\DeclareMathAlphabet\mathchorus{T1}{qzc}{m}{n}
\theoremstyle{definition}
\newtheorem{thm}{Theorem}[section]
\newtheorem{defn}{Definition}[section]	
\newtheorem{eg}{Example}[section]
\newtheorem{prop}{Proposition}[section]
\newtheorem{lem}{Lemma}[section]
\newtheorem{cor}{Corollary}[section]
\newtheorem{rmk}{Remark}[section]
\newtheorem{lemdef}{Lemma/Definition}[section]
\newtheoremstyle{dotlessS}{}{}{}{}{\color{blue}\bfseries}{}{ }{}
\theoremstyle{dotlessS}
\newtheoremstyle{dotlessP}{}{}{\color{blue}
}{}{\color{blue}\bfseries
}{}{ }{}
\theoremstyle{dotlessP}
\DeclareMathOperator{\act}{\pmb{\cdot}}
\DeclareMathOperator{\delb}{\bar{\partial}}
\DeclareMathOperator{\N}{\mathbb{N}}
\DeclareMathOperator{\C}{\mathbb{C}}
\DeclareMathOperator{\Cat}{\mathcal{C}}
\DeclareMathOperator{\CFKG}{\mathcal{CF}_{\mathcal{K}}^G}
\DeclareMathOperator{\CFKGtranstozero}{\mathcal{CF}_{\pitchfork 0,\mathcal{K}}^{G}}
\DeclareMathOperator{\CFK}{\mathcal{CF}_{\mathcal{K}}}
\DeclareMathOperator{\Ainf}{\mathchorus{A}_{\infty}}
\DeclareMathOperator{\Z}{\mathbb{Z}}
\DeclareMathOperator{\B}{\mathbb{B}}
\DeclareMathOperator{\R}{\mathbb{R}}
\DeclareMathOperator{\Q}{\mathbb{Q}}
\DeclareMathOperator{\s}{\mathfrak{s}}
\DeclareMathOperator{\rindex}{\mathfrak{r}}
\DeclareMathOperator{\Rindex}{\mathfrak{R}}
\DeclareMathOperator{\f}{\mathfrak{f}}
\DeclareMathOperator{\g}{\mathfrak{g}}
\DeclareMathOperator{\Aut}{Aut}
\DeclareMathOperator{\scorr}{\mathfrak{X}}
\DeclareMathOperator{\evaluation}{ev}
\DeclareMathOperator{\E}{\mathcal{E}}
\DeclareMathOperator{\novringe}{\Lambda_{0,nov}}
\DeclareMathOperator{\nove}{\Lambda_{nov}}
\DeclareMathOperator{\Spec}{Spec}
\DeclareMathOperator{\m}{\mathfrak{m}}
\DeclareMathOperator{\n}{\mathfrak{n}}
\DeclareMathOperator{\p}{{\mathfrak{p}}}
\DeclareMathOperator{\tn}{\mathfrak{t}}
\DeclareMathOperator{\pq}{{\mathfrak{pq}}}
\DeclareMathOperator{\q}{{\mathfrak{q}}}
\DeclareMathOperator{\Hom}{Hom}
\DeclareMathOperator{\fil}{\mathcal{F}}
\DeclareMathOperator{\Mq}{\mathchorus{M}}
\DeclareMathOperator{\kur}{\mathcal{M}}
\DeclareMathOperator{\pr}{pr}
\DeclareMathOperator{\id}{Id}
\DeclareMathOperator{\hol}{\mathscr{O}}
\DeclareMathOperator{\lra}{\longrightarrow}
\DeclareMathOperator{\Pindex}{\mathfrak{P}}
\DeclareMathOperator{\image}{im}
\DeclareMathOperator{\val}{val}
\DeclareMathOperator{\trop}{trop}
\DeclareMathOperator{\supp}{supp}
\DeclareMathOperator{\Crit}{Crit}
\DeclareMathOperator{\Sp}{Sp}
\DeclareMathOperator{\Span}{span}
\DeclareMathOperator{\mlag}{\mathchorus{MLag}}
\DeclareMathOperator{\Diff}{Diff}
\DeclareMathOperator{\po}{\mathfrak{PO}}
\DeclareMathOperator{\interior}{int}
\newcommand{\proj}{\mathbb{CP}}
\newcommand{\corr}[1]{\text{Corr}_{#1}^{\epsilon}}
\newcommand{\gcorr}[1]{\text{Corr}_{#1}^{G,\epsilon}}
\newcommand{\complete}[1]{\widehat{#1}}
\newcommand{\pair}[1]{\left\langle #1 \right\rangle }
\newcommand{\pseries}[1]{ \llbracket  #1 \rrbracket }
\newcommand{\set}[2]{\left\{ #1 \, \middle|\, #2\right\}}
\newcommand{\lrp}[1]{\left(#1\right)}
\newcommand{\Bigp}[1]{\Big(#1\Big)}
\def\equationautorefname~#1\null{(#1)\null}
\newcommand\restr[2]{{
  \left.\kern-\nulldelimiterspace 
  #1 
  \littletaller 
  \right|_{#2} 
  }}
\newcommand{\littletaller}{\mathchoice{\vphantom{\big|}}{}{}{}}
\title{
Equivariant Lagrangian Floer theory\\ 
on compact toric manifolds
} 
\author{Yao Xiao}
\date{}
\begin{document}

\maketitle

\begin{abstract}
    We define an equivariant Lagrangian Floer theory for Lagrangian torus fibers in a compact symplectic toric manifold equipped with a subtorus action. We show that the set of all Lagrangian torus fibers with weak bounding cochain data whose equivariant Lagrangian Floer cohomology is non-zero can be identified with a rigid analytic space. We prove that the set of these Lagrangian torus fibers is the tropicalization of the rigid analytic space. This provides a way to locate them in the moment polytope. 
Moreover, we prove that the dimension of such a rigid analytic space is equal to the dimension of the subtorus when the symplectic manifold is $\proj^n$ or has complex dimension less than or equal to $2$. 
We also show that the Lagrangian submanifolds with non-trivial equivariant Floer cohomology are non-displaceable by $G$-equivariant Hamiltonian diffeomorphisms. 
\end{abstract}

\tableofcontents

\section{Introduction}
\label{Section intro}

Lagrangian Floer theory on a symplectic manifold, developed by Floer \cite{MR0965228} and generalized by Fukaya-Oh-Ohta-Ono \cite{fooobook1, FOOObook2},  encodes the intersection information of its
Lagrangian submanifolds and is the building block in the construction of the Fukaya category. 
The Lagrangian submanifolds (with weak
bounding cochain data) with non-vanishing Lagrangian Floer cohomology, in many cases (for instance, in the compact Fano toric case \cite{MR3868001, ganatra2019automatically}), are generators of the Fukaya category of the symplectic manifolds, but are hard to compute in general. 
Fukaya-Oh-Ohta-Ono \cite{I, II, III} studied the Lagrangian Floer theory on compact toric manifolds and provided an algorithm for locating such Lagrangian torus fibers. 
Various constructions related to equivariant Floer theory have been studied by Austin-Braam \cite{AustinBraam}, Seidel-Smith \cite{SS}, Woodward \cite{Woodward, WoodwardQuantumKirwan} and Woodward-Xu \cite{WoodwardXu}, Hendricks-Lipshitz-Sarkar \cite{HLS1,HLS2}, Cho-Hong \cite{ChoHong}, Bao-Honda\cite{BaoHonda}, etc. 
We study the case where the action is by a continuous compact Lie group. 

Motivated by Fukaya-Oh-Ohta-Ono's \cite{I, II, III} and Daemi-Fukaya's \cite{daemifukaya} works, 
we define an equivariant Lagrangian Floer theory on compact toric manifolds with a subtorus action. 
To locate the Lagrangian submanifolds with non-trivial equivariant Lagrangian Floer cohomology, we apply tropical analytic geometry to the rigid analytic space $\Crit_G^{\Delta}(\po)$ consisting of all such pairs of the form $(L(u),b)$, each consisting of a Lagrangian torus fiber and a weak bounding cochain whose $G$-equivariant Lagrangian Floer cohomology is non-zero. 
We prove that the dimension of the rigid analytic space $\Crit_G^{\Delta}(\po)$ is the dimension of the subtorus $G$ in cases where the compact toric manifold is a complex projective space $\proj^n$ or of complex dimension less than or equal to $2$. 
Moreover, we show that the Lagrangian submanifolds with non-trivial equivariant Floer cohomology is non-displaceable by $G$-equivariant Hamiltonian diffeomorphisms.
\subsection{The setup}
\label{Section setup}
Let $(X,\omega, T^n, \mu)$ be a compact symplectic toric manifold. More specifically, $X$ is a (real) $2n$-dimensional manifold with symplectic form $\omega$ such that there is an effective Hamiltonian torus action on $(X,\omega)$ by $T^n$ and $\mu: X\to \tn^*$ is an associated moment map, where $\tn$ is the Lie algebra of $T^n$ and $\tn^*$ is the dual of $\tn$. 
Let $\Delta = \mu(X)$ be the moment polytope for the $T^n$-action. 
For each $u\in \interior \Delta$, $L(u):=\mu^{-1}(u)$ is a $T^n$-invariant Lagrangian torus with a $T^n$-invariant relatively spin structure induced by the $T^n$-action. 
Let $J$ be a $T^n$-invariant almost complex structure on $X$ compatible with $\omega$.    

Let $G = K T^r \subset T^n$ be a compact $r$-dimensional connected subtorus of $T^n$ given by a $(n\times r)$-matrix $K$ with integer coefficients of rank $r$. The $G$-action is induced from the $T^n$-action on $X$. 
We identify 
\[ \tn^*\cong M_{\R}\cong (\R^n)^*,  \]
where $M_{\R}$ is the dual vector space of 
an $n$-dimensional $R$-vector space 
$N_{\R}$. 
Let $N\cong \Z^n$ be a full-rank lattice in $N_{\R}$ and $M := \Hom_{\Z}(N,\Z)$ be the dual lattice of $N$ such that $M_{\R} \cong M\otimes_{\Z} \R$ and $N_{\R} \cong N\otimes_{\Z} \R$. 
For every $u\in \interior \Delta$, $\mu^{-1}(u)$ is diffeomorphic to $T^n$. 
We identify 
\[ \tn \cong N_{\R}\cong \R^n \qand H_1(L(u),\Z)\cong N\cong \Z^n, \quad H^1(L(u),\Z) \cong M \cong \Z^n. \]
Fix an integral basis $\{e_1^*,\ldots, e_n^*\}$ for the lattice $N$. 
Let $\{e_1,\ldots, e_n\}$ be the dual basis of $M$. 
Let the $a_{i,j} \in \Z$ be such that 
\[H^1(L(u), \Z) = \Span_{\Z} \left\{\alpha_i =  \sum_{j=1}^n a_{i,j}e_j \, \middle|\, 1\leq i \leq n\right\} , \]  where 
$H_{T^r}^1(L(u),\Z) = \Span_{\Z}\{ \alpha_{r+1}, \ldots, \alpha_n\}$. 

We consider the following coefficient rings. 
Define the \textbf{universal Novikov field} by
\begin{equation}
\label{universal Novikov field}
\nove  
= 
\left\{
\sum\limits_{i\in \N }  a_i T^{\lambda_i} e^{n_i}
    \;\middle| \;
    \begin{aligned} 
    &  \lambda_i \in \R, a_i \in \C , \text{ and }   n_i\in \Z  \quad \forall i \in \N \\
    &  \lambda_0 < \lambda_1< \cdots,  \quad \lim\limits_{i\to \infty }\lambda_i = \infty 
    \end{aligned} 
 \right\}. 
\end{equation} A non-Archimedean \textbf{valuation} function $\val: \nove \to \R\cup \{\infty\}$ on $\nove$ is defined as follows. 
\begin{equation}
\label{valuation universal novikov field}
y=\sum\limits_{i\in \N }  a_i T^{\lambda_i} e^{n_i} 
\mapsto 
\val (y) := \begin{cases}
\min \{ \lambda_i\mid i\in \Z, a_i\ne 0\} \quad &\text{if }y \ne 0 \\
\infty \quad &\text{if }y =0
\end{cases}     
\end{equation}
The \textbf{universal Novikov ring} is given by 
\begin{equation}
\label{universal Novikov ring}
\novringe  = \left\{ y\in \nove \mid  \val(y) \geq 0 \right\}.     
\end{equation}
We also consider the energy-zero parts of $\nove$ and $\novringe$. 
Define 
\begin{equation}\label{Novikov field}
\Lambda  = \left\{ \sum\limits_{i\in \N} a_i T^{\lambda_i} 
    \;\middle| \;
    \begin{aligned} 
    &  \lambda_i \in \R \text{ and }  a_i \in \C \quad \forall i \in \N
    \\ &  \lambda_0 < \lambda_1< \cdots,  \quad \lim\limits_{i\to \infty }\lambda_i = \infty
 \end{aligned} \right\}    
\end{equation}
to be the \textbf{Novikov field} and 
\begin{equation}
\label{Novikov ring}
\Lambda_0  
=  \left\{ \sum\limits_{i\in \N} a_i T^{\lambda_i} \in \Lambda 
    \;\middle| \; \lambda_i\geq 0  \quad \forall i \in \N
   \right\}     
\end{equation} 
to be the Novikov ring. 
The rings $\Lambda, \Lambda_0$ carries a valuation function
\begin{equation}
\label{valuation novikov field}
y=\sum\limits_{i\in \N }  a_i T^{\lambda_i} 
\mapsto 
\val (y) := \begin{cases}
\min \{ \lambda_i\mid i\in \Z, a_i\ne 0\} \quad &\text{if }y \ne 0 \\
\infty \quad &\text{if }y =0
\end{cases}.   
\end{equation}
The valuations \autoref{valuation universal novikov field} and \autoref{valuation novikov field} induce a non-Archimedean norm 
$y\mapsto |y|_R:=\exp(-\val(y))$ on $R$, where $R \in \{ \nove, \novringe, \Lambda, \Lambda_0\}$ and $\exp$ is the exponential map with Euler's number as the base. 
Moreover, we have an exponential map $\exp: \Lambda_0 \to  \Lambda_0$ defined as follows. Every $b \in \Lambda_0 $ can be decomposed as $b = b_0+ b_+$, where
$b_0\in \C$ and $b_+$ satisfies $\val(b_+)>0$. 
We define $\exp (b) = e^{b_0 } \sum_{n\in \N} \frac{b_+^n}{n!}$, where $e^{b_0 }$ is the usual exponential of complex numbers. 

\subsection{Main results}
We summarize our main results below. 
We denote by \, $\widehat{\ }$\,  the completion of the corresponding objects with respect to the energy filtration. 
\begin{enumerate}[1.]
\item \textbf{Definition of a $T^r$-equivariant $\Ainf$-algebra associated with a Lagrangian torus fiber. } 
\begin{thm}[Theorem \ref{mk is Ainf}] 
For each $u\in \interior \Delta$, we can associate to $L(u)$ a $G$-equivariant filtered $\Ainf$-algebra 
\begin{equation}\label{Ainf-algebra, intro}
\lrp{\Omega_G(L(u))\complete{\otimes}\novringe, \{\m_k^G\}_{k\in\N}},      
\end{equation}
where 
$\Omega_G(L(u))$ 
is the Cartan model for the $G$-equivariant differential forms on $L(u)$.    
\end{thm}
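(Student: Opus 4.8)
The plan is to run the Fukaya--Oh--Ohta--Ono de Rham construction of the filtered $A_\infty$-algebra on $\Omega(L(u))\widehat\otimes\novringe$ \emph{inside} the $G$-equivariant Cartan model, exploiting that everything in sight can be made $G$-equivariant because $J$ is $T^n$-invariant and $L(u)$ is a single $T^n$-orbit. Recall that for $\beta\in H_2(X,L(u);\Z)$ and $k\geq 0$ the moduli space $\mathcal M_{k+1}(\beta)$ of stable maps from genus-zero bordered curves with $k+1$ boundary marked points, boundary on $L(u)$, representing $\beta$, carries a $T^n$-action for the $T^n$-invariant $J$, with $T^n$-equivariant evaluation maps $\mathrm{ev}=(\mathrm{ev}_0,\dots,\mathrm{ev}_k)\colon \mathcal M_{k+1}(\beta)\to L(u)^{k+1}$; these moduli spaces admit $T^n$-equivariant Kuranishi structures with corners, compatible with the forgetful maps and with the fiber-product description of their boundaries $\partial\mathcal M_{k+1}(\beta)=\bigcup \mathcal M_{k_1+1}(\beta_1)\,{}_{\mathrm{ev}_0}\!\times_{\mathrm{ev}_i}\mathcal M_{k_2+1}(\beta_2)$. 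Restricting the $T^n$-action to $G$ gives $G$-equivariant Kuranishi data.

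First I would fix, by the usual induction over energy and the number of marked points, a $G$-invariant system of continuous families of multisections (CF-perturbations) $\widehat{\mathfrak S}$ compatible with the forgetful maps and the boundary fiber products; here compactness of $G$ lets one average each perturbation over $G$, and averaging does not destroy the compatibility relations since those relations are themselves $G$-equivariant. The perturbed zero loci $\mathcal M_{k+1}(\beta)^{\widehat{\mathfrak S}}$ are then $G$-spaces carrying $G$-equivariant weakly submersive evaluation maps with compact oriented fibers. Next I would set up $G$-equivariant integration along the fiber: for such a family there is a pushforward $(\mathrm{ev}_0)_!\colon \Omega_G\bigl(\mathcal M_{k+1}(\beta)^{\widehat{\mathfrak S}}\bigr)\to \Omega_G(L(u))$ of degree $-\dim(\text{fiber})$, where $\Omega_G(-)=(\Omega(-)\otimes\Sym(\g^*))^G$ carries the Cartan differential $d_G=d-\sum_i u_i\,\iota_{\xi_i}$, satisfying an equivariant Stokes formula relating $d_G\circ(\mathrm{ev}_0)_!$, $(\mathrm{ev}_0)_!\circ d_G$ and the pushforward along the boundary, together with the projection formula and compatibility with composition of correspondences. (Since $G=T^r$ acts on $L(u)\cong T^n$ freely by translations, $\Omega_G(L(u))$ is quasi-isomorphic to $\Omega(L(u)/G)$, but I keep the Cartan model for functoriality.) One then defines, for $x_i\in\Omega_G(L(u))$,
\begin{equation*}
\mathfrak{m}_k^G(x_1,\dots,x_k)\;=\;\sum_{\beta} T^{\omega(\beta)/2\pi}\,(\mathrm{ev}_0)_!\bigl(\mathrm{ev}_1^* x_1\wedge\cdots\wedge \mathrm{ev}_k^* x_k\bigr),
\end{equation*}
with $\mathfrak{m}_0^G$ the $k=0$ contribution, extended $\novringe$-linearly and continuously in the energy filtration.

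With these formal properties in hand, verification of the axioms is the same bookkeeping as in the non-equivariant case of \cite{fooobook1,FOOObook2,I}, with $d$ replaced by $d_G$ throughout: the filtered $A_\infty$-relations $\sum \mathfrak{m}^G(\cdots\otimes\mathfrak{m}^G(\cdots)\otimes\cdots)=0$ follow term-by-term from the boundary description of $\mathcal M_{k+1}(\beta)$ combined with the equivariant Stokes formula and the projection formula; $G$-equivariance of the $\mathfrak{m}_k^G$ (that they preserve the Cartan model, are $\Sym(\g^*)$-linear, and are $d_G$-compatible) is built into the construction; gappedness, compatibility with the energy filtration, and existence of a homotopy unit $1\in\Omega_G^0(L(u))$ follow exactly as in \cite{fooobook1}. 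Independence of the choices up to $G$-equivariant filtered $A_\infty$-homotopy equivalence follows from the standard cobordism/parametrized argument run $G$-equivariantly.

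The main obstacle is the joint implementation underlying the second paragraph: building a $G$-invariant, forgetful-compatible system of CF-perturbations on Kuranishi structures with corners, and establishing $G$-equivariant integration along the fiber with the equivariant Stokes and projection formulas at that level of generality and \emph{in the Cartan model}. Compactness of $G$ makes the averaging possible, but one must check that $G$-invariance survives the inductive construction through the forgetful maps and that pushforward of $d_G$-closed forms behaves correctly on all boundary strata; this is precisely the point at which I would invoke and adapt the equivariant de Rham / Kuranishi framework of Daemi--Fukaya \cite{daemifukaya} to the toric moduli spaces at hand.
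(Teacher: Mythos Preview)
Your proposal is correct and follows essentially the same route as the paper: the paper builds $G$-equivariant Kuranishi structures on $\mathcal M_{k+1}(\beta)$ (inherited from the $T^n$-equivariant ones of \cite{III}), develops $G$-equivariant CF-perturbations and integration along the fiber in the Cartan model (Section~\ref{GKur section}), and then derives the $A_\infty$-relations term-by-term from the boundary fiber-product description via the equivariant Stokes theorem and the composition formula for smooth correspondences. The only minor divergence is that the paper does not produce $G$-invariant CF-perturbations by averaging a non-equivariant system, but rather constructs them directly via a sheaf-theoretic softness argument (Proposition~\ref{soft sheaves}); your averaging heuristic is plausible but would need care with the inductive compatibility you flag, which is exactly why the paper takes the more structured route.
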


\item \textbf{Deformation of the $T^r$-equivariant $\Ainf$-algebra by a weak bounding cochain}. 
\begin{thm} [Theorem \ref{mkb is Ainf}] 
For each $u\in \interior \Delta$ and each
$b\in H^1(L(u),\Lambda_0)
$, 
there is a filtered  $\Ainf$-algebra 
\begin{equation}\label{Ainf-algebra, deformed}
\lrp{\Omega_G(L(u))\complete{\otimes}\novringe, \{(\m_k^G)^b\}_{k\in \N} }  
\end{equation}
 associated with $(L(u),b)$.    
\end{thm}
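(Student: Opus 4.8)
The plan is to define the deformed operations by the Fukaya--Oh--Ohta--Ono formula and then check (i) that the defining sums converge in the energy filtration and (ii) that the resulting collection satisfies the filtered $\Ainf$-relations with the same unit; the real work is (i), because $b$ is allowed to have a nonzero energy-zero part. Concretely, for $x_1,\dots,x_k\in\Omega_G(L(u))\complete{\otimes}\novringe$ I would set
\[
(\m_k^G)^b(x_1,\dots,x_k):=\sum_{l_0,\dots,l_k\geq 0}\m_{k+l_0+\cdots+l_k}^G\bigl(b^{\otimes l_0},x_1,b^{\otimes l_1},x_2,\dots,x_k,b^{\otimes l_k}\bigr),
\]
with $\m_\bullet^G$ the operations of Theorem \ref{mk is Ainf}; in particular $(\m_0^G)^b=\sum_{l\geq 0}\m_l^G(b^{\otimes l})$ is the curvature of the deformed algebra, which need not vanish (it vanishes, resp.\ equals a multiple of the unit, precisely when $b$ is a bounding, resp.\ a weak bounding, cochain).

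\textbf{Convergence.} Decompose $b=b_0+b_+$ with $b_0\in H^1(L(u),\C)$ the $T^0$-part and $\val(b_+)>0$. Insertions of $b_+$ cause no trouble: by gappedness of $\{\m_k^G\}$ and Gromov compactness, for every $E>0$ only finitely many of the summands indexed by $(l_0,\dots,l_k)$ are nonzero modulo $T^E$ once the energy contributed by $b_+$ is recorded, so that part of the sum converges $T$-adically and the total energy spectrum of $(\m_k^G)^b$ stays discrete. The genuine point is the sum over the number of $b_0$-insertions, since these do not raise the energy. Here I would use the explicit form of the equivariant operations from Theorem \ref{mk is Ainf}: each $\m_k^G$ decomposes as $\sum_{\beta}T^{\omega(\beta)}\m_{k,\beta}^G$ over the relative classes $\beta$ of holomorphic disks with boundary on $L(u)$, and, because $L(u)$ is a Lagrangian torus, $\m_{k,\beta}^G$ obeys a divisor axiom under which resumming all insertions of the closed $1$-form $b$ produces the single factor $\exp\bigl(\pair{\del\beta,b}\bigr)$ times the term with the $b$'s deleted; this factor is a well-defined element of $\novringe$ by the exponential map $\exp\colon\Lambda_0\to\Lambda_0$ of \S\ref{Section setup}. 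Combined with Gromov compactness, which bounds the number of $\beta$ with $\omega(\beta)\leq E$, this gives convergence in $\Omega_G(L(u))\complete{\otimes}\novringe$. I would arrange the bookkeeping exactly as in the non-equivariant toric case \cite{I}, noting that the Cartan-model corrections enter $\m_k^G$ only through the equivariant polynomial variables and are inert for these $T$-adic estimates.

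\textbf{$\Ainf$-relations and unitality.} Granting convergence, the relations for $\{(\m_k^G)^b\}$ follow formally from those for $\{\m_k^G\}$: expanding each $(\m_k^G)^b$ into $\m_\bullet^G$ applied to words in the $x_j$'s and $b$, every way of splitting off an inner operation on the left-hand side of the $\Ainf$-relation matches a unique term on the right-hand side, and the Koszul signs are those of $\{\m_k^G\}$; this is the standard computation (cf.\ \cite{fooobook1}). The same expansion, together with $\deg b=1$, shows that the unit of $\{\m_k^G\}$ stays a strict (and hence homotopy) unit for $\{(\m_k^G)^b\}$, so $(\Omega_G(L(u))\complete{\otimes}\novringe,\{(\m_k^G)^b\}_{k\in\N})$ is a filtered $\Ainf$-algebra, which proves the theorem.

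I expect the \textbf{convergence of the $b_0$-part} to be the main obstacle. In the classical treatment one usually restricts bounding cochains to $\Lambda_+\cdot H^1$, where convergence is automatic from the filtration alone; allowing $b\in\Lambda_0\cdot H^1$ forces us to invoke the explicit toric geometry --- the divisor axiom and Gromov compactness --- and to confirm that passing to the Cartan model in Theorem \ref{mk is Ainf} does not spoil them. The algebra in step (ii) is routine.
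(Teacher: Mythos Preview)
Your argument is correct, but the paper takes a much shorter route that sidesteps the convergence problem entirely. Rather than defining $(\m_k^G)^b$ by inserting copies of $b$ and then collapsing the infinite sum via a divisor axiom, the paper takes the resummed expression as the \emph{definition}: for each $\beta\in\pi_2(X,L(u))$ one sets $(\m_{k,\beta}^G)^b:=\exp(\partial\beta\cap b)\,\m_{k,\beta}^G$ and then $(\m_k^G)^b:=\sum_\beta(\m_{k,\beta}^G)^b\,T^{\omega(\beta)/2\pi}e^{I_\mu(\beta)/2}$. Convergence is then automatic from Gromov compactness (finitely many $\beta$ below each energy level) together with $\exp(\partial\beta\cap b)\in\Lambda_0$, and the $\Ainf$-relation becomes a one-liner: since $\exp(\partial\beta_1\cap b)\exp(\partial\beta_2\cap b)=\exp(\partial\beta\cap b)$ whenever $\beta_1+\beta_2=\beta$, the deformed $\Ainf$-sum at level $\beta$ equals $\exp(\partial\beta\cap b)$ times the undeformed one, which vanishes by Proposition~\ref{mk is Ainf}. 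Your approach has the conceptual advantage of deriving this formula from the general FOOO deformation and thus explaining its origin, but it obliges you to establish an equivariant divisor axiom that you only sketch; the paper's direct definition buys a two-line proof at the cost of that motivation.
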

Moreover, we show in Corollary \ref{m1b squares to 0} that
$(\m_1^G)^b\circ (\m_1^G)^b = 0$. 
This leads to the definition of a $T^r$-equivariant Lagrangian Floer cohomology $ HF_G((L, b),(L, b), \novringe)$.

\item \textbf{Spectral sequences}. 
In Section \ref{Section spectral sequences},
we define a spectral sequence to compute the $G$-equivariant Lagrangian Floer cohomology. 
\begin{thm}[Theorem \ref{spectral sequence statements}]
There exists a spectral sequence such that its $E_2\cong H_G^*(L(u),\R)\otimes \novringe$
and it converges to $HF_G((L(u), b),(L(u), b), \novringe)$.     
\end{thm}
\item 
\textbf{The set of 
Lagrangian torus fibers with non-trivial $G$ -equivariant Lagrangian Floer cohomology is the tropicalization of a rigid analytic space}. 

\begin{lem}
[Corollary \ref{weak bounding cochains are where partial derivatives in the normal directions vanish}]
$(\m_1^G)^b=0$ on the $E_2$-page of the spectral sequence 
if and only if $(\nabla \po)_b(\alpha) =0$ for all $\alpha \in H_G^1(L(u),\Lambda_0)$. 
\end{lem}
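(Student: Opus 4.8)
### Proof Proposal

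The plan is to unwind both sides of the claimed equivalence in terms of the potential function $\po$ and its equivariant enhancement, and to match the vanishing of $(\m_1^G)^b$ on the $E_2$-page with the vanishing of the "normal" partial derivatives of $\po$ at $b$. First I would recall the structure of the spectral sequence from Theorem \ref{spectral sequence statements}: its $E_2$-page is $H_G^*(L(u),\R)\otimes\novringe$, and the differential $(\m_1^G)^b$ induced on this page is, by the general theory of filtered $\Ainf$-algebras deformed by a weak bounding cochain (as in Fukaya-Oh-Ohta-Ono \cite{I,II,III}), given by cup product with a distinguished element built from the potential. Concretely, in the non-equivariant case the statement is that $\m_1^b$ on $H^*(L(u))$ is wedging with $\sum_i \frac{\partial \po}{\partial x_i}(b)\, e_i$; the equivariant refinement replaces $H^*$ with $H_G^*$ and restricts the relevant derivatives to the directions $\alpha_{r+1},\dots,\alpha_n$ spanning $H_{T^r}^1(L(u),\Z)$, because the Cartan model contributes the extra polynomial generators and the $G$-invariance forces the leading-order behavior in exactly those $n-r$ directions. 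So the first key step is to identify $(\m_1^G)^b$ on $E_2$ with the operator $\eta\mapsto (\nabla\po)_b \wedge \eta$, where $(\nabla\po)_b \in H_G^1(L(u),\Lambda_0)$ collects the derivatives of $\po$ with respect to the coordinates dual to $\alpha_{r+1},\dots,\alpha_n$.

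Next I would show that, with this identification, $(\m_1^G)^b = 0$ is equivalent to $(\nabla\po)_b(\alpha) = 0$ for all $\alpha \in H_G^1(L(u),\Lambda_0)$. One direction is immediate: if $(\nabla\po)_b$ vanishes as an element of $H_G^1$, then wedging with it is the zero operator, so $(\m_1^G)^b = 0$ on $E_2$. For the converse, I would use that the cohomology ring $H_G^*(L(u),\R)\otimes\novringe$ contains $H_G^1(L(u),\Lambda_0)$ as a summand on which wedging with a degree-one class $\xi$ is injective unless $\xi = 0$ — this is the standard fact that the exterior algebra on $H^1(T^n)$ has no zero divisors in degree one, and it persists after tensoring with the polynomial part of the Cartan model and with the Novikov coefficients, since these are integral domains and the relevant module is free. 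Hence $(\m_1^G)^b = 0$ forces $(\nabla\po)_b = 0$ as a class, i.e. $(\nabla\po)_b(\alpha) = 0$ for every $\alpha$. Pairing against the basis $\alpha_{r+1},\dots,\alpha_n$ then recovers the componentwise statement.

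The main obstacle I anticipate is making precise the identification in the first step: one must carefully track how the Cartan model $\Omega_G(L(u))$ and its cohomology $H_G^*(L(u),\R) = H^*(L(u),\R)\otimes \R[\u_1,\dots,\u_r]$ (polynomial generators of degree two from the $T^r$-factor) interact with the deformed operations $(\m_k^G)^b$, and verify that only the first-order term in $b$ survives on $E_2$ and that it is exactly the gradient of $\po$ in the normal directions — not, say, with extra contributions from the equivariant parameters or from higher $\m_k$. This requires invoking the explicit form of $\po$ for toric fibers (a sum of monomials $T^{\ell_j(u)} e^{\langle v_j, x\rangle}$ over the facets) together with the divisor axiom and the $G$-invariance of the chosen perturbation data, so that the equivariant potential reduces to the ordinary one in the way Daemi-Fukaya \cite{daemifukaya} set up. Once that structural fact is in hand, the algebra is routine: the no-zero-divisors argument closes the equivalence, and the reduction to the basis $\{\alpha_i\}$ is bookkeeping. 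I would therefore spend most of the proof on the spectral-sequence computation and treat the converse as a short lemma about exterior algebras over an integral domain.
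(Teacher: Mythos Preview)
Your identification of $(\m_1^G)^b$ on the $E_2$-page as ``wedging with $(\nabla\po)_b \in H_G^1$'' is incorrect. The paper's Lemma~\ref{m1b is partial derivative} computes $(\m_1^G)^b(\alpha_i) = \frac{\partial \po^u}{\partial c_i}(b)\cdot PD[L(u)]\cdot e$, a scalar multiple of the unit (the constant function) times the degree-two generator $e$. In de~Rham degree the operator sends degree~1 to degree~0, not degree~1 to degree~2; this is a contraction-type Koszul differential, not a cup product with a one-form. Your ``no zero divisors'' step is also misstated: wedging with a nonzero $\xi \in H_G^1$ is \emph{not} injective on $H_G^1$ (indeed $\xi\wedge\xi=0$), so the argument as written does not close.

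The paper's proof is both simpler and more direct. It first establishes Lemma~\ref{m1b is partial derivative} by an explicit moduli-space computation: for each generator $\alpha_i$ of $H_G^1(L(u),\Z)$ ($r+1\le i\le n$), one expands the definition of $(\m_1^G)^b$ via the evaluation maps on $\Mq_{k+1}(L(u),J,\beta)$, uses Maslov-index and dimension constraints to restrict to $I_\mu(\beta)=2$, and recognizes the result as $\partial\po^u/\partial c_i$ at $b$. Then Corollary~\ref{weak bounding cochains are where partial derivatives in the normal directions vanish} observes that since $G$ acts freely on $L(u)$, $H_G^*(L(u),\R)\cong H^*(T^{n-r},\R)$ is generated as a ring by $H_G^1$, so $(\m_1^G)^b$ vanishes on all of $H_G^*$ if and only if it vanishes on $H_G^1$, which by the lemma is exactly the vanishing of the $n-r$ partial derivatives. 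No global structural identification of the operator on higher degrees is needed.
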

Moreover, by degree counting, 
for any $\alpha \in H_G^1(L(u),\Lambda_0)$, $(\m_1^G)^b(\alpha)$ is a non-zero degree $0$ form, which is a multiple of $1$. 
Thus, if $(\m_1^G)^b \ne 0$ on $H_G(L(u),\Lambda_0)$, 
we have 
\[ 
HF_G((L(u), b),(L(u), b), \nove) = 0.
\] 

The above implies the following. 
\begin{thm}[Theorem \ref{main theorem}]
Let $f_i \in \Lambda[y_1^{\pm 1}, \ldots, y_n^{\pm 1}]$ be the derivative of $\po$ with respect to the $\alpha_i$-th direction. 
Then
\begin{align*}
 \set{ (u,b ) \in \bigcup_{u\in \interior \Delta} \{u\}\times H^1(L(u), \Lambda_0/(2\pi i \Z))}{  HF_G((L(u), b),(L(u), b), \nove)\ne 0}    
\end{align*} 
is isomorphic to 
\[ \Crit_G^{\Delta}(\po): = \set{ (y_1,\ldots, y_n)\in (\Lambda^*)^n }{ 
\begin{aligned}
  & f_{i}(y_1,\ldots, y_n)=0 
  \quad \forall r+1\leq i \leq n \\
  & \lrp{\val(y_1),\ldots, \val(y_n)} \in \interior \Delta
\end{aligned}  
} .
\]  
Here we identify
\[  
\lrp{ u_1,\ldots, u_n, 
\sum_{i=1}^n x_ie_i}  
\text{ with } 
(y_1,\ldots, y_n) = \lrp{ \exp(x_1)T^{u_1}, \ldots,  \exp(x_n)T^{u_n}}\in (\Lambda^*)^n.  
\]
\end{thm}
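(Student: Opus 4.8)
The plan is to read off the statement from three inputs, all of them either proved above or pure book‑keeping: a non‑vanishing criterion for $HF_G$ (contained in the remarks preceding the statement together with Theorem~\ref{spectral sequence statements}), the Lemma (Corollary~\ref{weak bounding cochains are where partial derivatives in the normal directions vanish}), and an explicit change of coordinates. First I would record the criterion: for a fixed pair $(u,b)$, $HF_G((L(u),b),(L(u),b),\nove)\neq 0$ if and only if $(\m_1^G)^b$ vanishes on the $E_2$‑page. For the ``only if'': by the degree‑counting remark, if $(\m_1^G)^b(\alpha)$ is a nonzero multiple of the unit for some $\alpha\in H_G^1(L(u),\Lambda_0)$, then on the next page the unit is a coboundary, and since the spectral sequence is multiplicative this forces $E_3=0$, hence $E_\infty=0$ and $HF_G=0$. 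For the ``if'': if $(\m_1^G)^b$ kills the degree‑$\leq 1$ part of $E_2$, then it kills all of $E_2$ (it is a derivation of the product and, as $G$ acts freely on $L(u)$, $H_G^*(L(u),\R)\cong H^*(T^{n-r},\R)$ is generated in degree $\leq 1$); so $d_2=0$, the spectral sequence degenerates at $E_2$, and $HF_G\cong H_G^*(L(u),\R)\otimes\nove\neq 0$.

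Next I would convert the criterion into an equation in $\po$. By the Lemma, $(\m_1^G)^b=0$ on $E_2$ if and only if $(\nabla\po)_b(\alpha)=0$ for every $\alpha\in H_G^1(L(u),\Lambda_0)$, and since $H_G^1(L(u),\Lambda_0)$ is the $\Lambda_0$‑span of $\alpha_{r+1},\dots,\alpha_n$, it suffices to test $\alpha=\alpha_i$ for $r+1\leq i\leq n$. Writing $b=\sum_{j=1}^n x_je_j$, $u=(u_1,\dots,u_n)$, and $y_j:=\exp(x_j)T^{u_j}$, the chain rule — using $\partial_{x_j}y_j=y_j$, $\partial_{x_k}y_j=0$ for $k\neq j$, and $\alpha_i=\sum_j a_{i,j}e_j$ — gives
\[
(\nabla\po)_b(\alpha_i)=\sum_{j=1}^{n} a_{i,j}\,y_j\,\partial_{y_j}\po=f_i(y_1,\dots,y_n),
\]
the $\alpha_i$‑directional derivative of $\po$ from the statement. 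Hence $(\m_1^G)^b=0$ on $E_2$ if and only if $f_i(y)=0$ for all $r+1\leq i\leq n$.

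It remains to verify that $(u,b)\mapsto(y_1,\dots,y_n)=\bigl(\exp(x_1)T^{u_1},\dots,\exp(x_n)T^{u_n}\bigr)$ is a bijection
\[
\bigcup_{u\in\interior\Delta}\{u\}\times H^1\bigl(L(u),\Lambda_0/(2\pi i\Z)\bigr)\ \xrightarrow{\sim}\ \set{(y_1,\dots,y_n)\in(\Lambda^*)^n}{(\val(y_1),\dots,\val(y_n))\in\interior\Delta}.
\]
Writing $x_j=c_j+z_j$ with $c_j\in\C$ and $\val(z_j)>0$, one has $\exp(x_j)=e^{c_j}\bigl(1+\sum_{m\geq1}z_j^m/m!\bigr)$, which has valuation $0$ since $e^{c_j}\in\C^*$; thus $\val(y_j)=u_j$, and $\exp\colon\Lambda_0\to\Lambda_0^*:=\{y\in\Lambda:\val(y)=0\}$ is surjective with kernel $2\pi i\Z$, inverse the logarithm $\Lambda_0^*\to\Lambda_0/(2\pi i\Z)$. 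Via the basis $e_1,\dots,e_n$ of $H^1(L(u),\Z)$ this identifies $H^1(L(u),\Lambda_0/(2\pi i\Z))$ with $(\Lambda_0^*)^n$, so $(u,b)\mapsto(y_1,\dots,y_n)$ maps bijectively onto $\prod_{j=1}^n T^{u_j}\Lambda_0^*$; letting $u$ range over $\interior\Delta$ gives the target set above. Under this bijection the defining conditions of $\Crit_G^{\Delta}(\po)$ — $f_i(y)=0$ for $r+1\leq i\leq n$ together with $(\val(y_1),\dots,\val(y_n))\in\interior\Delta$ — are exactly the conditions produced in the first two steps, which proves the isomorphism.

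The point I expect to require the most care is the identification $(\nabla\po)_b(\alpha_i)=f_i(y)$ with $f_i\in\Lambda[y_1^{\pm1},\dots,y_n^{\pm1}]$: it presupposes that the equivariant potential function produced by the $G$‑equivariant $\Ainf$‑structure agrees, after the substitution $y_j=\exp(x_j)T^{u_j}$, with the holomorphic‑disc generating function of the non‑equivariant toric theory, so that its $G$‑directional derivatives are honestly the asserted Laurent objects and the chain‑rule computation is legitimate; that is where all of the genuine disc‑counting input is concentrated. A secondary, more formal point to nail down is the degeneration of the spectral sequence at $E_2$ in the ``if'' half of the criterion: one must control the higher differentials $d_r$ for $r\geq 3$, not merely $d_2$.
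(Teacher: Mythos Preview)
Your proposal is correct and follows essentially the same route as the paper's proof of Theorem~\ref{main theorem}: both combine Corollary~\ref{weak bounding cochains are where partial derivatives in the normal directions vanish} with the spectral sequence of Theorem~\ref{spectral sequence statements} and the degree-count observation that $(\m_1^G)^b(\alpha)$ is a multiple of the unit, forcing $E_3=0$ when it is nonzero. Your concern about the higher differentials $d_r$ for $r\geq 3$ dissolves once you note that the hypothesis in the ``if'' direction is that the \emph{full} operator $(\m_1^G)^b$ vanishes on harmonic representatives of $H_G(L(u))$ (this is what Corollary~\ref{weak bounding cochains are where partial derivatives in the normal directions vanish} asserts), not merely its leading energy piece; these representatives are therefore honest $\delta$-cocycles and survive to $E_\infty$, giving degeneration at $E_2$ outright. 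Your other flagged point, the identification $\po_G=\po$ as a Laurent series in $y$, is handled in the paper immediately after the definition of $\po_G$ together with Theorem~\ref{potential function of a compact symplectic toric manifold}.
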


\begin{thm}[Theorem \ref{CritGPO is a rigid analytic space}]
$\Crit_G^{\Delta}(\po)$ is a rigid analytic space over $\Lambda$. 
\end{thm}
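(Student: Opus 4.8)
The plan is to exhibit $\Crit_G^{\Delta}(\po)$ as a Zariski-closed analytic subspace of the \emph{open polytopal domain} $\trop^{-1}(\interior\Delta)$ inside the analytic $n$-torus $(\Lambda^*)^n$, cut out by the finitely many Laurent polynomials $f_{r+1},\dots,f_n$. I would first give $\trop^{-1}(\interior\Delta)$ the structure of a rigid analytic space over $\Lambda$ by writing it as an increasing admissible union of affinoid polytopal subdomains, and then check that the common vanishing locus of $f_{r+1},\dots,f_n$ is, on each such affinoid, a closed affinoid subvariety, and that these glue.

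First I would choose an exhaustion $P_1\subset P_2\subset\cdots$ of $\interior\Delta$ by rational convex polytopes with $P_\nu\subset\interior P_{\nu+1}$ and $\bigcup_\nu P_\nu=\interior\Delta$, which exists because $\Delta$ is itself a rational polytope. Writing $P_\nu=\bigcap_j\{u\in M_{\R}\mid \langle u,m_{\nu,j}\rangle\le c_{\nu,j}\}$ with $m_{\nu,j}\in M$ (clearing denominators, or keeping $m_{\nu,j}\in M\otimes_{\Z}\Q$ and using the powers $T^{s}$, $s\in\R$, available in $\Lambda$), and noting that for the monomial $y^{m}:=\prod_k y_k^{\langle m,e_k^*\rangle}$ one has $\val(y^{m})=\langle\trop(y),m\rangle$, the set $U_{P_\nu}:=\{y\in(\Lambda^*)^n\mid \val(y^{m_{\nu,j}})\le c_{\nu,j}\ \forall j\}$ equals $\trop^{-1}(P_\nu)$. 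A routine computation (as in the description of polytopal domains in the toric mirror symmetry literature, or in Berkovich's framework) identifies $U_{P_\nu}$ with a rational affinoid subdomain of a suitable closed polyannulus in $(\Lambda^*)^n$, with affinoid algebra the Tate algebra attached to $P_\nu$. Each inclusion $U_{P_\nu}\hookrightarrow U_{P_{\nu+1}}$ is then an open immersion of affinoids with $U_{P_\nu}$ contained in the interior of $U_{P_{\nu+1}}$, so $\{U_{P_\nu}\}_\nu$ is an admissible covering of $\trop^{-1}(\interior\Delta)$: any affinoid mapping into the union has quasi-compact image and hence factors through some $U_{P_\nu}$. This endows $\trop^{-1}(\interior\Delta)$ with the structure of a rigid analytic space over $\Lambda$.

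Next, for each $\nu$ the Laurent polynomials $f_{r+1},\dots,f_n\in\Lambda[y_1^{\pm1},\dots,y_n^{\pm1}]$ are in particular bounded analytic functions on the affinoid $U_{P_\nu}$, i.e.\ elements of $\mathcal{O}(U_{P_\nu})$. Their common zero set $V_\nu:=\{x\in U_{P_\nu}\mid f_i(x)=0\ \forall r+1\le i\le n\}$ is a Zariski-closed subset of an affinoid, hence an affinoid space with coordinate ring $\mathcal{O}(U_{P_\nu})/(f_{r+1},\dots,f_n)$, and the closed immersions $V_\nu\hookrightarrow U_{P_\nu}$ are compatible with the transition maps. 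Gluing along the admissible cover $\{U_{P_\nu}\}_\nu$ produces a closed analytic subspace of $\trop^{-1}(\interior\Delta)$ whose set of $\Lambda$-points is precisely $\{y\in(\Lambda^*)^n\mid f_i(y)=0\ \forall r+1\le i\le n,\ \trop(y)\in\interior\Delta\}=\Crit_G^{\Delta}(\po)$ and which carries the admissible affinoid covering $\{V_\nu\}_\nu$. Therefore $\Crit_G^{\Delta}(\po)$ is a rigid analytic space over $\Lambda$.

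The main obstacle is not the gluing formalism but the two endpoints of the argument: (i) checking that the polytopal domains $U_P$ are genuinely affinoid over $\Lambda$ — this uses that $\Lambda$, with norm $|y|=\exp(-\val(y))$, is a complete non-Archimedean field with value group $\R$ (and is algebraically closed over $\C$), so that the scalars $T^{c_{\nu,j}}$ and, where facet slopes are non-integral, the necessary fractional powers all lie in $\Lambda$; and (ii) the admissibility of the exhausting cover, which is essential here because $\interior\Delta$ is open, so $\Crit_G^{\Delta}(\po)$ is a genuine rising union of affinoids rather than a single one, and one must verify that it satisfies the axioms of a rigid analytic space — equivalently, invoke that an increasing union of affinoids along Weierstrass/Laurent open immersions is admissible. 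If the ambient conventions allow, recasting everything in Berkovich-analytic language would make both points nearly automatic; over the Novikov field, the affinoid structure of these particular polytopal domains is the one used by Fukaya--Oh--Ohta--Ono in their work on toric mirror symmetry.
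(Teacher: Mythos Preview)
Your argument is correct, but the paper takes a shorter route that sidesteps both of the obstacles you flag. Rather than exhausting $\interior\Delta$ by smaller rational polytopes and gluing, the paper works directly with the \emph{closed} polytope $\Delta$: since $\Delta$ is compact, $\trop^{-1}(\Delta)$ is a single affinoid space $\Sp\Lambda\langle\Delta\rangle$ (this is Rabinoff's result on polytopal domains, quoted in the paper as Lemma~\ref{polytopal domain}). The vanishing locus $W=V(f_{r+1},\dots,f_n)\cap\trop^{-1}(\Delta)$ is then itself affinoid, with coordinate ring $\Lambda\langle\Delta\rangle/(f_{r+1},\dots,f_n)$. Finally, $\Crit_G^{\Delta}(\po)=W\cap\trop^{-1}(\interior\Delta)$ is the locus in $W$ where $|y^{v_i}T^{-\lambda_i}|<1$ for all $i$, and by \cite{bosch} 5.1/Proposition~7 a finite intersection of sets defined by strict norm inequalities is admissible open in an affinoid. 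Hence $\Crit_G^{\Delta}(\po)$ is an admissible open in a single affinoid, and no exhaustion or gluing is needed.

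Your exhaustion approach has the virtue of applying to any open polyhedral domain, bounded or not, whereas the paper's argument exploits the compactness of $\Delta$. But in the present setting the paper's method is more economical: it replaces your admissibility check for an increasing union (your obstacle (ii)) by a single invocation of the standard fact that strict-inequality loci are admissible open, and it reduces your obstacle (i) to the affinoid structure of one polytopal domain rather than infinitely many.
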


Since $\val(y_i) = u_i$, we can locate the Lagrangian torus fibers with non-zero $G$-equivariant Lagrangian Floer cohomology by tropicalizing the rigid analytic space $\Crit_G^{\Delta}(\po)$. 
For instance, Figure \ref{fig: S2 times S2, a,b non-zero, intro} shows the Lagrangian torus fibers which have non-zero $S^1$-equivariant (for a generic $S^1$ in $T^2$) Lagrangian Floer cohomology in the $T^2$-moment polytope of $S^2\left(\frac{c}{2}\right)\times S^2\left(\frac{d}{2}\right)$. This is explained in Example \ref{S2 times S2 example}. 

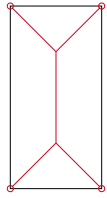
\begin{figure}[!h]
    \centering
\tikzset{every picture/.style={line width=0.75pt}} 
\resizebox{0.1\textwidth}{!}{

\begin{tikzpicture}[x=0.75pt,y=0.75pt,yscale=-1,xscale=1]

\draw   (250,50) -- (350,50) -- (350,250) -- (250,250) -- cycle ;
\draw [color={rgb, 255:red, 208; green, 2; blue, 27 }  ,draw opacity=1 ]   (251.66,51.66) -- (300,100) ;
\draw [shift={(250,50)}, rotate = 45] [color={rgb, 255:red, 208; green, 2; blue, 27 }  ,draw opacity=1 ][line width=0.75]      (0, 0) circle [x radius= 3.35, y radius= 3.35]   ;
\draw [color={rgb, 255:red, 208; green, 2; blue, 27 }  ,draw opacity=1 ]   (348.34,51.66) -- (300,100) ;
\draw [shift={(350,50)}, rotate = 135] [color={rgb, 255:red, 208; green, 2; blue, 27 }  ,draw opacity=1 ][line width=0.75]      (0, 0) circle [x radius= 3.35, y radius= 3.35]   ;
\draw [color={rgb, 255:red, 208; green, 2; blue, 27 }  ,draw opacity=1 ]   (300,100) -- (300,200) ;
\draw [color={rgb, 255:red, 208; green, 2; blue, 27 }  ,draw opacity=1 ]   (251.66,248.34) -- (300,200) ;
\draw [shift={(250,250)}, rotate = 315] [color={rgb, 255:red, 208; green, 2; blue, 27 }  ,draw opacity=1 ][line width=0.75]      (0, 0) circle [x radius= 3.35, y radius= 3.35]   ;
\draw [color={rgb, 255:red, 208; green, 2; blue, 27 }  ,draw opacity=1 ]   (348.34,248.34) -- (300,200) ;
\draw [shift={(350,250)}, rotate = 225] [color={rgb, 255:red, 208; green, 2; blue, 27 }  ,draw opacity=1 ][line width=0.75]      (0, 0) circle [x radius= 3.35, y radius= 3.35]   ;
\end{tikzpicture}
}
\caption{The Lagrangian torus fibers corresponding to the points on the red line segments have non-trivial $S^1$-equivariant Lagrangian Floer cohomology, for a generic subtorus $S^1$ in $T^2$. }
\label{fig: S2 times S2, a,b non-zero, intro}
\end{figure}

\item 
\textbf{The rigid analytic space $\Crit_G^{\Delta}(\po)$ has dimension $r = \dim G$ when $X$ is $\proj^n$ or a compact symplectic toric manifold of complex dimension $n\leq 2$. }
\begin{prop}[Proposition \ref{cpn rigid space dimension} and Proposition \ref{hypersurface rigid dimension}]
If $X$ is $\proj^n$ or a compact symplectic toric manifold of complex dimension $n\leq 2$, 
$\Crit_G(\po)$ 
 is a rigid analytic space of dimension $r = \dim G$. 
\end{prop}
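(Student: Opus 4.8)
The plan is to use Theorem~\ref{main theorem} to realize $\Crit_G^{\Delta}(\po)$ as the locus in the analytic torus $(\Lambda^{*})^{n}$ cut out by the $n-r$ Laurent equations $f_{r+1} = \cdots = f_{n} = 0$, intersected with the open condition $(\val(y_{1}),\ldots,\val(y_{n})) \in \interior\Delta$, and then to run an explicit dimension count. Three general facts will be used. First, if $\mathcal{X}$ is an irreducible reduced rigid analytic space and $g$ a global function that is neither zero nor a unit, then $\{g = 0\}$ is pure of codimension $1$ in $\mathcal{X}$ (principal ideal theorem). Second, a non-empty admissible open subdomain of a pure $d$-dimensional rigid analytic space is again of dimension $d$; since $\val$ is continuous and $\interior\Delta$ is open, the passage from $\{f_{i} = 0\}$ to $\Crit_{G}^{\Delta}(\po)$ is of this type. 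Third, by Fukaya--Oh--Ohta--Ono \cite{I, II, III} every compact toric manifold carries at least one pair $(u,b)$ with $u \in \interior\Delta$ and all partial derivatives of $\po$ vanishing, i.e.\ $\Crit(\po) \neq \emptyset$; since $\Crit(\po) \subseteq \Crit_{G}^{\Delta}(\po)$ and every point of $\Crit(\po)$ has valuation in $\interior\Delta$, this gives non-emptiness for free.

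Consider first $X = \proj^{n}$. This is Fano, so $\po$ has no bulk corrections, and in suitable coordinates $\po = y_{1} + \cdots + y_{n} + T^{c}(y_{1}\cdots y_{n})^{-1}$ for some $c > 0$, with $\Delta$ a dilated standard simplex whose barycenter is $(\tfrac{c}{n+1},\ldots,\tfrac{c}{n+1})$. Writing $W := T^{c}(y_{1}\cdots y_{n})^{-1}$, the logarithmic derivative is $y_{j}\,\partial\po/\partial y_{j} = y_{j} - W$, so with $\alpha_{i} = \sum_{j}a_{i,j}e_{j}$ we have $f_{i} = \sum_{j}a_{i,j}(y_{j} - W)$. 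Since $(a_{i,j}) \in \GL_{n}(\Z)$, the rows with $r+1 \leq i \leq n$ span an $(n-r)$-dimensional rational subspace $S \subseteq \R^{n}$, and the system $\{f_{i} = 0 : i > r\}$ says exactly that $(y_{1}-W,\ldots,y_{n}-W)$ lies in $S_{\Lambda}^{\perp} := \{v \in \Lambda^{n} : \sum_{j}s_{j}v_{j} = 0 \text{ for all } s \in S\}$, an $r$-dimensional $\Lambda$-subspace. Hence the map $(W,w) \mapsto y$, $y_{j} := w_{j} + W$, identifies $\{f_{i} = 0 : i > r\} \cap (\Lambda^{*})^{n}$ with the subset of the $(r+1)$-dimensional irreducible space $U := \{(W,w) \in \Lambda^{*} \times S_{\Lambda}^{\perp} : w_{j} + W \neq 0 \text{ for all } j\}$ cut out by the single equation $W\prod_{j=1}^{n}(w_{j} + W) = T^{c}$. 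Its left-hand side is a polynomial of $W$-degree $n+1$, hence non-constant, and $W\prod_{j}(w_{j}+W) - T^{c}$ is not a unit on $U$ because the genuine critical points $y_{1} = \cdots = y_{n} = W = \zeta T^{c/(n+1)}$ (with $\zeta^{n+1} = 1$, corresponding to $w = 0$) lie in its zero set; by the first fact this zero set is pure of dimension $r$. These critical points have valuation at the barycenter of $\Delta$, which is interior, so the admissible open subdomain obtained by further imposing $\val \in \interior\Delta$ --- namely $\Crit_{G}^{\Delta}(\po)$ by Theorem~\ref{main theorem} --- is non-empty, hence of dimension $r$ by the second fact.

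Now let $X$ be a compact toric manifold with $n = \dim_{\C} X \leq 2$; the curve case $\proj^{1}$ is already covered, so assume $n = 2$ and split on $r = \dim G \in \{0,1,2\}$. If $r = n$ there are no equations and $\Crit_{G}^{\Delta}(\po) = \{y \in (\Lambda^{*})^{n} : \val(y) \in \interior\Delta\}$, an admissible open subdomain of the $n$-torus, of dimension $n = r$. If $r = 0$ then $\Crit_{G}^{\Delta}(\po) = \Crit(\po)$, a finite set by \cite{I, II, III}, of dimension $0 = r$. In the remaining case $n = 2$, $r = 1$, one equation $f := f_{2} = 0$ is imposed, where $f$ is the derivative of $\po$ in the $\alpha_{2}$-direction. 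Using Fukaya--Oh--Ohta--Ono's computation of the (bulk-corrected) potential of a toric surface \cite{I, II, III}, the monomials attached to the edges of $\Delta$ occur in $\po$ with nonzero leading coefficients, and their exponents $v_{i}$ span $\R^{2}$ and so do not all lie on a single line through the origin; it follows that $f \neq 0$, while $f$ is not a unit on $(\Lambda^{*})^{2}$ because $\Crit(\po) \subseteq \{f = 0\}$ is non-empty. By the first fact $\{f = 0\}$ is a pure $1$-dimensional curve in $(\Lambda^{*})^{2}$, and since $\Crit(\po)$ lies in it with valuation in $\interior\Delta$, the admissible open subdomain $\Crit_{G}^{\Delta}(\po) = \{f = 0\} \cap \{\val \in \interior\Delta\}$ is non-empty, hence of dimension $1 = r$ by the second fact.

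The hard part --- and the reason for the restriction --- is hidden in the first fact once $n - r \geq 2$: a priori one of the later equations $f_{i}$ could vanish identically on a whole irreducible component of the common zero locus of the earlier ones, so that the $f_{i}$ fail to form a regular sequence and the dimension jumps above $r$; and even if they do cut the dimension down correctly, all solutions could escape the open region $\interior\Delta$. For $\proj^{n}$ these issues evaporate because the symmetric, correction-free potential makes the monomial change of variables turn all but one of the equations into linear ones --- so the regular-sequence property is visible by inspection --- while the genuine critical points supply an interior point. For toric surfaces at most one nontrivial equation is ever imposed, so no regular-sequence question arises, and \cite{I, II, III} again furnishes the interior point. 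A statement for general compact toric manifolds would instead require controlling the Newton polytopes of all the $f_{i}$, including bulk corrections, well enough to establish the regular-sequence property directly.
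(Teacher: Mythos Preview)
Your treatment of $\proj^{n}$ is essentially the same as the paper's. Both arguments use the invertible matrix $(a_{i,j})$ to turn the $n-r$ equations $f_{r+1}=\cdots=f_n=0$ into linear constraints, reducing to a single equation $W\prod_j(w_j+W)=T^{c}$ in $r+1$ free variables, and then invoke that a nonzero element of an integral domain is regular. The paper carries this out inside the affinoid $\trop^{-1}(\Delta)=\Sp\Lambda\langle\Delta\rangle$ (Cohen--Macaulay of dimension $n$ by Rabinoff) and pins down the dimension of the admissible open $\trop^{-1}(\interior\Delta)$ by a sandwich $\trop^{-1}(\Delta_\epsilon)\subset\Crit_G^{\Delta}(\po)\subset\trop^{-1}(\Delta)$ with a slightly shrunken polytope $\Delta_\epsilon$; you instead stay on the full torus $(\Lambda^*)^{n}$ and use purity together with the barycentric critical point to witness non-emptiness. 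Both packagings work here because $\proj^n$ is Fano and $\po$ is a genuine Laurent polynomial.

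There is a genuine gap in your surface case. The statement is for \emph{all} compact symplectic toric manifolds with $n\le 2$, including non-Fano ones (Hirzebruch surfaces $\mathbb{F}_k$ with $k\ge 2$, iterated blowups, etc.). For these, Theorem~\ref{potential function of a compact symplectic toric manifold} only gives $\po$ as a convergent series in $\Lambda\langle\Delta\rangle$, not as a Laurent polynomial, so your sentence ``$f$ is not a unit on $(\Lambda^*)^{2}$'' is not well-posed: $f$ need not extend to $(\Lambda^*)^{2}$ at all, and the principal-ideal argument has no ambient irreducible space to run in. The fix is exactly what the paper does in Proposition~\ref{hypersurface rigid dimension}: work in the affinoid $\Sp\Lambda\langle\Delta\rangle$, which is an integral domain and Cohen--Macaulay of dimension $n$ (Rabinoff, cited in Lemma~\ref{polytopal domain}), note that the leading-order monomials $y^{v_i}T^{-\lambda_i}$ cannot be cancelled by the higher-energy corrections and hence $f\neq 0$ there, conclude $V(f)\cap\trop^{-1}(\Delta)$ has dimension $n-1$, and then either use your non-emptiness-plus-purity argument or the paper's sandwich with $\Delta_\epsilon$ to pass to $\interior\Delta$. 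Once you relocate your hypersurface argument from $(\Lambda^*)^{2}$ to $\trop^{-1}(\Delta)$, the rest of your proof goes through.
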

We include some examples in Section \ref{examples}. 
\item \textbf{The Lagrangian submanifolds with non-trivial equivariant Lagrangian Floer cohomology are not displaceable by $G$-equivariant Hamiltonian diffeomorphisms.}
As a consequence of Theorem \ref{Hamiltonian isotopy invariance}, we prove the following equivariant non-displaceability result. 
\begin{thm}
    If $HF_G((L(u), b),(L(u), b), \nove)\ne 0$, then $L(u)$ is not displaceable by any $G$-equivariant Hamiltonian diffeomorphism, namely the time-1 map of some $G$-equivariant Hamiltonian isotopy. 
\end{thm}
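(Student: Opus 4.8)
The plan is to argue by contradiction, reducing everything to the $G$-equivariant Hamiltonian isotopy invariance of Theorem \ref{Hamiltonian isotopy invariance}. Suppose $\phi = \psi_1$ is the time-$1$ map of a $G$-equivariant Hamiltonian isotopy $\{\psi_t\}_{t\in[0,1]}$ (equivalently, generated by a time-dependent $G$-invariant Hamiltonian function on $X$), and suppose $\phi$ displaces $L(u)$, i.e.\ $\phi(L(u)) \cap L(u) = \emptyset$. Since each $\psi_t$ is $G$-equivariant, the image $\phi(L(u))$ is a $G$-invariant Lagrangian inheriting (by push-forward) a $G$-invariant relatively spin structure, and the weak bounding cochain $b \in H^1(L(u),\Lambda_0)$ is carried to a weak bounding cochain $\phi_* b = (\phi|_{L(u)}^{-1})^* b$ on $\phi(L(u))$. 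Hence the pair $(\phi(L(u)), \phi_* b)$ carries exactly the data required to define a $G$-equivariant Lagrangian Floer cohomology, and one obtains a well-defined group $HF_G\big((L(u),b),(\phi(L(u)),\phi_* b),\nove\big)$.

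First I would observe that this last group vanishes: because $L(u)$ and $\phi(L(u))$ are disjoint they are (trivially) transverse, and the underlying $G$-equivariant Floer complex — which in the Cartan / equivariant–de–Rham model used in this paper is built from the $G$-equivariant differential forms on the (clean) intersection locus $L(u)\cap\phi(L(u))=\emptyset$ — is the zero complex, so $HF_G\big((L(u),b),(\phi(L(u)),\phi_* b),\nove\big) = 0$. Next, Theorem \ref{Hamiltonian isotopy invariance} applied to the $G$-equivariant isotopy $\{\psi_t\}$ supplies an isomorphism
\[
HF_G\big((L(u),b),(L(u),b),\nove\big) \;\cong\; HF_G\big((L(u),b),(\phi(L(u)),\phi_* b),\nove\big) \;=\; 0 ,
\]
which contradicts the standing hypothesis $HF_G\big((L(u),b),(L(u),b),\nove\big) \ne 0$. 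Therefore no such $G$-equivariant $\phi$ can displace $L(u)$, proving the theorem.

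I expect the only genuine content — and it is content that should already be packaged inside Theorem \ref{Hamiltonian isotopy invariance} — to be the bookkeeping that makes the displacement argument legitimate: setting up the $G$-equivariant filtered $\Ainf$ / Floer machinery not merely for a pair of torus fibers but for the pair $(L(u),\phi(L(u)))$ equipped with its pushed-forward equivariant data and bounding cochain $\phi_* b$; checking that the $G$-invariance of the generating Hamiltonians makes the continuation moduli spaces $G$-invariant and the energy estimates uniform, so that the continuation maps are $G$-equivariant chain maps over $\nove$; and verifying that the resulting invariance isomorphism respects the $\Lambda_0$- and $H_G^*(\mathrm{pt})$-module structures. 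Once that framework is granted, the proof above is purely formal, and it parallels the non-equivariant non-displaceability arguments of Fukaya–Oh–Ohta–Ono, the new ingredient being only that the Hamiltonian isotopy is required to be $G$-equivariant so that the equivariant enhancements of all objects and maps are preserved.
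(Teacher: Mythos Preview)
Your argument is correct and matches the paper's proof: if $\psi_H^1(L)\cap L=\emptyset$ then the complex $CF_G(L,b,H)$ of Section~\ref{Hamiltonian cochain complex} is zero (being a direct sum over $\pi_0(\psi_H^1(L)\cap L)$), so $HF_G(L,b,H,\nove)=0$, and Theorem~\ref{Hamiltonian isotopy invariance} forces $HF_G((L,b),(L,b),\nove)=0$. The only difference is cosmetic: the paper phrases the perturbed group as $HF_G(L,b,H,\nove)$ rather than as a two-Lagrangian Floer group $HF_G\big((L,b),(\phi(L),\phi_*b),\nove\big)$, so your discussion of $\phi_*b$ and pushed-forward relatively spin structures is extra bookkeeping not needed in the paper's framework.
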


\item \textbf{$T^r$-equivariant Kuranishi structures on moduli spaces}. 
In Section \ref{GKur section}, we introduce the definition of $T^r$-equivariant Kuranishi structures, which the moduli spaces of pseudoholomorphic curves are equipped with. 
We also define related concepts such as $T^r$-equivariant CF-perturbations and $T^r$-equivariant integration along the fibers, which are essential in the definition of the $\Ainf$-operators. 
\end{enumerate} 

\subsection*{Acknowledgement}
This article is part of the author's Ph.D. thesis at Stony Brook University. 
I would like to express my deepest gratitude to my advisor, Kenji Fukaya, for introducing me to this beautiful research topic and providing guidance and support during the research process. 
I also want to thank Saman Habibi Esfahani, Mohamad Rabah, Joseph Rabinoff, and Amy Yiyue Zhu for helpful discussions. 

\section{The \texorpdfstring{$T^r$}{Tr}-equivariant \texorpdfstring{$\Ainf$}{A∞}-algebra associated with a Lagrangian torus fiber}  \label{Section Ainf}
Consider the setup in Section \ref{Section setup}: 
Let $u\in \interior \Delta$. 
Let $L(u) = \mu^{-1}(u)$ be a compact connected Lagrangian torus in $(X,\omega)$, $G$ be a subtorus of $T^n$ as in \ref{Section setup}, and $J$ be a $T^n$-invariant almost complex structure compatible with $\omega$. 
Equip $L$ with a $T^n$-invariant relatively spin structure and an orientation. 
For the definition of the $\Ainf$-algebra, we take the coefficient ring to be either the universal Novikov field $\nove$ or the universal Novikov ring $\novringe$, as defined in \autoref{universal Novikov field} and \autoref{universal Novikov ring}.

\subsection{The moduli space \texorpdfstring{$\Mq_{k+1} (L,J,\beta)$}{M k+1 (L,J,β)}}
\begin{defn}[Moduli space of pseudoholomorphic discs with $k+1$ boundary marked points]
\label{moduli space of discs}
For any $\beta \in \pi_2(X,L)$ and $k\in \N$, let 
\begin{equation}
\label{moduli space of discs notation}
\Mq_{k+1}(L,J,\beta):=  \set{ (\Sigma,j, \vec{z},u)}{ 
\begin{aligned}
& \Sigma \text{ is a genus $0$ nodal Riemann surface 
with connected boundary }\\
& \text{and complex structure} j;  \quad  u:(\Sigma,\partial \Sigma)\to (X,L) 
  \text{ is smooth}; \\
& du\circ j =J\circ du; 
\quad [u] = \beta \in \pi_2(X,L); \\
& (\Sigma,j, \vec{z},u) \text{ is stable};  \quad   E(u) 
<\infty\\
& \vec{z} =\left(z_0,z_1,\ldots, z_k\right)  \in (\partial \Sigma)^{k+1}, 
\text{where the $z_i$ are distinct non-nodal} \\
& \text{ boundary marked points  and the enumeration is in } \\ & \text{counterclockwise order along }\partial \Sigma 
\end{aligned}
} \Big/ \sim ,
\end{equation} 
where $(\Sigma,j, \vec{z},u) \sim (\Sigma',j', \vec{z}^{'},u')$ if and only if there exists a biholomorphism $\varphi: (\Sigma,j) \to (\Sigma',j')$ such that $u'\circ \varphi = u$ and $\varphi (z_i) = z_i'$ for all $0\leq i \leq k$. 
A biholomorphism $\varphi$ satisfying these conditions is called an \textbf{isomorphism} between $(\Sigma,j, \vec{z},u) $ and $(\Sigma',j',\vec{z}',u')$. 

For any element $x = [ \Sigma,j, \vec{z},u] \in  \Mq_{k+1} (L,J,\beta)$, 
we define its \textbf{automorphism group} by 
\[ \Aut x 
= 
\set{\varphi: (\Sigma,j) \to (\Sigma,j)}
{ \begin{aligned}
& \varphi \text{ is a biholomorphism} \\
& u \circ \varphi = u \\ 
& \varphi(z_i) = z_i \quad \forall 0\leq i\leq k 
\end{aligned}}. 
\]
\begin{rmk}
By an abuse of notation, we also use $\vec{z}$ to denote the ordered subset $\{z_0,\ldots, z_k\}$ of $\partial \Sigma$. 
And, whenever $I$ is another set,  the elements in $\Vec{z}\cap I$ are ordered by the original enumeration in $\Vec{z}$. 
\end{rmk}
\end{defn}
 
For each $0\leq i \leq k$, let 
\[ \evaluation_{i,\beta}: \Mq_{k+1} (L,J,\beta) \to L, \qquad [\Sigma, j, u ,\Vec{z}] \mapsto u(z_i) \] be the evaluation map at the $i$-th marked point. 
$G$ acts on $ \Mq_{k+1} (L,J,\beta) $ by 
\[ g\cdot [\Sigma, j, u ,\Vec{z}] = [\Sigma, j, g\cdot  u,\vec{z}], \qquad \forall g\in G, \quad \forall  [\Sigma, j, u ,\Vec{z}] \in \Mq_{k+1} (L,J,\beta),   \] 
where 
\[ (g\cdot u)(w) = g\cdot \left(u(w)\right), \qquad \forall w \in \Sigma. \] 
Then the evaluation maps are $G$-equivariant:
\[ \evaluation_{i,\beta}\left( g\cdot [\Sigma, j, u ,\Vec{z}]\right) = g\cdot (u(z_i)) = g\cdot \evaluation_{i,\beta}\left( [\Sigma, j, u ,\Vec{z}]\right), \qquad \forall g\in G, \quad \forall 0\leq i\leq k. \]

We state some results about the moduli spaces of pseudoholomorphic discs, which will be essential in the definition of the $\Ainf$-algebras in Section \ref{Ainf algebra associated with (L,b)}. 
We postpone the definitions related to $G$-equivariant Kuranishi structures to Section \ref{GKur section}, and we recall the definitions related to $G$-equivariant orbifolds in Section \ref{orbifolds section}. 

\begin{prop}[$G$-equivariant Kuranishi structure on $\Mq_{k+1}(L,J,\beta)$]
\label{GKur structure on moduli space}
Let $u\in \interior \Delta$ and $L = \mu^{-1}(u)$. 
For $k\geq 1, \beta \in \pi_2(X,L)$ with $\beta\ne 0$, the moduli space
$\Mq_{k+1}(L,J, \beta)$ has a $G$-equivariant Kuranishi structure with corners:
\begin{equation}
\widehat{\mathcal{U}} =  \left(\set{ \mathcal{U}_p = (U_p,\E_p,\psi_p,s_p) }{ p=[\Sigma,j,
\Vec{z},u]\in \Mq_{k+1}(L,J,\beta)},   
\set{\vec{\alpha} _{pq}}{ p\in \Mq_{k+1}(L,J,\beta), q\in \image \psi_p } \right). 
\end{equation}
The normalized boundary of $\Mq_{k+1}(L,J,\beta)$ is a union of the fiber products  
\begin{align*}
    \partial \Mq_{k+1}(L,J,\beta) 
    & 
    = 
    \bigcup_{\substack{k_1,k_2 \geq 0 \\ k_1+k_2=k+1}} 
     \bigcup_{\substack{
    \beta_1,\beta_2 \in \pi_2(X,L)\\ \beta_1+\beta_2=\beta}} 
     \bigcup_{j=1}^{k_2} 
     \Mq_{k_1+1}(L,J,\beta_1) 
   _{\evaluation_0}{\times} _{\evaluation_j} \Mq_{k_2+1} (L,J,\beta_2), 
\end{align*}
and the systems of $G$-equivariant Kuranishi structures are compatible with the fiber product description. 
Moreover,
$\evaluation_{i}: \Mq_{k+1 }(L,J,\beta) \to L$ is a $T^n$-equivariant strongly continuous weakly submersive map. 
\end{prop}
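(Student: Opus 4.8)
\begin{sketch}
The plan is to upgrade the Kuranishi structure constructed by Fukaya--Oh--Ohta--Ono (\cite{fooobook1, I}) on $\Mq_{k+1}(L,J,\beta)$ to a $G$-equivariant one, exploiting that $J$ and $L=\mu^{-1}(u)$ are $T^n$-invariant, so the entire construction can be run $T^n$-equivariantly, hence $G$-equivariantly; transitivity of the $T^n$-action on $L(u)$ will in addition give the weak submersivity of the evaluation maps for free. Recall that a Kuranishi chart $\mathcal{U}_p=(U_p,\E_p,\psi_p,s_p)$ at $p=[\Sigma,j,\vec{z},u]$ consists of: a finite-dimensional obstruction space $\E_p\subset \smooth(\Sigma;\Lambda^{0,1}\otimes u^*TX)$ whose elements are supported away from the nodes and marked points, chosen so that $\E_p$ together with the image of the linearized operator $D_u\overline{\partial}$ spans the relevant $L^p$-completion; a thickened local moduli space $U_p$ of approximate solutions of $\overline{\partial}u'\in\E_p$ with stabilization data; the Kuranishi map $s_p$, i.e.\ the $\overline{\partial}$-section; and the embedding $\psi_p$ identifying $s_p^{-1}(0)/\Aut(p)$ with a neighborhood of $p$. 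Since $J$ is $T^n$-invariant, the stabilizer $G_p\subseteq G$ of $p$ is a closed, hence compact, subgroup acting on all of this data; averaging over $G_p$ produces a $G_p$-invariant $\E_p$, hence a $G_p$-equivariant chart, and transporting it along the orbit by the $G$-action yields a chart invariant under a $G$-neighborhood of $G\cdot p$. Compactness of $\Mq_{k+1}(L,J,\beta)$ reduces the cover to finitely many orbits. Finally, to obtain weak submersivity of $\evaluation_i$ one builds the directions of the $T^n$-action into $U_p$ (choosing $\E_p$ and the chart parametrization $T^n$-equivariantly): since $T^n$ acts freely and transitively on $L(u)\cong T^n$, the differential of $\evaluation_i$ restricted to the $T^n$-orbit directions in $U_p$ already surjects onto $T_{u(z_i)}L$, which is precisely the device FOOO use in the toric setting.

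For the coordinate changes $\vec{\alpha}_{pq}$: the bundle embeddings $\E_q\hookrightarrow\E_p$ and the open embeddings between Kuranishi neighborhoods are produced by the usual pregluing and gluing analysis, and since every analytic choice ($J$, a $T^n$-invariant metric, and $T^n$-invariant cutoff functions on $L$ and near the nodes) is $G$-invariant, the coordinate changes can be taken $G$-equivariant, after a further averaging step where needed, with the cocycle condition holding up to the usual notion of compatibility exactly as in the non-equivariant case; compatibility with the $\Aut(p)$-actions is handled by the $G$-equivariant orbifold formalism of Section \ref{orbifolds section}.

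For the normalized boundary: the identification of $\partial\Mq_{k+1}(L,J,\beta)$ with the union of fiber products over $k_1+k_2=k+1$ and $\beta_1+\beta_2=\beta$ is the standard codimension-one degeneration in which a disc bubbles off at a boundary marked point, and $G$ --- acting diagonally on the domain and preserving $J$ --- respects this description; matching the Kuranishi data on the boundary with the fiber-product Kuranishi structures is the standard gluing theorem, carried out $G$-equivariantly because the neck gluing uses only $T^n$-invariant data and the obstruction spaces on the two factors are $(G_{p_1}\times G_{p_2})$-invariant by construction. The genuinely new point --- and the main obstacle --- is not any single chart but the coherence: one must produce a $G$-equivariant \emph{system} of Kuranishi structures that is simultaneously $G$-equivariant chart by chart, compatible under all coordinate changes, and compatible with the fiber-product description on every boundary stratum. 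This forces an induction on the energy of $\beta$ and on the number of marked points: at each stage one has, by inductive hypothesis, a $G$-equivariant Kuranishi structure on $\partial\Mq_{k+1}(L,J,\beta)$ and must extend it inward while preserving $G$-equivariance, which is where the averaging must be reconciled with the already-fixed gluing data. The precise bookkeeping is exactly what the formalism of $T^r$-equivariant Kuranishi structures in Section \ref{GKur section} is designed to carry out; here it suffices to note that $T^n$-invariance of all the toric data makes each step identical to its non-equivariant counterpart in \cite{fooobook1, I}. The $T^n$-equivariance and strong continuity of $\evaluation_i$ are then immediate from the construction, and weak submersivity was explained above.
\end{sketch}
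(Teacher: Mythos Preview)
Your sketch is correct and follows essentially the same approach as the paper: both defer to the $T^n$-equivariant Kuranishi construction of FOOO in the toric setting (the paper cites \cite{III} Section~4.3) and explain why the $T^n$-invariance of $J$, $L$, and the metric makes every step of the standard chart-and-gluing construction equivariant, with weak submersivity of $\evaluation_i$ coming from the transitive $T^n$-action on $L(u)$. Two points where the paper is more precise than your sketch: (i) rather than the stabilizer $G_p\subset G$, the paper works with the group $G(x)=\{(g,\gamma)\in T^n\times\Gamma_x\mid gu=u\circ\gamma\}$, which correctly tracks the interaction between the torus action and domain automorphisms when choosing invariant obstruction spaces; and (ii) the coherence induction is a \emph{double} induction, first on $(k',\beta')$ ordered by energy then marked points, and within each $(k,\beta)$ a downward induction on the number of disc components, gluing the boundary-induced Kuranishi data to the interior charts stratum by stratum. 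Neither point changes your overall strategy.
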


\begin{proof}
A $T^n$-equivariant Kuranishi structure on the moduli space 
$\Mq_{k+1}(L,J, \beta)$  satisfying the properties is constructed in \cite{III} Section 4.3. 

We sketch the proof. 
Let $x = [\Sigma,j, \vec{z}, u] \in \Mq_{k+1}( L,J, \beta)$. 
We want to construct a $G$-equivariant Kuranishi chart $(U_x,\E_x,\psi_x,s_x)$ at $x$, where $U_x = V_x/\Gamma_x$ for some manifold $V_x$ and some finite group $\Gamma_x$. 
Let $\Gamma_x = \Aut x$. 
Let $G(x) = \{(g,\gamma)\in T^n\times\Gamma_x \mid gu=u\circ \gamma\}$. We say $z$ is a special point if $z$ is a nodal singularity or boundary marked point of $\Sigma$. 

Let $\Sigma = \bigcup_{\alpha\in A} \Sigma_{\alpha}$, where each extended disc component $\Sigma_{\alpha}$ consists of an irreducible disc component and all the spheres rooted on it. 
Let $\Vec{z}_{\alpha}  = \vec{z} \cap \Sigma_{\alpha}$ be the set of marked points on $\Sigma_{\alpha}$. 
For each $\alpha \in A$, 
choose a non-empty $G(x)$-invariant open subset $K_{\alpha}\subset \Sigma \setminus \partial \Sigma$ with compact closure $\overline{K}_{\alpha}$ which does not intersect $\partial \Sigma$ or nodal singularities
and let 
\[ D_{u,\alpha} : W^{1,p} \left(\Sigma_{\alpha},\partial \Sigma_{\alpha}; u^*TX, u^*TL \right)=:W_{\alpha} \to L^p \left(\Sigma_{\alpha}; u^*TX\otimes \Lambda^{0,1}\Sigma_{\alpha}\right),  \]
\[ D_{u} : W^{1,p} \left(\Sigma,\partial \Sigma; u^*TX, u^*TL \right)\to L^p \left(\Sigma; u^*TX\otimes \Lambda^{0,1}\Sigma\right)\]
be the linearization of $\delb$. 
Choose a finite-dimensional vector subspace $E_{\alpha} \subset C_c^{\infty}(K_{\alpha}, u^*TX)$ consisting of compactly supported elements for each $\alpha$ such that the following holds. 
\begin{enumerate} [i)]
    \item 
    $D_{u}\lrp{ \set{\xi 
\in W_{\alpha}
}{  \xi (z) =0  \; \forall \text{ special point }z 
 }}   + E_{\alpha}
= L^p \lrp{\Sigma_{\alpha}; u^*TX \otimes \Lambda^{0,1}\Sigma_{\alpha}}.  $
    \item $\bigoplus\limits_{\alpha\in A}  E_{\alpha} $ is $G(x)$-invariant. 
    \item 
    For each $\forall z_0 \in \partial \Sigma$, the map 
    $\operatorname{Ev}_{z_0}: D_{u}^{-1}  (\bigoplus\limits_{\alpha\in A}   E_{\alpha} ) \to T_{u(z_0)}L $ given by $v\mapsto v(z_0) $ is surjective. 
    \item If $\gamma \in \Gamma_x$ and $\Sigma_{\alpha'} =\gamma \Sigma_{\alpha}$, then $\gamma_*E_{\alpha} = E_{\alpha'}$.
\end{enumerate}
Let $E_x(x) = \bigoplus\limits_{\alpha\in A}   E_{\alpha}$. 
If $(g,\gamma)\in G(x)$, let $E_x((g,
\gamma)\cdot x) = g_*E_x(x)$. 

For each $\alpha$, choose $l_{\alpha}$ many appropriate extra interior marked points $\vec{w}_{\alpha}^+$ on $\Sigma_{\alpha}$ away from nodes to stabilize the domain of $x$. 
Let $\vec{w}^+$ be the ordered set of all such extra marked points on $\Sigma$. 
Let $v( \vec{w}^+)=( \Sigma, j , \vec{z}, \vec{w}^+)$.

Suppose $(v' =( \Sigma(v'), j' , \vec{z}', \vec{w}^{+'}),u')$ is a smooth curve with $k+1$ boundary marked points and $l$ interior marked points such that the domain $v'$ is close to $v$ in $\Mq_{k+1,l}$ and $(v',u')$ is close to $(hx, \vec{w}^+)$ for some $h\in T^n$. 
For each $v'$, there is an embedding $i_{v'}: \Sigma \setminus S  \to \Sigma(v')$, where $S$ is a neighborhood of the set of marked points and singularities. 

We decompose $\Sigma(v') = \bigcup_{r\in R}\Sigma_{r}'$ into extended disc components as well. 
Let $r \in R$.  
Let $A(r) =\{\alpha\in A\mid i_{v'} ( \Sigma_{\alpha} \setminus  
S)
\subset \Sigma_{r}'\} $. 
For each $\alpha\in A(r)$, we   
obtain a map 
$P_{r, \alpha}: E_{\alpha} \to h_*E_{\alpha} \to  C^{\infty} (\Sigma_r', (u')^*TX\otimes \Lambda^{0,1}\Sigma_{r}')$
using the convexity of the square of the distance function 
(and an exponential decay estimate) 
which allows us to 
(choose a ``closest'' $h\in T^n$ and)
define a suitable parallel transport map. 
Then we define $E_x(v',u') = \bigoplus\limits_{r\in R}\bigoplus\limits_{\alpha\in A(r)} \image P_{r, \alpha}$. 
Let  
\[ V_x = \{ (v',u') \text{ close to }(T^n\cdot x, \vec{w})\mid \delb u' \equiv 0 \,\operatorname{mod}\, E_x(v',u')\}.\] 
Let $s_x: (v',u')\mapsto \delb u'$ and $\E_x\to V_x$ be the orbibundle whose fiber is $E_x(v',u')$ at $[(v',u')]$ and $E_x(x)$ at $x$. Thus, an equivariant Kuranishi chart at $x$ is defined. 

For $(k_1,\beta_1), (k_2,\beta_2)\in \N\times \pi_2(
X,L)$, we say 
$(k_1,\beta_1)< (k_2,\beta_2)$ if either $\omega(\beta_1)<\omega(\beta_2)$ or $\omega(\beta_1)=\omega(\beta_2)$ and  $k_1<k_2$. 
For coordinate changes to be defined, we need to modify the obstruction bundles $\E_x$ inductively on $(k', \beta')$.  
Suppose for all $(k',\beta')< (k, \beta)$ we have a Kuranishi structure and, in particular, coordinate changes are defined on $\Mq_{k+1}(L,J,\beta')$. 
More specifically, we have a 
finite cover $\{U(\mathfrak{c})\mid [\mathfrak{c}]\in P_{k'}(\beta')\}$ 
of $\Mq_{k'}(L,J,\beta')$, where $P(\beta') \subset \Mq_{k+1}(L,J,\beta')/T^n$ 
and $K(\mathfrak{c})\subset \Mq_{k+1}(L,J,\beta')$ is a $T^n$-invariant closed subset of the Kuranishi neighborhood at $\mathfrak{c}$. 
The fiber of the obstruction bundle of a point in $\Mq_{k+1}(L,J,\beta')$ is given by a direct sum of (perturbations of) fibers of (some of) the $\mathcal{E}_{\mathfrak{c}}$'s  
so that the coordinate changes in $\Mq_{k+1}(L,J,\beta')$ is defined. 
Then we define the Kuranishi structure on $\Mq_{k+1}(L,J,\beta)$ by a downward induction on the number of disc components. 
For each $d>1$, let $S_d\Mq_{k+1}(L,J,\beta)$ be the set of elements with at least $d$ disc components. 

Suppose on $S_{d+1}\Mq_{k+1}(L,J,\beta)$ we have a finite cover $\{ K_{\p}\mid \p \in \Pindex \}$ 
such that each $K_{\p}$ is a $T^n$-invariant compact subset of the Kuranishi neighborhood $\psi_{\p }\lrp{ s_{\p}^{-1}(0)}$ and that 
\[ S_{d+1}\Mq_{k+1}(L,J,\beta) \subset \bigcup\limits_{\p\in \Pindex }   \interior K_{\p} . \]
As before, we find a cover of \[\mathcal{K}_d\Mq_{k+1}(L,J,\beta) = S_{d}\Mq_{k+1}(L,J,\beta)\setminus \bigcup\limits_{\p\in \Pindex}  \interior K_{\p}\] by $T^n$-invariant compact subsets $K_{x_1},\ldots, K_{x_m}$ of  Kuranishi neighborhoods of finitely many points $x_1,\ldots, x_m \in S_{d}\Mq_{k+1}(L,J,\beta) \setminus S_{d+1}\Mq_{k+1}(L,J,\beta) $ and define the fibers of the obstruction bundles by appropriate direct sums of the fibers of the obstruction bundles at the $x_i$'s. 

Then we glue the Kuranishi structures on $\mathcal{K}_d\Mq_{k+1}(L,J,\beta) $ and $ \bigcup_{\p\in \Pindex }   \interior K_{\p} $. In particular, when a point $x \in \lrp{ \bigcup _{\p\in \Pindex} \interior K_{\p} } \cap \lrp{\bigcup_{i} \interior K_{x_i} }$, we define the fibers of $\E_x$ by taking direct sums of (perturbations of) the relevant fibers of the obstruction bundles from the two types of Kuranishi structures. 
The induction construction then allows us to obtain coordinate changes. 
\end{proof}

\subsection{The \texorpdfstring{$G$}{G}-equivariant \texorpdfstring{$\Ainf$}{A∞}-algebra associated with \texorpdfstring{$(L,b)$}{(L,b)} }\label{Ainf algebra associated with (L,b)}
Let $L=\mu^{-1}(u)$ be a Lagrangian torus fiber for some $u\in \interior \Delta$. 
Let $\g $ be the Lie algebra of $G$ and $\g^*$ be its dual.  
Let $S(\g^*)$ be the symmetric algebra on $\g^*$. 
Let $\Omega(L)$ be the de Rham complex of $L$ and $ \Omega_G(L,\R) = \lrp{ \Omega(L)\otimes S(\g^*) }^G$ be the Cartan model of $L$. 
Let $d_G$ be the Cartan differential. 
Recall the definition of the universal Novikov ring in \ref{universal Novikov ring}. 
Define 
\begin{equation}
\label{Floer complex}
C_G(L, \novringe):= \Omega_G(L,\R)\widehat{\otimes}_{\R} \novringe.  
\end{equation} 
Let $C = C_G(L,  \novringe)$. 
It is a graded $\novringe$-module: $C= \complete{\bigoplus\limits_{p\in \N}}C^p$, where 
\begin{equation}
C^p = \complete{\bigoplus_{ i+2j+2n = p }} \Bigp{\Omega^{i}(L)\otimes S^{j}(\g^*)}^G \widehat{\otimes}_{\R} (\Lambda_{0}\cdot  e^n).    
\end{equation}
Define a degree on $C$ such that $\deg h = \min \set{ p\in \N}{ h \in \bigoplus \limits_{m=0}^p C^m}$. 
Denote by $C[1]$ the module determined by $C[1]^p = C^{p+1}$. 
Let $B_0 C[1] = \Lambda_{0,nov}$
and 
\[ 
B_kC[1] 
=  
C[1] \otimes \cdots \otimes C[1] \qquad \forall k> 0. \]
Let $BC[1] = \widehat{\bigoplus\limits _{k\in \N } } B_kC[1]$. 
For any $\beta\in \pi_2(X,L)$, we define 
$ \m_{k,\beta}^G: B_kC[1] \to C[1]$, the contribution of the moduli space $\Mq_{k+1}(L,J,\beta)$ by using the evaluation maps $\evaluation_{j,\beta}^G: \Mq_{k+1}(L,J,\beta)\to L$, as follows. 
We denote by $(\evaluation_{j,\beta}^G)^*$ the $G$-equivariant pullback by the evaluation map at the $k$-th marked point and $(\evaluation_{0,\beta}^G)_!$ the $G$-equivariant integration along the fiber by the evaluation map at the $0$-th marked point, which we discuss in Section \ref{integration along the fiber on Kuranishi}.  
If $\beta = 0$, define 
\begin{equation}
\label{Ainf operator beta = 0}
 \begin{dcases}
\m_{0,0}^G= 0 \quad &  \\
\m_{1,0}^G(h)= d_Gh
\quad & \forall h \in C  \\
\m_{2,0}^G (h_1\otimes h_2) = (-1)^{\deg h_1} (\evaluation_{1,0}^G)^*h_1 \wedge (\evaluation_{2,0}^G)^*h_2 \quad & \forall h_1,h_2  \in   C \\
\m_{k,0}^G=0 \quad & \forall k\geq 3 .
\end{dcases} 
\end{equation}
For $\beta\ne 0$, 
define
\begin{equation} \label{Ainf operator beta non-zero k=0}
\m_{0,\beta}^G(1) = (\evaluation_{0,\beta}^G)_! (1)   
\end{equation}
and 
\begin{equation}
\label{Ainf operator beta non-zero} 
\m_{k,\beta}^G(h_1\otimes \cdots \otimes h_k) = (\evaluation_{0,\beta}^G)_!\Bigp{(\evaluation_{ 1,\beta}^G)^* h_1\wedge \cdots \wedge (\evaluation_{k ,\beta}^G)^*h_k )} \quad  \forall k \geq 1.  
\end{equation}
And $\forall k\in \N$ we define $\m_k^G: B_kC[1]\to C[1]$ by, $\forall h_1\otimes \cdots h_k \in B_kC[1]$, 
\begin{equation}
 \label{Ainf operator}
\m_k^G(h_1\otimes \cdots \otimes h_k) = \sum_{\beta \in \pi_2(X,L)}\m_{k,\beta}^G(h_1\otimes \cdots \otimes h_k)T^{\frac{\omega(\beta)}{2\pi}} e^{\frac{I_{\mu}(\beta)}{2}},  
\end{equation}
where $\omega(\beta) = \pair{ \omega, \beta}$
and $I_{\mu}(\beta)$ is the Maslov index of $\beta$. 

\begin{defn}[Discrete submonoid]
Consider the monoid $(\R_{\geq 0}\times 2\Z, +, (0,0))$ and the projection maps 
$E:\R_{\geq 0}\times 2\Z \to \R_{\geq 0}$, $I_{\mu}: \R_{\geq 0}\to 2\Z$. 
A subset 
$S\subset \R_{\geq 0}\times 2\Z$ 
is a \textbf{discrete submonoid} if the following holds. 
  \begin{enumerate}[i)]
      \item $(S,+,(0,0))$ is a monoid. 
      \item $E(S)$ is discrete.
      \item For each $E_0 \in \R_{\geq 0}$, $S\cap E^{-1}([0,E_0])$ is a finite set. 
  \end{enumerate} 
\end{defn}

Let $L$ be a Lagrangian torus fiber of the compact symplectic toric manifold $X$ and let
\begin{equation}
\label{submonoid for the Ainf algebra} S=\{(\omega(\beta), I_{\mu}(\beta))\mid \beta\in \pi_2(X,L)\} . 
\end{equation} 
\begin{defn}[$S$-gapped curved filtered $G$-equivariant $\Ainf$-algebra]\label{gapped curved filtered equivariant Ainf algebra}
An \textbf{$S$-gapped curved\footnote{The word ``curved" means an $\m_0: \Lambda_{0,nov} \to C[1]$ is included, in contrast to the classical $\Ainf$-algebra. } filtered $G$-equivariant $\Ainf$-algebra}  is a tuple $(C,\{\m_k^G\}_{k\in \N}, G, S)$ consisting of 
\begin{itemize}
    \item a $\novringe$-module $C$, 
    \item a family of operators $\m_k^G : B_kC[1]\to C[1]$, 
    \item a compact Lie group $G$ with Lie algebra $\g$, and
    \item a discrete submonoid $S
\subset \R_{\geq 0}\times 2\Z$ 
\end{itemize} such that the following holds. 
\begin{enumerate}[i)]
    \item ($G$-equivariant) $C$ is an $S(\g^*)^G$-algebra and $BC[1]$ is an $S(\g^*)^G$-coalgebra and, $\forall k\in \N$, the operator $\m_k$ is an $S(\g^*)^G$-algebra homomorphism.     
    \item ($S$-gapped) $\forall k\in \N$, there is a decomposition 
    \[ \m_k^G 
     = \sum_{(\lambda, n)\in S} \m_{k,(\lambda, n)}^G  T^{\lambda}e^\frac{n}{2}.
   \] 
    \item (Energy filtered) There is an energy filtration on $C$ such that, $\forall p\in \N$, the following holds. 
    \begin{itemize}
        \item The filtration on $C^p$ is decreasing: $F^{\lambda}C^p \subset F^{\lambda'}C^p$ if $\lambda > \lambda' $. 
        \item $\forall \lambda'>0$, we have $T^{\lambda'}\cdot F^{\lambda}C^p \subset F^{\lambda+\lambda'}C^p $. 
         \item $C^p$ is complete with respect to the $T$-adic topology induced by the filtration. 
         \item $C^p$ has a basis whose elements are in $F^0 C^p \setminus \bigcup_{\lambda>0}F^{\lambda}C^p$. 
         \item $\m_0^G(1)\in F^{\lambda} C[1]$ for some $\lambda>0$. 
    \end{itemize}
    Moreover, for each $k\in \N$, $\m_k^G$ is  filtration-preserving: \[\m_k^G(F^{\lambda_1}C^{p_1}\otimes\cdots \otimes F^{\lambda_k}C^{p_k}) \subset F^{\lambda_1 + \cdots + \lambda_k} C^{p_1+\cdots + p_k -k+2} \] for all $(\lambda_1,\ldots, \lambda_k )\in \R_{\geq 0}^{k}$ and all $(p_1,\ldots, p_k )\in \N^k$. 
    \item ($\Ainf$-relations) The family $\{\m_k^G\}_{k\in \N}$ satisfy the following $\Ainf$- relations: For any $k\in \N\setminus \{ 0 \}$, $s\in S$, 
    \begin{align}
    \sum_{\substack{s_1,s_2\in S
    \\ s_1+s_2=s} }
   &  \sum_{\substack{
    k_1,k_2\in \N
    \\
    k_1+k_2=k+1}} \sum_{i=0}^{k_1} (-1)^{\dagger} \m_{k_1,s_1}^G ( h_1\otimes \cdots \otimes h_{i}\otimes \nonumber 
    \\ 
    & \m_{k_2,s_2}^G(h_{i+1}\otimes \cdots \otimes h_{i+k_2})\otimes
     h_{i+k_2+1} \otimes \cdots \otimes h_{k } ) =0 , 
    \end{align}
    where $\dagger = \sum\limits_{j=1}^{i} (\deg h_j + 1)$.
\end{enumerate}
\end{defn}
\begin{prop} 
\label{mk is Ainf}
Let 
$(C_G(L,\novringe), \{\m_k^G\}_{k\in \N}, G, S )$ be the data defined in 
\autoref{Floer complex}, 
\autoref{Ainf operator beta = 0}, 
\autoref{Ainf operator beta non-zero k=0}, 
\autoref{Ainf operator beta non-zero}, and 
\autoref{submonoid for the Ainf algebra}. 
It is an $S$-gapped curved filtered $G$-equivariant $\Ainf$-algebra. 
\end{prop}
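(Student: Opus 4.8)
The plan is to verify the four axioms of Definition \ref{gapped curved filtered equivariant Ainf algebra} in turn, with the bulk of the work going into the $\Ainf$-relations (axiom iv), which we deduce from the boundary structure of the moduli spaces described in Proposition \ref{GKur structure on moduli space}. First I would dispatch the $G$-equivariance axiom (i): the operators $\m_{k,\beta}^G$ are built from the $G$-equivariant pullback $(\evaluation_{j,\beta}^G)^*$, the wedge product on the Cartan model, and the $G$-equivariant pushforward $(\evaluation_{0,\beta}^G)_!$; each of these is a morphism of $S(\g^*)^G$-modules by the construction in Section \ref{integration along the fiber on Kuranishi}, and since $\evaluation_{i,\beta}$ is $T^n$-equivariant (hence $G$-equivariant), pullback and integration along the fiber are well-defined on the Cartan models. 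The coalgebra structure on $BC[1]$ is the standard deconcatenation coproduct, extended $S(\g^*)^G$-linearly. For the $S$-gapped axiom (ii), the decomposition is immediate from \autoref{Ainf operator}: group the sum over $\beta\in\pi_2(X,L)$ by the value $(\omega(\beta),I_\mu(\beta))\in S$, so $\m_{k,(\lambda,n)}^G = \sum_{\beta:\,(\omega(\beta),I_\mu(\beta))=(\lambda,n)}\m_{k,\beta}^G$; Gromov compactness guarantees this inner sum is finite and $S$ is a discrete submonoid (the first condition is that $S$ is closed under addition, which holds since $\omega$ and $I_\mu$ are additive under gluing; discreteness and local finiteness of $E(S)$ follow from Gromov compactness and positivity of the energy of non-constant discs).

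For the energy-filtered axiom (iii), the filtration $F^\lambda$ on $C = \Omega_G(L,\R)\widehat\otimes_{\R}\novringe$ is the $T$-adic filtration coming from the valuation on $\novringe$, i.e.\ $F^\lambda C^p$ consists of those series $\sum_i \eta_i T^{\lambda_i}e^{n_i}$ with $\lambda_i \ge \lambda$ for all $i$; the listed properties (decreasing, $T^{\lambda'}F^\lambda\subset F^{\lambda+\lambda'}$, completeness, and existence of a basis in $F^0\setminus\bigcup_{\lambda>0}F^\lambda$ given by a real basis of $\Omega_G(L,\R)$) are then formal. That $\m_0^G(1)\in F^\lambda C[1]$ for some $\lambda>0$ follows because $\m_{0,\beta}^G$ is accompanied by the factor $T^{\omega(\beta)/2\pi}$ and every $\beta$ with $\Mq_{1}(L,J,\beta)\ne\emptyset$, $\beta\ne 0$, has $\omega(\beta)>0$ (and $\m_{0,0}^G = 0$); filtration-preservation of each $\m_k^G$ for $k\ge 1$ is similar, using that $\m_{k,0}^G$ preserves the filtration trivially and each $\m_{k,\beta}^G$ with $\beta\ne 0$ strictly raises it by $\omega(\beta)/2\pi>0$, together with the fact that pullback and wedge preserve $F^0$ and integration along the fiber is filtration-nonincreasing in the de Rham degree bookkeeping (degree shifts are absorbed in the target index $p_1+\cdots+p_k-k+2$).

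The heart of the proof is axiom (iv). The strategy, following Fukaya--Oh--Ohta--Ono, is to apply Stokes' theorem ($G$-equivariantly) for integration along the fiber to the space $\Mq_{k+1}(L,J,\beta)$ with its $G$-equivariant Kuranishi structure and CF-perturbation. Concretely, for the composite form $\Theta = (\evaluation_{1,\beta}^G)^*h_1\wedge\cdots\wedge(\evaluation_{k,\beta}^G)^*h_k$, one has $(\evaluation_{0,\beta}^G)_!(d_G\Theta) \pm d_G\big((\evaluation_{0,\beta}^G)_!\Theta\big) = (\evaluation_{0,\beta}^G|_{\partial})_!(\Theta|_{\partial})$, where the right side is the contribution of the normalized boundary $\partial\Mq_{k+1}(L,J,\beta)$. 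By Proposition \ref{GKur structure on moduli space} this boundary is the union of fiber products $\Mq_{k_1+1}(L,J,\beta_1)\,{}_{\evaluation_0}\!\times_{\evaluation_j}\Mq_{k_2+1}(L,J,\beta_2)$, and the compatibility of the $G$-equivariant Kuranishi structures with this fiber-product description — together with the composition formula for $G$-equivariant integration along the fiber over a fiber product — identifies the boundary contribution with the quadratic terms $\m_{k_1,s_1}^G(\cdots\otimes\m_{k_2,s_2}^G(\cdots)\otimes\cdots)$. Expanding $d_G = \m_{1,0}^G$ and moving the wedge/Leibniz terms (the $\beta_1 = 0$ or $\beta_2 = 0$ pieces, which reproduce $\m_{2,0}^G$ and $\m_{1,0}^G$ appearing in the $\Ainf$-relation) to the appropriate side, and summing over all $\beta$ with weight $T^{\omega(\beta)/2\pi}e^{I_\mu(\beta)/2}$ while collecting by total energy $s\in S$, yields exactly the stated relation, with the sign $\dagger = \sum_{j=1}^i(\deg h_j + 1)$ coming from commuting $d_G$ past the pulled-back forms $(\evaluation_{j,\beta}^G)^*h_j$ in the Koszul convention on $BC[1]$.

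The main obstacle is the $G$-equivariant Stokes' theorem and composition formula for integration along the fiber: making precise that the $G$-equivariant CF-perturbations can be chosen compatibly over all the moduli spaces $\Mq_{k+1}(L,J,\beta)$ and their boundary fiber products (so that the boundary contribution genuinely factors as a product of lower $\m$'s rather than merely doing so up to perturbation-dependent error), and that Stokes' theorem holds in the Cartan model with corners. This requires the machinery of Section \ref{GKur section} — $T^r$-equivariant Kuranishi structures, $T^r$-equivariant CF-perturbations, and $T^r$-equivariant integration along the fiber — and an inductive choice of perturbations ordered by $(\omega(\beta),k)$ exactly as in the construction of the Kuranishi structures themselves; the sign bookkeeping, while routine in principle, is also delicate and is handled by the orientation conventions on the Kuranishi structures fixed in Proposition \ref{GKur structure on moduli space}.
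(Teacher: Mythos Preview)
Your proposal is correct and follows essentially the same approach as the paper: the $S$-gapped decomposition is defined identically, and the $\Ainf$-relations are derived by splitting the quadratic sum into the $d_G$-on-output term, the $d_G$-on-input terms, and the genuine boundary terms, then invoking the $G$-equivariant Stokes' Theorem (Theorem \ref{Stokes for Gkur} / Proposition \ref{Stokes corollary}) together with the composition formula (Proposition \ref{composition formula}) and the boundary description in Proposition \ref{GKur structure on moduli space}. Your treatment of axioms (i) and (iii) is more explicit than the paper's, which simply asserts them ``by construction,'' but the substance is the same.
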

\begin{proof}
For any $k\in \N$, $(\lambda, n)\in S$, let 
\[ \m_{k,(\lambda, n)}^G = \sum_{\substack{\beta \in \pi_2(X,L) \\ \omega(\beta)=\lambda , I_\mu(\beta) )= n } }\m_{k,\beta}^G. \]
Then $S$-gappedness follows. 
For each $k\in \N, \beta\in \pi_2(X,L)$, 
we have 
\begin{align*}
& \sum_{\substack{k_1,k_2 \geq 0 \\ k_1+k_2=k+1}} 
     \sum_{\substack{
    \beta_1,\beta_2 \in \pi_2(X,L)\\ \beta_1+\beta_2=\beta}} 
      \sum_{j=1}^{k_1} 
      (-1)^* \m_{k_1,\beta_1}^G(h_1\otimes \cdots h_{j-1} 
      \otimes \m_{k_2,\beta_2}^G(h_j\otimes \cdots h_{j+k_2-1})\otimes \cdots \otimes h_k
      )  \\
&  = \underbrace{\m_{1,0}^G \m_{k,\beta}^G(h_1\otimes \cdots \otimes h_k)}_{ \text{(I)} } 
+ \underbrace{\sum_{i=1}^k (-1)^* \m_{k,\beta}^G(h_1\otimes \cdots\otimes \m_{1,0}^G (h_i)\otimes\cdots  \otimes h_k)}_{ \text{(II)} } \\
& + \underbrace{\sum_{\substack{ k_1+k_2=k+1\\ \beta_1+\beta_2=\beta \\ 
 (k_1,\beta_1)\ne (1,0)\\
  (k_2,\beta_2)\ne (1,0)} }
      \sum_{j=1}^{k_1} 
      (-1)^{\dagger} \m_{k_1,\beta_1}^G(h_1\otimes \cdots h_{j-1} 
      \otimes \m_{k_2,\beta_2}^G(h_j\otimes \cdots h_{j+k_2-1})\otimes \cdots \otimes h_k
      ) }_{ \text{(III)} }, 
\end{align*}
where $* =  \sum\limits_{l=1}^{i-1} (\deg h_l + 1)$ and $\dagger =  \sum\limits_{l=1}^{j-1} (\deg h_l + 1)$. 
To show the $\Ainf$-relation, it suffices to show the sum is zero for all $k,\beta$. 
By Proposition \ref{GKur structure on moduli space} and Proposition \ref{composition formula}, (III) corresponds to $\gcorr{ \partial \Mq_{k+1}(\beta)} (h_1\otimes \cdots  \otimes h_k)$. Moreover, (I) corresponds to $d^G \circ \gcorr{ \Mq_{k+1}(\beta)} (h_1\otimes \cdots  \otimes h_k)$ and (II) corresponds to $\gcorr{ \Mq_{k+1}(\beta)} \circ d_G (h_1\otimes \cdots  \otimes h_k)$. 
Thus, by Stokes' Theorem \ref{Stokes for Gkur} (or Proposition \ref{Stokes corollary}), the sum is zero.  
Hence, by construction, $(C_G(L,\novringe), \{\m_k^G\}_{k\in \N}, G, S )$ has an $S$-gapped curved filtered $\Ainf$ structure. 
\end{proof}

\begin{defn}[$(\m_{k,\beta}^G)^b$]
Let $ k\in \N$ and $\beta\in \pi_2(X,L)$. For any $b\in H^1(L,\Lambda_0)$,  define 
$(\m_{k,\beta}^G)^b: B_k( C[1])\to C[1]$ 
by 
\begin{equation}\label{mkb}
(\m_{k,\beta}^G)^b( h_1\otimes \cdots \otimes h_k) = \exp(  \partial \beta \cap b  )  \m_{k,\beta}^G( h_1\otimes \cdots \otimes h_k) .     
\end{equation} 
And we define 
$(\m_k^G)^b: B_k (C_G(L)[1])\to C_G(L)[1]$
by 
\begin{equation}
\label{A infinity operator with bounding cochain}
 (\m_k^G)^b = \sum_{\beta\in \pi_2(X,L)}(\m_{k,\beta}^G)^b T^{\frac{\omega(\beta)}{2\pi}}e^{\frac{I_{\mu}(\beta)}{2}}.    
\end{equation}
\end{defn}

\begin{prop}
\label{mkb is Ainf}
Let $b\in H^1(L,\Lambda_0)$. 
Then 
$(C_G(L,\novringe), \{(\m_k^G)^b\}_{k\in \N})$ is an $\Ainf$-algebra.     
\end{prop}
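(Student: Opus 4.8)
## Proof Proposal

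The plan is to deduce the $\Ainf$-relations for the deformed operators $(\m_k^G)^b$ directly from the $\Ainf$-relations for the undeformed operators $\{\m_k^G\}_{k\in\N}$ established in Proposition \ref{mk is Ainf}, using the fact that $b$ is a degree-$1$ cochain and that the deformation \autoref{mkb} is multiplicative in $\beta$ under concatenation. First I would record the basic additivity $\partial(\beta_1+\beta_2)\cap b = \partial\beta_1\cap b + \partial\beta_2\cap b$, so that $\exp(\partial(\beta_1+\beta_2)\cap b) = \exp(\partial\beta_1\cap b)\exp(\partial\beta_2\cap b)$; this is the key algebraic identity that makes the deformation compatible with the boundary decomposition of the moduli spaces. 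Since $b$ has degree $1$, inserting copies of $b$ into the inputs does not disturb the sign bookkeeping beyond what is already accounted for by the Koszul signs $\dagger$ in Definition \ref{gapped curved filtered equivariant Ainf algebra}(iv); this needs to be checked but is routine.

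The cleanest route is the standard ``bounding cochain'' trick of Fukaya-Oh-Ohta-Ono: show that $(\m_k^G)^b$ is obtained from $\m_k^G$ by the formula
\[
(\m_k^G)^b(h_1\otimes\cdots\otimes h_k) = \sum_{\ell_0,\ldots,\ell_k\geq 0} \m_{k+\ell_0+\cdots+\ell_k}^G(b^{\otimes \ell_0}\otimes h_1\otimes b^{\otimes\ell_1}\otimes\cdots\otimes h_k\otimes b^{\otimes\ell_k}),
\]
interpreting the right-hand side as convergent in the energy filtration (convergence holds because $b\in H^1(L,\Lambda_0)$ has non-negative valuation and $S$-gappedness forces the $\omega(\beta)$-exponents to accumulate only at $+\infty$). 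One then checks that this ``geometric series in $b$'' formula agrees with \autoref{mkb}: for a fixed $\beta$, summing $\m_{k+|\ell|,\beta}^G$ over all ways of inserting $b$'s between the $h_i$ produces, via the cup-product structure of $\m_{k,\beta}^G$ in \autoref{Ainf operator beta non-zero} together with $\evaluation_{j,\beta}^* b \wedge (\text{stuff})$, exactly the factor $\sum_{m}\frac{(\partial\beta\cap b)^m}{m!} = \exp(\partial\beta\cap b)$, because each inserted $b$ contributes $\langle \partial\beta, b\rangle = \partial\beta\cap b$ after integrating the pulled-back $1$-form over the relevant boundary circle. This is the computational heart of the matching and deserves care, but it is the same computation as in the non-equivariant toric case.

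Granting the geometric-series formula, the $\Ainf$-relations for $(\m_k^G)^b$ follow formally from those for $\m_k^G$: one substitutes the series into the deformed $\Ainf$-relation, expands, and resums. The inner composition $(\m_k^G)^b\circ(\m_k^G)^b$ becomes a sum over insertions of $b$'s into a double application of undeformed operators; the undeformed $\Ainf$-relations make the terms cancel in pairs, and the leftover ``boundary'' terms where a $\m^G_{1,0}=d_G$ hits a $b$ vanish precisely because $b$ is a \emph{bounding} cochain is \emph{not} needed here — for $\Ainf$-ness alone (as opposed to $(\m_1^G)^b$ squaring to zero, which is Corollary \ref{m1b squares to 0}) no Maurer-Cartan hypothesis on $b$ is required; the relations hold for arbitrary $b\in H^1(L,\Lambda_0)$. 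The filtration-preserving, $S$-gapped, and $G$-equivariant properties are inherited termwise from $\m_k^G$ since multiplying by $\exp(\partial\beta\cap b)\in\Lambda_0$ (valuation $\geq 0$) and by the powers $T^{\omega(\beta)/2\pi}$ does not decrease energy and commutes with the $S(\g^*)^G$-module structure.

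The main obstacle I anticipate is the bookkeeping that identifies the combinatorial $b$-insertion sum with the closed-form factor $\exp(\partial\beta\cap b)$ — in particular getting the signs and the $\frac{1}{m!}$ right, and justifying the interchange of the (infinite) sum over $\beta$ with the (infinite) sum over $b$-insertions. Both infinite sums converge in the $T$-adic topology by $S$-gappedness and $\val(b)\geq 0$, so Fubini-type rearrangement is legitimate; but writing this out carefully, together with the degree-$1$ sign checks, is where essentially all the work lies. Everything else is formal manipulation on top of Proposition \ref{mk is Ainf}.
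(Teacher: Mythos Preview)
Your proposal is correct, but it takes a substantially longer route than the paper. You already isolate the crucial identity $\exp(\partial\beta_1\cap b)\exp(\partial\beta_2\cap b)=\exp(\partial(\beta_1+\beta_2)\cap b)$ in your first paragraph, and that identity alone finishes the proof immediately: by definition \autoref{mkb}, $(\m_{k,\beta}^G)^b$ is literally the scalar $\exp(\partial\beta\cap b)$ times $\m_{k,\beta}^G$, so the left-hand side of the deformed $\Ainf$-relation at fixed $\beta$ is $e^{\partial\beta\cap b}$ times the undeformed $\Ainf$-relation at $\beta$, which is zero by Proposition \ref{mk is Ainf}. That is the paper's entire proof.

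Your detour through the geometric-series $b$-insertion formula is a legitimate alternative (it is the standard FOOO mechanism when the deformation is \emph{defined} by insertion of $b$'s rather than by the closed exponential factor), and it has the advantage of explaining \emph{why} the exponential deformation \autoref{mkb} is the ``right'' one. But for the proposition as stated, the matching of the insertion sum with $\exp(\partial\beta\cap b)$ --- together with the attendant sign, $\frac{1}{m!}$, and convergence bookkeeping you flag --- is unnecessary work: the definition already hands you a scalar, and scalars factor out of multilinear operators. Drop paragraphs two through four and use the multiplicativity you identified in paragraph one directly.
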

\begin{proof}
 \begin{align*}
 & \sum_{
\substack
{
k_1,k_2 \geq 0 \\ 
k_1+k_2=k+1\\
\beta_1,\beta_2 \in \pi_2(X,L)\\ 
\beta_1+\beta_2=\beta
}
} 
      \sum_{j=1}^{k_1} 
      (-1)^* (\m_{k_1,\beta_1}^G)^b(h_1\otimes \cdots h_{j-1} 
      \otimes  
       (\m_{k_2,\beta_2}^G)^b(h_j\otimes \cdots  \otimes h_{j+k_2-1})\otimes \cdots \otimes h_k
      ) \\
= &  e^{\partial \beta \cap b}
\sum_{
\substack
{
k_1,k_2 \geq 0 \\ 
k_1+k_2=k+1\\
\beta_1,\beta_2 \in \pi_2(X,L)\\ 
\beta_1+\beta_2=\beta
}
} 
      \sum_{j=1}^{k_1} 
      (-1)^* \m_{k_1,\beta_1}^G(h_1\otimes \cdots \otimes h_{j-1} 
      \otimes \m_{k_2,\beta_2}^G(h_j\otimes \cdots \otimes  h_{j+k_2-1})\otimes \cdots \otimes h_k
      )   \\    
= &  0 \quad \text{by Proposition \ref{mk is Ainf}}. 
\end{align*} 
\end{proof}

\begin{defn}
Let $u\in \interior \Delta$. 
Define the \textbf{potential function} 
\[ \po_G: \bigcup_{u\in \interior \Delta}  \{u\}\times H^1(\mu^{-1}(u),\Lambda_0/(2\pi i \Z))\to \Lambda \] 
by 
\[ (\m_0^G)^b(1) = \po_G^u(b)e .\]

\end{defn} 
 
By Theorem \ref{potential function of a compact symplectic toric manifold} (\cite{I} Proposition 4.6), for all $u\in \interior \Delta$,  
there exists $\po^u(b)\in \Lambda_0$ such that 
\[
\po^u(b)PD[L(u)]e  
=
\exp(\partial \beta \cap b)\m_0(1)
=
\exp(\partial \beta \cap b)  \m_0^G(1)
= 
(\m_0^G)^b(1) .
\]
Thus, $\po_G(b)$ is defined and equal to $\po(b)$ for all $b\in H^1(\mu^{-1}(u),\Lambda_0)$. 
For this reason, we will omit $G$ in the notation of the potential function from now on. 

Note that we have the inclusion
\[\bigcup_{u\in \interior \Delta}  \{u\}\times H^1(\mu^{-1}(u),\Lambda_0/(2\pi i \Z)) \to (\Lambda^*)^n  \]
via 
\begin{equation}
\label{(u,b) in Lambda^n}
\left(u_1,\ldots,  u_n, \sum_{i=1}^nx_ie_i\right)\mapsto (\exp(x_1)T^{u_1},\ldots, \exp(x_n)T^{u_n})=: (y_1,\ldots, y_n).    
\end{equation} 
Then the potential function takes the form of a formal Laurent series in $y_1,\ldots, y_n$. 

\begin{cor}
\label{m1b squares to 0}
For any $u\in \interior \Delta$, $\forall b\in H^1(\mu^{-1}(u);\Lambda_0)$, 
we have 
$ (\m_1^G)^b \circ (\m_1^G)^b=0$. 
\end{cor}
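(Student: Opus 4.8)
The plan is to deduce the identity directly from the $\Ainf$-relations for $\{(\m_k^G)^b\}_{k\in\N}$ (Proposition~\ref{mkb is Ainf}), specialized to a single input, exploiting that the curvature term $(\m_0^G)^b(1)$ is a scalar multiple of the unit of the algebra. Recall that, by the definition of the potential function together with Theorem~\ref{potential function of a compact symplectic toric manifold}, one has $(\m_0^G)^b(1) = \po^u(b)\cdot PD[L(u)]$ with $\po^u(b)\in\Lambda_0$, and that $PD[L(u)] = 1 \in \Omega_G(L)$ is the strict unit of $(C_G(L,\novringe),\{(\m_k^G)^b\})$.

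First I would write out the $\Ainf$-relation of Definition~\ref{gapped curved filtered equivariant Ainf algebra}, item (iv), in its $b$-deformed form, for $k=1$ and a single input $h\in C_G(L,\novringe)[1]$. Only the splittings $(k_1,k_2)\in\{(1,1),(2,0)\}$ contribute, so the relation reads
\[
(\m_1^G)^b\bigl((\m_1^G)^b(h)\bigr) + (\m_2^G)^b\bigl((\m_0^G)^b(1)\otimes h\bigr) + (-1)^{\deg h+1}(\m_2^G)^b\bigl(h\otimes(\m_0^G)^b(1)\bigr) = 0 .
\]
Substituting $(\m_0^G)^b(1) = \po^u(b)\,PD[L(u)]$ and pulling the scalar $\po^u(b)$ out of both $\m_2$-terms, the claim reduces to the cancellation of the last two terms.

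Next I would invoke the strict unitality of $1 = PD[L(u)]$ for $\{(\m_k^G)^b\}$. Since $\exp(\partial\beta\cap b)=1$ when $\beta=0$, the energy-zero part of $(\m_k^G)^b$ agrees with that of $\m_k^G$, so \autoref{Ainf operator beta = 0} gives $\m_{2,0}^G(1\otimes h)=h$, $\m_{2,0}^G(h\otimes 1)=(-1)^{\deg h}h$, and $\m_{1,0}^G(1)=d_G 1=0$; for $\beta\ne 0$ the vanishing of $\m_{k,\beta}^G(\dots\otimes 1\otimes\dots)$ (and of $\m_{1,\beta}^G(1)$) is the forgetful-map argument of Fukaya--Oh--Ohta--Ono, which applies verbatim in the $T^n$-equivariant Cartan model because $1$ is the pullback along the map that forgets the marked point at which it is inserted, and integration along the positive-dimensional fibers of that map annihilates pulled-back forms. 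Hence $(\m_2^G)^b(1\otimes h)=h$ and $(\m_2^G)^b(h\otimes 1)=(-1)^{\deg h}h$, so the two curvature terms above equal $\po^u(b)\,h$ and $(-1)^{\deg h+1}(-1)^{\deg h}\po^u(b)\,h=-\po^u(b)\,h$; they cancel, leaving $(\m_1^G)^b\circ(\m_1^G)^b=0$.

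The only delicate point I expect is the sign bookkeeping: one must check, with the precise degree conventions of Definition~\ref{gapped curved filtered equivariant Ainf algebra} and \autoref{Ainf operator beta = 0}, that the two curvature terms genuinely carry opposite signs --- this is exactly the mechanism by which a \emph{weak} bounding cochain (curvature a multiple of the unit) still makes $(\m_1^G)^b$ square to zero, and thus defines the $G$-equivariant Lagrangian Floer cohomology. The remaining ingredient, strict unitality in the equivariant de Rham model, is routine but does rely on the forgetful-map/fiber-dimension computation, for which I would cite \cite{I, II, III}.
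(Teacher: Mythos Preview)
Your proof is correct and follows the same route as the paper's: write the $\Ainf$-relation (Proposition~\ref{mkb is Ainf}) with $k=1$ to obtain $(\m_1^G)^b(\m_1^G)^b(x) = -(\m_2^G)^b\bigl((\m_0^G)^b(1)\otimes x\bigr) + (-1)^{\deg x}(\m_2^G)^b\bigl(x\otimes(\m_0^G)^b(1)\bigr)$, then observe that the right-hand side vanishes because $(\m_0^G)^b(1)$ is a scalar multiple of the unit. The paper records only the one-line identity and the word ``$=0$'', whereas you spell out the cancellation via strict unitality and the forgetful-map argument for $\beta\neq 0$; one small slip is that the paper writes $(\m_0^G)^b(1)=\po^u(b)\,PD[L(u)]\,e$ (with the degree-$2$ generator $e$), but since $e$ is a central even-degree scalar this does not affect your sign computation.
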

\begin{proof}  
For any $x\in C_G(L,\novringe)$, by the $\Ainf$-relations (Proposition \ref{mkb is Ainf}), we have
\begin{align*}
(\m_1^G)^b \circ (\m_1^G)^b(x)
= - (\m_2^G)^b\lrp{ (\m_0^G)^b(1)\otimes x } +(-1)^{\deg x}(\m_2^G)^b (x\otimes (\m_0^G)^b(1)) 
=0. 
\end{align*}
 
\end{proof}

\cite{I} proves that one can express the potential function for a compact Fano toric manifold $(X,\omega)$ purely from the information of its moment polytope as follows.  
\begin{thm}[Theorem 4.5 \cite{I}]
\label{Fano potential}
Let $(X,\omega, T^n, \mu)$ be a compact symplectic toric Fano manifold with moment polytope 
\begin{equation} \label{moment polytope}
\Delta = \mu(X) = \bigcap_{i=1}^m\{u\in \tn^* \mid \pair{u,v_i}-\lambda_i\geq 0 \},    
\end{equation} 
where $m$ is the number of the facets of $\Delta$, $v_i = (v_{i,1}, \ldots, v_{i,n})$ is the inner normal vector of the $i$-th facet. 
We denote the affine function $\pair{u,v_i}-\lambda_i$ by $l_i(u)$. 
On 
\[ \bigcup_{u\in \interior \Delta}\{u\}\times  H^1\lrp{\mu^{-1}(u),\frac{\Lambda_0}{2\pi i \Z}}  ,\]
we have
\[ \po\left(u_1,\ldots,  u_n, \sum_{i=1}^nx_ie_i\right) =  \sum_{i=1}^m \exp( \pair{v_i,x})T^{l_i(u)}, \]
where $x=(x_1,\ldots, x_n)$. 
In particular, if we use the coordinates \autoref{(u,b) in Lambda^n}, the potential function
$\po$ defines a Laurent polynomial 
\begin{equation}\label{leading potential}
\po = \sum_{i=1}^m y_1^{v_{i,1}}\cdots y_n^{v_{i,n}}T^{-\lambda_i} \in \Lambda[y_1^{\pm 1},\ldots, y_n^{\pm 1}].     
\end{equation}

\end{thm}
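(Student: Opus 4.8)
The plan is to follow \cite{I} and reduce the computation of $\po(b)$ to an enumeration of minimal-Maslov holomorphic discs. Since the discussion above already identifies $(\m_0^G)^b(1) = \exp(\partial\beta\cap b)\,\m_0(1)$ with the non-equivariant count and records $(\m_0^G)^b(1) = \po^u(b)\,PD[L(u)]\,e$, it suffices to analyze $\m_0(1) = \sum_{\beta\in\pi_2(X,L)} (\evaluation_{0,\beta})_!(1)\,T^{\omega(\beta)/2\pi}e^{I_\mu(\beta)/2}$ on $L = L(u)$. By Theorem \ref{potential function of a compact symplectic toric manifold} this is a multiple of the unit and sits in total degree $2$; since $(\evaluation_{0,\beta})_!(1)$ is a de Rham form of degree $2 - I_\mu(\beta)$, the classes of Maslov index $I_\mu(\beta) > 2$ contribute zero.

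Next I would invoke the Fano hypothesis. For a $T^n$-invariant $\omega$-compatible $J$, there is no nonconstant $J$-holomorphic sphere of Chern number $\leq 0$ and no $J$-holomorphic disc with boundary on $L(u)$ of Maslov index $\leq 0$; hence the Maslov-index-$2$ moduli spaces $\Mq_1(L,J,\beta)$ admit no codimension-one sphere or disc bubbling and are compact manifolds without boundary. By the Cho--Oh classification (carried out for the standard integrable $J_0$, and $J$-independent in this index range by the Fano condition) the relative classes of Maslov index $2$ are exactly $\beta_1,\dots,\beta_m$, one for each facet $l_i(u)\geq 0$ of $\Delta$, with $\partial\beta_i$ equal to the inner normal $v_i\in N\cong H_1(L(u),\Z)$, symplectic area $\omega(\beta_i) = 2\pi\,l_i(u)$, and $\evaluation_{0,\beta_i}\colon\Mq_1(L,J,\beta_i)\to L(u)$ an orientation-preserving diffeomorphism, so that $(\evaluation_{0,\beta_i})_!(1) = PD[L(u)]$.

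Combining these, $\m_0(1) = \bigl(\sum_{i=1}^m T^{l_i(u)}\bigr)PD[L(u)]\,e$, and deforming by $b=\sum_{j}x_j e_j$ multiplies the $i$-th term by $\exp(\partial\beta_i\cap b)$. Under the lattice identifications we have $\partial\beta_i\cap b = \pair{v_i,x}$ with $x=(x_1,\dots,x_n)$, so $\po\bigl(u_1,\dots,u_n,\sum_j x_j e_j\bigr) = \sum_{i=1}^m \exp(\pair{v_i,x})\,T^{l_i(u)}$. Finally, in the coordinates $y_j = \exp(x_j)T^{u_j}$ of \autoref{(u,b) in Lambda^n}, using $l_i(u) = \pair{u,v_i}-\lambda_i = \sum_j v_{i,j}u_j - \lambda_i$, the $i$-th summand becomes $\prod_{j=1}^n\bigl(\exp(x_j)T^{u_j}\bigr)^{v_{i,j}}T^{-\lambda_i} = y_1^{v_{i,1}}\cdots y_n^{v_{i,n}}T^{-\lambda_i}$, and since there are finitely many facets this exhibits $\po \in \Lambda[y_1^{\pm1},\dots,y_n^{\pm1}]$ as in \autoref{leading potential}.

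I expect the main obstacle to be entirely the geometric input imported from \cite{I}: the transversality and orientation of the Maslov-$2$ moduli spaces together with the Cho--Oh enumeration, and the Fano positivity ruling out discs of Maslov index $\leq 0$ and thereby all higher-energy corrections. Everything past that point is bookkeeping with symplectic areas and with the pairing $\partial\beta_i\cap b$. In the present $G$-equivariant setting the only additional point is that $(\m_0^G)^b(1)$ must reduce to the non-equivariant count; but this was already arranged above --- the relevant $\Mq_1(L,J,\beta_i)$ carry no obstruction, so $G$-equivariant integration along the fiber agrees with the ordinary one --- so no new geometric input is required.
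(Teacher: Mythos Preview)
The paper does not supply its own proof of this theorem; it is quoted verbatim as Theorem~4.5 of \cite{I} and used as a black box. Your proposal is a correct sketch of the argument in \cite{I}: restrict to Maslov-index-$2$ classes by degree reasons, invoke the Fano hypothesis to preclude bubbling so that $\Mq_1(L,J,\beta)$ is unobstructed for $I_\mu(\beta)=2$, apply the Cho--Oh classification to identify these classes with the basic discs $\beta_1,\dots,\beta_m$, and read off $\omega(\beta_i)=2\pi\,l_i(u)$ and $\partial\beta_i=v_i$. Since there is nothing to compare against in the present paper, your outline is both correct and the standard route.
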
 
When $(X,\omega, T^n, \mu)$ is compact symplectic toric but not necessarily Fano, the same formula \autoref{leading potential} computes the leading order potential function $\po_0$ of $X$. 
\begin{thm}[Potential function of a compact symplectic toric manifold, \cite{I} Theorem 4.6]
\label{potential function of a compact symplectic toric manifold}
  Let $(X,\omega, T^n, \mu)$ be a compact symplectic toric manifold with moment polytope \autoref{moment polytope}. 
    Let $u\in \interior \Delta$ and $b\in H^1(L(u),(\Lambda_0/2\pi i))$. 
   Then there exists an index set $I\subset \N$ such that $\forall 1\leq i \leq m, j\in I$, there exist 
   $r_j \in \Q$, $e_{j}^i \in \N$, and $\rho_j>0$ satisfying the following. 
   \begin{enumerate}[i)]
       \item $\sum\limits_{i=1}^m e_{j}^i>0  \quad \forall 1\leq j\leq n$. 
       \item If we let $l_i(u) =\pair{u,v_i}-\lambda_i$, 
       \[ v_{j,k}' = \sum\limits_{i=1}^m e_{j}^i v_{i,k},\quad  l_j' = \sum\limits_{i=1}^m e_{j}^i l_i, \qand v_j' = (v_{j,1}',\ldots, v_{j,n}'),\]
       then the potential function
       is given by 
        \begin{equation} 
         \po ( u, \sum_{i=1}^nx_ie_i) - \sum_{i=1}^m \exp (\pair{v_i,x})T^{l_i(u)} = \sum_{j\in I} r_j   \exp (\pair{v_j',x})T^{l_j'(u)+\rho_j}.
       \end{equation}  
    In particular, if we use the coordinates \autoref{(u,b) in Lambda^n},
       \begin{align}
       \label{potential on compact non-Fano}
        \po 
     =     &\sum_{i=1}^m y_1^{v_{i,1}}\cdots y_n^{v_{i,n}}T^{-\lambda_i} + \sum_{j\in I } r_j \left(\prod_{i=1}^m(y_1^{ v_{i,1}}\cdots y_n^{v_{i,n}}T^{-\lambda_i})^{e_j^i}\right)
       T^{\rho_j}  
       \end{align}
   \end{enumerate}
\end{thm}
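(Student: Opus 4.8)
The plan is to unwind the definition of $\po$, reduce the computation of $(\m_0^G)^b(1)$ to a weighted count of Maslov-index-$2$ pseudoholomorphic discs, and then invoke the classification of such discs on a compact toric manifold (Cho--Oh, and Fukaya--Oh--Ohta--Ono \cite{I}).

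First I would expand the deformed operator via \autoref{mkb}, \autoref{Ainf operator beta non-zero k=0}, and the fact---recorded before the statement, using $T^n$-invariance---that on a torus fiber the $G$-equivariant count coincides with the ordinary one:
\[
(\m_0^G)^b(1)=\sum_{\beta\in\pi_2(X,L)}\exp(\partial\beta\cap b)\,(\evaluation_{0,\beta})_!(1)\,T^{\omega(\beta)/2\pi}e^{I_\mu(\beta)/2}.
\]
Writing $\m_{0,\beta}(1):=(\evaluation_{0,\beta})_!(1)$, and noting that $\Mq_1(L,J,\beta)$ has virtual dimension $n+I_\mu(\beta)-2$, integration along the fibre of $\evaluation_{0,\beta}\colon\Mq_1(L,J,\beta)\to L(u)$ produces a form of degree $2-I_\mu(\beta)$ on $L(u)\cong T^n$; hence every summand has $C$-degree $2$, and a summand can be nonzero only if $2-I_\mu(\beta)\ge0$, which together with $I_\mu(\beta)\ge2$ for classes $\beta\ne0$ with $\Mq_1(L,J,\beta)$ contributing (Cho--Oh) forces $I_\mu(\beta)=2$. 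For such classes $\m_{0,\beta}(1)$ is a $0$-form that is $T^n$-invariant by Proposition \ref{GKur structure on moduli space} (the Kuranishi structures and CF-perturbations being $T^n$-invariant), hence a constant multiple of $\mathrm{PD}[L(u)]$. This justifies the definition $(\m_0^G)^b(1)=\po^u(b)\,e$ with $\po^u(b)\in\Lambda_0$ and reduces everything to enumerating Maslov-$2$ classes, with $\po^u(b)=\sum_{I_\mu(\beta)=2}c_\beta\exp(\partial\beta\cap b)T^{\omega(\beta)/2\pi}$ where $c_\beta\in\Q$ is the virtual count from the CF-perturbation.

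Next I would invoke the structure of Maslov-$2$ classes for the standard (or a $T^n$-invariant) complex structure: every $\beta$ with $\m_{0,\beta}(1)\ne0$ decomposes as $\beta=\sum_{i=1}^m e^i\beta_i+\alpha$, where $\beta_i$ is the Cho--Oh basic disc class of the $i$-th facet (so $I_\mu(\beta_i)=2$, $\omega(\beta_i)=2\pi l_i(u)$, $\partial\beta_i=v_i$), $e^i\in\N$, $\alpha\in H_2(X;\Z)$ is represented by a (possibly empty) union of holomorphic spheres, and $\sum_i e^i\ge1$; since $I_\mu=2c_1$ on sphere classes, $I_\mu(\beta)=2$ gives $\sum_i e^i+c_1(\alpha)=1$. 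For $\beta=\beta_i$ (i.e.\ $\alpha=0$), Cho--Oh show $\Mq_1(L,J,\beta_i)$ is regular with $\evaluation_{0,\beta_i}$ a diffeomorphism onto $L(u)$, so $c_{\beta_i}=1$ and these produce the leading term $\sum_{i=1}^m\exp(\pair{v_i,x})T^{l_i(u)}$ of \autoref{leading potential}. Every other contributing class has $\alpha\ne0$, so $\rho:=\omega(\alpha)/2\pi>0$ because a non-constant holomorphic sphere has positive symplectic area, and $\sum_i e^i=1-c_1(\alpha)\ge1$ --- here the non-Fano hypothesis enters, permitting effective $\alpha$ with $c_1(\alpha)\le0$ --- so that $\partial\beta=\sum_i e^i v_i$ and $\omega(\beta)/2\pi=\sum_i e^i l_i(u)+\rho$.

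Finally I would collect terms: Gromov compactness makes $\{(\omega(\beta),I_\mu(\beta)):\beta\in\pi_2(X,L)\}$ discrete, so only finitely many tuples $(e^i)_i$ and values $\rho$ occur below any energy; grouping the correction classes accordingly, indexing the groups by $j\in I$, and setting $e_j^i$, $v_j'=\sum_i e_j^i v_i$, $l_j'=\sum_i e_j^i l_i$, $\rho_j=\rho$ and $r_j=\sum_\beta c_\beta$ over the group, gives the displayed identity, with $\sum_i e_j^i\ge1$ being exactly item (i); passing to the coordinates of \autoref{(u,b) in Lambda^n} yields \autoref{potential on compact non-Fano}. I expect the main obstacle to be the classification input of the third paragraph --- that every Maslov-$2$ class contributing to $\m_0$ is of the form $\sum_i e^i\beta_i+\alpha$ with $\sum_i e^i\ge1$, together with the regularity and explicit shape of the basic classes and the vanishing of higher-Maslov contributions. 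This is the technical core of \cite{I}, resting on the explicit description of holomorphic discs and spheres in toric manifolds with the standard complex structure, the bubbling analysis, and the positivity of symplectic areas and of the intersection numbers $\beta\cdot D_i$ along the facets; for the full argument I would cite \cite{I}.
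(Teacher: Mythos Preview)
The paper does not give its own proof of this theorem; it is simply quoted as \cite{I} Theorem 4.6 and used as input for the rest of Section \ref{Section Ainf}. Your sketch is a faithful outline of the argument in \cite{I}: reduce $(\m_0^G)^b(1)$ to a sum over Maslov-$2$ classes via the degree count, identify the basic discs $\beta_i$ as the leading terms using Cho--Oh regularity, and write every other contributing class as $\sum_i e^i\beta_i+\alpha$ with $\alpha$ a nontrivial sphere class of positive area, so that the correction terms acquire the extra factor $T^{\rho_j}$ with $\rho_j>0$. The only point to flag is that your ``$I_\mu(\beta)\ge2$ for classes $\beta\ne0$ with $\Mq_1(L,J,\beta)$ contributing'' is not quite Cho--Oh in the non-Fano case---it is the $T^n$-equivariance of $\evaluation_0$ (hence submersivity onto $L(u)$) that forces the dimension bound $I_\mu(\beta)\ge2$, exactly as in the paper's own argument in the proof of Lemma \ref{m1b is partial derivative}; with that correction your outline matches the source.
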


The rest of the section is devoted to the proof of Theorem \ref{main theorem}. 

Since $(\m_1^G)^b$ commutes with $d_G$, it is defined on $H^1_G(L)$.
\begin{lem}\label{m1b is partial derivative}
Let $u\in \interior \Delta$ and let
\[ b= \sum\limits_{i=1}^n c_i \alpha_i \in H^1(L(u),\Lambda_0), \quad \text{where }  \]
where $\alpha_{r+1},\ldots, \alpha_n$ generates $H_G^1(L(u),\Z)$. 
Then for any $r+1\leq i \leq n$, we have
\[ (\m_1^G)^b(\alpha_i)=\left(\frac{\partial \po^u}{\partial c_i}(b)\right) PD[L(u)]e .  \]
\end{lem}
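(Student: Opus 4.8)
The plan is to reduce the identity to a disc-class-by-disc-class statement relating $\m_{1,\beta}^G$ to $\m_{0,\beta}^G$, and then to extract that statement from the forgetful map that drops the marked point $z_1$.

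First I would expand both sides as sums over $\beta\in\pi_2(X,L)$. Regard $\po^u$ as a function of $(c_1,\dots,c_n)$ via $b=\sum_{j}c_j\alpha_j$ (equivalently, via the coordinates $y_j$ of Theorem \ref{potential function of a compact symplectic toric manifold}), and recall $(\m_0^G)^b(1)=\po^u(b)\,PD[L(u)]\,e$. Since $\partial\beta\cap b=\sum_j c_j(\partial\beta\cap\alpha_j)$, we have $\frac{\partial}{\partial c_i}\exp(\partial\beta\cap b)=(\partial\beta\cap\alpha_i)\exp(\partial\beta\cap b)$, so differentiating the defining series of $(\m_0^G)^b(1)$ coming from \autoref{A infinity operator with bounding cochain} and \autoref{mkb} term by term (valid since the series is gapped in the energy filtration) gives
\[
\Bigp{\frac{\partial\po^u}{\partial c_i}(b)}PD[L(u)]\,e=\sum_{\beta\in\pi_2(X,L)}(\partial\beta\cap\alpha_i)\exp(\partial\beta\cap b)\,\m_{0,\beta}^G(1)\,T^{\frac{\omega(\beta)}{2\pi}}e^{\frac{I_{\mu}(\beta)}{2}}.
\]
Comparing with $(\m_1^G)^b(\alpha_i)=\sum_{\beta}\exp(\partial\beta\cap b)\,\m_{1,\beta}^G(\alpha_i)\,T^{\frac{\omega(\beta)}{2\pi}}e^{\frac{I_{\mu}(\beta)}{2}}$, it suffices to prove, for each $\beta\in\pi_2(X,L)$, the single-class identity
\[
\m_{1,\beta}^G(\alpha_i)=(\partial\beta\cap\alpha_i)\,\m_{0,\beta}^G(1).
\]
For $\beta=0$ both sides vanish: $\partial 0=0$, and $\m_{1,0}^G(\alpha_i)=d_G\alpha_i=0$ because for $r+1\le i\le n$ the class $\alpha_i\in H_G^1(L(u),\Z)$ is represented by a translation-invariant $1$-form on $L(u)=T^n$ annihilating the tangent directions of the $G$-orbits, hence $d_G$-closed. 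This is precisely the step where the hypothesis $i\ge r+1$ is used.

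For $\beta\ne 0$, the key geometric input is the forgetful map $\pi:\Mq_2(L,J,\beta)\to\Mq_1(L,J,\beta)$ that forgets $z_1$ and stabilizes: it is a $G$-equivariant map of spaces carrying compatible $G$-equivariant Kuranishi structures (its construction parallels that of Proposition \ref{GKur structure on moduli space} and the forgetful maps of \cite{I, III}), its fiber is the boundary circle parametrizing the position of $z_1$ on $\partial\Sigma$, and the $0$-th evaluation map of $\Mq_2(L,J,\beta)$ factors as $\evaluation_{0,\beta}^G\circ\pi$ through the $0$-th evaluation map $\evaluation_{0,\beta}^G$ of $\Mq_1(L,J,\beta)$ since $\pi$ preserves $z_0$. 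Using the translation-invariant representative of $\alpha_i$ and the composition formula for $G$-equivariant integration along the fiber (Proposition \ref{composition formula}) applied to this factorization,
\[
\m_{1,\beta}^G(\alpha_i)=(\evaluation_{0,\beta}^G)_!\bigp{(\evaluation_{1,\beta}^G)^*\alpha_i}=(\evaluation_{0,\beta}^G)_!\Bigp{\pi_!^G\bigp{(\evaluation_{1,\beta}^G)^*\alpha_i}},
\]
where on the right $\evaluation_{0,\beta}^G$ is the evaluation map on $\Mq_1(L,J,\beta)$. On a fiber of $\pi$ — obtained by fixing $(\Sigma,j,z_0,u)$ and letting $z_1$ run over $\partial\Sigma$ — the map $\evaluation_{1,\beta}$ is the boundary loop $z\mapsto u(z)$, whose class in $H_1(L(u),\Z)$ is $\partial\beta$ by definition of $\partial:\pi_2(X,L)\to\pi_1(L)$; since $\alpha_i$ is closed, $\int_{\partial\Sigma}(\evaluation_{1,\beta})^*\alpha_i=\pair{\alpha_i,\partial\beta}=\partial\beta\cap\alpha_i$, independently of the fiber, and since $G\subset T^n$ acts only on the target (it does not move $z_1$) it acts trivially along these fibers, so $\pi_!^G\bigp{(\evaluation_{1,\beta}^G)^*\alpha_i}=(\partial\beta\cap\alpha_i)\cdot 1$. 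Substituting back, $\m_{1,\beta}^G(\alpha_i)=(\partial\beta\cap\alpha_i)(\evaluation_{0,\beta}^G)_!(1)=(\partial\beta\cap\alpha_i)\,\m_{0,\beta}^G(1)$, which is the required single-class identity; summing over $\beta$ completes the argument.

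I expect the main obstacle to be purely technical: setting up the $G$-equivariant forgetful map at the level of Kuranishi structures, checking its compatibility with the $G$-equivariant CF-perturbations defining the pushforwards $(\evaluation_{0,\beta}^G)_!$, and verifying that the composition/fiber-product formula for $G$-equivariant integration along the fiber applies to the factorization $\evaluation_{0,\beta}^G\circ\pi$ — in particular handling the strata of $\Mq_2(L,J,\beta)$ where $z_1$ bubbles off together with $z_0$ and the ``circle fiber'' picture degenerates. Once this framework is in place, the remaining content is just the elementary fact that integrating an invariant closed $1$-form over the boundary loop of a disc in class $\beta$ records the pairing with $\partial\beta$.
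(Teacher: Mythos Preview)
Your proof is correct and is organized differently from the paper's. The paper routes through the non-equivariant operators: it expands $\po^u(b)$ as $\sum_{\beta:I_\mu(\beta)=2}\sum_{k}\m_{k,\beta}(b^{\otimes k})$ using the description from \cite{I}, differentiates to a triple sum $\sum_\beta\sum_k\sum_l\m_{k,\beta}(b^{\otimes l-1}\otimes\alpha_i\otimes b^{\otimes k-l})$, collapses this to $\sum_\beta\exp(\partial\beta\cap b)\m_{1,\beta}(\alpha_i)$ via the (implicitly invoked) toric divisor identity, and only at the end passes to $\m_{1,\beta}^G$. You instead stay inside the equivariant framework of this paper: differentiating the defining series of $(\m_0^G)^b(1)$ hits only the factor $\exp(\partial\beta\cap b)$, so the problem reduces immediately to the single-class identity $\m_{1,\beta}^G(\alpha_i)=(\partial\beta\cap\alpha_i)\,\m_{0,\beta}^G(1)$, which you establish directly with the one-marked-point forgetful map $\Mq_2(\beta)\to\Mq_1(\beta)$. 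Both arguments rest on the same geometric content --- integrating the closed invariant $1$-form $\alpha_i$ over the boundary circle --- but yours is more self-contained: no detour through the non-equivariant $\m_k$ or the $I_\mu(\beta)=2$ degree count, and the role of the hypothesis $i\ge r+1$ is isolated cleanly in the $\beta=0$ step as $d_G\alpha_i=0$. The price is having to spell out the equivariant forgetful-map compatibility you flag; in the toric situation that burden is in fact mild, since the relevant moduli spaces are already smooth with trivial obstruction bundle, so the Kuranishi issues you anticipate largely evaporate.
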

\begin{proof}
For each $1\leq j \leq m$, let $\beta_j \in H_2(X,L(u),\Z) \cong \pi_2(X,L(u))$ be the class of the basic disc, which is a small disc transverse to $\mu^{-1} \lrp{j\text{-th facet of }\Delta}$. 
Let $\m_{k,\beta}$ be the ordinary $\Ainf$-operator on the de Rham model. 
Note that 
\[ \po^u (b)  PD[L(u)] e = \sum_{
\substack{\beta\in \pi_2(X,L(u))\\
I_\mu(\beta)=2}}\sum_{k=0}^{\infty}(\m_{k,\beta})(b\otimes \cdots \otimes b) T^{\frac{\omega(\beta)}{2\pi}}e \]
only involves moduli spaces $\Mq_{k+1}(L(u),J,\beta)$ for $I_\mu(\beta)=2$. 
On the one hand, 
\[ \deg (\m_{k,\beta}(b^{\otimes k})) = k - ((n-3+I_\mu(\beta)+k+1)-n) = 2-I_\mu(\beta) \geq 0  \]   
implies  
$I_\mu(\beta) \leq 2$. 
On the other hand, 
for every $\beta\in \pi_2(X,L(u))$, the evaluation map
$\evaluation_0: \Mq_{k+1}^{main}(L(u),J,\beta) \to L$ is a submersion. 
In particular, whenever $\Mq_{k+1}^{main}(L(u),J,\beta)\ne \emptyset$, 
its dimension is no less than $\dim L(u) = n$: 
\[  n-3+I_\mu(\beta) + 1 \geq n 
\quad \Rightarrow \quad I_\mu(\beta) \geq 2. \]
This proves the claim. 
Therefore,  for $r+1\leq i\leq n$, we have
\begin{align*}
 \left(\frac{\partial \po^u}{\partial c_i}(b)\right)  e
= &  \frac{\partial }{\partial c_i} \left(\sum_{
\substack{\beta\in \pi_2(X,L(u))\\
I_\mu(\beta)=2}}\sum_{k=0}^{\infty}(\m_{k,\beta})(b\otimes \cdots \otimes b) T^{\frac{\omega(\beta)}{2\pi}}e\right) \\
= & \sum_{
\substack{\beta\in \pi_2(X,L(u))\\
I_\mu(\beta)=2}} 
\sum_{k=1}^{\infty} \sum_{l=1}^k \m_{k,\beta} (b^{\otimes l-1} \otimes \alpha_i\otimes b^{\otimes k-l})T^{\frac{\omega(\beta)}{2\pi}}e 
 \\
= & \sum_{
\substack{\beta\in \pi_2(X,L(u))\\
I_\mu(\beta)=2}} \exp(\partial \beta\cap b)\m_{1,\beta}(\alpha_i) T^{\frac{\omega(\beta)}{2\pi}}e  
 \\
= &  \sum_{
\substack{\beta\in \pi_2(X,L(u))\\
I_\mu(\beta)=2}} \exp(\partial \beta \cap b)\m_{1,\beta}^G (\alpha_i) T^{\frac{\omega(\beta)}{2\pi}}e \\
= & (\m_1^G)^b(\alpha_i). 
\end{align*}
\end{proof}

\begin{cor}\label{weak bounding cochains are where partial derivatives in the normal directions vanish}
    $(\m_1^G)^b|_{H_G(L(u),\Lambda_0)} =0 $ if and only if 
    \[  \frac{\partial \po^u}{\partial c_i}(b) =0 \qquad \forall r+1\leq i \leq n.   \]
    
\end{cor}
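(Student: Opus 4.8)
The plan is to deduce the corollary from Lemma~\ref{m1b is partial derivative} together with the algebra structure of $H_G^*(L(u))$. Since $G$ is a subtorus of $T^n$ acting by translations on the torus $L(u)=\mu^{-1}(u)$ and $u\in\interior\Delta$ (so the $T^n$-orbit through $L(u)$ is free), the $G$-action on $L(u)$ is free; hence the Cartan model $\Omega_G(L(u),\R)$ computes $H^*(L(u)/G,\R)$, the exterior algebra on the degree-one classes $\alpha_{r+1},\dots,\alpha_n$ spanning $H_G^1(L(u),\Z)$. In particular $H_G^*(L(u),\Lambda_0)$ is a free $\Lambda_0$-module, hence torsion free, and $PD[L(u)]\,e$ is a nonzero element of it. For the ``only if'' direction: if $(\m_1^G)^b$ vanishes on $H_G(L(u),\Lambda_0)$, then $(\m_1^G)^b(\alpha_i)=0$ for each $r+1\leq i\leq n$; by Lemma~\ref{m1b is partial derivative} this element equals $\bigl(\tfrac{\partial\po^u}{\partial c_i}(b)\bigr)\,PD[L(u)]\,e$, and torsion-freeness forces $\tfrac{\partial\po^u}{\partial c_i}(b)=0$.

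For the ``if'' direction, assume $\tfrac{\partial\po^u}{\partial c_i}(b)=0$ for all $r+1\leq i\leq n$. By Lemma~\ref{m1b is partial derivative}, $(\m_1^G)^b(\alpha_i)=0$ for all such $i$, and $(\m_1^G)^b(PD[L(u)])=0$ because the constant function is a strict unit of the $G$-equivariant $\Ainf$-algebra and is therefore $(\m_1^G)^b$-closed. To propagate the vanishing, I would use the $\Ainf$-relation of Proposition~\ref{mkb is Ainf} with two inputs: combined with the weak-bounding-cochain identity $(\m_0^G)^b(1)=\po^u(b)\,PD[L(u)]\,e$ (a multiple of the strict unit, as recorded after Theorem~\ref{potential function of a compact symplectic toric manifold}) and strict unitality, the terms in which $(\m_0^G)^b(1)$ is inserted into $(\m_3^G)^b$ vanish, leaving the Leibniz identity $(\m_1^G)^b\bigl((\m_2^G)^b(x,y)\bigr)=(\m_2^G)^b\bigl((\m_1^G)^b x,y\bigr)\pm(\m_2^G)^b\bigl(x,(\m_1^G)^b y\bigr)$ on $H_G^*(L(u),\Lambda_0)$. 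Thus $(\m_1^G)^b$ is a derivation of the product $(\m_2^G)^b$.

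It then suffices to observe that $H_G^*(L(u),\Lambda_0)$ is spanned over $\Lambda_0$ by iterated $(\m_2^G)^b$-products of $PD[L(u)]$ and $\alpha_{r+1},\dots,\alpha_n$. Indeed, the $\beta=0$ summand of $(\m_2^G)^b$ is, up to sign, the wedge product, while every $\beta\neq 0$ summand carries a positive power of $T$ (nonconstant $J$-holomorphic discs have positive area); so modulo the maximal ideal of $\Lambda_0$ the product $(\m_2^G)^b$ reduces to the classical cup product, under which the exterior algebra $H_G^*(L(u),\R)$ is generated in degree $\leq 1$. A successive-approximation (Nakayama-type) argument, using the $T$-adic completeness of $H_G^*(L(u),\Lambda_0)$, then lifts this to the claimed spanning statement. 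A derivation that vanishes on each of $PD[L(u)],\alpha_{r+1},\dots,\alpha_n$ vanishes on every iterated $(\m_2^G)^b$-product of them, hence on all of $H_G^*(L(u),\Lambda_0)$. (Alternatively, one may appeal to the explicit description of the toric-fiber differential in the spirit of \cite{I}: $(\m_1^G)^b$ is the contraction-type operator determined by the tuple $\bigl(\tfrac{\partial\po^u}{\partial c_i}(b)\bigr)_{r+1\leq i\leq n}$, which vanishes identically exactly when every entry does.)

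The main obstacle is the ``if'' direction: one must check carefully that the curvature and higher-product corrections to the Leibniz rule genuinely cancel on cohomology — this rests on the strict-unit structure of the equivariant $\Ainf$-algebra and on $(\m_0^G)^b(1)$ being a multiple of the unit — and that the lifting argument for degree-$\leq 1$ generation goes through verbatim over the complete filtered ring $\Lambda_0$. Both are adaptations of standard Fukaya--Oh--Ohta--Ono technology, modulo the customary sign bookkeeping.
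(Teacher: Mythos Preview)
Your argument is correct and in fact more detailed than the paper's. The paper dispatches the corollary in two sentences: since the $G$-action on $L(u)$ is free, $H_G^*(L(u),\R)\cong H^*(L(u)/G,\R)\cong H^*(T^{n-r},\R)$ is generated as a ring by $H_G^1$, hence $(\m_1^G)^b$ vanishes on $H_G$ iff it vanishes on $H_G^1$, and then Lemma~\ref{m1b is partial derivative} gives the partial-derivative criterion. What the paper leaves implicit is \emph{why} ring-generation suffices---one needs $(\m_1^G)^b$ to be a derivation of the product, which for toric fibers follows from its explicit description as a contraction operator (exactly your parenthetical alternative, in the spirit of \cite{I}). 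Your main route derives the derivation property abstractly from the curved $\Ainf$-relation together with strict unitality of $PD[L(u)]$; this is a valid substitute with the virtue of not invoking the explicit toric computation, at the cost of importing strict unitality (standard for the de Rham model but not recorded in this paper) and, to make the Leibniz identity literally hold on $H_G$ rather than on chains, of passing to the canonical $G$-basic model $\bigwedge^*(\alpha_{r+1},\dots,\alpha_n)$ on which $d_G=0$.
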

 
\begin{proof}
   Since the $G$-action on $L(u)$ is free, 
   $H_G^1(L(u),\R) \cong H^1(L(u)/G,\R)$ generates $H_G(L(u),\R)\cong H(L(u)/G,\R) \cong H(T^{n-r},\R)$. 
    Thus,  $(\m_1^G)^b=0$ on $H_G(L(u),\R)$ 
    if and only if 
    $(\m_1^G)^b =0$  on $H_G^1(L(u),\R)$. 
    By Proposition \ref{m1b is partial derivative}, 
    this holds 
    if and only if 
    \[\frac{\partial \po^u}{\partial c_i}(b) =0 \qquad \forall r+1\leq i \leq n.\]
\end{proof}

\subsection{Spectral sequences}
\label{Section spectral sequences}
 Let $u \in \interior \Delta$, $L = \mu^{-1}(u)$, and $b\in H^1(L,\Lambda_0)$.  
Let
 \[ C = \Omega_G(L,\R) \complete{\otimes} \novringe, \quad \delta = (\m_1^G)^b. \] 
 Then 
 $( C[1],\delta)$ is a cochain complex. 

 Define 
 \[ 
 F^\lambda C^p = \complete{\bigoplus_{ \substack{m, n \in \N \\ m+2n = p}}} \Omega_G^m(L) \otimes (T^{\lambda}\Lambda_0\cdot e^{n}). 
 \]
 Define 
 $E: C^p \to \R$  by  $ E(x) = \lambda $
if $x\in F^{\lambda} C^p$ but $x\not \in F^{\lambda'} C^p $ for any $\lambda'>\lambda$. 

Let $\delta_{1,0}= (\m_{1,\beta=0}^G)^b  = \m_{1,\beta=0} = d_G$.  
Since non-constant pseudoholomorphic discs with boundary on $L$ have a universal energy lower bound, 
there exists $\lambda''>0$ such that, $\forall \lambda$, $\forall x\in F^\lambda C$, we have
$(\delta - \delta_{1,0} ) x\in F^{\lambda + \lambda''} C$. 
Let $0<\lambda_0<\lambda''$. 
We use the $\lambda_0$ to define a decreasing integral filtration as follows. 
For any $q\in \N$, let 
 \[ \fil^qC^p = \complete{\bigoplus_{ \substack{m, n \in \N \\ m+2n = p}}} \Omega_G^m(L) \otimes (T^{q\lambda_0}\Lambda_0\cdot e^{n}) \]
 and let $\fil^{\infty} C^p =\{0\}$. 

 \begin{defn}
     Define 
     \begin{align*}
A_r^{p,q} & := \fil^qC^p \cap \delta^{-1}(\fil^{q+r-1}C^{p+1})   \\
Z_r^{p,q} & : = A_r^{p,q}+\fil^{q+1}C^p =\fil^qC^p \cap \delta^{-1}(\fil^{q+r-1}C^{p+1}) +\fil^{q+1}C^p   \\
B_r^{p,q} & := 
\fil^qC^p \cap \delta (\fil^{q-r+2}C^{p-1})+ \fil^{q+1}C^p  \\
E_r^{p,q} & :=\frac{A_r^{p,q} }{B_r^{p,q}\cap A_r^{p,q}}. 
\end{align*} 
For any $r\geq 0$, we have 
\[
B_{r+1}^{p,q} = \fil^{q}C^p \cap \delta (\fil^{q-r+1}C^{p-1}) + \fil^{q+1}C^p \subset B_r^{p,q} 
\]
and 
\[Z_{r+1}^{p,q} = \fil^{q}C^p \cap \delta^{-1} (\fil^{q+r}C^{p+1}) + \fil^{q+1}C^p  \subset Z_{r}^{p,q}. \]
\end{defn}
It is easy to check the following. 
\begin{lem}\label{isomorphisms for equivalent definitions of Er}
Let $R$ be a commutative ring. 
Suppose $A,M, T$ are $R$-modules, where $M\subset A$ is a submodule. 
Then we have the following:
\begin{enumerate}[i)]
    \item $\dfrac{A}{A\cap (M+T)} \cong  \dfrac{A+T}{M+T}$ ; 
    \item $A\cap (M+ T) = M+A\cap T$. 
\end{enumerate}
\end{lem}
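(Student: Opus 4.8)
The statement is an elementary pair of facts in commutative algebra: given $R$-modules $M \subset A$ and an arbitrary $R$-module $T$ (all living inside a common ambient module, so that $M + T$, $A \cap T$, etc. make sense), we have $A/(A\cap(M+T)) \cong (A+T)/(M+T)$ and $A \cap (M+T) = M + (A\cap T)$. The plan is to prove (ii) first by a direct element-chase, and then deduce (i) either from the second isomorphism theorem or from (ii) combined with a short exact sequence argument.

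\textbf{Step 1: Prove (ii), the modular law.} One inclusion is immediate: since $M \subset A$, both $M$ and $A \cap T$ lie in $A$, and both lie in $M + T$, so $M + (A\cap T) \subseteq A \cap (M+T)$. For the reverse inclusion, take $a \in A \cap (M+T)$ and write $a = m + t$ with $m \in M$, $t \in T$. Then $t = a - m \in A$ because $a \in A$ and $m \in M \subseteq A$; hence $t \in A \cap T$, and $a = m + t \in M + (A \cap T)$. This is exactly the \emph{modular law} for submodules, and the hypothesis $M \subseteq A$ is what makes it work.

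\textbf{Step 2: Deduce (i).} Consider the natural surjection $\pi\colon A + T \twoheadrightarrow (A+T)/(M+T)$. Its restriction to $A$, call it $\pi|_A \colon A \to (A+T)/(M+T)$, is still surjective: any coset is represented by some $a + t$ with $a \in A$, $t \in T$, and $a + t \equiv a \pmod{M+T}$ since $t \in T \subseteq M + T$. The kernel of $\pi|_A$ is $A \cap (M + T)$. By the first isomorphism theorem, $A/(A \cap (M+T)) \cong (A+T)/(M+T)$. (Alternatively, one can invoke the second isomorphism theorem directly with the submodules $A$ and $M+T$ of $A+T$, noting $A \cap (M+T)$ in the denominator on the left; Step 1 then rewrites that denominator in the form stated.)

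\textbf{Main obstacle.} Honestly, there is no real obstacle here — the lemma is "easy to check," as the text says, and the only thing to be careful about is the standing assumption that all modules in sight are submodules of one fixed ambient module, so that sums and intersections are taken inside it; the identity $t = a - m \in A$ in Step 1 uses precisely $M \subseteq A$, and without that hypothesis (ii) is false. In the application to the spectral sequence, $A = A_r^{p,q}$, $M = \fil^{q+1}C^p \cap \delta(\cdots)$-type terms, and $T = \fil^{q+1}C^p$, all inside $C^p$, so the hypotheses hold.
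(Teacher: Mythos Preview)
Your proof is correct and complete; part (ii) is exactly the modular law for submodules (using $M\subseteq A$), and part (i) is the second isomorphism theorem applied to $A$ and $M+T$ inside the ambient module. The paper itself gives no proof of this lemma, merely prefacing it with ``It is easy to check the following,'' so your argument is precisely the standard verification the reader is expected to supply.
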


\begin{prop}\leavevmode
\begin{enumerate}[i)]
\item 
$A_r^{p,q}\cap B_r^{p,q} =  \fil^qC^p \cap \delta (\fil^{q-r+2}C^{p-1}) +  \fil^{q+1}C^p \cap \delta^{-1}(\fil^{q+r-1}C^{p+1}) $. 
\item $E_{r}^{p,q} \cong \dfrac{Z_r^{p,q}}{B_r^{p,q}}$
\end{enumerate}
\end{prop}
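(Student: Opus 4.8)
The plan is to prove both statements by purely homological-algebra manipulations, treating the filtered complex $(C[1],\delta)$ abstractly and invoking Lemma \ref{isomorphisms for equivalent definitions of Er} at the key moments. Throughout I will abbreviate $\fil^q C^p$ by $F^q$ (suppressing $p$), so that the ambient module is $A := A_r^{p,q} = F^q \cap \delta^{-1}(F^{q+r-1})$ (where the inverse image refers to degree $p+1$), and $B := B_r^{p,q} = F^q \cap \delta(F^{q-r+2}) + F^{q+1}$. Note at the outset that $A \supset F^{q+1} \cap \delta^{-1}(F^{q+r-1}) \supset$ the second summand on the right of (i), and that $A \supset F^q \cap \delta(F^{q-r+2})$ because any element of the form $\delta y$ with $y \in F^{q-r+2} \subset C^{p-1}$ automatically satisfies $\delta(\delta y) = 0 \in F^{q+r-1}$; so both summands claimed in (i) already lie in $A$.

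For part (i), I would apply Lemma \ref{isomorphisms for equivalent definitions of Er}(ii) with $R$ the base ring, $A = A_r^{p,q}$, $M = F^q \cap \delta(F^{q-r+2})$ (a submodule of $A$ by the remark above), and $T = F^{q+1}$. The lemma gives $A \cap (M + T) = M + A \cap T$. Now $B_r^{p,q} = M + T$ by definition, so the left side is exactly $A_r^{p,q} \cap B_r^{p,q}$. On the right side, $A \cap T = A_r^{p,q} \cap F^{q+1} = F^q \cap \delta^{-1}(F^{q+r-1}) \cap F^{q+1} = F^{q+1} \cap \delta^{-1}(F^{q+r-1})$ since $F^{q+1} \subset F^q$. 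Hence $A_r^{p,q} \cap B_r^{p,q} = F^q \cap \delta(F^{q-r+2}) + F^{q+1} \cap \delta^{-1}(F^{q+r-1})$, which is precisely the claimed formula. This step is essentially a bookkeeping check that the hypotheses of the modular-law lemma are met, the only subtlety being the verification that $M \subset A$.

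For part (ii), recall $E_r^{p,q} = A_r^{p,q}/(B_r^{p,q} \cap A_r^{p,q})$ and $Z_r^{p,q} = A_r^{p,q} + F^{q+1}$. I would apply Lemma \ref{isomorphisms for equivalent definitions of Er}(i) with $A = A_r^{p,q}$, $T = F^{q+1}$, and $M = F^q \cap \delta(F^{q-r+2})$ again, noting $M \subset A$. The lemma yields $A/(A \cap (M+T)) \cong (A+T)/(M+T)$, i.e. $A_r^{p,q}/(A_r^{p,q} \cap B_r^{p,q}) \cong (A_r^{p,q} + F^{q+1})/(F^q \cap \delta(F^{q-r+2}) + F^{q+1}) = Z_r^{p,q}/B_r^{p,q}$. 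The left side is $E_r^{p,q}$ by definition, so this gives the desired isomorphism. One must check that $B_r^{p,q} = M + T$ really coincides with the denominator appearing in $Z_r^{p,q}/B_r^{p,q}$ — it does, by the definition of $B_r^{p,q}$ — and that $M + T \subset A + T$ so the quotient makes sense, which follows from $M \subset A$.

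The only genuine obstacle, if one can call it that, is making sure at each invocation of Lemma \ref{isomorphisms for equivalent definitions of Er} that the chosen $M$ is honestly a submodule of the chosen $A$; this reduces to the observation that $\delta \circ \delta = 0$ (so elements $\delta y$ with $y$ in the relevant filtration level land in the kernel, hence in $\delta^{-1}$ of anything, in particular in $A$) together with the trivial containment $F^{q+1} \subset F^q$. Everything else is a direct substitution into the two isomorphisms of the lemma, so I would write the proof as two short paragraphs, one per item, each a single application of the lemma preceded by the submodule check. No transfinite or limiting arguments are needed at this stage — those enter only later when one passes to $E_\infty$ and identifies the abutment with $HF_G$.
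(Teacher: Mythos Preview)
Your proposal is correct and follows essentially the same route as the paper: both parts are proved by applying the two halves of Lemma~\ref{isomorphisms for equivalent definitions of Er} with the choices $A = A_r^{p,q}$, $M = \fil^q C^p \cap \delta(\fil^{q-r+2}C^{p-1})$, $T = \fil^{q+1}C^p$, after observing $M \subset A$. Your verification that $M \subset A$ via $\delta^2 = 0$ is exactly the point the paper records as ``Note that $\fil^qC^p \cap \delta (\fil^{q-r+2}C^{p-1}) \subset A_r^{p,q}$.''
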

\begin{proof}
\begin{enumerate}[i)]
    \item Note that $\fil^qC^p \cap \delta (\fil^{q-r+2}C^{p-1}) \subset A_r^{p,q}$. 
    Thus, by Lemma \ref{isomorphisms for equivalent definitions of Er} ii), 
    \begin{align*}
   A_r^{p,q}\cap B_r^{p,q} 
   & =  \fil^qC^p \cap \delta (\fil^{q-r+2}C^{p-1}) + A_r^{p,q}\cap \fil^{q+1}C^p \\
   & =\fil^q C^p \cap \delta (\fil^{q-r+2}C^{p-1}) +  \fil^{q+1}C^p \cap \delta^{-1}(\fil^{q+r-1}C^{p+1})      
    \end{align*}
    \item Apply Lemma \ref{isomorphisms for equivalent definitions of Er} i)
    to \[ A = A_r^{p,q}, \qquad M = \fil^qC^p \cap \delta(\fil^{q-r+2}C^{p-1}), \qquad T = \fil^{q+1}C^p, \] we have
    \[ 
E_{r}^{p,q}  = \frac{A_r^{p,q} }{B_r^{p,q}\cap A_r^{p,q}}  \cong \frac{A_r^{p,q}+\fil^{q+1}C^p }{B_r^{p,q}}   \cong \frac{Z_r^{p,q}}{B_r^{p,q}}. \]
\end{enumerate}
\end{proof}

\begin{defn}[$E_{\infty}^{p,q}$] 
For a fixed pair $(p,q)$, if $r>q+2$, then 
\begin{align*}
B_{r}^{p,q}   
&  =\;  \fil^qC^p \cap \delta (\fil^{q-r+2}C^{p-1})+ \fil^{q+1}C^p \\
&  =\;  \fil^qC^p \cap \delta ( C^{p-1})+ \fil^{q+1}C^p
\end{align*}
is independent of $r$. 
Moreover, for $r>q+2$, we have inclusions
\[ 
\cdots \subset  Z_{q+5}^{p,q} 
\subset Z_{q+4}^{p,q} 
\subset Z_{q+3}^{p,q}, \]
and thus an inverse system
\[
\cdots \subset E_{q+5}^{p,q}
\subset  E_{q+4}^{p,q} 
\subset   E_{q+3}^{p,q}. \]
We define 
\[E_{\infty}^{p,q} := \lim_{\longleftarrow} E_{r}^{p,q}.\]
\end{defn}  
We will use the following lemma (proved in Proposition 6.3.9 \cite{fooobook1}) to prove Theorem \ref{spectral sequence statements}. 
\begin{lem}[Proposition 6.3.9 in \cite{fooobook1}]\label{Prop 6.3.9}
Let $C = \complete{\bigoplus\limits_{p\in \N} }C^p$ be a graded finitely generated and free module over $\Lambda_0$ such that $C$ and each $C^p$ is complete with respected to the energy filtration. 
Let $\delta: C^* \to C^{*+1}$ be a 
degree $1$ operator such that 
\begin{equation*}
\delta\circ \delta = 0 \quad \text{ and }\quad \delta (F^{\lambda}C)\subset F^{\lambda}C.     
\end{equation*}
Let $W\subset C^p$ be a finitely generated $\Lambda_0$-submodule.  
Then there exists a constant $c$ depending on $W$ but not on $\lambda$ such that 
\begin{equation}
\label{6.3.9 eqn}
\delta(W) \cap F^{\lambda}C^{p+1} \subset \delta \left(W\cap F^{\lambda-c}C^{p} \right). 
\end{equation}
\end{lem}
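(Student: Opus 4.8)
The plan is to reduce the statement to a statement about a single finitely generated module and then to exploit completeness together with the fact that $\delta$ preserves the energy filtration. First I would fix a finite generating set $w_1,\dots,w_N$ of $W$ over $\Lambda_0$ and pass to the quotient module $\overline{W} := W/(W\cap \ker\delta)$, which is again finitely generated over $\Lambda_0$; then $\delta$ induces an injection $\overline{W}\hookrightarrow C^{p+1}$ whose image is $\delta(W)$. Because $\Lambda_0$ is a valuation ring, $\overline{W}$ is a finitely generated torsion-free $\Lambda_0$-module; I would want to conclude it is free, or at least that it admits a ``bounded'' $\Lambda_0$-basis, so that the valuation $\val$ on $C^{p+1}$ restricted to $\delta(W)$ is comparable (up to a fixed additive constant $c$ depending only on $W$) to the valuation on $\overline{W}$ coming from a fixed basis. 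Concretely, I would choose elements $\xi_1,\dots,\xi_M\in W$ whose images $\delta\xi_1,\dots,\delta\xi_M$ form a $\Lambda_0$-basis of a finite-index (equivalently, cofinite-length) submodule of $\delta(W)$, and record the finitely many ``denominators'': there is $c_0>0$ with $T^{c_0}\,\delta(W)\subset \sum_j \Lambda_0\,\delta\xi_j$.

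Next, given $y\in\delta(W)\cap F^\lambda C^{p+1}$, write $y=\delta x$ with $x\in W$, and expand $T^{c_0}x = \sum_j a_j \xi_j + (\text{something in }W\cap\ker\delta)$ with $a_j\in\Lambda_0$; applying $\delta$ gives $T^{c_0} y = \sum_j a_j\,\delta\xi_j$. Since the $\delta\xi_j$ are a basis of a submodule on which the ambient valuation and the coefficient valuation differ by at most a constant $c_1$ depending only on the chosen $\xi_j$, from $\val(T^{c_0}y)\ge \lambda+c_0$ I get $\val(a_j)\ge \lambda + c_0 - c_1$ for every $j$, hence $\val(a_j\xi_j)\ge \lambda - c_1 - c_2$ where $c_2$ bounds the valuations of the $\xi_j$ themselves. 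Thus $x' := \sum_j (a_j/T^{c_0})\xi_j\in W\cap F^{\lambda-c}C^p$ for $c := 2c_1+c_2+c_0$ (a constant depending only on $W$), and $\delta x' = y$. This gives the desired inclusion $\delta(W)\cap F^\lambda C^{p+1}\subset \delta(W\cap F^{\lambda-c}C^p)$.

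I would carry out the steps in this order: (1) reduce to the torsion-free quotient $\overline{W}$ and establish it is free over $\Lambda_0$ of finite rank (here I use that $C^p$ is free and complete, so submodules and subquotients are well-behaved); (2) choose a $\Lambda_0$-basis of $\delta(W)$, or of a cofinite submodule thereof, and extract the finitely many constants $c_0,c_1,c_2$ comparing ambient valuation to coordinate valuation; (3) run the division argument above to produce the preimage $x'$ with the energy estimate. The main obstacle I anticipate is step (1)–(2): showing that a finitely generated torsion-free $\Lambda_0$-module is free and, more importantly, that the submodule $\delta(W)\subset C^{p+1}$ sits inside a free module in such a way that its intrinsic valuation and the restriction of $\val$ from $C^{p+1}$ differ only by a fixed constant — this is where completeness of $C^{p+1}$ and the absence of ``infinite denominators'' must be used carefully; once that comparability is in hand, the energy bookkeeping in step (3) is routine.
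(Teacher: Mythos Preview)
The paper does not give its own proof of this lemma: it is stated with the parenthetical ``(proved in Proposition 6.3.9 \cite{fooobook1})'' and then used as a black box in the proof of Theorem~\ref{spectral sequence statements}. So there is nothing in the paper to compare your argument against.

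That said, your plan is essentially the standard proof and is correct in outline. A couple of places you can tighten: since $\Lambda_0$ is a valuation ring, every finitely generated torsion-free $\Lambda_0$-module is free, so $\delta(W)$ itself is free of finite rank and you can take the $\delta\xi_j$ to be an honest $\Lambda_0$-basis (no need for the auxiliary constant $c_0$). The comparison you worry about in step~(2) is then immediate linear algebra: writing each $\delta\xi_j$ in a fixed $\Lambda_0$-basis of $C^{p+1}$ gives a matrix of rank $M$ over $\Lambda$, and Cramer's rule applied to any invertible $M\times M$ minor shows that $\val(a_j)\ge \val(y)-c_1$ with $c_1$ the valuation of that minor's determinant. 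Finally, note that for $\lambda\le c$ the inclusion \eqref{6.3.9 eqn} is vacuous because $F^{\lambda-c}C^p\supset F^0C^p=C^p$, so you only need your estimate for $\lambda$ large, which disposes of the concern about whether $a_j/T^{c_0}\in\Lambda_0$.
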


\begin{thm}
\label{spectral sequence statements}
Let $u\in \interior \Delta$, 
and $L = \mu^{-1}(u)$. 
Let 
$b\in H^1(L,\Lambda_0)$. 
There exists a spectral sequence with the following properties.  
\begin{enumerate}[i)]
    \item The $E_2$-page is given by $ H_G(L,\novringe)$, where 
    \begin{equation}
    E_2^{p,q}  \cong \complete{\bigoplus_{m\in \N}} H_G^{p-2m}(L,\R)\complete{\otimes}  \frac{\fil^q(\Lambda_0\cdot e^m)}{\fil^{q+1}(\Lambda_0\cdot e^m) } 
\end{equation}
\item $\forall r, p\in \N, \forall q\in \Z$, there exists a well-defined map $\delta_r: E_r^{p,q}\to E_r^{p+1,q+r-1}$ satisfying:
\begin{enumerate}[a)]
    \item $$\delta_r^{p+1,q+r-1} \circ \delta_r^{p,q}=0;$$
    \item \begin{equation}\label{Einfty}
    E_{r+1}^{p,q} \cong \frac{\ker \delta_r^{p,q}}{\image \delta_r^{p-1,q-r+1}}
    \end{equation}
    \item \[e^{\pm 1} \circ \delta_r^{p,q}=\delta_r^{p\pm 2, q}\circ e^{\pm 1}\]
\end{enumerate}
\item 
There exists some $r_0\geq 2$ with 
\begin{equation}\label{E2} E_2^{p,q} \quad \Longrightarrow\quad E_{r_0}^{p,q} \cong  E_{r_0+1}^{p,q} \cong \cdots \cong E_{\infty}^{p,q} = \frac{\fil^q HF_{G}^p\Bigp{ (L,b),(L, b),  \novringe}}{\fil^{q+1} HF_{G}^p\Bigp{ (L,b) , (L, b),  \novringe}}. \end{equation} 
\end{enumerate}
\end{thm}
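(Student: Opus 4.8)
The plan is to construct the spectral sequence associated to the decreasing integral filtration $\{\fil^q C^p\}$ defined just above the statement, and then to verify its three listed properties in turn. The general machinery of filtered complexes yields a spectral sequence with $E_r^{p,q}$ as defined (with differentials $\delta_r$ induced by $\delta = (\m_1^G)^b$), provided one checks convergence; the subtlety throughout is that $C^p$ is not a finite-dimensional vector space but a completed infinite direct sum of $\Lambda_0$-modules, so the usual $E_\infty = \mathrm{gr}\, H$ identification requires the Artin--Rees-type input from Lemma \ref{Prop 6.3.9}. I would therefore isolate at the outset the observation that $\delta = \delta_{1,0} + (\text{higher energy})$ with $\delta_{1,0} = d_G$ preserving $\fil^q$ (indeed preserving $\Omega_G^m \otimes \Lambda_0 e^n$ exactly) and $(\delta - \delta_{1,0})$ raising the $\lambda_0$-filtration level, so that the filtration is genuinely compatible with $\delta$ and the associated graded differential on the $E_0$-page is $d_G$.

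For property i), I would compute $E_0^{p,q} = \fil^q C^p / \fil^{q+1} C^p \cong \bigoplus_{m} \Omega_G^{p-2m}(L) \otimes (T^{q\lambda_0}\Lambda_0 / T^{(q+1)\lambda_0}\Lambda_0)\cdot e^m$, with $\delta_0$ induced by $\delta$; since $\delta$ only changes the $T$-adic valuation by multiples of $\lambda''>\lambda_0$ except through the $d_G$ term, $\delta_0$ is exactly the Cartan differential $d_G$ on each graded piece. Taking cohomology gives $E_1^{p,q}$ in terms of $H_G^{p-2m}(L,\R)$ tensored with the valuation-$q\lambda_0$ graded piece of $\Lambda_0 e^m$. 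The identification $E_2 \cong H_G(L,\novringe)$ then follows because the $\delta_1$ differential must vanish for degree/valuation reasons: $\delta_1$ raises $q$ by one, i.e. raises the $T$-valuation by exactly $\lambda_0$, but $\delta - d_G$ raises it by at least $\lambda'' > \lambda_0$, so there is no contribution — hence $E_2 = E_1 = H_G(L,\R)\otimes(\text{graded }\Lambda_0 e^m)$, which reassembles to $H_G(L,\novringe)$ with the stated filtration grading. The $e^{\pm 1}$-equivariance in ii c) is immediate since $e^{\pm 1}$ commutes with $d_G$ and with all $\m_{k,\beta}^G$ (it only shifts the $S(\g^*)$-degree / the $e^n$ bookkeeping), hence descends to every page; ii a) and ii b) are the standard formal consequences of the filtered-complex construction.

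The main work — and the main obstacle — is property iii): the eventual degeneration and the identification $E_\infty^{p,q} \cong \fil^q HF_G^p / \fil^{q+1} HF_G^p$. Here I would follow the argument of Proposition 6.3.9 / Chapter 6 of \cite{fooobook1} essentially verbatim, adapted to the $G$-equivariant Cartan model. The two points to establish are: (a) for fixed $(p,q)$ the groups $B_r^{p,q}$ stabilize once $r > q+2$ (already noted in the excerpt) and the inverse limit $E_\infty^{p,q} = \varprojlim_r E_r^{p,q}$ agrees with the honest subquotient $Z_\infty^{p,q}/B_\infty^{p,q}$ of $\ker\delta / \image\delta = HF_G^p$; and (b) the natural map $Z_\infty^{p,q}/(Z_\infty^{p,q}\cap(\image\delta + \fil^{q+1})) \to \fil^q HF_G^p/\fil^{q+1}HF_G^p$ is an isomorphism. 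Injectivity is formal; surjectivity is exactly where Lemma \ref{Prop 6.3.9} enters — one needs that a class in $HF_G^p$ lying in $\fil^q$ can be represented by a cocycle in $\fil^q C^p$, which fails naively in the completed setting but is rescued by the estimate $\delta(W)\cap F^\lambda C^{p+1}\subset \delta(W\cap F^{\lambda - c}C^p)$ applied to a finitely generated $W$ containing a chosen cocycle representative. Finally, to get an actual $r_0$ at which degeneration onto $E_\infty$ occurs (rather than merely a limit), I would invoke that $H_G^*(L,\R)$ is finitely generated over $\R$ and that the filtration jumps of any fixed homology class are bounded (again via Lemma \ref{Prop 6.3.9}, whose constant $c$ bounds how far a boundary can ``descend''), so that for $r$ large the differentials $\delta_r$ out of and into $E_r^{p,q}$ vanish and $E_r^{p,q} = E_\infty^{p,q}$. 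I expect the bookkeeping of the two filtrations ($E$ by $\R$-valued energy, $\fil$ by integer multiples of $\lambda_0$) and the careful application of the completeness/finite-generation hypotheses of Lemma \ref{Prop 6.3.9} to be the only genuinely delicate part; the rest is the standard spectral sequence of a filtered complex.
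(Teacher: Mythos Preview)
Your approach matches the paper's --- build the spectral sequence from the filtration $\fil^q$, identify the associated-graded differential as $d_G$, and invoke Lemma~\ref{Prop 6.3.9} for convergence --- but two points need correction. In part~i) your indexing is off by one relative to the paper's explicit definitions of $A_r, B_r, E_r$ given just before the theorem: under those, $E_1^{p,q} = \fil^q C^p/\fil^{q+1} C^p$ already, with $\delta_1: E_1^{p,q}\to E_1^{p+1,q}$ equal to $d_G$ on the associated graded, so $E_2 = H(E_1,\delta_1)$ is $H_G$ directly (this is exactly what the paper computes from $A_2,B_2$). Your additional step ``$E_2 = E_1$ since $\delta_1=0$'' is therefore unnecessary, and as stated does not follow from $\lambda'' > \lambda_0$ alone: that inequality only forces $\delta - d_G$ to raise the integer filtration by at least one, not two.

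In part~iii) you correctly locate Lemma~\ref{Prop 6.3.9} as the key input, but misidentify where it bites. Surjectivity of the map $\fil^q H^p(C,\delta) \to E_\infty^{p,q}$ is not the obstacle --- any $\delta$-cocycle in $\fil^q C^p$ already lies in every $A_r^{p,q}$, so it represents a class in $E_\infty$ tautologically. The paper instead applies the lemma with $W = E_2^{p,q}$ (finitely generated over $\Lambda_0$ because $H_G(L,\R)$ is finite-dimensional) to obtain $\delta(\fil^q C^p) \cap \fil^{q+r-1} C^{p+1} \subset \delta(\fil^{q+1} C^p)$ once $(r-1)\lambda_0 > c + \lambda_0$; this single estimate simultaneously shows that $\delta_r = 0$ on $E_r^{p,q}$ (giving the finite $r_0$) and that every class in $E_r^{p,q}$ can be adjusted by an element of $A_r\cap B_r$ to a genuine $\delta$-cocycle (giving surjectivity of $\pi_{p,q}$).
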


\begin{proof}
\begin{enumerate}[i)]
\item We compute 
\begin{align*}
    A_2^{p,q} 
    & = \{ x\in \fil^{q}C^p
    \mid \delta x \in \fil^{q+1}C^{p+1} \}   = \ker d_G \cap \fil^{q}C^p,  
    \\
    B_2^{p,q} 
    & =  \delta( \fil^{q} C^{p-1})\cap \fil^{q}C^p 
    + \fil^{q+1}C^{p}   =  \image   d_G  \cap   \fil^{q}C^p 
    + \fil^{q+1}C^{p}.\\
 \Rightarrow \quad  
 E_2^{p,q} & = \frac{A_{2}^{p,q}}{B_2^{p,q}\cap A_{2}^{p,q}} \cong \frac{A_{2}^{p,q}+ \fil^{q+1}C^p}{B_2^{p,q}}\\
   & \cong \frac{\Bigp{ \ker d_G \cap \fil^{q}C^p + \fil^{q+1}C^p }/\fil^{q+1}C^p  }
   { \Bigp{\image d_G  \cap   \fil^{q}C^p + \fil^{q+1}C^{p}} /\fil^{q+1}C^p } \\
    & \cong \bigoplus_{m\in \N} H_G^{p-2m}(L;\R)\otimes  (\fil^q(\Lambda_0\cdot e^m)/\fil^{q+1}(\Lambda_0\cdot e^m)) 
\end{align*}

\item Define $\delta_r[x] = [\delta x] \in E_r^{p+1,q+r-1}$. Then 
\begin{align*}
\delta (A_r^{p,q} )
& =\fil^{q+r-1}C^{p+1} \cap \delta (\fil^qC^p) \\
& \subset  \fil^{q+r-1}C^{p+1} \cap \delta^{-1} (\{0\}) \\
& \subset  \fil^{q+r-1}C^{p+1} \cap \delta^{-1} (\fil^{q+2r-2}C^{p+2}) = A_r^{p+1,q+r-1} .     
\end{align*}
Also, 
\begin{align*}
 \delta (A_r^{p,q}\cap B_r^{p,q}) 
 & = \delta  \Bigp{\delta (\fil^{q-r+2} C^{p-1}) \cap\fil^qC^p +   \delta^{-1}(\fil^{q+r-1}C^{p+1})\cap  \fil^{q+1}C^p }  \\
 & = \delta \Bigp{\fil^{q+1}C^p \cap \delta^{-1}(\fil^{q+r-1}C^{p+1})  } \\
 & = \delta (\fil^{q+1}C^p ) \cap \fil^{q+r-1}C^{p+1}\\
 & \subset  \delta (\fil^{q+1}C^{p }) \cap\fil^{q+r-1}C^{p+1} +  \fil^{q+r}C^{p+1} \cap \delta^{-1}(\fil^{q+2r-2}C^{p+2}) \\
 & = A_r^{p+1,q+r-1}\cap B_r^{p+1,q+r-1} .
\end{align*}
Therefore, $\delta_r$ is well-defined.
\begin{enumerate}[a)]
\item $\delta_r^{p+1,q+r-1} \circ \delta_r^{p,q}=0$ follows from $\delta \circ \delta =0$. 
\item 
We have
\begin{equation*}
\begin{split}
& \ker \delta_r^{p,q}  \\
= & \frac{A_r^{p,q}\cap \delta^{-1}(A_r^{p+1,q+r-1}\cap B_r^{p+1,q+r-1})}{A_r^{p,q}\cap B_r^{p,q} }\\
= & \frac{A_r^{p,q}\cap \delta^{-1}( B_r^{p+1,q+r-1})}{A_r^{p,q}\cap B_r^{p,q} } \quad \text{ since } A_r^{p,q}\subset \delta^{-1}(A_r^{p+1,q+r-1})\\
= & \frac{\fil^qC^p \cap \delta^{-1}(\fil^{q+r-1}C^{p+1}) \cap \delta^{-1} \Bigp{\delta (\fil^{q+1}C^{p } )\cap \fil^{q+r-1}C^{p+1} + \fil^{q+r}C^{p+1}}}{A_r^{p,q}\cap B_r^{p,q}}
\\
= & \frac{\fil^qC^p \cap  \delta^{-1} \Bigp{\delta (\fil^{q+1}C^{p } )\cap \fil^{q+r-1}C^{p+1} + \fil^{q+r}C^{p+1}} }{A_r^{p,q}\cap B_r^{p,q}} \\
& \qquad  [ \text{ since  } \Bigp{\delta (\fil^{q+1}C^{p } )\cap \fil^{q+r-1}C^{p+1} + \fil^{q+r}C^{p+1} } \subset \fil^{q+r-1} ] \\
= & \frac{\fil^{q+1}C^p \cap \delta^{-1}(\fil^{q+r-1}C^{p+1}) + \fil^q C^p \cap \delta^{-1}(\fil^{q+r}C^{p+1})}{A_r^{p,q}\cap B_r^{p,q}}.
\end{split}
\end{equation*}
\begin{equation*}
\begin{split}
& \image \delta_{r}^{p-1,q-r+1} \\
= & \frac{\delta (A_r^{p-1,q-r+1}) + A_r^{p,q}\cap B_r^{p,q}}{ A_r^{p,q}\cap B_r^{p,q}}\\ 
= & \frac{\fil^qC^p\cap \delta (\fil^{q-r+1}C^{p-1}) + \fil^q C^p\cap  \delta (\fil^{q-r+2}C^{p-1}) + \fil^{q+1}C^p \cap \delta^{-1}(\fil^{q+r-1}C^{p+1})}{ A_r^{p,q}\cap B_r^{p,q}}\\
= & \frac{\fil^qC^p\cap \delta (\fil^{q-r+1}C^{p-1})  + \fil^{q+1}C^p \cap \delta^{-1}(\fil^{q+r-1}C^{p+1})}{ A_r^{p,q}\cap B_r^{p,q}}  . 
\end{split} 
\end{equation*}
Hence,
\begin{equation*}
\begin{split}
& \frac{\ker \delta_r^{p,q}}{\image \delta_r^{p-1,q-r+1}} \\
= & \frac{\fil^{q+1}C^p \cap \delta^{-1}(\fil^{q+r-1}C^{p+1}) + \fil^q C^p \cap \delta^{-1}(\fil^{q+r}C^{p+1})}{\fil^qC^p\cap \delta (\fil^{q-r+1}C^{p-1})  + \fil^{q+1}C^p \cap \delta^{-1}(\fil^{q+r-1}C^{p+1})}\\
 \cong &  \frac{\fil^{q}C^{p}\cap  \delta^{-1}(\fil^{q+r}C^{p+1})}{ \fil^qC^p\cap \delta(\fil^{q-r+1} C^{p-1})  + \fil^{q+1}C^p \cap \delta^{-1}( \fil^{q+r}C^{p+1}) } \quad \text{by Lemma }\ref{isomorphisms for equivalent definitions of Er} \\
= & \frac{A_{r+1}^{p,q} }{A_{r+1}^{p,q}\cap B_{r+1}^{p,q}} = E_{r+1}^{p,q}. 
\end{split} 
\end{equation*}
\item follows from the fact that $\delta$ commutes with multiplying by  $e^{\pm 1}$ 
and that 
$\deg e^{\pm 1} = \pm 2$. 
\end{enumerate}
\item 
We consider the restriction of the original spectral sequence to pages starting from $E_2$. Take $C =E_2$ and $W = E_2^{p,q}$ in Lemma \ref{Prop 6.3.9} to get a constant $c$ such that \autoref{6.3.9 eqn} holds. 

Let $r>2$ be big enough such that $(r-1)\lambda_0 - c > \lambda_0$. 
Let $r\geq r_0$. 
Then 
 \[ (q+r-1)\lambda_0-c = q\lambda_0 +(r-1)\lambda_0-c  > (q+1)\lambda_0  \]
 and thus, by \autoref{6.3.9 eqn}, 
\[
\delta(\fil^q C^{p}) \cap \fil^{q+r-1}C^{p+1} \subset \delta \left(\fil^{q+1}C^{p} \right).     
\]
Consider any $x\in A_r^{p,q} =  \fil^qC^p \cap \delta^{-1}(\fil^{q+r-1}C^{p+1}) $.   
Then $\delta x \in \delta (\fil^{q+1}C^p)$ and thus there exists 
\begin{equation}
\label{consequence of 6.3.9} y \in \fil^{q+1}C^p \cap \delta^{-1}(\fil^{q+r-1}C^{p+1}) \subset A_r^{p,q}\cap B_r^{p,q} 
\end{equation}
such that $[\delta x] = [\delta y]$. 
Thus, $\delta_r[x] = \delta_r[y]=0$.  
    Therefore, there exists a $r_0 \gg 2$ such that  we have
    \[
    E_{r_0}^{p,q} \cong  E_{r_0+1}^{p,q} \cong \cdots \cong E_{\infty}^{p,q}.
    \]

 Consider the map
 \[
 \pi_{p,q}: \fil^q H^p (C,\delta )\to E_{\infty}^{p,q} 
 \]
defined as follows. 
An element $[x] \in \fil^q H^p (C,\delta )$ is represented by 
\[ 
x\in \fil^q C^p \cap \delta^{-1}(0)\subset  \fil^q C^p \cap \delta^{-1}(\fil^{q+r-1}C^{p+1}) = A_r^{p,q}
\]  
for any $r\geq \max \{ r_0, q+2\}$. 
We define $\pi_{p,q}[x]$ to be the class represented by $x$ in $E_{\infty}^{p,q} \cong \frac{A_{r}^{p,q}}{A_{r}^{p,q}\cap B_{r}^{p,q}}$. 
    
    Suppose $x, x' \in \fil^q  H^p(C,\delta)$ with $x-x'=\delta y$ for some $y\in \fil^q C^{p-1}$.  
    Let $r\geq \max \{ r_0, q+2\} $. 
    Then 
     \[ \delta y\in \delta (\fil^q C^{p-1}) = \delta (\fil^{q-r+2}C^{p-1}) \cap \fil^q C^{p} \subset A_{r}^{p,q}\cap B_{r}^{p,q} . \] 
    This implies 
    $\pi_{p,q}[x]  = \pi_{p,q} [x']$ in $E_{r}^{p,q}$ and hence the well-definedness of $\pi_{p,q}$. 
 
    Let $[x] \in E_{\infty}^{p,q}$. Then for $r$ large we have $[x] \in E_{r}^{p,q}$ and that 
    $\exists y$ satisfying \autoref{consequence of 6.3.9}.  
    \[ [x-y]=[x]\in E_{r}^{p,q}  \qand \delta (x-y)=0.  \]
    Thus, $[x-y]\in \fil^qH^p(C, \delta)$ with $ \pi_{p,q}[x-y] = [x]$, showing the surjectivity of $\pi_{p,q}$. 
 
    Suppose $[x]\in \ker \pi_{p,q} $. 
    Then, for $r$ large enough, we have 
    \[ x \in \delta (\fil^{q-r+2}C^{p-1} ) +\fil^{q+1}C^{p} 
    = \delta ( C^{p-1} ) +\fil^{q+1}C^{p}. \]
   Hence, $[x] \in \fil^{q+1} H^p(C,\delta)$ and thus $\ker \pi_{p,q} \subset  \fil^{q+1} H^p(C,\delta)$. 
  Moreover, for large enough $r$, 
   \[ A_r^{p,q} \cap B_r^{p,q} = \delta(C^{p-1}) + \fil^{q+1} C^{p} \Rightarrow \fil^{q+1} H^p(C,\delta) \subset   \ker \pi_{p,q}.\]
  Therefore, $\ker \pi_{p,q} = \fil^{q+1} H^p(C,\delta).$
\end{enumerate}
\end{proof}

\begin{thm}
\label{main theorem}
Let $u\in \interior \Delta$ and $b\in H^1(L(u),\Lambda_0)$.
Then 
$b = \sum\limits_{i=1}^n x_ie_i  = \sum\limits_{i=1}^n c_i\alpha_i$, 
where $e_1,\ldots, e_n$ generate $H^1(L(u),\Z)\cong M$ as in Section \ref{Section setup} and $\alpha_{r+1},\ldots, \alpha_n$ generate $H_G^1(L(u),\Z)$. 
Let 
\begin{equation}
\label{CritGPO definition}
 \Crit_G^{\Delta}(\po): = \set{ (y_1,\ldots, y_n)\in (\Lambda^*)^n }{ 
 \begin{aligned}
& \frac{\partial \po}{\partial c_i}=0 \quad \forall r+1\leq i \leq n \\
& (\val(y_1),\ldots, \val(y_n)) \in \interior \Delta
\end{aligned} 
} 
\end{equation}
and let  $\mlag_G(X,\omega)$ be the set
\begin{equation}
\label{mlagG definition}  
\set{ 
(u,b) \in \bigcup_{u\in \interior \Delta} \{u\}\times H^1\lrp{L(u), \frac{\Lambda_0}{2\pi i \Z}} 
}
{ 
HF_G((L(u), b),(L(u), b),\nove) \ne 0
}. 
\end{equation}
Then the following are equivalent. 
\begin{enumerate}[i)]
\item \label{crit po}  $ (y_1,\ldots, y_n) = (e^{x_1}T^{u_1}, \ldots,e^{x_n}T^{u_n} )\in \Crit_G^{\Delta}(\po)$. 
    \item \label{isomorphic to HG}
   \begin{equation*}
       HF_G\left( (\mu^{-1}(u), b),(\mu^{-1}(u), b),\novringe\right)\cong H_G(\mu^{-1}(u), \R)\otimes_{\R} \novringe. 
   \end{equation*} 
    \item \label{non-zero HF}
    $(u,b) \in \mlag_G(X,\omega)$; i.e. \[   HF_G\left( (\mu^{-1}(u), b),(\mu^{-1}(u), b),\novringe\right)\ne  0 . \]
\end{enumerate}
\end{thm}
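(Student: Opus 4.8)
The three conditions will be shown equivalent via the cycle \ref{crit po}\,$\Rightarrow$\,\ref{isomorphic to HG}\,$\Rightarrow$\,\ref{non-zero HF}\,$\Rightarrow$\,\ref{crit po}. Most of the analytic content has already been packaged into Corollary \ref{weak bounding cochains are where partial derivatives in the normal directions vanish} and the spectral sequence of Theorem \ref{spectral sequence statements}, so the argument is largely an assembly. The implication \ref{isomorphic to HG}\,$\Rightarrow$\,\ref{non-zero HF} is immediate: $H_G(\mu^{-1}(u),\R)$ contains the class of the unit $PD[L(u)]$ and is therefore nonzero, hence so is $H_G(\mu^{-1}(u),\R)\otimes_{\R}\novringe$.

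For \ref{crit po}\,$\Rightarrow$\,\ref{isomorphic to HG}, I would first translate the critical point condition into Floer-theoretic language. Under the coordinate identification in the statement, $f_i$ agrees with $\partial\po^u/\partial c_i$, so \ref{crit po} says precisely that $\partial\po^u/\partial c_i(b)=0$ for all $r+1\le i\le n$. By Lemma \ref{m1b is partial derivative}, $(\m_1^G)^b(\alpha_i)=\bigl(\partial\po^u/\partial c_i(b)\bigr)PD[L(u)]e$, and Corollary \ref{weak bounding cochains are where partial derivatives in the normal directions vanish} then shows this is equivalent to $(\m_1^G)^b=0$ on $H_G(L(u),\Lambda_0)$, i.e.\ to the vanishing of the differential on the $E_2$-page $H_G(L(u),\novringe)$ of the spectral sequence of Theorem \ref{spectral sequence statements}. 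The core step is to deduce that the spectral sequence degenerates at $E_2$: the higher differentials $\delta_r$ ($r\ge 2$) are all induced by $\delta=(\m_1^G)^b$ acting on representatives, and since $(\m_1^G)^b-d_G$ strictly raises the energy filtration, the hypothesis that $(\m_1^G)^b$ already carries $d_G$-cocycles into $d_G$-coboundaries lets one correct any representative to higher filtration order without changing its class, giving $\delta_r=0$ for all $r\ge 2$ by induction on $r$. Hence $E_\infty^{p,q}\cong E_2^{p,q}$, and because $E_2=H_G(L(u),\R)\otimes\novringe$ is finitely generated and free over $\novringe$ there are no extension problems, so $HF_G((L(u),b),(L(u),b),\novringe)\cong H_G(L(u),\R)\otimes\novringe$, which is \ref{isomorphic to HG}.

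For \ref{non-zero HF}\,$\Rightarrow$\,\ref{crit po} I would argue by contraposition. If \ref{crit po} fails, Corollary \ref{weak bounding cochains are where partial derivatives in the normal directions vanish} produces an index $i_0$ with $r+1\le i_0\le n$ and $P:=\partial\po^u/\partial c_{i_0}(b)\ne 0$, and by Lemma \ref{m1b is partial derivative} together with the degree count recorded after that corollary (the output of $(\m_1^G)^b$ on a degree-$1$ class is a scalar multiple of the unit), $(\m_1^G)^b(\alpha_{i_0})=P\cdot PD[L(u)]e$ is a nonzero multiple of the unit. Since $e$ is invertible and, over the field $\nove$, $P$ is invertible, the class of the unit becomes a coboundary in $\bigl(\Omega_G(L(u))\complete\otimes\nove,(\m_1^G)^b\bigr)$; unitality of $HF_G$ then forces $HF_G((L(u),b),(L(u),b),\nove)=0$. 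To upgrade this to the $\novringe$-coefficient statement in \ref{non-zero HF}, I would feed the nonvanishing of $(\m_1^G)^b$ on $H_G(L(u),\novringe)$ back into the spectral sequence: it makes some $\delta_{r_1}$ nonzero and, by the computation above, that differential carries a class onto a filtration shift of the unit tower; since the spectral sequence is a module over the unital ring $HF_G$, this collapses $E_\infty$ to zero, whence $HF_G((L(u),b),(L(u),b),\novringe)=0$ and \ref{non-zero HF} fails. This also reconciles \ref{non-zero HF} with the definition of $\mlag_G(X,\omega)$ via $\nove$, since the chain of equivalences shows the $\nove$- and $\novringe$-nonvanishing coincide.

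The main obstacle, in both directions, is the passage from the chain-level statement about $(\m_1^G)^b$ to the conclusion about $HF_G$: one must control every page of the spectral sequence, not merely $E_2$, and be careful that $\novringe$ is not a field, so that a single nonzero differential need not a priori annihilate all $T$-torsion. Both the degeneration in step 2 and the total collapse in step 3 hinge on the special form of $(\m_1^G)^b$ in the toric setting—its image on degree-$1$ classes is always a scalar multiple of the unit—combined with the unitality of $HF_G$ and the fact that $e$ is a unit; these are exactly what make the arguments go through over $\novringe$ rather than only over $\nove$.
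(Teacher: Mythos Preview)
Your argument and the paper's use the same ingredients (Corollary~\ref{weak bounding cochains are where partial derivatives in the normal directions vanish}, the degree count, and the spectral sequence of Theorem~\ref{spectral sequence statements}) but are routed differently. The paper's proof is very short: it first asserts i)\,$\Leftrightarrow$\,iii) from Corollary~\ref{weak bounding cochains are where partial derivatives in the normal directions vanish}, notes ii)\,$\Rightarrow$\,iii) is trivial, and proves iii)\,$\Rightarrow$\,ii) by contraposition---if $HF_G\not\cong H_G\otimes\novringe$ the spectral sequence fails to collapse at $E_2$, which the paper identifies with $(\m_1^G)^b(\alpha)\ne 0$ for some $\alpha\in H_G^1$; the degree count then forces the image into $\novringe\cdot PD[L]e$, and the exterior-algebra structure of $H_G(L)\cong\Lambda^*(\alpha_{r+1},\dots,\alpha_n)$ gives $E_3=0$ outright. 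So where you separately argue i)\,$\Rightarrow$\,ii) by a page-by-page degeneration and $\neg$i)\,$\Rightarrow\neg$iii) via invertibility over $\nove$ followed by an ``upgrade,'' the paper extracts both from the single dichotomy ``either $(\m_1^G)^b|_{H_G^1}=0$ and the sequence collapses, or it is nonzero and $E_3=0$.''

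Your $\nove$-route is fine for the field-coefficient version, but the upgrade step you sketch---``the spectral sequence is a module over the unital ring $HF_G$, so hitting the unit collapses $E_\infty$''---is circular as phrased: you are invoking the ring you are trying to show vanishes. What actually makes this work is multiplicativity of the $E_r$-pages themselves (each $\delta_r$ is a derivation on an exterior algebra, so a nonzero value on a generator kills everything), which is precisely the mechanism behind the paper's one-line claim $E_3=0$. For i)\,$\Rightarrow$\,ii) your induction is correct but heavier than needed: Lemma~\ref{m1b is partial derivative} is a \emph{chain-level} identity, so when all $\partial\po/\partial c_i$ vanish each $\alpha_i$ is an honest $(\m_1^G)^b$-cocycle, and unitality of the $A_\infty$-structure makes their iterated $\m_2$-products cocycles too, supplying $(\m_1^G)^b$-closed representatives for all of $H_G(L)$ and killing every $\delta_r$ at once.
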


\begin{proof}
The equivalence of \ref{crit po} and \ref{non-zero HF} follows from Proposition \ref{weak bounding cochains are where partial derivatives in the normal directions vanish}. 
\ref{isomorphic to HG} $\Rightarrow$ \ref{non-zero HF} is trivial. 
If \ref{isomorphic to HG} does not hold, then the spectral sequence in Theorem \ref{spectral sequence statements} does not collapse at $E_2$ page. 
This is equivalent to saying 
$(\m_1^G)^b(\alpha) \ne 0$ for some element $\alpha \in H_G^1(L,\R)\otimes_{\R} \novringe$. 
By degree count, $(\m_1^G)^b(\alpha)\in \novringe$, which implies $E_3=0$ and thus $HF_G\left( (\mu^{-1}(u), b),(\mu^{-1}(u), b),\novringe\right)= 0$. 
\end{proof}

\section{\texorpdfstring{$\Crit_G^{\Delta}(\po)$}{Crit G Delta (PO)}} 
\label{Section CritG(PO)}
We denote the coordinate valuation map by 
\[ \trop: \Lambda^n \to (\R \cup \{\infty\})^n, \qquad (y_1,\ldots, y_n)\mapsto (\val(y_1), \ldots,\val(y_n) ).  \]
\subsection{
\texorpdfstring{$\Crit_G^{\Delta}(\po)$}{Crit G Delta (PO)} is a rigid analytic space}

\begin{lem}
\label{polytopal domain}
    Suppose $\Delta\subset \R^n$ is a polytope of the form \autoref{moment polytope}. 
    If $A$ is an affinoid algebra such that  $\trop^{-1}(\Delta) = \Sp A$, then $A$ is a Cohen-Macaulay ring of dimension $n$. 
    Moreover,  $\trop^{-1}(\interior \Delta)$ is a rigid analytic space. 
\end{lem}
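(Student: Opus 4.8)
\begin{sketch}
The plan is to recognize $\trop^{-1}(\Delta)$ as an affinoid subdomain of an explicit smooth affinoid --- a Laurent polyannulus around $\Delta$ --- and then read off the Cohen--Macaulay property and the dimension from that polyannulus, handling the interior at the end by an admissible exhaustion. First I would use that $\Delta$, being a polytope, is bounded, so $\Delta\subset(-R,R)^n$ for some $R>0$; then $\mathbb{T}_R:=\trop^{-1}([-R,R]^n)$ is the affinoid $\Sp\Lambda\langle s_1,t_1,\dots,s_n,t_n\rangle/(s_jt_j-T^{2R}:1\le j\le n)$ with $s_j=T^{R}y_j$, $t_j=T^{R}y_j^{-1}$, on which every monomial $y^{v}$, $v\in\Z^n$, is a regular function. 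Writing $\Delta=\bigcap_{i=1}^m\{u:l_i(u)=\langle u,v_i\rangle-\lambda_i\ge 0\}$ as in \autoref{moment polytope}, the condition $\trop(y)\in\Delta$ is exactly $\val(y^{v_i}T^{-\lambda_i})\ge 0$, i.e. $|y^{v_i}T^{-\lambda_i}|\le 1$, for all $i$. Hence with $f_i:=y^{v_i}T^{-\lambda_i}\in\mathcal{O}(\mathbb{T}_R)$ one has $\trop^{-1}(\Delta)=\{x\in\mathbb{T}_R:|f_i(x)|\le 1,\ i=1,\dots,m\}$, a Weierstrass --- in particular affinoid --- subdomain of $\mathbb{T}_R$. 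By uniqueness of the affinoid structure this identifies $A$ with $\mathcal{O}(\trop^{-1}(\Delta))$, the polytopal algebra of Laurent series converging on $\trop^{-1}(\Delta)$.

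Next I would check that $\mathbb{T}_R$ is regular. The Jacobian matrix of the defining equations $s_jt_j-T^{2R}$ is $[\,\mathrm{diag}(t_j)\mid\mathrm{diag}(s_j)\,]$, which has rank $n$ at every maximal ideal, since $s_jt_j=T^{2R}$ is a unit of $\Lambda$ and so $s_j$ and $t_j$ never vanish simultaneously; by the Jacobian criterion $\mathbb{T}_R$ is smooth over $\Lambda$ of pure dimension $n$, hence regular and (being connected) integral. Then I would transfer this to $A$ by the standard fact that an affinoid subdomain $U\hookrightarrow V$ of rigid spaces induces an isomorphism on completed local rings at every point of $U$: applied to $\trop^{-1}(\Delta)\hookrightarrow\mathbb{T}_R$, all completed local rings of $A$ are regular, so $A$ is a regular ring and in particular Cohen--Macaulay. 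For the dimension, $\trop^{-1}(\Delta)$ is a nonempty admissible open (it contains $(T^{u_1},\dots,T^{u_n})$ for any $u\in\Delta$, since $\Lambda$ has value group $\R$) of the integral $n$-dimensional space $\mathbb{T}_R$, so $\dim A=n$. A route avoiding the local-ring statement: the Weierstrass algebra is $\mathcal{O}(\mathbb{T}_R)\langle\xi_1,\dots,\xi_m\rangle/(\xi_i-f_i)$, i.e. the Cohen--Macaulay ring $\mathcal{O}(\mathbb{T}_R)$ with $m$ Tate variables adjoined and quotiented by the regular sequence $\xi_i-f_i$, hence Cohen--Macaulay of dimension $(n+m)-m=n$.

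For the ``moreover'', I would exhaust: for small rational $\epsilon>0$ set $\Delta_\epsilon:=\{u:l_i(u)\ge\epsilon,\ i=1,\dots,m\}$; these are again polytopes of the form \autoref{moment polytope}, they increase as $\epsilon\downarrow 0$, and their union is $\{u:l_i(u)>0\ \forall i\}=\interior\Delta$. Each $\trop^{-1}(\Delta_\epsilon)$ is affinoid by the first part, and for $\epsilon>\epsilon'$ the inclusion $\trop^{-1}(\Delta_\epsilon)\hookrightarrow\trop^{-1}(\Delta_{\epsilon'})$ is a Weierstrass subdomain (cut out by $|y^{v_i}T^{-(\lambda_i+\epsilon)}|\le 1$). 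Finally I would verify that $\{\trop^{-1}(\Delta_\epsilon)\}_{\epsilon>0}$ is an admissible cover of $\trop^{-1}(\interior\Delta)$: for any affinoid $\Sp D\to\trop^{-1}(\interior\Delta)$, the image under $\trop$ of the compact maximal spectrum of $D$ is a compact subset of $\interior\Delta$, hence contained in some $\Delta_\epsilon$, so the morphism factors through $\trop^{-1}(\Delta_\epsilon)$. Thus $\trop^{-1}(\interior\Delta)$ is an admissible open of $\mathbb{T}_R$, an increasing union of affinoids with Weierstrass transition maps --- a quasi-Stein rigid analytic space over $\Lambda$.

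The hard part is not geometric but foundational: making sure the rigid-analytic machinery is invoked legitimately over the Novikov field $\Lambda$, which is complete and algebraically closed but not discretely valued --- namely that $\trop^{-1}(\Delta)$ is genuinely affinoid with the expected Laurent-series algebra, that affinoid subdomains induce isomorphisms of completed local rings (so regularity and dimension descend), and that the exhaustion in the previous paragraph is an admissible cover. If one prefers to be self-contained and not cite ``affinoid subdomains inherit regularity'', the main thing to spell out is the alternative in the second paragraph: that adjoining a Tate variable preserves the Cohen--Macaulay property and that the $\xi_i-f_i$ form a regular sequence, so that $A=\mathcal{O}(\mathbb{T}_R)\langle\xi_1,\dots,\xi_m\rangle/(\xi_i-f_i)$ is Cohen--Macaulay of the stated dimension directly.
\end{sketch}
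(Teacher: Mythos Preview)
Your argument is correct and takes a genuinely different route from the paper. The paper does not work inside an ambient polyannulus; instead it uses the Delzant (smoothness) condition on $\Delta$ to produce a surjection $\Lambda[z_1^{\pm1},\dots,z_m^{\pm1}]\to\Lambda[y_1^{\pm1},\dots,y_n^{\pm1}]$ via $z_i\mapsto y^{v_i}$, identifies $A$ with $T_{m,\lambda}/(\ker\varphi)T_{m,\lambda}$, and then simply cites Rabinoff (Proposition~6.9) for both the affinoid structure and the Cohen--Macaulay property of dimension $n$. For the interior, the paper invokes Bosch 5.1/Proposition~7 (strict norm inequalities define admissible opens) in one line, rather than building an exhaustion by the $\Delta_\epsilon$. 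Your approach buys more: you actually prove regularity (not just Cohen--Macaulay), you do not need the Delzant condition on $\Delta$, and you are largely self-contained rather than deferring to Rabinoff. The paper's approach is shorter and, for the interior, considerably slicker --- your exhaustion argument is correct but heavier than the single citation the paper uses. One small remark on your alternative route: the regular-sequence claim for $(\xi_i-f_i)$ is not quite as immediate as you suggest, since the successive quotients need not remain integral domains in general; it does go through here because each intermediate Weierstrass domain is again a polytopal domain, but your primary route via completed local rings is the cleaner of the two.
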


\begin{proof}
Suppose the moment polytope $\Delta$ is defined by $m$ affine inequalities. 
\[ \Delta = \bigcap_{i=1}^m \set{ u\in M_{\R} }{\pair{u,v_i} - \lambda_i\geq 0}, \]
where $m>n$ is the number of facets of $\Delta$ and each $v_i = (v_{i,1}\ldots, v_{i,n}) \in N_{\Z}$ is the inner normal vector of the $i$-th facet. 
Denote $y^{v_i} := y_1^{v_{i,1}}\cdots y_n^{v_{i,n}}$. 
Then 
\begin{align*}
\trop^{-1}(\Delta) 
& = 
\{ (y_1,\ldots, y_n)\in \Lambda^n\mid (\val(y_1), \ldots, \val(y_n))\in \Delta\} \\
& = 
\{ (y_1,\ldots, y_n)\in \Lambda^n\mid \val(y^{v_i}) - \val(T^{\lambda_i})\geq 0 \quad \forall 1\leq i \leq m\} 
\\
& = 
\set{ (y_1,\ldots, y_n)\in \Lambda^n }
{\left| \frac{y^{v_i}}{T^{\lambda_i}} \right| \leq 1 \quad \forall 1\leq i \leq m }.
\end{align*}

By the smoothness of the Delzant polytope, the linear map 
\[ \Tilde{\varphi}: \Z^{m} \twoheadrightarrow M_{\Z} \cong \Z^n, \qquad (c_1,\ldots, c_m)\mapsto 
\begin{pmatrix}
v_{1,1}  & \cdots & v_{m,1}\\
\vdots  & \ddots &  \vdots\\
v_{1,n} & \cdots & v_{m,n}
\end{pmatrix}
\begin{pmatrix}
c_1 \\
\vdots \\
c_m
\end{pmatrix} , 
\] 
is surjective, and it induces a surjective ring homomorphism 
\begin{equation} \label{varphi}
\varphi: 
\Lambda[z_1^{\pm 1},\ldots, z_m^{\pm 1}] \twoheadrightarrow 
\Lambda[y_1^{\pm 1},\ldots, y_n^{\pm 1}], \qquad z^c \mapsto y^{\Tilde{\varphi}(c)}.     
\end{equation}
Here $z^c := z_1^{c_1}\cdots z_m^{c_m}$ if $c=(c_1,\ldots, c_m)$. 
In particular, 
$ 
\varphi(z_i) =y^{v_i}. 
$  

Let $\lambda = (\lambda_1,\ldots, \lambda_m)$. 
Let  \begin{align*}
 T_{m,\lambda} 
 & 
 = \Lambda\pair{z_1 T^{-\lambda_1}, \ldots, z_mT^{-\lambda_m}},
 \end{align*}
Indeed, by \cite{Rabinoff} Proposition 6.9, $ \trop^{-1}(\Delta) \cong  \Sp A$, where 
\begin{equation}\label{affinoid algebra for polytope}
 A = T_{m, \lambda} / (\ker \varphi)T_{m, \lambda}   
\end{equation}
 is a $\Lambda$-affinoid algebra of dimension $n$ and $\trop^{-1}(\Delta)$ is a $\Lambda$-affinoid space. 
Then 
\[ 
\trop^{-1}(\interior \Delta) \cong \{ z\in \trop^{-1}(\Delta)\mid |z_i T^{-\lambda_i}|<1 \quad \forall 1\leq i \leq m \} 
\]
is an admissible open subset of $\trop^{-1}(\Delta)$ by Proposition \cite{bosch} 5.1/Proposition 7 (see Proposition \ref{strict norm inequalities define admissible opens}).  
Therefore, $ \trop^{-1}(\interior \Delta)$ is also a rigid analytic $\Lambda$-space. 
\end{proof} 

Moreover, 
by \cite{Rabinoff} Proposition 6.9 and 
\cite{Rabinoff} Remarks 6.5 and 6.6, 
\begin{align}
\label{restricted power series on Delta}
   \Lambda \pair{\Delta} 
  &  := \set{ \sum_{c\in \Z^n} a_cy^c} { \val (a_c) + \pair{u,c} \to \infty \quad \text{as } |c|\to \infty \quad \forall u\in \Delta }
\end{align} 
 is the completion of $ \Lambda[y_1^{\pm 1}, \ldots, y_n^{\pm 1}]$ with respect to the norm $|\cdot|_{\Delta}: \Lambda[y_1^{\pm 1}, \ldots, y_n^{\pm 1}] \to \R_{\geq 0}$, which is defined by
\[ \left| \sum_{c\in \Z^n} a_cy^c \right|_{\Delta}: = \sup_{\substack{c\in \Z^n\\ u\in \Delta}} |a_c|_{\Lambda} \cdot \exp(-\pair{u,c}). \]
We can alternatively argue that $\Lambda\pair{\Delta}$ is an affinoid algebra as follows. Since $\Lambda\pair{\Delta}$ is a $\Lambda$-Banach algebra with this norm and $|y^{v_i}T^{-\lambda_i}|_{\Delta} = \sup\limits _{u\in \Delta} e^{- (\pair{u,v_i}-\lambda_i )}  \leq e^0 <1$ for all $1\leq i \leq m$, we have a continuous homomorphism $\Phi: \Lambda\pair{z_1,\ldots, z_m} \to \Lambda\pair{\Delta}$ prescribed by $z_i\mapsto y^{v_i}T^{-\lambda_i}$ according to \cite{bgr} 6.1.1/Proposition 4.  
Moreover, by the smoothness of the polytope, since $v_1,\ldots, v_m$ contains a $\Z$-basis of $\Z^n$, every $c  \in \Z^n$ is a $\Z$-linear combination $ c = \sum_{i=1}^m c(v_i)v_i$ of $v_1,\ldots, v_m$. 
Thus, if $f = \sum_{c\in\Z^n} a_{c}y^c \in \Lambda\pair{\Delta}$, then 
\[ f = \Phi \lrp{ \sum_{c\in\Z^n} a_c T^{\sum\limits_{i=1}^m c(v_i)\lambda_i} z_1^{c(v_1)}\cdots z_m^{c(v_m)}}  . \] 
As $|c(v_1)|+\cdots +|c(v_1)|\to \infty $, we have 
\[ \val \lrp{ a_c T^{ \sum\limits_{i=1}^m c(v_i)\lambda_i} }  =\val(  a_c ) + \sum_{i=1}^m c(v_i)\lambda_i\leq  \val(  a_c ) + \sum_{i=1}^m c(v_i)\pair{u,v_i}\to \infty\]  for all $u\in \Delta$. 
Therefore, $\sum_{c\in\Z^n} a_c T^{\sum\limits_{i=1}^m c(v_i)\lambda_i} z_1^{c(v_1)}\cdots z_m^{c(v_m)}\in \Lambda\pair{z_1,\ldots, z_m}$.  
This shows that $\Phi$ is a continuous epimorphism and thus $\Lambda\pair{\Delta}$ is a $\Lambda$-affinoid algebra (\cite{bgr} 6.1.1/Definition 1), commonly denoted by $\Lambda\pair{ y^{v_1}T^{-\lambda_1}, \ldots,  y^{v_m}T^{-\lambda_m}}$.

\begin{prop}\label{CritGPO is a rigid analytic space}
Let $X$ be a compact symplectic toric manifold as in Section \ref{Section setup}. Then $\Crit_G^{\Delta}(\po)$ is a rigid analytic space. Moreover, the closure of the image of the map $\trop: \Crit_G^{\Delta}(\po)\to \Delta, (y_1,\ldots, y_n)\mapsto (\val(y_1), \ldots, \val(y_n))$ is a polytopal set, i.e. a union of polytopes.  
\end{prop}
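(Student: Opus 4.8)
The plan is to realize $\Crit_G^{\Delta}(\po)$ as a closed analytic subspace of the rigid analytic space $\trop^{-1}(\interior\Delta)$ of Lemma~\ref{polytopal domain}, and then to read off its tropicalization from the Newton data of the defining equations. First I would check that each $f_i:=\partial\po/\partial c_i$, $r+1\le i\le n$, defines an analytic function on $\trop^{-1}(\interior\Delta)$. Since $b=\sum_i c_i\alpha_i=\sum_j x_je_j$ with $\alpha_i=\sum_j a_{i,j}e_j$, in the coordinates \eqref{(u,b) in Lambda^n} the operator $\partial/\partial c_i$ is the derivation $\sum_j a_{i,j}\,y_j\partial_{y_j}$, which merely multiplies the coefficient of a monomial $y^\nu$ by the integer $\sum_j a_{i,j}\nu_j$ (valuation $0$); so $f_i$ converges on the same domain as $\po$. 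By Theorem~\ref{potential function of a compact symplectic toric manifold}, $\po$ is a convergent series whose leading monomials $y^{v_i}T^{-\lambda_i}$ satisfy $|y^{v_i}T^{-\lambda_i}|=\exp(-l_i(u))\le\exp(-\min_{u\in\Delta'}l_i(u))<1$ over $\trop^{-1}(\Delta')$ for each rational polytope $\Delta'\subset\interior\Delta$, the higher-order corrections carrying extra factors $T^{\rho_j}$ with $\rho_j>0$; hence $\po$, and each $f_i$, lies in the affinoid algebra $\Lambda\pair{\Delta'}$ of \eqref{affinoid algebra for polytope}. Since the $\trop^{-1}(\Delta')$ form an admissible cover of $\trop^{-1}(\interior\Delta)$ by affinoid subdomains, the $f_i$ are global sections of its structure sheaf, and $\Crit_G^{\Delta}(\po)=V(f_{r+1},\dots,f_n)$ is the vanishing locus of finitely many of them — a closed analytic subspace of $\trop^{-1}(\interior\Delta)$, locally the affinoid subvariety $V(f_{r+1},\dots,f_n)\subset\Sp\Lambda\pair{\Delta'}$, and in particular a rigid analytic space. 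This proves the first assertion.

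For the second assertion I would first dispose of the Fano case, where $\po$ and the $f_i$ are Laurent polynomials (Theorem~\ref{Fano potential}): then, up to intersecting with $\trop^{-1}(\interior\Delta)$, $\Crit_G^{\Delta}(\po)$ is the set of $\Lambda$-points of the closed subscheme $Z=V(f_{r+1},\dots,f_n)\subset(\Lambda^*)^n$, and since $\Lambda$ is algebraically closed, complete, and nontrivially valued with value group $\R$ and a multiplicative section $\lambda\mapsto T^\lambda$ of the valuation, the fundamental theorem of tropical geometry (see, e.g., \cite{Rabinoff}) presents $\trop(Z)$ as the support $|\Sigma|$ of a rational polyhedral complex in $\R^n$; consequently $\trop(\Crit_G^{\Delta}(\po))=|\Sigma|\cap\interior\Delta$, whose closure is the finite union of the polytopes $\sigma\cap\Delta$ over the cells $\sigma\in\Sigma$ that meet $\interior\Delta$ (using that the open segment from an interior point of $\Delta$ to any point of $\Delta$ lies in $\interior\Delta$, so $\overline{\sigma\cap\interior\Delta}=\sigma\cap\Delta$ in that case). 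For general compact toric $X$, $\po$ is an infinite series, and I would work over an exhaustion $\Delta_1\subset\Delta_2\subset\cdots$ of $\interior\Delta$ by rational polytopes: on each, $\Crit_G^{\Delta}(\po)$ restricts to the affinoid subvariety $Y_k=V(f_{r+1},\dots,f_n)\subset\Sp\Lambda\pair{\Delta_k}$, whose tropicalization $\trop(Y_k)$ is polyhedral by the analytic analogue of the structure theorem (\cite{Rabinoff}); equivalently, truncating $\po$ modulo energy $E$ gives a Laurent polynomial $\po^{(E)}$ with $\val(\po-\po^{(E)})>E$ throughout $\trop^{-1}(\Delta_k)$ once $E=E(\Delta_k)$ is large enough — only finitely many monomials of $\po$ having valuation $\le E$ on the compact $\Delta_k$ — so $\trop(Y_k)$ coincides with the polyhedral set produced by the Fano argument applied to $\po^{(E)}$. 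Then $\trop(\Crit_G^{\Delta}(\po))=\bigcup_k\trop(Y_k)$.

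The main obstacle is concluding, in the general case, that this increasing union of polyhedral subsets of the compact polytope $\Delta$ has polytopal closure rather than being merely a countable nested union of polytopes. This is a finiteness/stabilization statement: one must show that only finitely many distinct polyhedral cells occur among the $\trop(Y_k)$ — plausibly because, outside a fixed compact subpolytope of $\interior\Delta$ (i.e. near $\partial\Delta$), the system $f_{r+1}=\dots=f_n=0$ is dominated by a single finite-energy truncation of $\po$, so that no new cells are created as $k\to\infty$ — after which the closure of the union is manifestly a finite union of polytopes. The convergence bookkeeping for $\po$ and the $f_i$ in the first part, and the identification of $\trop(Y_k)$ with the truncated picture, are routine given Theorem~\ref{potential function of a compact symplectic toric manifold} but should be written out.
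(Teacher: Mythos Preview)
Your approach to the first assertion is correct but more laborious than the paper's. The key simplification you miss is that the $f_i$ already lie in the affinoid algebra $\Lambda\pair{\Delta}$ over the \emph{closed} polytope: each monomial of $\po$ (and hence of $f_i$) is a product of factors $y^{v_k}T^{-\lambda_k}$, which have sup-norm $\le 1$ on $\trop^{-1}(\Delta)$, times an additional $T^{\rho_j}$ with $\rho_j>0$. Thus $W=V(f_{r+1},\dots,f_n)\cap\trop^{-1}(\Delta)=\Sp\bigl(\Lambda\pair{\Delta}/(f_{r+1},\dots,f_n)\bigr)$ is a single affinoid space, and $\Crit_G^{\Delta}(\po)=W\cap\trop^{-1}(\interior\Delta)$ is an admissible open in $W$ by the standard fact that loci of the form $\{|g|<1\}$ are admissible open. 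No exhaustion is needed.

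For the second assertion, the stabilization problem you flag is genuine for your truncation approach, and you are right that it would require nontrivial additional work. The paper sidesteps it entirely: once $\Crit_G^{\Delta}(\po)$ is realized as a closed analytic subvariety of the polytopal domain $\trop^{-1}(\Delta)$ (or rather $W$ sits inside it), one can invoke Gubler's result (\cite{GublerNA} Proposition 5.2) that the tropicalization of any closed analytic subvariety of a polytopal domain is a finite union of polytopes. So the polytopal structure comes for free from the literature, and no hands-on comparison of $\trop(Y_k)$ for varying $k$ is required.
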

\begin{proof}
By Proposition \ref{potential function of a compact symplectic toric manifold}, $\po$ takes the form of \autoref{potential on compact non-Fano}.  
For all $1\leq i\leq n $, let 
\begin{align}\label{partial derivatives}
 f_i 
& = \sum_{j=1}^n a_{i,j} \left(y_j\frac{\partial \po}{\partial y_j} \right) \nonumber 
\\
& = \sum_{j=1}^n a_{i,j} \left(\sum_{k=1}^m v_{k,j} y_1^{v_{k,1}}\cdots y_n^{v_{k,n}}T^{-\lambda_k} +  \sum_{s}\sum_{k=1}^m r_s v_{k,j}\prod_{k=1}^m (y_1^{v_{k,1} }\cdots y_n^{v_{k,n}}T^{-\lambda_k})^{e_s^k}\right).     
\end{align}
Then  
\begin{align*}
W 
&= V \lrp{f_{r+1},\ldots, f_n} \cap \trop^{-1}(\Delta) \\
& = \{ (y_1,\ldots, y_n)\in \trop^{-1}(\Delta) \mid f_i(y_1,\ldots, y_n) = 0 \quad \forall r+1\leq i \leq n\} \\
& = \Sp \frac{\Lambda\pair{\Delta}}{\lrp{f_{r+1},\ldots, f_n}} 
\end{align*}
is a $\Lambda$-affinoid space. 

By Proposition \cite{bosch} 5.1/Proposition 7 (see Proposition \ref{strict norm inequalities define admissible opens}), 
 \[  M = W \cap \trop^{-1}(\interior \Delta) = \set{ (y_1,\ldots, y_n)\in W }{ \left|\frac{y^{v_1}}{T^{\lambda_1}}\right|<1, \ldots \left|\frac{y^{v_m}}{T^{\lambda_m}}\right|<1 } \]
is a finite intersection of admissible open subsets and thus is an admissible open subset of $W$.  
Hence, 
$(M,\hol_W|_M)$ is a rigid analytic space. 

The last claim follows from \cite{GublerNA} Proposition 5.2. 
\end{proof}

\begin{prop}[$\Crit_G^{\Delta}(\po)$ of $\proj^n$]
\label{cpn rigid space dimension}
Suppose $G = \iota(T^r)$ is an $r$-dimensional subtorus of $T^n$ acting on $(\proj^n, T^n,\omega, \mu)$, which has moment polytope  
\[ 
\Delta=\set{ (u_1, \ldots, u_n) \in \R^n }{ 
\begin{aligned}
& u_i\geq 0 \quad \forall 1\leq i\leq n,\\
&  1-\sum_{i=1}^n u_i \geq 0
\end{aligned}}
. 
\]
Then $\Crit_G^{\Delta}(\po)$ is a rigid analytic space of dimension $r$.   

\end{prop}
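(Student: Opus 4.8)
The plan is to write the potential function of $\proj^n$ explicitly, recognise the equations defining $\Crit_G^{\Delta}(\po)$ as a truncation of the classical critical-point system, and then run a short dimension count on the Cohen--Macaulay affinoid algebra of Lemma \ref{polytopal domain}. Since $\proj^n$ is Fano, Theorem \ref{Fano potential} applies with facet data $v_i = e_i^{*}$, $\lambda_i = 0$ for $1\le i\le n$ and $v_{n+1}=-(1,\dots,1)$, $\lambda_{n+1}=-1$, giving
\[ \po \;=\; y_1+\cdots+y_n+\frac{T}{y_1\cdots y_n}. \]
Put $w:=T/(y_1\cdots y_n)$; then $y_j\,\partial\po/\partial y_j = y_j-w$, so the defining polynomials become $f_i=\sum_{j=1}^n a_{i,j}(y_j-w)$ (cf.\ \eqref{partial derivatives}) and $\Crit_G^{\Delta}(\po)=V(f_{r+1},\dots,f_n)\cap\trop^{-1}(\interior\Delta)$. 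Every monomial of $\po$, and hence of each $f_i$, has valuation $\ge 0$ on all of $\Delta$ ($\val(y_j)=u_j\ge 0$ and $\val(w)=1-\sum_j u_j\ge 0$ there), so $\po$ and all $f_i$ lie in the $\Lambda$-affinoid algebra $A$ of Lemma \ref{polytopal domain} with $\Sp A=\trop^{-1}(\Delta)$; thus $\Crit_G^{\Delta}(\po)$ is the admissible open subspace $\Sp\lrp{A/(f_{r+1},\dots,f_n)}\cap\trop^{-1}(\interior\Delta)$ of $\Sp A$.

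Next I analyse the full system $f_1=\cdots=f_n=0$. On $(\Lambda^{*})^n$ multiply the $i$-th equation by the unit $y_1\cdots y_n$ to get $\sum_j a_{i,j}\lrp{y_j\textstyle\prod_k y_k-T}=0$; since $(a_{i,j})\in\mathrm{GL}_n(\Z)$ (the $\alpha_i$ forming a $\Z$-basis of $M$), imposing all $n$ equations forces $y_j\prod_k y_k=T$ for every $j$, hence $y_1=\cdots=y_n=w$ and $w^{\,n+1}=T$. Therefore $V(f_1,\dots,f_n)$ is the finite set of the $n+1$ points with $y_j=\zeta\,T^{1/(n+1)}$, $\zeta^{n+1}=1$ (this is the semisimplicity of $QH^{*}(\proj^n)$), and each of them satisfies $\trop=\lrp{\tfrac1{n+1},\dots,\tfrac1{n+1}}\in\interior\Delta$; in particular $\Crit_G^{\Delta}(\po)\ne\emptyset$.

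Now the dimension count. Let $B=A/(f_{r+1},\dots,f_n)$. Quotienting $B$ by the $r$ further elements $f_1,\dots,f_r$ gives $A/(f_1,\dots,f_n)$, which is $0$-dimensional by the previous paragraph; as each generator lowers the Krull dimension by at most $1$, this forces $\dim B\le r$. For the reverse inequality, and simultaneously for $\Crit_G^{\Delta}(\po)$, pick one of the barycenter points $x$, which lies in $\Sp B\cap\trop^{-1}(\interior\Delta)$. The local ring $\mathcal O_{\Sp A,x}$ is Cohen--Macaulay of dimension $n$ (Lemma \ref{polytopal domain}, $A$ being an affinoid domain, hence equidimensional), and $(f_1,\dots,f_n)$ is $\mathfrak m_x$-primary there because the barycenter points are isolated in $V(f_1,\dots,f_n)$; hence $f_1,\dots,f_n$ is a regular sequence in $\mathcal O_{\Sp A,x}$, and after reordering, $f_{r+1},\dots,f_n$ is the initial segment of a regular sequence, so $\dim\mathcal O_{\Sp B,x}=n-(n-r)=r$. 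Since $\trop^{-1}(\interior\Delta)$ is an admissible open neighbourhood of $x$, also $\dim\mathcal O_{\Crit_G^{\Delta}(\po),x}=r$. Putting this together, $r\le\dim\Crit_G^{\Delta}(\po)\le\dim\Sp B\le r$, and combined with Proposition \ref{CritGPO is a rigid analytic space} this shows $\Crit_G^{\Delta}(\po)$ is a rigid analytic space of dimension $r$.

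The main obstacle is the upper bound $\dim\le r$: for an arbitrary subtorus $G$ the $n-r$ equations $f_{r+1}=\cdots=f_n=0$ need not visibly form a regular sequence, so one cannot simply count codimensions. The device is to reintroduce the complementary equations $f_1,\dots,f_r$: together the $n$ polynomials $f_i$ cut out the finite classical critical-point set, so they are a system of parameters for $A$, and Krull's inequality handles $\dim B\le r$ while Cohen--Macaulayness of $A$ upgrades the parameter system to a regular sequence and pins down the local dimension at the barycenter points. A subsidiary point is the comparison between the algebraic torus $(\Lambda^{*})^n$ and the affinoid $\Sp A$ — that $\po\in A$ and that $V(f_1,\dots,f_n)$ stays $0$-dimensional in $\Sp A$ — both of which are immediate from the explicit formula for $\po$.
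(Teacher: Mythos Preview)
Your route is genuinely different from the paper's and more conceptual: rather than an explicit change of variables, you invoke the full critical system $f_1=\cdots=f_n=0$ (the $n{+}1$ classical critical points of $\po$) and run a Cohen--Macaulay dimension argument. The local computation at a barycenter point is correct and gives the lower bound $\dim\Crit_G^\Delta(\po)\ge r$ cleanly. But the upper bound step has a real gap: the principle ``each generator lowers the Krull dimension by at most $1$,'' i.e.\ $\dim R/(x_1,\dots,x_k)\ge\dim R-k$, fails for non-equidimensional Noetherian (even affinoid) rings --- for instance $R=\Lambda\langle x,s,t\rangle/(x(x-1),xs,xt)\cong\Lambda\langle s,t\rangle\times\Lambda$ has $\dim R=2$ while $\dim R/(1-x)=0$. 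You apply this to $B=A/(f_{r+1},\dots,f_n)$, but $B$ is not known to be equidimensional a priori; your local argument only controls the components through the barycenter points, and does not exclude higher-dimensional components of $\Sp B$ on which some $f_i$ ($i\le r$) happens to be a unit. (Relatedly, ``Krull's inequality'' bounds the \emph{height} of $(f_1,\dots,f_r)$ from above, which translates into a \emph{lower} bound on $\dim B/(f_1,\dots,f_r)$ only when $B$ is equidimensional and catenary.)

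The paper avoids this by making the regularity of $f_{r+1},\dots,f_n$ manifest: writing $A\cong\Lambda\langle y_1,\dots,y_n,z\rangle/(y_1\cdots y_n z-T)$ with $z=T/(y_1\cdots y_n)$, the $f_i$ become the \emph{linear} forms $X_i=\sum_j a_{ij}(y_j-z)$, so $f_{r+1},\dots,f_n$ are part of a coordinate system on $T_{n+1}$ and $B\cong\Lambda\langle X_1,\dots,X_r,X_{n+1}\rangle/(g)$ for a single nonzero $g$; then $\dim B=r$ follows from $T_{r+1}$ being a Cohen--Macaulay domain. Equivalently, $f_{r+1},\dots,f_n,\;y_1\cdots y_n z-T$ is visibly a regular sequence in $T_{n+1}$, whence $\dim B=(n+1)-(n-r+1)=r$; this is the minimal patch your argument needs. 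For the passage from $\trop^{-1}(\Delta)$ to $\trop^{-1}(\interior\Delta)$ the paper sandwiches with the shrunk polytope $\Delta_\epsilon$ rather than using a single barycenter point; both work, and your barycenter device is arguably tidier for that particular step.
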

\begin{proof}
By Theorem \ref{Fano potential}, 
\[ \po = y_1+\cdots y_n + \frac{T}{y_1\cdots y_n}. \]
Consider 
\begin{align*}
Q 
& = \set{ (y_1,\ldots, y_n)\in (\Lambda^*)^n }{ \sum_{j=1}^n a_{i,j} \left(y_j\frac{\partial \po}{\partial y_j} \right)=0 \quad \forall r+1\leq i \leq n   } \\
& = \set{ (y_1,\ldots, y_n)\in (\Lambda^*)^n }{ \sum_{j=1}^n a_{i,j} \left(y_j - \frac{T}{y_1\cdots y_n}\right)=0 \quad \forall r+1\leq i \leq n   }. 
\end{align*}
Then 
\[ \Crit_G^{\Delta}(\po) = Q\cap \trop^{-1}(\interior\Delta) = Q \cap \set{ y\in \trop^{-1}(\Delta)}{ \left| y^{v_i} T^{-\lambda_i} \right| < 1 \quad \forall 1\leq i \leq m }.\] 

We first consider 
\begin{align*}
W & = Q\cap \trop^{-1}(\Delta)  \\ 
& \cong  \Sp\frac{\Lambda \langle y_1,\ldots, y_n, z \rangle }{\left(y_1\cdots y_n z-T,\sum\limits_{j=1}^n a_{r+1,j} (y_j-z), \ldots,\sum\limits_{j=1}^n a_{n,j} (y_j-z) \right)} 
\end{align*}
Recall that
$  A = \begin{pmatrix}
    a_{1,1} & \cdots &  a_{1,n}\\
    \vdots & \ddots & \vdots \\
    a_{n,1} & \cdots & a_{n,n}
\end{pmatrix}$
is an invertible matrix. 
Let 
\[ A^{-1} = \begin{pmatrix}
    b_{1,1} & \cdots &  b_{1,n}\\
    \vdots & \ddots & \vdots \\
    b_{n,1} & \cdots & b_{n,n}
\end{pmatrix} 
.
\]  
Change variables by setting 
\[ 
\begin{pmatrix}
    X_1 \\
    \vdots \\
    X_n\\
    X_{n+1}
\end{pmatrix} 
= 
\begin{pmatrix}
    a_{1,1} & \cdots &  a_{1,n} & 0 \\
    \vdots & \ddots & \vdots \\
    a_{n,1} & \cdots & a_{n,n}& 0 \\
    0 & \cdots & 0 & 1
\end{pmatrix}
\begin{pmatrix}
    y_1-z \\
    \vdots \\
    y_n-z\\
    z 
\end{pmatrix} . 
\]
By \cite{bgr} 6.1.1/Proposition 4, 
this defines a continuous epimorphism
\[ 
\varphi: \Lambda\pair{X_1,\ldots, X_n, X_{n+1}} \to \frac{\Lambda \langle y_1,\ldots, y_n, z \rangle }{\left(y_1\cdots y_n z-T,\sum\limits_{j=1}^n a_{r+1,j} (y_j-z), \ldots,\sum\limits_{j=1}^n a_{n,j} (y_j-z) \right)} . 
\]
Then 
\begin{align*}
W
& \cong  
\Sp \frac{\Lambda \langle X_1,\ldots,X_n, X_{n+1} \rangle }{\left(  X_{n+1}\prod_{i=1}^n(X_{n+1}+\sum\limits_{j=1}^n b_{i,j}X_j)-T,X_{r+1},\ldots, X_n\right)} \\
 & \cong  \Sp \frac{\Lambda \langle X_1,\ldots,X_r, X_{n+1} \rangle }{\left(  X_{n+1}\prod_{i=1}^n( X_{n+1}+\sum\limits_{j=1}^r b_{i,j}X_j)-T\right)}
\end{align*}
By \cite{Rabinoff} Proposition 6.9 and \cite{Rabinoff} Theorem 4.6, 
the Tate algebra 
\[T_{r+1} =\Lambda \langle X_1,\ldots,X_r, X_{n+1} \rangle \]
is a Cohen-Macaulay ring of dimension $r+1$, which is also an integral domain. 
Since $X_{n+1}\prod_{i=1}^n( X_{n+1}+\sum\limits_{j=1}^r b_{i,j}X_j)-T \ne 0$, it is not a zero divisor and thus is a regular sequence in $T_{r+1}$. 
Thus, by \cite{cohen} Theorem 2.1.2, the Krull dimension of $W$ is $r+1-1 = r$.  

Since $ \Crit_G^{\Delta}(\po) = Q \cap \trop^{-1}(\interior \Delta) \subset W$, 
$\dim \Crit_G(\po) \leq r$. 
On the other hand, 
let $\epsilon>0$ be sufficiently small. 
Consider 
\begin{equation*}\label{Delta epsilon}
 \Delta_{\epsilon} = \bigcap_{i=1}^m\{u\in \R^n \mid \pair{u,v_i}-\lambda_i\geq \epsilon\} \subset \Delta.    
\end{equation*}
\begin{align*}
  Q\cap \trop^{-1}(\Delta)
    & \supset   Q\cap \trop^{-1}(\Delta_{\epsilon})\\
    & \cong  \Sp \frac{\Lambda \pair{\frac{y_1}{T^{\epsilon}}, \ldots,\frac{y_n}{T^{\epsilon}},\frac{T^{1-\epsilon}}{y_1\cdots y_n}}}{\left(\sum\limits_{j=1}^n a_{r+1,j} (y_j - z) , \ldots,\sum\limits_{j=1}^n a_{n,j} (y_j - z) \right)}\\
     & \cong  \Sp \frac{\Lambda \pair{z_1, \ldots, z_n,  z_{n+1}}}{\left( z_1\cdots z_{n+1}- T^{1-(n+1)\epsilon}, \sum\limits_{j=1}^n a_{r+1,j} (z_j - z_{n+1}) , \ldots,\sum\limits_{j=1}^n a_{n,j} (z_j - z_{n+1}) \right)}\\
   & =: U.     
\end{align*}
Change the variables by setting 
\[ 
\begin{pmatrix}
    X_1' \\
    \vdots \\
    X_n' \\
    X_{n+1}'
\end{pmatrix} 
= 
\begin{pmatrix}
    a_{1,1} & \cdots &  a_{1,n} & 0 \\
    \vdots & \ddots & \vdots \\
    a_{n,1} & \cdots & a_{n,n}& 0 \\
    0 & \cdots & 0 & 1
\end{pmatrix}
\begin{pmatrix}
    z_1-z_{n+1} \\
    \vdots \\
    z_n-z_{n+1}\\
    z_{n+1}'
\end{pmatrix} . 
\]
Then 
\begin{align*}
U 
& \cong \Sp \frac{\Lambda \langle X_1',\ldots,X_n', X_{n+1}'\rangle }{\left(  X_{n+1}'\prod_{i=1}^n(X_{n+1}'+\sum\limits_{j=1}^n b_{i,j}X_j')-T^{1-(n+1)\epsilon}\right)}  \\
& \cong \Sp \frac{\Lambda \langle X_1',\ldots,X_r', X_{n+1}' \rangle }{\left(  X_{n+1}'\prod_{i=1}^n(X_{n+1}'+\sum\limits_{j=1}^n b_{i,j}X_j')-T^{1-(n+1)\epsilon}\right)}
\end{align*}
Since $X_{n+1}'\prod_{i=1}^n(X_{n+1}'+\sum\limits_{j=1}^n b_{i,j}X_j')-T^{1-(n+1)\epsilon}$ is not a zero divisor in the integral domain 
$\Lambda \langle X_1',\ldots,X_r',X_{n+1}' \rangle$, which is Cohen-Macaulay of dimension $r+1$, we see that $\dim U = r$. 
Since $Q\cap  \trop^{-1}( \Delta_{\epsilon}) \subset \Crit_G(\po)  \subset  Q\cap  \trop^{-1}( \Delta)$ and $\dim \lrp{Q\cap  \trop^{-1}( \Delta_{\epsilon})} = \dim \lrp{Q\cap  \trop^{-1}( \Delta)} = r$, we conclude that $\dim \Crit_G^{\Delta}(\po)=r$.  
\end{proof}

\begin{prop}
\label{hypersurface rigid dimension}
Let $X$ be a compact symplectic toric manifold of complex dimension $n$ as in \ref{potential function of a compact symplectic toric manifold}.  If $G\cong T^{n-1}$ be an $(n-1)$-dimensional subtorus of $T^n$ as in Section \ref{Section setup}, 
then $\Crit_G(\po)$ is a rigid analytic space of dimension $n-1$. 
\end{prop}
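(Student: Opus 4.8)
The plan is to adapt the proof of Proposition \ref{cpn rigid space dimension}, using that when $r = \dim G = n-1$ the system defining $\Crit_G^{\Delta}(\po)$ in \autoref{CritGPO definition} collapses to the \emph{single} equation $f_n = \sum_{j=1}^n a_{n,j}\bigl(y_j\,\partial\po/\partial y_j\bigr) = 0$. Thus $\Crit_G^{\Delta}(\po) = V(f_n)\cap\trop^{-1}(\interior\Delta)$ is a hypersurface inside $\trop^{-1}(\interior\Delta)$; it is a rigid analytic space by Proposition \ref{CritGPO is a rigid analytic space}. By Theorem \ref{potential function of a compact symplectic toric manifold} both $\po$ and $f_n$ lie in $\Lambda\pair{\Delta}$, and by Lemma \ref{polytopal domain} the affinoid $\trop^{-1}(\Delta) = \Sp\Lambda\pair{\Delta}$ is Cohen--Macaulay of dimension $n$; hence $\Crit_G^{\Delta}(\po)$ is an admissible open subspace of the affinoid $W := \Sp\bigl(\Lambda\pair{\Delta}/(f_n)\bigr)$, and it is enough to prove that $W$ is equidimensional of dimension $n-1$ and that $\Crit_G^{\Delta}(\po)$ is nonempty.

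First I would check that $\Lambda\pair{\Delta}$ is an integral domain. For a suitably generic $u\in\interior\Delta$ the Gauss valuation $v_u\bigl(\sum_c a_c y^c\bigr) := \min_c\bigl(\val(a_c)+\pair{u,c}\bigr)$ is a well-defined, multiplicative valuation on $\Lambda\pair{\Delta}$ with $v_u(f)<\infty$ iff $f\ne 0$: the growth condition defining $\Lambda\pair{\Delta}$ makes the minimum finite, and genericity of $u$ forces it to be attained at a unique exponent among the (countably many) monomials occurring in a given pair of elements, which forbids cancellation of leading terms in a product. So $\Lambda\pair{\Delta}$ is a Cohen--Macaulay domain of dimension $n$, and cutting it by a single non-zero-divisor yields a ring that is Cohen--Macaulay and equidimensional of dimension $n-1$ (\cite{cohen} Theorem 2.1.2).

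It then remains to see that $f_n$ is nonzero and not a unit. For nonvanishing, write $\po = \po_0 + \bigl(\text{terms of strictly higher }T\text{-order}\bigr)$ with $\po_0 = \sum_{i=1}^m y^{v_i}T^{-\lambda_i}$ as in \autoref{potential on compact non-Fano}; the leading part of $f_n$ is then $\sum_{i=1}^m\pair{\alpha_n,v_i}\,y^{v_i}T^{-\lambda_i}$ with $\alpha_n = \sum_j a_{n,j}e_j\ne 0$. Since the facet normals $v_1,\dots,v_m$ span $N_\R$, some $\pair{\alpha_n,v_{i_1}}\ne 0$; evaluating $v_u$ at a point $u\in\interior\Delta$ near the relative interior of the $i_1$-th facet (so that $l_{i_1}(u)<l_k(u)$ for $k\ne i_1$) makes the monomial $\pair{\alpha_n,v_{i_1}}y^{v_{i_1}}T^{-\lambda_{i_1}}$ strictly dominate all other terms of $f_n$ — here one uses that distinct facets of $\Delta$ have distinct inner normals and that each correction term in \autoref{potential on compact non-Fano} carries a positive extra power $T^{\rho_j}$ — so $v_u(f_n) = l_{i_1}(u)<\infty$ and $f_n\ne 0$. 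Hence $f_n$ is a non-zero-divisor and $W$ is Cohen--Macaulay, equidimensional, of dimension $n-1$. Finally $\Crit_G^{\Delta}(\po)\ne\emptyset$ — in particular $f_n$ is not a unit and $W\ne\emptyset$ — since $\Crit_G^{\Delta}(\po)$ contains the critical locus of $\po$ over $\Lambda$, which is nonempty with tropicalization in $\interior\Delta$ by \cite{I,II,III}; being a nonempty admissible open of the equidimensional affinoid $W$, it has dimension $n-1$. (One may instead run the computation over $\trop^{-1}(\Delta_\epsilon)\subset\trop^{-1}(\interior\Delta)$ for small $\epsilon>0$, to parallel Proposition \ref{cpn rigid space dimension} more closely and make the lower bound explicit.)

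The one genuinely non-formal step is showing that $\Lambda\pair{\Delta}$ is a domain — equivalently, that $f_n$ is a non-zero-divisor — since Lemma \ref{polytopal domain} supplies only the Cohen--Macaulay property; the Gauss-valuation argument resolves this, but one must be careful with the genericity of $u$ when an element of $\Lambda\pair{\Delta}$ has infinite support. A secondary, easier point is to make precise that the leading term $\pair{\alpha_n,v_{i_1}}y^{v_{i_1}}T^{-\lambda_{i_1}}$ of $f_n$ is not destroyed by the infinitely many higher-order corrections in \autoref{potential on compact non-Fano}, which comes down to the same two facts (distinct facet normals, positive correction powers).
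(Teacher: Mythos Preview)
Your argument is correct and follows the paper's approach: show that $\Lambda\pair{\Delta}$ is a Cohen--Macaulay integral domain of dimension $n$, so the single nonzero element $f_n$ is a non-zero-divisor and $W=\Sp\bigl(\Lambda\pair{\Delta}/(f_n)\bigr)$ has dimension $n-1$, then sandwich $\Crit_G^{\Delta}(\po)$ between $Q\cap\trop^{-1}(\Delta_\epsilon)$ and $W$. The paper is terser on exactly the two points you flag --- it justifies integrality by the one-line remark that $\Lambda\pair{\Delta}$ ``is a subring of the formal power series ring,'' and it simply asserts $f_n\ne 0$ --- so your Gauss-valuation argument and leading-term analysis fill in details the paper leaves implicit.
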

\begin{proof}
Consider 
\begin{align*}
Q 
& = \set{ (y_1,\ldots, y_n)\in (\Lambda^*)^n }{ \sum_{j=1}^n a_{n,j} \left(y_j\frac{\partial \po}{\partial y_j} \right)=0 }. 
\end{align*}
Then \[ \dim \lrp{ Q\cap  \trop^{-1}( \Delta) } = \dim \frac{\Lambda \pair{\Delta} }{\left(\sum\limits_{j=1}^n a_{n,j} \left(y_j\frac{\partial \po}{\partial y_j} \right) \right)}. \] 
Since $ \Lambda \pair{\Delta}$ is a subring of the formal power series ring, 
$\Lambda \pair{\Delta}$ is also an integral domain. 
Since $\sum\limits_{j=1}^n a_{n,j} \left(y_j\frac{\partial \po}{\partial y_j}\right) \ne 0$, it is a regular sequence in $\Lambda \pair{\Delta}$, which is a Cohen-Macaulay ring of dimension $n$ by \cite{Rabinoff} Proposition 6.9. 
Therefore, $\dim \left( Q\cap  \trop^{-1}( \Delta) \right)= n-1$. 
Let 
\begin{equation*}
 \Delta_{\epsilon} = \bigcap_{i=1}^m\{u\in \R^n \mid \pair{u,v_i}-\lambda_i\geq \epsilon\}.   
\end{equation*}
Then
$\dim \left( Q\cap  \trop^{-1}( \Delta_{\epsilon})\right) = n-1$ as well. 
Since 
\[ Q\cap  \trop^{-1}( \Delta_{\epsilon}) \subset \Crit_G(\po)  \subset  Q\cap  \trop^{-1}( \Delta),\] 
we conclude that 
$\dim \Crit_G(\po)= n-1$. 
\end{proof}

\begin{cor}
Let $X$ be a compact symplectic toric manifold of complex dimension $n\leq 2$ as in \ref{potential function of a compact symplectic toric manifold}.  If $G\cong T^r$ be a subtorus of $T^2$ as in Section \ref{Section setup} for some $0\leq r \leq n$, 
then $\Crit_G(\po)$ is a rigid analytic space of dimension $r$. 
\end{cor}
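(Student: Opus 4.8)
The plan is to argue by cases on $(n,r)$. When $n\le 1$, the only compact symplectic toric manifold of complex dimension $n$ is $\proj^n$, so the assertion is a special case of Proposition \ref{cpn rigid space dimension}; I therefore assume $n=2$, so that $r\in\{0,1,2\}$, and I organize the argument by the codimension $n-r$, which is the number of equations $f_{r+1},\dots,f_n$ carving $\Crit_G^{\Delta}(\po)$ out of $\trop^{-1}(\interior\Delta)$. Throughout, Proposition \ref{CritGPO is a rigid analytic space} already guarantees that $\Crit_G^{\Delta}(\po)$ is a rigid analytic space, so only the dimension count is at issue, and I use repeatedly that $\Lambda\pair{\Delta}$ and $\Lambda\pair{\Delta_\epsilon}$ (for small $\epsilon>0$) are Cohen--Macaulay integral domains of dimension $2$, exactly as in the proofs of Propositions \ref{cpn rigid space dimension} and \ref{hypersurface rigid dimension} (via Lemma \ref{polytopal domain} and \cite{Rabinoff} Proposition 6.9).

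If $n-r=0$, i.e.\ $G=T^2$, the condition defining $\Crit_G^{\Delta}(\po)$ is vacuous, so $\Crit_G^{\Delta}(\po)=\trop^{-1}(\interior\Delta)$; being a nonempty admissible open of the irreducible affinoid space $\Sp\Lambda\pair{\Delta}$ of dimension $2$, it is a rigid analytic space of dimension $2=r$. If $n-r=1$, i.e.\ $G\cong T^1$, this is precisely Proposition \ref{hypersurface rigid dimension} for $n=2$. Hence the only case left is $r=0$, where $G$ is trivial and $\Crit^{\Delta}(\po)$ is the ordinary critical locus $\{\,y\in(\Lambda^*)^2:\ y_1\partial_{y_1}\po=y_2\partial_{y_2}\po=0,\ \trop(y)\in\interior\Delta\,\}$.

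For this last case I would imitate the proofs of Propositions \ref{cpn rigid space dimension} and \ref{hypersurface rigid dimension}. Put $Q=V(f_1,f_2)$; then $Q\cap\trop^{-1}(\Delta_\epsilon)\subseteq\Crit^{\Delta}(\po)\subseteq Q\cap\trop^{-1}(\Delta)$, and the two ends are, respectively, $\Sp$ of $\Lambda\pair{\Delta_\epsilon}/(f_1,f_2)$ and of $\Lambda\pair{\Delta}/(f_1,f_2)$. Since $A$ is invertible, $(f_1,f_2)$ generates the same ideal as $(y_1\partial_{y_1}\po,\,y_2\partial_{y_2}\po)$, and $f_1\ne 0$: otherwise $\po$ would be killed by the nonzero vector field $\sum_j a_{1,j}y_j\partial_{y_j}$, which is impossible because the facet normals $v_k$ of $\Delta$ span $\R^2$. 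As $\Lambda\pair{\Delta}$ is a domain, $f_1$ is a nonzerodivisor, so the whole matter reduces to showing that $f_2$ is a nonzerodivisor modulo $(f_1)$ — equivalently, that the critical scheme $V(y_1\partial_{y_1}\po,\,y_2\partial_{y_2}\po)$ has no one-dimensional component. Granting that, $(f_1,f_2)$ is a regular sequence in the Cohen--Macaulay rings $\Lambda\pair{\Delta}$ and $\Lambda\pair{\Delta_\epsilon}$, so both ends have Krull dimension $2-2=0$, and $\Crit^{\Delta}(\po)$ is squeezed to dimension $0=r$.

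The main obstacle is precisely the no-critical-curve statement for a compact toric surface. I would obtain it from the theorem of Fukaya--Oh--Ohta--Ono (\cite{I}) that, for a compact toric $X$, the Jacobian ring $\Jac(\po)$ is isomorphic to the small quantum cohomology $QH^*(X;\Lambda)$, which is a finite-dimensional $\Lambda$-algebra; thus $\Jac(\po)$ is Artinian and $\Crit(\po)$ is a finite set, a fortiori $0$-dimensional. (Alternatively, one could verify this directly from the explicit potential function of Theorem \ref{potential function of a compact symplectic toric manifold}: its leading part is a Laurent polynomial whose critical locus is finite, and the higher-order correction terms cannot create a one-dimensional critical component, by the same $\Delta_\epsilon$-rescaling estimate; this involves only the finitely many toric surfaces.) Once the case $r=0$ is settled, all $0\le r\le n\le 2$ are covered, completing the proof.
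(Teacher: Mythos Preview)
Your proposal is correct and follows essentially the same case analysis as the paper's proof: $r=n$ via the polytopal domain result (\cite{Rabinoff} Proposition 6.9), $r=n-1$ via Proposition \ref{hypersurface rigid dimension}, and $r=0$ via the FOOO papers \cite{I}; you simply supply more detail (handling $n\le 1$ explicitly through Proposition \ref{cpn rigid space dimension}, and unpacking why the FOOO citation yields $0$-dimensionality of the critical locus). One minor bibliographic point: the isomorphism $\Jac(\po)\cong QH^*(X;\Lambda)$ is established in \cite{III} rather than \cite{I}, though the finiteness of critical points you need is already available from the earlier paper.
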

\begin{proof}
The case when $r=0$ follows from \cite{I}. 
The case when $r=1, n=2$ follows from Proposition \ref{hypersurface rigid dimension}.  
The case when $r=2, n=2$ follows from \cite{Rabinoff}
Proposition 6.9. 
\end{proof}

\subsection{Examples} \label{examples}
  
Consider the action on a toric manifold $(X^4, \omega, T^2, \mu)$ by a subtorus $G=\iota(S^1)$, where
\[ S^1 \hookrightarrow T^2, \qquad \theta \mapsto (k_1\theta, k_2\theta),  \]
for some $(k_1,k_2) \in \Z^2 \setminus \{(0,0)\}$. Since the action is free,  $H_{G}^1(L(u), \R) \cong H^1(L(u)/G, \R)\hookrightarrow H^1(L(u),\R)$ is generated by  
\[ \alpha_2 = -k_2 e_1 + k_1 e_2.\] 
Complete it to a basis 
$\{ \alpha_1,\alpha_2\}$ of $H^1(L(u),\R)$. 
Let $b = c_1\alpha_1 + c_2\alpha_2$. 
By Theorem \ref{main theorem}, 
there is a bijection 
\[ \mlag_G(\proj^2,\omega) \to 
V\left( \frac{\partial \po}{\partial c_2}\right) \cap \trop^{-1}(\interior \Delta) = :\Crit_G^{\Delta}(\po)\]
\[ \lrp{u_1,u_2, b = \sum_{i=1}^2 x_ie_i} 
\mapsto (y_1,y_2) = (e^{x_1}T^{u_1}, e^{x_2}T^{u_2}).   \]
Then 
\[ \val(y_i) = \val(e^{x_i}T^{u_i}) = \val(e^{x_i}) + \val(T^{u_i}) = u_i. \]
In particular, given $(y_1, y_2) \in \Crit_G^{\Delta}(\po)$, 
the Lagrangian associated with it is 
$\mu^{-1}(\val(y_1), \val(y_2) )$.

\subsubsection{\texorpdfstring{$S^1$}{S1}-action on \texorpdfstring{$\proj^2$}{CP2}}
\begin{eg}[$S^1$-action on $\proj^2$]
Consider $(\proj^2,\omega, T^2,\mu)$ associated with the moment polytope 
\[ \Delta=\set{ (u_1, u_2) \in \R^2 }{ 
\begin{aligned}
& u_i\geq 0 \quad \forall 1\leq i\leq 2,\\
&  1-u_1-u_2 \geq 0
\end{aligned}}
. \]
Its potential function is then given by 
\[ \po  = y_1 + y_2+ \frac{T}{y_1y_2}. \] 

Denote by $f$ the Laurent polynomial
\[ \frac{\partial \po}{\partial c_2} = -k_2y_1 \frac{\partial \po}{\partial y_1} + k_1 y_2 \frac{\partial \po}{\partial y_2} = -k_2(y_1-\frac{T}{y_1y_2}) + k_1(y_2-\frac{T}{y_1y_2}) =  -k_2y_1+ k_1y_2 + (k_2-k_1)\frac{T}{y_1y_2}   \]
and denote $Y=V(f)\cap (\Lambda^*)^2$. 
By Kapranov's theorem (\cite{maclagan} Theorem 3.1.3, also see Theorem \ref{Kapranov's Theorem}), 
\[ \overline{\trop (Y)} = V\left(\trop \left(f \right)\right).  \]
\begin{enumerate}[i)]
    \item Suppose $k_1, k_2, k_2-k_1$ are all non-zero. 
\begin{align*}
V(\trop f) 
= 
& \{ u\in \R^2\mid u_1 = 1-u_1-u_2\leq u_2\} \\
\cup  
& \{ u\in \R^2\mid u_2 = 1-u_1-u_2\leq u_1\}\\ 
\cup 
&\{ u\in \R^2\mid u_1 = u_2\leq 1-u_1-u_2\}.  
\end{align*}

$ \trop(\Crit_G^{\Delta}(\po))=   \trop (Y)\cap  \interior \Delta $ is shown in Figure \ref{fig: cp2 generic}. 
\begin{figure}[h!]
\centering
\tikzset{every picture/.style={line width=0.75pt}} 
\resizebox{0.3\textwidth}{!}{
\begin{tikzpicture}[x=0.75pt,y=0.75pt,yscale=-1,xscale=1]

\draw   (201,51) -- (348.8,201) -- (201,201) -- cycle ;
\draw [color={rgb, 255:red, 208; green, 2; blue, 27 }  ,draw opacity=1 ][line width=1.5]    (202.47,54.02) -- (250.27,152.14) ;
\draw [shift={(201,51)}, rotate = 64.03] [color={rgb, 255:red, 208; green, 2; blue, 27 }  ,draw opacity=1 ][line width=1.5]      (0, 0) circle [x radius= 4.36, y radius= 4.36]   ;
\draw [color={rgb, 255:red, 208; green, 2; blue, 27 }  ,draw opacity=1 ][line width=1.5]    (250.27,152.14) -- (203.38,198.64) ;
\draw [shift={(201,201)}, rotate = 135.24] [color={rgb, 255:red, 208; green, 2; blue, 27 }  ,draw opacity=1 ][line width=1.5]      (0, 0) circle [x radius= 4.36, y radius= 4.36]   ;
\draw [color={rgb, 255:red, 208; green, 2; blue, 27 }  ,draw opacity=1 ][line width=1.5]    (250.27,152.14) -- (345.79,199.51) ;
\draw [shift={(348.8,201)}, rotate = 26.38] [color={rgb, 255:red, 208; green, 2; blue, 27 }  ,draw opacity=1 ][line width=1.5]      (0, 0) circle [x radius= 4.36, y radius= 4.36]   ;

\draw (159,195.3) node [anchor=north west][inner sep=0.75pt]    {$( 0,0)$};
\draw (159,43.2) node [anchor=north west][inner sep=0.75pt]    {$( 0,1)$};
\draw (354,195.3) node [anchor=north west][inner sep=0.75pt]    {$( 1,0)$};
\draw (251,127.2) node [anchor=north west][inner sep=0.75pt]  {$\left(\frac{1}{3} ,\frac{1}{3}\right)$};
\end{tikzpicture}
}

\caption{Case when $k_1,k_2,k_1-k_2\ne 0$}
\label{fig: cp2 generic}
\end{figure}
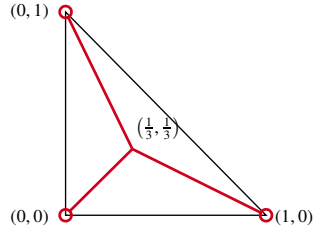

Moreover,  
\begin{align*}
W 
& :=  Y\cap  \trop ^{-1}(\Delta) \\
& = \Sp\frac{\Lambda\pair{y_1,y_2, \frac{T}{y_1y_2}}}{
\left(
-k_2y_1+k_1y_2+(k_2-k_1)\frac{T}{y_1y_2}
\right)
}\\
& \cong \Sp\frac{\Lambda\pair{y_1,y_2,z}}{
\left(
-k_2y_1+k_1y_2+(k_2-k_1)z, 
y_1y_2z-T
\right)
}
\end{align*}
 is an affinoid space, and 
\[ 
\Crit_G^{\Delta}(\po) = W \setminus \trop^{-1}\left(\{(0,0), (0,1), (1,0)\}\right). 
\]

We can compute the genus of $W$ as a rigid analytic curve as follows. 
We have a canonical reduction map 
\[ 
\rho: W
\to 
\widetilde{W} = \Spec \frac{\C[y_1,y_2]}{\lrp{(-k_2y_1+k_1y_2)y_1y_2}}.
\] 

By \cite{fresnel2012rigid} Proposition 5.6.2, since $W$ is a non-singular connected one-dimensional affinoid space, the genus of $W$ equal to the arithmetic genus of the compactification of $\widetilde{W}$. 

Let $C_1$, $C_2$, $C_3$ be the divisors corresponding to $-k_2y_1+k_1y_2=0$, $y_1=0$, $y_2=0$, respectively.  
Then by the adjunction formula 
(\cite{Hartshorne} Chapter V, Exercise 1.3),
the arithmetic genus of $\widetilde{W}$ is equal to 
\begin{align*}
  g_a(C_1+C_2+C_3) 
     = \sum_{i=1}^3 g_a(C_i) + C_1\cdot C_2 + C_2\cdot C_3 + C_1\cdot C_3 - 2 =1. 
\end{align*}

By the above argument, $g(W)$ is a rigid analytic curve of genus $1$.

\item If $k_1=0$ and $k_2\ne 0$, then  
$f = -k_2 \lrp{y_1-\frac{T}{y_1y_2}}$. 
\[ V(\trop f)  = \{ u\in \R^2\mid u_1 = 1-u_1-u_2\}. \]
$\trop\left( \Crit_G^{\Delta}(\po)\right) = \trop V(f)\cap \interior \Delta$ is shown in Figure \ref{fig: cp2 a=0}. 
\begin{figure}[h!]
\begin{center}
\tikzset{every picture/.style={line width=0.75pt}} 
\resizebox{0.3\textwidth}{!}{
\begin{tikzpicture}[x=0.75pt,y=0.75pt,yscale=-1,xscale=1]

\draw   (200,50) -- (349.8,200.6) -- (200,200.6) -- cycle ;
\draw [color={rgb, 255:red, 208; green, 2; blue, 27 }  ,draw opacity=1 ][line width=1.5]    (201.49,53) -- (273.26,197.19) ;
\draw [shift={(274.75,200.2)}, rotate = 63.54] [color={rgb, 255:red, 208; green, 2; blue, 27 }  ,draw opacity=1 ][line width=1.5]      (0, 0) circle [x radius= 4.36, y radius= 4.36]   ;
\draw [shift={(200,50)}, rotate = 63.54] [color={rgb, 255:red, 208; green, 2; blue, 27 }  ,draw opacity=1 ][line width=1.5]      (0, 0) circle [x radius= 4.36, y radius= 4.36]   ;

\draw (255,205.6) node [anchor=north west][inner sep=0.75pt]  [xscale=0.75,yscale=0.75]  {$\left(\frac{1}{2} ,0\right)$};
\draw (157,41.2) node [anchor=north west][inner sep=0.75pt]  [xscale=0.75,yscale=0.75]  {$( 0,1)$};
\end{tikzpicture}
}
\end{center}
\caption{Case when $k_1=0$}
\label{fig: cp2 a=0}
\end{figure}
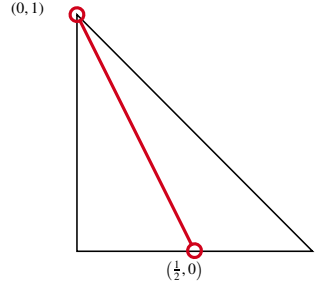

Indeed, 
\begin{align*}
\Crit_G^{\Delta}(\po)  
= &  Y\cap \trop^{-1}(\interior \Delta) \\
= & \set{ \left(y_1,y_2\right)\in (\Lambda^*)^2 }{ 
\begin{aligned}
& -k_2y_1+k_2\frac{T}{y_1y_2}=0\\
& \val(y_1) > 0, \val \left(\frac{T}{y_1^2}\right) > 0, \\
& \val \left(\frac{T}{y_1T/y_1^2}\right)>0,   
\end{aligned} 
}\\
= & \set{ \left(y_1,\frac{T}{y_1^2}\right)\in (B_{\Lambda}^1)^2 }{ 
\begin{aligned}
& \val(y_1) > 0, \val \left(\frac{T}{y_1^2}\right) > 0, \\
& 
y_1\ne 0, \frac{T}{y_1^2}\ne 0    
\end{aligned} 
}\\
= & \set{ (y_1,\frac{T}{y_1^2})\in \Lambda^2 }{ e^{-\frac{1}{2}} < |y_1| <1 }\\
\cong & \set{ y_1 \in B_{\Lambda}^1 }{ e^{-\frac{1}{2}} < |y_1| < 1 }
 \subset  \Sp \Lambda\pair{ y_1, Ty_1^{-2}} 
 \end{align*}
is an open annulus.

\item If $k_2=0$ and $k_1\ne 0$, then $f = k_1\lrp{y_2- \frac{T}{y_1y_2}}$. 
\[ V\left( \trop \left(f  \right) \right) = \{ u\in \R^2\mid u_2 = 1-u_1-u_2\}. \]
$\trop\left( \Crit_G^{\Delta}(\po)\right) = \trop V(f)\cap \interior \Delta$ is shown in Figure \ref{fig: cp2 b=0}. 

\begin{figure}[!h]
\centering 
\tikzset{every picture/.style={line width=0.75pt}} 
\resizebox{0.3\textwidth}{!}{
\begin{tikzpicture}[x=0.75pt,y=0.75pt,yscale=-1,xscale=1]

\draw   (200.8,51.2) -- (349.8,200.2) -- (200.8,200.2) -- cycle ;
\draw [color={rgb, 255:red, 208; green, 2; blue, 27 }  ,draw opacity=1 ][line width=1.5]    (204.41,129.42) -- (346.78,198.73) ;
\draw [shift={(349.8,200.2)}, rotate = 25.96] [color={rgb, 255:red, 208; green, 2; blue, 27 }  ,draw opacity=1 ][line width=1.5]      (0, 0) circle [x radius= 4.36, y radius= 4.36]   ;
\draw [shift={(201.4,127.95)}, rotate = 25.96] [color={rgb, 255:red, 208; green, 2; blue, 27 }  ,draw opacity=1 ][line width=1.5]      (0, 0) circle [x radius= 4.36, y radius= 4.36]   ;

\draw (150,105.6) node [anchor=north west][inner sep=0.75pt]    {$\left( 0,\frac{1}{2}\right)$};
\draw (358,193.6) node [anchor=north west][inner sep=0.75pt]    {$( 1,0)$};
\end{tikzpicture}
}
    \caption{Case when $k_2=0$}
    \label{fig: cp2 b=0}
\end{figure}

Similar to the case $k_1=0$, 
\begin{align*}
\Crit_G^{\Delta}(\po)
= & Y\cap \trop^{-1}( \interior \Delta) \\
= & \set{ \left(y_1, y_2\right)\in \Lambda^2 }{ 
\begin{aligned}
& k_1y_2-k_1\frac{T}{y_1y_2} =0 \\
& \val(y_2) >  0, \val \left(\frac{T}{y_2^2}\right) > 0, \\    
& \val \left(\frac{T}{y_2T/y_2^2 }\right) > 0, 
y_2\ne  0, \frac{T}{y_2^2}\ne 0 
\end{aligned} } \\
= & \set{ \left(\frac{T}{y_2^2}, y_2\right)\in \Lambda^2 }{ 
\begin{aligned}
& \val(y_2) >  0, \val \left(\frac{T}{y_2^2}\right) > 0, \\    
& 
y_2\ne  0, \frac{T}{y_2^2}\ne 0 
\end{aligned} } \\
 \cong 
 & \set{ y_2 \in B_{\Lambda}^1 }{ e^{-\frac{1}{2}} < |y_2| < 1 }
 \subset 
\Sp \Lambda\pair{ y_2, Ty_2^{-2}
 }
\end{align*}
is an open annulus. 

\item If $k_2-k_1 =0$ and $k_1,k_2\ne 0$, then $f = -k_1y_1+k_1y_2$. 
\[ V\left( \trop \left(f \right) \right) = \{ u\in \R^2\mid  u_1 = u_2 \}. \]
$\trop\left( \Crit_G^{\Delta}(\po)\right) = \trop V(f)\cap \interior \Delta$ is shown in Figure \ref{fig:cp2 b-a=0}.

\begin{figure}[!h]
\centering
\tikzset{every picture/.style={line width=0.75pt}} 
\resizebox{0.25\textwidth}{!}{
\begin{tikzpicture}[x=0.75pt,y=0.75pt,yscale=-1,xscale=1]

\draw   (201,50) -- (351.8,200.6) -- (201,200.6) -- cycle ;
\draw [color={rgb, 255:red, 208; green, 2; blue, 27 }  ,draw opacity=1 ][line width=1.5]    (203.37,198.23) -- (274.03,127.67) ;
\draw [shift={(276.4,125.3)}, rotate = 315.04] [color={rgb, 255:red, 208; green, 2; blue, 27 }  ,draw opacity=1 ][line width=1.5]      (0, 0) circle [x radius= 4.36, y radius= 4.36]   ;
\draw [shift={(201,200.6)}, rotate = 315.04] [color={rgb, 255:red, 208; green, 2; blue, 27 }  ,draw opacity=1 ][line width=1.5]      (0, 0) circle [x radius= 4.36, y radius= 4.36]   ;

\draw (179,206.6) node [anchor=north west][inner sep=0.75pt]  [xscale=0.7,yscale=0.7]  {$( 0,0)$};
\draw (283,74.6) node [anchor=north west][inner sep=0.75pt]  [xscale=0.7,yscale=0.7]  {$\left(\frac{1}{2} ,\frac{1}{2}\right)$};
\end{tikzpicture}
}
    \caption{Case when $k_2-k_1=0$}
    \label{fig:cp2 b-a=0}
\end{figure}

Moreover, 
\begin{align*}
\Crit_G^{\Delta}(\po)
= & Y\cap \trop^{-1}( \interior \Delta) \\
= & \set{ \left( y_1, y_1\right)\in \Lambda^2 }{ \val(y_1) >  0, \val \left(\frac{T}{ y_1^2}\right) > 0, 
y_1\ne  0
} \\
 \cong & \set{ y_1 \in B_{\Lambda}^1 }{ e^{-\frac{1}{2}} < |y_1| < 1 }
 \subset  \Sp \Lambda\pair{ y_1, Ty_1^{-2}}
\end{align*}
is an open annulus. 
\end{enumerate}

\end{eg}

\subsubsection{\texorpdfstring{$S^1$}{S1}-action on a one-point blowup of \texorpdfstring{$\proj^2$}{CP2}}
\begin{eg}[$S^1$-action on a one-point blowup of $\proj^2$]

Consider the one-point blowup $(\proj^2(1),\omega, T^2,\mu)$ of $\proj^2$ whose moment polytope is given by
\[ \Delta=\set{ (u_1, u_2) \in \R^2 }{ 
\begin{aligned}
& u_i\geq 0 \quad \forall 1\leq i\leq 2,\\
&  1-u_1-u_2 \geq 0\\
&  1-\alpha - u_2\geq 0
\end{aligned}}
. \]
Its potential function is  
\[ \po  = y_1 + y_2 + \frac{T}{y_1y_2} + \frac{T^{1-\alpha }}{y_2}. \]
\begin{align*}
\Longrightarrow 
f := \frac{\partial \po}{\partial c_2} 
& = -k_2 y_1 \frac{\partial \po}{\partial y_1} + k_1 y_2 \frac{\partial \po}{\partial y_2} \\
& = -k_2 \lrp{ y_1-\frac{T}{y_1y_2} } + k_1\left(y_2-\frac{T}{y_1y_2} - \frac{T^{1-\alpha}}{y_2}\right)  \\
& =  -k_2y_1+ k_1y_2 + (k_2-k_1)\frac{T}{y_1y_2} -k_1 \frac{T^{1-\alpha}}{y_2}.       
\end{align*}

\begin{enumerate}
\item \textbf{Suppose $k_1,k_2,k_2-k_1$ are all non-zero.}

We now consider the tropicalization of $\Crit_G^{\Delta}(\po)$. 
\[ \trop  (f) : \R^2 \to \R, \quad  u \mapsto \min \{ u_1, u_2, 1-u_1-u_2, 1-\alpha - u_2\}. \]

\begin{enumerate}[a)]
    \item $u_1= u_2\leq \min \{1-u_1-u_2, 1-\alpha - u_2\}$ 
    \[ \Longrightarrow 
    \begin{cases}
     & u_1= u_2\\ 
    & u_1 \leq 1-2u_1\\
    & u_1 \leq  1-\alpha - u_1
    \end{cases}
  \Rightarrow 
  \begin{cases}
    u_2 & = u_1, \\ 
    u_1  &  \leq \min \{\frac{1}{3},  \frac{1-\alpha}{2}\} \\
     & =  \begin{cases}
     \frac{1}{3} \quad & \text{ if } \alpha \leq \frac{1}{3}\\
      \frac{1-\alpha}{2} \quad & \text{ if } \alpha \geq \frac{1}{3}
    \end{cases}      
  \end{cases}
    \] 
    \item $u_1=1-u_1-u_2\leq \min \{ u_2, 1-\alpha - u_2\}$ 
     \[ \Longrightarrow 
    \begin{cases}
     & u_2= 1-2u_1\\ 
    & u_1 \leq 1-2u_1\\
    & u_1 \leq  1-\alpha - (1-2u_1)
    \end{cases}
    \Rightarrow 
   \begin{cases}
     & u_2= 1-2u_1\\ 
    &  \alpha\leq u_1 \leq \frac{1}{3},
    \end{cases} 
    \] 
    which can happen only if $\alpha \leq \frac{1}{3}$. 
    \item $u_1= 1-\alpha - u_2\leq \min \{1-u_1-u_2, u_2\}$ 
    \[ \Longrightarrow 
    \begin{cases}
     & u_2= 1-\alpha-u_1\\ 
    & u_1 \leq 1-u_1-(1-\alpha-u_1)\\
    & u_1 \leq  1-\alpha - u_1
    \end{cases}
  \Rightarrow 
    \begin{cases}
      u_2 & = 1-\alpha-u_1 \\ 
    u_1  &  \leq \min \{\alpha ,  \frac{1-\alpha}{2}\} \\
     & =  \begin{cases}
     \alpha \quad & \text{ if } \alpha \leq \frac{1}{3}\\
      \frac{1-\alpha}{2} \quad & \text{ if } \alpha \geq \frac{1}{3}
    \end{cases}   
      \end{cases} 
    \] 
    \item $u_2 = 1-u_1-u_2 \leq \min \{u_1, 1-\alpha - u_2\}$
     \[ \Longrightarrow 
    \begin{cases}
     & u_2= \frac{1-u_1}{2}\\ 
    &  \frac{1-u_1}{2} \leq u_1\\
    & \frac{1-u_1}{2} \leq 1-\alpha - \frac{1-u_1}{2}
    \end{cases}
     \Rightarrow 
     \begin{cases}
     u_2 & = \frac{1-u_1}{2}\\
    u_1  &  \geq \max \{\alpha , \frac{1}{3}\} \\
     & =  \begin{cases}
    \frac{1}{3} \quad & \text{ if } \alpha \leq \frac{1}{3}\\
     \alpha \quad & \text{ if } \alpha \geq \frac{1}{3}
    \end{cases}   
     \end{cases}
    \]  
     \item $u_2 = 1-\alpha - u_2 \leq \min \{u_1, 1-u_1-u_2\}$ 
      \[ \Longrightarrow 
    \begin{cases}
     & u_2= \frac{1-\alpha}{2}\\ 
    & \frac{1-\alpha}{2} \leq u_1\\
    & \frac{1-\alpha}{2} \leq 1-u_1 -\frac{1-\alpha}{2}
    \end{cases}
     \Rightarrow 
    \begin{cases}
     & u_2= \frac{1-\alpha}{2}\\ 
    & \frac{1-\alpha}{2} \leq u_1 \leq \alpha , 
    \end{cases}
    \]  
    which can happen only if $\alpha\geq \frac{1}{3}$. 
    \item $1-u_1-u_2= 1-\alpha - u_2 \leq \min \{u_1, u_2 \}$
     \[ \Longrightarrow 
    \begin{cases}
     & u_1= \alpha\\ 
    & 1-\alpha - u_2\leq \alpha\\
    & 1-\alpha - u_2 \leq u_2
    \end{cases}
    \Rightarrow 
      \begin{cases}
     u_1 & =  \alpha\\
    u_2 &  \geq \max \{ 1-2\alpha ,  \frac{1-\alpha}{2}\} \\
     & =  \begin{cases}
     1-2\alpha \quad & \text{ if } \alpha \leq \frac{1}{3}\\
      \frac{1-\alpha}{2}\quad & \text{ if } \alpha \geq \frac{1}{3}
    \end{cases}   
     \end{cases}
    \]  
\end{enumerate}

\begin{enumerate}[i)]
\item 
The case when $0< \alpha<\frac{1}{3} $ is shown in Figure \ref{fig:cp2(1) a,b,b-a non-zero, 0< alpha <1/3}. 
\begin{figure}[!h]
\centering
\tikzset{every picture/.style={line width=0.75pt}} 
\resizebox{0.25\textwidth}{!}{
\begin{tikzpicture}[x=0.75pt,y=0.75pt,yscale=-1,xscale=1]

\draw    (199.8,50.6) -- (200.8,300.6) ;
\draw    (500.8,300.6) -- (200.8,300.6) ;
\draw    (250.3,51.1) -- (500.8,300.6) ;
\draw [color={rgb, 255:red, 208; green, 2; blue, 27 }  ,draw opacity=1 ][line width=1.5]    (250.31,54.45) -- (250.5,100.6) ;
\draw [shift={(250.3,51.1)}, rotate = 89.77] [color={rgb, 255:red, 208; green, 2; blue, 27 }  ,draw opacity=1 ][line width=1.5]      (0, 0) circle [x radius= 4.36, y radius= 4.36]   ;
\draw [color={rgb, 255:red, 208; green, 2; blue, 27 }  ,draw opacity=1 ][line width=1.5]    (202.19,52.96) -- (250.5,100.6) ;
\draw [shift={(199.8,50.6)}, rotate = 44.6] [color={rgb, 255:red, 208; green, 2; blue, 27 }  ,draw opacity=1 ][line width=1.5]      (0, 0) circle [x radius= 4.36, y radius= 4.36]   ;
\draw [color={rgb, 255:red, 208; green, 2; blue, 27 }  ,draw opacity=1 ][line width=1.5]    (250.5,100.6) -- (300.5,200.6) ;
\draw [color={rgb, 255:red, 208; green, 2; blue, 27 }  ,draw opacity=1 ][line width=1.5]    (203.17,298.22) -- (300.5,200.6) ;
\draw [shift={(200.8,300.6)}, rotate = 314.91] [color={rgb, 255:red, 208; green, 2; blue, 27 }  ,draw opacity=1 ][line width=1.5]      (0, 0) circle [x radius= 4.36, y radius= 4.36]   ;
\draw [color={rgb, 255:red, 208; green, 2; blue, 27 }  ,draw opacity=1 ][line width=1.5]    (497.8,299.1) -- (300.5,200.6) ;
\draw [shift={(500.8,300.6)}, rotate = 206.53] [color={rgb, 255:red, 208; green, 2; blue, 27 }  ,draw opacity=1 ][line width=1.5]      (0, 0) circle [x radius= 4.36, y radius= 4.36]   ;
\draw    (199.8,50.6) -- (250.3,51.1) ;

\draw (271,87.4) node [anchor=north west][inner sep=0.75pt]  [font=\LARGE]  {$( \alpha ,1-2\alpha )$};
\draw (281,222.4) node [anchor=north west][inner sep=0.75pt]  [font=\LARGE]  {$\left(\frac{1}{3} ,\frac{1}{3}\right)$};

\end{tikzpicture}

}
\caption{Case when $k_1,k_2,k_2-k_1\ne 0$ and $0<\alpha <\frac{1}{3}$}
    \label{fig:cp2(1) a,b,b-a non-zero, 0< alpha <1/3}
\end{figure}

\item
The case when $\alpha =\frac{1}{3}$ is shown in Figure \ref{fig:cp2(1) a,b,b-a non-zero, alpha =1/3}. 

\begin{figure}[!h]
\centering
\tikzset{every picture/.style={line width=0.75pt}} 
\resizebox{0.2\textwidth}{!}{

\begin{tikzpicture}[x=0.75pt,y=0.75pt,yscale=-1,xscale=1]

\draw    (200,100) -- (200.41,201) ;
\draw    (350,200) -- (200.41,201) ;
\draw    (250.3,100) -- (350,200) ;
\draw [color={rgb, 255:red, 208; green, 2; blue, 27 }  ,draw opacity=1 ][line width=1.5]    (250.3,103.36) -- (250.3,151) ;
\draw [shift={(250.3,100)}, rotate = 90] [color={rgb, 255:red, 208; green, 2; blue, 27 }  ,draw opacity=1 ][line width=1.5]      (0, 0) circle [x radius= 4.36, y radius= 4.36]   ;
\draw [color={rgb, 255:red, 208; green, 2; blue, 27 }  ,draw opacity=1 ][line width=1.5]    (202.36,102.39) -- (250.3,151) ;
\draw [shift={(200,100)}, rotate = 45.39] [color={rgb, 255:red, 208; green, 2; blue, 27 }  ,draw opacity=1 ][line width=1.5]      (0, 0) circle [x radius= 4.36, y radius= 4.36]   ;
\draw [color={rgb, 255:red, 208; green, 2; blue, 27 }  ,draw opacity=1 ][line width=1.5]    (202.78,198.63) -- (250.3,151) ;
\draw [shift={(200.41,201)}, rotate = 314.94] [color={rgb, 255:red, 208; green, 2; blue, 27 }  ,draw opacity=1 ][line width=1.5]      (0, 0) circle [x radius= 4.36, y radius= 4.36]   ;
\draw [color={rgb, 255:red, 208; green, 2; blue, 27 }  ,draw opacity=1 ][line width=1.5]    (346.99,198.52) -- (250.3,151) ;
\draw [shift={(350,200)}, rotate = 206.17] [color={rgb, 255:red, 208; green, 2; blue, 27 }  ,draw opacity=1 ][line width=1.5]      (0, 0) circle [x radius= 4.36, y radius= 4.36]   ;
\draw    (200,100) -- (250.3,100) ;

\draw (231,162.4) node [anchor=north west][inner sep=0.75pt]  [font=\Large]  {$\left(\frac{1}{3} ,\frac{1}{3}\right)$};

\end{tikzpicture}

}
\caption{Case when $k_1,k_2,k_2-k_1\ne 0$ and $\alpha =\frac{1}{3}$}
    \label{fig:cp2(1) a,b,b-a non-zero, alpha =1/3}
\end{figure}

\item 

The case when $1/3 <\alpha <1$ is shown in Figure \ref{fig: cp2(1), a,b,b-a non-zero, 1/3 <alpha <1}. 

\begin{figure}[!h]
  \centering
\tikzset{every picture/.style={line width=0.75pt}} 
\resizebox{0.3\textwidth}{!}{
\begin{tikzpicture}[x=0.75pt,y=0.75pt,yscale=-1,xscale=1]

\draw    (200,49.1) -- (200.8,199.8) ;
\draw    (501.8,198.8) -- (200.8,199.8) ;
\draw    (350.8,49.8) -- (501.8,198.8) ;
\draw    (200,49.1) -- (350.8,49.8) ;
\draw [color={rgb, 255:red, 208; green, 2; blue, 27 }  ,draw opacity=1 ][line width=1.5]    (350.8,53.15) -- (350.8,123.8) ;
\draw [shift={(350.8,49.8)}, rotate = 90] [color={rgb, 255:red, 208; green, 2; blue, 27 }  ,draw opacity=1 ][line width=1.5]      (0, 0) circle [x radius= 4.36, y radius= 4.36]   ;
\draw [color={rgb, 255:red, 208; green, 2; blue, 27 }  ,draw opacity=1 ][line width=1.5]    (274.8,122.8) -- (203.12,197.38) ;
\draw [shift={(200.8,199.8)}, rotate = 133.86] [color={rgb, 255:red, 208; green, 2; blue, 27 }  ,draw opacity=1 ][line width=1.5]      (0, 0) circle [x radius= 4.36, y radius= 4.36]   ;
\draw [color={rgb, 255:red, 208; green, 2; blue, 27 }  ,draw opacity=1 ][line width=1.5]    (202.39,51.45) -- (274.8,122.8) ;
\draw [shift={(200,49.1)}, rotate = 44.58] [color={rgb, 255:red, 208; green, 2; blue, 27 }  ,draw opacity=1 ][line width=1.5]      (0, 0) circle [x radius= 4.36, y radius= 4.36]   ;
\draw [color={rgb, 255:red, 208; green, 2; blue, 27 }  ,draw opacity=1 ][line width=1.5]    (274.8,122.8) -- (350.8,123.8) ;
\draw [color={rgb, 255:red, 208; green, 2; blue, 27 }  ,draw opacity=1 ][line width=1.5]    (350.8,123.8) -- (498.8,197.31) ;
\draw [shift={(501.8,198.8)}, rotate = 26.41] [color={rgb, 255:red, 208; green, 2; blue, 27 }  ,draw opacity=1 ][line width=1.5]      (0, 0) circle [x radius= 4.36, y radius= 4.36]   ;

\draw (374,96.4) node [anchor=north west][inner sep=0.75pt]  [font=\LARGE]  {$\left( \alpha ,\frac{1-\alpha }{2}\right)$};
\draw (154,100.4) node [anchor=north west][inner sep=0.75pt]  [font=\LARGE]  {$\left(\frac{1-\alpha }{2} ,\frac{1-\alpha }{2}\right)$};

\end{tikzpicture}

}
    \caption{Case when $k_1,k_2,k_2-k_1\ne 0$, $\frac{1}{3} <\alpha <1$ }
    \label{fig: cp2(1), a,b,b-a non-zero, 1/3 <alpha <1}
\end{figure}
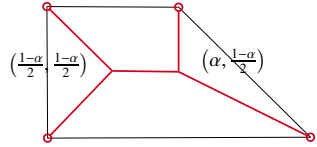
\end{enumerate}

We have 
\begin{align*}
 W : = & V(f)\cap \trop^{-1}(\Delta) \\
 = & \Sp \frac{\Lambda\pair{y_1,y_2,\frac{T}{y_1y_2}, \frac{T^{1-\alpha}}{y_2}}}{
 \lrp{-k_2y_1+ay_2+(k_2-k_1)\frac{T}{y_1y_2}-k_1  \frac{T^{1-\alpha}}{y_2}}
 } 
 \\
\cong & \Sp \frac{\Lambda\pair{y_1,y_2,z, x}}{
 \lrp{
 \begin{aligned}
 & -k_2y_1+k_1y_2+(k_2-k_1)z-k_1 x,\\
 & y_1y_2z-T, y_2x - T^{1-\alpha}
 \end{aligned}
} },
\end{align*}
and
\begin{align*} 
\Crit_G^{\Delta}(\po) 
= & W \setminus \trop^{-1} \lrp{\left\{ (0,0),(1,0),(0,1-\alpha), (\alpha,1-\alpha)\right\}}.  
\end{align*}

\item \textbf{Suppose $k_1=0$ and $k_2,k_2-k_1\ne 0$.} Then $f = -k_2\left(y_1- \frac{T}{y_1y_2}\right)$ and 
\begin{align*}
\Crit_G^{\Delta}(\po)
:= & V(f) \cap \trop^{-1}(\interior \Delta)\\
= & \set{ \left(y_1,y_2\right)\in (\Lambda^*)^2 }{ 
\begin{aligned}
& -k_2y_1+k_2\frac{T}{y_1y_2}=0\\
& \val(y_1) > 0, \val(y_2)>0, \\
& \val\lrp{\frac{T^{1-\alpha}}{y_2} }>0, \val \left(\frac{T}{y_1y_2}\right) > 0,   
\end{aligned} 
}\\
= & \set{ \left(y_1,\frac{T}{y_1^2}\right)\in (\Lambda^*)^2  }{ 
\begin{aligned}
& \val(y_1) > 0, \val \left(\frac{T}{y_1^2}\right) > 0, \\
& \val \left(\frac{T}{y_1T/y_1^2}\right)>0,   \val\lrp{\frac{T^{1-\alpha}}{T/y_1^2} }>0     
\end{aligned} 
}\\
= & \set{ \left(y_1,\frac{T}{y_1^2}\right)\in (B_{\Lambda}^1)^2 }{ 
\begin{aligned}
& |y_1|<1, |y_1|>e^{-\frac{1}{2}}, \\
& |y_1|<e^{-\frac{\alpha}{2}}
\end{aligned} 
}\\
= & \set{ \left(y_1,\frac{T}{y_1^2}\right)\in \Lambda^2 }{ 
e^{-\frac{1}{2}}<|y_1|<e^{-\frac{\alpha}{2}} 
}
\end{align*}
is an ``open" annulus. 

Moreover, we have
\[ \trop f : \R^2 \to \R, \quad  u \mapsto \min \{ u_1,   1-u_1-u_2 \}. \]
$\trop(\Crit_G^{\Delta}(\po)) = \trop V(f)\cap  \interior \Delta$ is shown in Figure \ref{fig:cp2(1), a=0}. 
\begin{figure}[!h]
    \centering
\tikzset{every picture/.style={line width=0.75pt}} 
\resizebox{0.3\textwidth}{!}{
\begin{tikzpicture}[x=0.75pt,y=0.75pt,yscale=-1,xscale=1]

\draw    (199.8,50.6) -- (250.3,51.1) ;
\draw    (200.8,300.6) -- (500.8,300.2) ;
\draw    (199.8,50.6) -- (200.8,300.6) ;
\draw    (250.3,51.1) -- (500.8,300.2) ;
\draw [color={rgb, 255:red, 208; green, 2; blue, 27 }  ,draw opacity=1 ][line width=1.5]    (226.56,53.85) -- (349.29,297.4) ;
\draw [shift={(350.8,300.4)}, rotate = 63.26] [color={rgb, 255:red, 208; green, 2; blue, 27 }  ,draw opacity=1 ][line width=1.5]      (0, 0) circle [x radius= 4.36, y radius= 4.36]   ;
\draw [shift={(225.05,50.85)}, rotate = 63.26] [color={rgb, 255:red, 208; green, 2; blue, 27 }  ,draw opacity=1 ][line width=1.5]      (0, 0) circle [x radius= 4.36, y radius= 4.36]   ;

\end{tikzpicture}
}
    \caption{Case when $k_1=0$}
    \label{fig:cp2(1), a=0}
\end{figure}

\item \textbf{Suppose $k_2=0$ and $k_1,k_2-k_1\ne 0$.} 
Then \[ f = k_1\left(y_2-\frac{T}{y_1y_2} - \frac{T^{1-\alpha}}{y_2}\right). \]
 
We have
\[ \trop  (f) = \trop \left(y_2-\frac{T}{y_1y_2} - \frac{T^{1-\alpha}}{y_2}\right): \R^2 \to \R, \quad  u \mapsto \min \{ u_2,   1-u_1-u_2, 1-\alpha -u_2 \}. \]
Thus,  
\begin{align*}
    V(\trop f) 
    =  & \{ u_2=1-u_1-u_2\leq 1-\alpha - u_2\}  \cup   \{ 1-u_1-u_2= 1-\alpha - u_2\leq u_2 \} \\
    & \cup  \{ u_2=1-\alpha - u_2\leq 1-u_1-u_2\} \\
     =  &\{ u_2 = \frac{1-u_1}{2}, u_1 \geq \alpha\} \cup \{ u_1 = \alpha, u_2 \geq \frac{1-\alpha}{2}\} \cup  \{ u_2 = \frac{1-\alpha}{2}, u_1 \leq \alpha \} .
\end{align*}

The set $\trop(\Crit_G^{\Delta}(\po))$ is shown in Figure \ref{fig: cp2(1), b=0}. 

\begin{figure}[!h]
\centering
\tikzset{every picture/.style={line width=0.75pt}} 
\resizebox{0.3\textwidth}{!}{
\begin{tikzpicture}[x=0.75pt,y=0.75pt,yscale=-1,xscale=1]

\draw    (199.8,50.6) -- (250.3,51.1) ;
\draw    (200.8,300.6) -- (500.8,300.2) ;
\draw    (199.8,50.6) -- (200.8,300.6) ;
\draw    (250.3,51.1) -- (500.8,300.2) ;
\draw [color={rgb, 255:red, 208; green, 2; blue, 27 }  ,draw opacity=1 ][line width=1.5]    (250.8,175.6) -- (497.8,298.7) ;
\draw [shift={(500.8,300.2)}, rotate = 26.49] [color={rgb, 255:red, 208; green, 2; blue, 27 }  ,draw opacity=1 ][line width=1.5]      (0, 0) circle [x radius= 4.36, y radius= 4.36]   ;
\draw [color={rgb, 255:red, 208; green, 2; blue, 27 }  ,draw opacity=1 ][line width=1.5]    (250.31,54.45) -- (250.8,175.6) ;
\draw [shift={(250.3,51.1)}, rotate = 89.77] [color={rgb, 255:red, 208; green, 2; blue, 27 }  ,draw opacity=1 ][line width=1.5]      (0, 0) circle [x radius= 4.36, y radius= 4.36]   ;
\draw [color={rgb, 255:red, 208; green, 2; blue, 27 }  ,draw opacity=1 ][line width=1.5]    (203.65,175.6) -- (250.8,175.6) ;
\draw [shift={(200.3,175.6)}, rotate = 0] [color={rgb, 255:red, 208; green, 2; blue, 27 }  ,draw opacity=1 ][line width=1.5]      (0, 0) circle [x radius= 4.36, y radius= 4.36]   ;

\draw (267,108.4) node [anchor=north west][inner sep=0.75pt]  [font=\LARGE]  {$\left( \alpha ,\frac{1-\alpha }{2}\right)$};

\end{tikzpicture}

}
    \caption{Case when $k_2=0$}
    \label{fig: cp2(1), b=0}
\end{figure}
Moreover, 
\begin{align*}
 W:= &   V(f) \cap \trop^{-1}(\Delta) \\ 
 = & \Sp \frac{\Lambda \pair{ y_1, y_2, \frac{T}{y_1y_2}, \frac{T^{1-\alpha}}{y_2}} }{
 \lrp{
  a\left(y_2-\frac{T}{y_1y_2} - \frac{T^{1-\alpha}}{y_2}\right)
 }}\\
\cong & \Sp \frac{\Lambda \pair{ y_1, y_2, z, x} }{
 \lrp{
 y_2-z-x, y_1y_2z-T, y_2x-T^{1-\alpha}
 }
 },
\end{align*}
and $\Crit_G^{\Delta}(\po) = W\setminus \trop^{-1} \lrp{\left\{\lrp{0,\frac{1-\alpha}{2}}, \lrp{\alpha, 1-\alpha}, \lrp{1,0} \right\} }$. 
\item \textbf{Suppose $k_2-k_1=0$ and $k_1,k_2\ne 0$.} 
 
We have
\[ \trop  (f) = \trop (-y_1 + y_2 - \frac{T^{1-\alpha}}{y_2}): \R^2 \to \R, \quad  u \mapsto \min \{ u_1,  u_2, 1-\alpha -u_2 \}. \]
Thus,  
\begin{align*}
    V(\trop f) 
    =  & \{ u_1= u_2 \leq 1-\alpha - u_2\}  
    \cup \{ u_1=1-\alpha - u_2 \leq  u_2 \}\\ 
    &  \cup \{ u_2=1-\alpha - u_2\leq  u_1 \} \\ 
   =  & \{ u_1 =u_2, u_1\leq \frac{1-\alpha}{2}\} \cup  \{  u_2 = 1-\alpha - u_1, u_1\leq \frac{1-\alpha}{2}\}\\
   & \cup  \{ u_2 = \frac{1-\alpha}{2}, u_1\geq  \frac{1-\alpha}{2}\}
\end{align*}
The set $\trop (\Crit_G^{\Delta}(\po))=\trop V(f)\cap \interior \Delta$ in Figure \ref{fig:cp2(1), b-a=0}. 
\begin{figure}[!h]
    \centering
\tikzset{every picture/.style={line width=0.75pt}} 
\resizebox{0.25\textwidth}{!}{
\begin{tikzpicture}[x=0.75pt,y=0.75pt,yscale=-1,xscale=1]

\draw    (199.8,50.6) -- (250.3,51.1) ;
\draw    (200.8,300.6) -- (500.8,300.2) ;
\draw    (199.8,50.6) -- (200.8,300.6) ;
\draw    (250.3,51.1) -- (500.8,300.2) ;
\draw [color={rgb, 255:red, 208; green, 2; blue, 27 }  ,draw opacity=1 ][line width=1.5]    (203.17,298.22) -- (315.2,185.8) ;
\draw [shift={(200.8,300.6)}, rotate = 314.9] [color={rgb, 255:red, 208; green, 2; blue, 27 }  ,draw opacity=1 ][line width=1.5]      (0, 0) circle [x radius= 4.36, y radius= 4.36]   ;
\draw [color={rgb, 255:red, 208; green, 2; blue, 27 }  ,draw opacity=1 ][line width=1.5]    (201.98,53.15) -- (315.2,185.8) ;
\draw [shift={(199.8,50.6)}, rotate = 49.52] [color={rgb, 255:red, 208; green, 2; blue, 27 }  ,draw opacity=1 ][line width=1.5]      (0, 0) circle [x radius= 4.36, y radius= 4.36]   ;
\draw [color={rgb, 255:red, 208; green, 2; blue, 27 }  ,draw opacity=1 ][line width=1.5]    (315.2,185.8) -- (382.85,186.75) ;
\draw [shift={(386.2,186.8)}, rotate = 0.81] [color={rgb, 255:red, 208; green, 2; blue, 27 }  ,draw opacity=1 ][line width=1.5]      (0, 0) circle [x radius= 4.36, y radius= 4.36]   ;

\draw (198,155) node [anchor=north west][inner sep=0.75pt]  [font=\LARGE]  {$\left(\frac{1-\alpha }{2} ,\frac{1-\alpha }{2}\right)$};

\end{tikzpicture}

}
    \caption{Case $k_2-k_1=0$}
    \label{fig:cp2(1), b-a=0}
\end{figure}
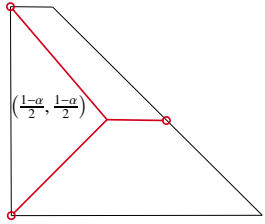
 \end{enumerate}
\end{eg}

Moreover, 
\begin{align*}
 W:= &   V(f) \cap \trop^{-1}(\Delta) \\ 
 = & \Sp \frac{\Lambda \pair{ y_1, y_2, \frac{T}{y_1y_2}, \frac{T^{1-\alpha}}{y_2}} }{(-y_1 + y_2 - \frac{T^{1-\alpha}}{y_2}) }
 \\
\cong & \Sp \frac{\Lambda \pair{ y_1, y_2, z, x} }{
 \lrp{
 -y_1 + y_2 - x, y_1y_2z-T, y_2x-T^{1-\alpha}
 }
 },
\end{align*}
and $\Crit_G^{\Delta}(\po) = W\setminus \trop^{-1} \lrp{ \left\{\lrp{0,0}, \lrp{0, 1-\alpha}, \lrp{\frac{1+\alpha}{2}, \frac{1-\alpha}{2}} \right\} }$.

\subsubsection{\texorpdfstring{$S^1$}{S1}-action on a two-point blowup of \texorpdfstring{$\proj^2$}{CP2}}
\begin{eg}[$S^1$-action on a two-point blowup of $\proj^2$]

Consider the two-point blowup $(\proj^2(2),\omega, T^2,\mu)$ of $\proj^2$ whose moment polytope is given by 
\[ \Delta=\set{ (u_1, u_2) \in \R^2 }{ 
-1\leq u_1 \leq 1,-1\leq u_2 \leq 1,u_1 +u_2 \leq 1+\alpha
}
, \]
where $-1< \alpha <1$. 
Its potential function is  
\[ \po  = Ty_1 + Ty_2 + \frac{T^{1+\alpha}}{y_1y_2} + \frac{T}{y_1} + \frac{T}{y_2}. \]
\begin{align*}
\Longrightarrow 
0 = f 
& : = \frac{\partial \po}{\partial c_2} 
= -k_2y_1 \frac{\partial \po}{\partial y_1} + k_1 y_2 \frac{\partial \po}{\partial y_2} \\
& = T\left(-k_2 \left(y_1-\frac{T^{\alpha}}{y_1y_2} - \frac{1}{y_1}\right) + k_1\left(y_2-\frac{T^{\alpha}}{y_1y_2} - \frac{1}{y_2}\right) \right) \\
& =  T\left(-k_2y_1  + k_1y_2 +(k_2-k_1)\frac{T^{\alpha}}{y_1y_2} + \frac{k_2}{y_1}- \frac{k_1}{y_2}\right) 
\end{align*}

\begin{enumerate}
\item \textbf{Suppose $k_1,k_2,k_2-k_1$ are all non-zero.}

We have
\[ \trop  (f) : \R^2 \to \R, \quad  u \mapsto \min \{ 1+ u_1, 1+u_2, 1+\alpha-u_1-u_2, 1-u_1, 1-u_2\}. \]
\begin{enumerate}[a)]
    \item $u_1=u_2\leq \min \{\alpha-u_1-u_2, -u_1,-u_2\}$. 
    \[ 
    \Longrightarrow u_2 =u_1\leq \min \{ \frac{\alpha}{3}, 0\}
     =  \begin{cases}
     \frac{\alpha}{3} \quad & \text{ if } \alpha \leq 0\\
      0 \quad & \text{ if } \alpha \geq 0
    \end{cases} . 
    \]
    \item $u_1=\alpha-u_1-u_2\leq \min \{u_2, -u_1,-u_2\} $. 
    \[ \Longrightarrow 
    \begin{cases}
     & u_2 = \alpha - 2u_1 \\ 
     & u_1 \leq \alpha-2u_1\\
    & u_1 \leq -u_1\\
     & u_1 \leq 2u_1-\alpha
    \end{cases}
    \quad \Rightarrow 
    \begin{cases}
     & u_2 = \alpha - 2u_1 \\ 
     & \alpha \leq u_1 \leq \min\{ \frac{\alpha}{3}, 0\}
    \end{cases}
    \quad \Rightarrow    \begin{cases}
     & u_2 = \alpha - 2u_1 \\ 
     & \alpha \leq u_1 \leq   \frac{\alpha}{3}\\
     & \alpha \leq 0
    \end{cases}.
    \] 
    \item $u_1=-u_1\leq \min \{u_2, \alpha-u_1-u_2, -u_2\}$.
     \[ \Longrightarrow 
    \begin{cases}
     & u_1=0 \\ 
    & u_2\geq 0\\
    & \alpha-u_2\geq 0 \\
    & -u_2 \geq 0
    \end{cases}
    \quad \Rightarrow \quad  
    \begin{cases}
     & u_1=u_2 =0 \\ 
     & \alpha\geq 0
    \end{cases}.\]
    \item $u_1=-u_2\leq \min \{\alpha-u_1-u_2, -u_1, u_2\}$. 
      \[ \Longrightarrow 
    \begin{cases}
     & u_2 = -u_1 \\ 
    & u_1 \leq \alpha \\
    & u_1\leq -u_1 
    \end{cases}
    \quad \Rightarrow \quad  
    \begin{cases}
     & u_2 = -u_1 \\ 
     & u_1\leq  \min \{0,\alpha\}
      = \begin{cases}
     \alpha \quad & \text{ if } \alpha \leq 0\\
      0 \quad & \text{ if } \alpha \geq 0
    \end{cases}
    \end{cases}.\]
     \item $u_2=\alpha-u_1-u_2\leq \min \{u_1, -u_1,-u_2\}$.
    \[ \Longrightarrow 
    \begin{cases}
     & u_2 = \frac{\alpha-u_1}{2} \\ 
    &  \frac{\alpha-u_1}{2}\leq u_1 \\
    &  \frac{\alpha-u_1}{2}\leq -u_1 \\
    & \frac{\alpha-u_1}{2}\leq -\frac{\alpha-u_1}{2}
    \end{cases}
    \quad \Rightarrow  
    \begin{cases}
     & u_2 =  \frac{\alpha-u_1}{2} \\ 
     & \max\{\alpha, \frac{\alpha}{3}\}\leq u_1\leq -\alpha 
    \end{cases}
     \quad \Rightarrow   
    \begin{cases}
     & u_2 =  \frac{\alpha-u_1}{2} \\ 
     &  \frac{\alpha}{3} \leq u_1\leq -\alpha \\
     & \alpha \leq 0
    \end{cases}
    . \] 
    \item $u_2=-u_1\leq \min \{\alpha-u_1-u_2, u_1,-u_2\}$. 
     \[ \Longrightarrow 
    \begin{cases}
     & u_2 = -u_1 \\ 
    & -u_1 \leq \alpha\\
    & -u_1\leq u_1 
    \end{cases}
    \quad \Rightarrow \quad  
    \begin{cases}
    & u_2 = -u_1 \\
    & u_1 \geq \max\{ 0,-\alpha\}   
    = \begin{cases}
     -\alpha \quad & \text{ if } \alpha \leq 0\\
      0 \quad & \text{ if } \alpha \geq 0
    \end{cases}
    \end{cases}. 
    \]
    \item $u_2=-u_2\leq \min \{\alpha-u_1-u_2, u_1, -u_1\}$. 
 \[ \Longrightarrow 
    \begin{cases}
     & u_2 = 0 \\ 
      &  0\leq u_1 \\
    &  0\leq \alpha - u_1 \\
    & 0\leq -u_1
    \end{cases}
    \quad \Rightarrow \quad  
    \begin{cases}
     & u_1=u_2=0\\
    &   \alpha \geq 0
    \end{cases}.\]
    \item $\alpha-u_1-u_2=-u_1\leq \min \{u_1, u_2,-u_2\}$. 
      \[ \Longrightarrow 
    \begin{cases}
     & u_2 = \alpha \\ 
    & -u_1 \leq u_1\\
    & -u_1 \leq \alpha\\
     & -u_1 \leq -\alpha
    \end{cases}
     \quad \Rightarrow \quad  
   \begin{cases}
     & u_2 = \alpha \\ 
    &  u_1 \geq \max\{0, \alpha, -\alpha\} 
    = 
     \begin{cases}
     -\alpha \quad & \text{ if } \alpha \leq 0\\
      \alpha \quad & \text{ if } \alpha \geq 0
    \end{cases}
    \end{cases} . 
    \] 
    \item $\alpha-u_1-u_2=-u_2\leq \min \{u_1, u_2, -u_1\}$. 
     \[ \Longrightarrow 
    \begin{cases}
     & u_1 = \alpha \\ 
    & -u_2\leq \alpha\\
    & -u_2\leq u_2\\
     & -u_2\leq -\alpha
    \end{cases}
    \quad \Rightarrow \quad  
   \begin{cases}
     & u_1 = \alpha \\ 
    & u_2 \geq \max\{0, \alpha, -\alpha\} 
    = 
     \begin{cases}
     -\alpha \quad & \text{ if } \alpha \leq 0\\
      \alpha \quad & \text{ if } \alpha \geq 0
    \end{cases}
    \end{cases}
    \] 
     \item $-u_1=-u_2\leq \min \{u_1, u_2, \alpha-u_1-u_2\}$. 
      \[ \Longrightarrow 
    \begin{cases}
     & u_1 = u_2 \\ 
    & -u_1\leq u_1\\
    & -u_1\ \leq \alpha-2u_1
    \end{cases}
    \quad \Rightarrow  
   \begin{cases}
     & u_1 = u_2 \\ 
    & u_1 \geq 0\\
     & u_1 \leq \alpha
    \end{cases} 
     \quad \Rightarrow    
   \begin{cases}
     & u_1 = u_2 \\ 
    &  0\leq u_1 \leq \alpha \\
    & \alpha \geq 0
    \end{cases} . 
    \] 
\end{enumerate}

\begin{enumerate}[1)]
\item 
The case  $-1<\alpha< 0$ is shown in Figure \ref{fig: cp2(2) a,b,b-a all non-zero, alpha<0}. 
\begin{figure}[!h]
    \centering

\tikzset{every picture/.style={line width=0.75pt}} 
\resizebox{0.35\textwidth}{!}{   
\tikzset{every picture/.style={line width=0.75pt}} 

\begin{tikzpicture}[x=0.75pt,y=0.75pt,yscale=-1,xscale=1]

\draw [line width=1.5]    (150,100) -- (150,450) ;
\draw [line width=1.5]    (150,100) -- (250,100) ;
\draw [line width=1.5]    (500,450) -- (150,450) ;
\draw [line width=1.5]    (500,350) -- (500,450) ;
\draw [line width=1.5]    (250,100) -- (500,350) ;
\draw [color={rgb, 255:red, 208; green, 2; blue, 27 }  ,draw opacity=1 ][line width=1.5]    (152.38,447.64) -- (285.96,315.06) ;
\draw [shift={(150,450)}, rotate = 315.21] [color={rgb, 255:red, 208; green, 2; blue, 27 }  ,draw opacity=1 ][line width=1.5]      (0, 0) circle [x radius= 4.36, y radius= 4.36]   ;
\draw [color={rgb, 255:red, 208; green, 2; blue, 27 }  ,draw opacity=1 ][line width=1.5]    (400,350) -- (496.65,350) ;
\draw [shift={(500,350)}, rotate = 0] [color={rgb, 255:red, 208; green, 2; blue, 27 }  ,draw opacity=1 ][line width=1.5]      (0, 0) circle [x radius= 4.36, y radius= 4.36]   ;
\draw [color={rgb, 255:red, 208; green, 2; blue, 27 }  ,draw opacity=1 ][line width=1.5]    (285.96,315.06) -- (400,350) ;
\draw [color={rgb, 255:red, 208; green, 2; blue, 27 }  ,draw opacity=1 ][line width=1.5]    (400,350) -- (497.63,447.63) ;
\draw [shift={(500,450)}, rotate = 45] [color={rgb, 255:red, 208; green, 2; blue, 27 }  ,draw opacity=1 ][line width=1.5]      (0, 0) circle [x radius= 4.36, y radius= 4.36]   ;
\draw [color={rgb, 255:red, 208; green, 2; blue, 27 }  ,draw opacity=1 ][line width=1.5]    (152.37,102.37) -- (250,200) ;
\draw [shift={(150,100)}, rotate = 45] [color={rgb, 255:red, 208; green, 2; blue, 27 }  ,draw opacity=1 ][line width=1.5]      (0, 0) circle [x radius= 4.36, y radius= 4.36]   ;
\draw [color={rgb, 255:red, 208; green, 2; blue, 27 }  ,draw opacity=1 ][line width=1.5]    (250,103.36) -- (250,200) ;
\draw [shift={(250,100)}, rotate = 90] [color={rgb, 255:red, 208; green, 2; blue, 27 }  ,draw opacity=1 ][line width=1.5]      (0, 0) circle [x radius= 4.36, y radius= 4.36]   ;
\draw [color={rgb, 255:red, 208; green, 2; blue, 27 }  ,draw opacity=1 ][line width=1.5]    (250,200) -- (285.96,315.06) ;

\draw (168,180.4) node [anchor=north west][inner sep=0.75pt]  [font=\Large]  {$( \alpha ,-\alpha )$};
\draw (361,359.4) node [anchor=north west][inner sep=0.75pt]  [font=\Large]  {$( -\alpha ,\alpha )$};
\draw (208,317.4) node [anchor=north west][inner sep=0.75pt]  [font=\Large]  {$\left(\frac{\alpha }{3} ,\frac{\alpha }{3}\right)$};
\draw (114.5,67.9) node [anchor=north west][inner sep=0.75pt]  [font=\Large]  {$( -1,1)$};
\draw (107,464.4) node [anchor=north west][inner sep=0.75pt]  [font=\Large]  {$( -1,-1)$};
\draw (467,464.4) node [anchor=north west][inner sep=0.75pt]  [font=\Large]  {$( 1,-1)$};
\draw (516,341.4) node [anchor=north west][inner sep=0.75pt]  [font=\Large]  {$( 1,\alpha )$};
\draw (225,67.9) node [anchor=north west][inner sep=0.75pt]  [font=\Large]  {$( \alpha ,1)$};

\end{tikzpicture}

}
    \caption{Case when $k_1,k_2,k_2-k_1\ne 0, -1< \alpha <0$}
    \label{fig: cp2(2) a,b,b-a all non-zero, alpha<0}
\end{figure}

\item 
The case  $\alpha=0$ is shown in Figure \ref{fig: cp2(2) a,b,b-a all non-zero, alpha=0}.  

\begin{figure}[!h]
    \centering

\tikzset{every picture/.style={line width=0.75pt}} 
\resizebox{0.35\textwidth}{!}{   
\tikzset{every picture/.style={line width=0.75pt}} 

\begin{tikzpicture}[x=0.75pt,y=0.75pt,yscale=-1,xscale=1]

\draw    (200,100) -- (200,300) ;
\draw    (401,201) -- (400,300) ;
\draw    (200,100) -- (301,100) ;
\draw    (200,300) -- (400,300) ;
\draw    (301,100) -- (401,201) ;
\draw [color={rgb, 255:red, 208; green, 2; blue, 27 }  ,draw opacity=1 ]   (201.67,298.35) -- (301,200) ;
\draw [shift={(200,300)}, rotate = 315.29] [color={rgb, 255:red, 208; green, 2; blue, 27 }  ,draw opacity=1 ][line width=0.75]      (0, 0) circle [x radius= 3.35, y radius= 3.35]   ;
\draw [color={rgb, 255:red, 208; green, 2; blue, 27 }  ,draw opacity=1 ]   (201.66,101.66) -- (398.34,298.34) ;
\draw [shift={(400,300)}, rotate = 45] [color={rgb, 255:red, 208; green, 2; blue, 27 }  ,draw opacity=1 ][line width=0.75]      (0, 0) circle [x radius= 3.35, y radius= 3.35]   ;
\draw [shift={(200,100)}, rotate = 45] [color={rgb, 255:red, 208; green, 2; blue, 27 }  ,draw opacity=1 ][line width=0.75]      (0, 0) circle [x radius= 3.35, y radius= 3.35]   ;
\draw [color={rgb, 255:red, 208; green, 2; blue, 27 }  ,draw opacity=1 ]   (301,102.35) -- (301,200) ;
\draw [shift={(301,100)}, rotate = 90] [color={rgb, 255:red, 208; green, 2; blue, 27 }  ,draw opacity=1 ][line width=0.75]      (0, 0) circle [x radius= 3.35, y radius= 3.35]   ;
\draw [color={rgb, 255:red, 208; green, 2; blue, 27 }  ,draw opacity=1 ]   (300,200) -- (398.65,200.98) ;
\draw [shift={(401,201)}, rotate = 0.57] [color={rgb, 255:red, 208; green, 2; blue, 27 }  ,draw opacity=1 ][line width=0.75]      (0, 0) circle [x radius= 3.35, y radius= 3.35]   ;

\draw (251,193.4) node [anchor=north west][inner sep=0.75pt]    {$( 0,0)$};
\draw (373,310.4) node [anchor=north west][inner sep=0.75pt]    {$( 1,-1)$};
\draw (169,310.4) node [anchor=north west][inner sep=0.75pt]    {$( -1,-1)$};
\draw (174,72.4) node [anchor=north west][inner sep=0.75pt]    {$( -1,1)$};
\draw (279,72.4) node [anchor=north west][inner sep=0.75pt]    {$( 0,1)$};
\draw (414,190.4) node [anchor=north west][inner sep=0.75pt]    {$( 1,0)$};
\end{tikzpicture}
}
    \caption{Case when $k_1,k_2,k_2-k_1\ne 0, \alpha =0$}
    \label{fig: cp2(2) a,b,b-a all non-zero, alpha=0}
\end{figure}

 \item 
The case $\alpha>0$ is shown in Figure \ref{fig: cp2(2) a,b,b-a all non-zero, alpha>0}. 
\begin{figure}[!h]
    \centering
\tikzset{every picture/.style={line width=0.75pt}} 
\resizebox{0.35\textwidth}{!}{

\tikzset{every picture/.style={line width=0.75pt}} 

\begin{tikzpicture}[x=0.75pt,y=0.75pt,yscale=-1,xscale=1]

\draw    (200,100) -- (200,300) ;
\draw    (400.29,179.59) -- (400,300) ;
\draw    (200,100) -- (320,100) ;
\draw    (200,300) -- (400,300) ;
\draw    (320,100) -- (400.29,179.59) ;
\draw [color={rgb, 255:red, 208; green, 2; blue, 27 }  ,draw opacity=1 ]   (201.66,101.66) -- (398.34,298.34) ;
\draw [shift={(400,300)}, rotate = 45] [color={rgb, 255:red, 208; green, 2; blue, 27 }  ,draw opacity=1 ][line width=0.75]      (0, 0) circle [x radius= 3.35, y radius= 3.35]   ;
\draw [shift={(200,100)}, rotate = 45] [color={rgb, 255:red, 208; green, 2; blue, 27 }  ,draw opacity=1 ][line width=0.75]      (0, 0) circle [x radius= 3.35, y radius= 3.35]   ;
\draw [color={rgb, 255:red, 208; green, 2; blue, 27 }  ,draw opacity=1 ]   (201.66,298.34) -- (320,180) ;
\draw [shift={(200,300)}, rotate = 315] [color={rgb, 255:red, 208; green, 2; blue, 27 }  ,draw opacity=1 ][line width=0.75]      (0, 0) circle [x radius= 3.35, y radius= 3.35]   ;
\draw [color={rgb, 255:red, 208; green, 2; blue, 27 }  ,draw opacity=1 ]   (320,180) -- (397.65,180) ;
\draw [shift={(400,180)}, rotate = 0] [color={rgb, 255:red, 208; green, 2; blue, 27 }  ,draw opacity=1 ][line width=0.75]      (0, 0) circle [x radius= 3.35, y radius= 3.35]   ;
\draw [color={rgb, 255:red, 208; green, 2; blue, 27 }  ,draw opacity=1 ]   (320,102.35) -- (320,180) ;
\draw [shift={(320,100)}, rotate = 90] [color={rgb, 255:red, 208; green, 2; blue, 27 }  ,draw opacity=1 ][line width=0.75]      (0, 0) circle [x radius= 3.35, y radius= 3.35]   ;

\draw (321,162.4) node [anchor=north west][inner sep=0.75pt]    {$( \alpha ,\alpha )$};
\draw (251,192.4) node [anchor=north west][inner sep=0.75pt]    {$( 0,0)$};
\draw (311,72.4) node [anchor=north west][inner sep=0.75pt]    {$( \alpha ,1)$};
\draw (171,312.4) node [anchor=north west][inner sep=0.75pt]    {$( -1,-1)$};
\draw (176,72.4) node [anchor=north west][inner sep=0.75pt]    {$( -1,1)$};
\draw (409,172.4) node [anchor=north west][inner sep=0.75pt]    {$( 1,\alpha )$};
\draw (371,312.4) node [anchor=north west][inner sep=0.75pt]    {$( 1,-1)$};
\end{tikzpicture}
}
    \caption{Case when $k_1,k_2,k_2-k_1\ne 0, 0<\alpha<1$}
    \label{fig: cp2(2) a,b,b-a all non-zero, alpha>0}
\end{figure}
\end{enumerate}  

Then 
\begin{align*}
 W
 := 
 &   V(f) \cap \trop^{-1}(\Delta) 
 \\ 
 = 
 & \Sp \frac{\Lambda \pair{ Ty_1, Ty_2, \frac{T^{1+\alpha}}{y_1y_2},\frac{T}{y_1}, \frac{T}{y_2} } }{
 \lrp{
 T\left(-k_2y_1  + k_1y_2 +(k_2-k_1)\frac{T^{\alpha}}{y_1y_2} + \frac{k_2}{y_1}- \frac{k_1}{y_2}\right) 
 }
 }
 \\
\cong & \Sp \frac{\Lambda \pair{ z_1,z_2, z, x_1,x_2 } }{
\lrp{
 \begin{aligned}
& -k_2z_1+k_1z_2+(k_2-k_1)z +k_2x_1-k_1x_2, 
\\
& z_1z_2z-T^{3+\alpha}, x_1z_1-T^2,  x_2z_2-T^2
\end{aligned}
 } },
\end{align*}
and $\Crit_G^{\Delta}(\po) = W\cap \trop^{-1} \lrp{ \left\{(-1,-1),(-1,1),(\alpha,1),(1,\alpha), (1,-1) \right\}}$.

\item \textbf{Suppose $k_1=0$ and $k_2,k_2-k_1\ne 0$.} Then 
\[ f = -k_2T \left(y_1-\frac{T^{\alpha}}{y_1y_2} - \frac{1}{y_1}\right) \] 
and
\[ \trop f: \R^2 \to \R, \qquad (u_1,u_2)\mapsto \min \{1+ u_1, 1+\alpha-u_1-u_2, 1-u_1\}.  \]
Thus, 
\begin{align*}
\trop(V(f)) 
= &  V(\trop f)  \\
= & \{ u_1=\alpha-u_1-u_2\leq -u_1 \} \cup \{ u_1=-u_1 \leq \alpha-u_1-u_2 \} \\
& \cup \{ \alpha-u_1-u_2= -u_1\leq u_1 \} \\
= & \{ u_2 =\alpha - 2u_1, u_1\leq 0\} \cup \{ u_1=0, u_2 \leq \alpha\} \\
& \cup \{ u_2 = \alpha, u_1 \geq 0\}.
\end{align*}

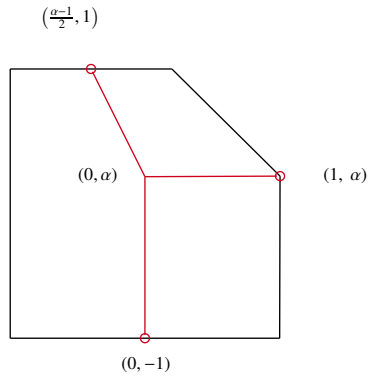
\begin{figure}[!h]
    \centering
\tikzset{every picture/.style={line width=0.75pt}} 
\resizebox{0.35\textwidth}{!}{   
\begin{tikzpicture}[x=0.75pt,y=0.75pt,yscale=-1,xscale=1]

\draw    (200,100) -- (200,300) ;
\draw    (400.29,179.59) -- (400,300) ;
\draw    (200,100) -- (320,100) ;
\draw    (200,300) -- (400,300) ;
\draw    (320,100) -- (400.29,179.59) ;
\draw [color={rgb, 255:red, 208; green, 2; blue, 27 }  ,draw opacity=1 ]   (300,180) -- (397.94,179.6) ;
\draw [shift={(400.29,179.59)}, rotate = 359.77] [color={rgb, 255:red, 208; green, 2; blue, 27 }  ,draw opacity=1 ][line width=0.75]      (0, 0) circle [x radius= 3.35, y radius= 3.35]   ;
\draw [color={rgb, 255:red, 208; green, 2; blue, 27 }  ,draw opacity=1 ]   (300,297.65) -- (300,180) ;
\draw [shift={(300,300)}, rotate = 270] [color={rgb, 255:red, 208; green, 2; blue, 27 }  ,draw opacity=1 ][line width=0.75]      (0, 0) circle [x radius= 3.35, y radius= 3.35]   ;
\draw [color={rgb, 255:red, 208; green, 2; blue, 27 }  ,draw opacity=1 ]   (261.05,102.1) -- (300,180) ;
\draw [shift={(260,100)}, rotate = 63.43] [color={rgb, 255:red, 208; green, 2; blue, 27 }  ,draw opacity=1 ][line width=0.75]      (0, 0) circle [x radius= 3.35, y radius= 3.35]   ;

\draw (249,172.4) node [anchor=north west][inner sep=0.75pt]    {$( 0,\alpha )$};
\draw (431,172.4) node [anchor=north west][inner sep=0.75pt]    {$( 1,\ \alpha )$};
\draw (281,312.4) node [anchor=north west][inner sep=0.75pt]    {$( 0,-1)$};
\draw (221,52.4) node [anchor=north west][inner sep=0.75pt]    {$\left(\frac{\alpha -1}{2} ,1\right)$};

\end{tikzpicture}
}
    \caption{Case when $k_1=0, k_2, k_2-k_1\ne 0$}
    \label{fig: cp2(2) a=0}
\end{figure}
We have 
\begin{align*}
    W  
    = V(f)\cap \trop^{-1}(\Delta) 
    & = \Sp \frac{\Lambda \pair{Ty_1, Ty_2, \frac{T^{1+\alpha}}{y_1y_2},\frac{T}{y_1}, \frac{T}{y_2}  }}{\lrp{ -k_2T \left(y_1-\frac{T^{\alpha}}{y_1y_2} - \frac{1}{y_1}\right) } } 
    \\
   &  
    \cong 
    \Sp \frac{\Lambda \pair{z_1, z_2, z, x_1,x_2}}{
    \lrp{ 
    \begin{aligned}
    & 
    z_1-z_2-x_1, z_1x_1-T^2, 
    \\
    & 
    z_2x_2-T^2, 
    z_1z_2z-T^{3+\alpha}     
    \end{aligned}
    } 
    },
\end{align*}
 and 
 \[ \Crit_G^{\Delta}(\po) = W \setminus \trop^{-1} \left\{ \lrp{\frac{\alpha -1}{2},1}, (0,-1), (1,\alpha) \right\} .\] 
\item \textbf{Suppose $k_2=0$ and $k_1,k_2-k_1\ne 0$.} Then 
\[ f =  k_1T \left(y_2-\frac{T^{\alpha}}{y_1y_2} - \frac{1}{y_2}\right)  \] 
and
\[ \trop f: \R^2 \to \R, \qquad (u_1,u_2)\mapsto \min \{ 1+u_2, 1+\alpha-u_1-u_2, 1-u_2\}.  \]
\begin{align*}
\trop(V(f)) 
= &  V(\trop f)  \\
= & \{ u_2=\alpha-u_1-u_2\leq -u_2 \} \cup \{ u_2=-u_2 \leq \alpha-u_1-u_2 \} \\
& \cup \{ \alpha-u_1-u_2= -u_2\leq u_2 \} \\
= & \{ u_2 =\frac{\alpha - u_1}{2}, u_1 \geq \alpha\} \cup \{ u_2=0, u_1\leq \alpha\} \\
& \cup \{ u_1 = \alpha, u_2 \geq 0\}
\end{align*}

The case $k_2=0, k_1,k_2-k_1\ne 0$ is shown in Figure \ref{fig: cp2(2) b=0}. 
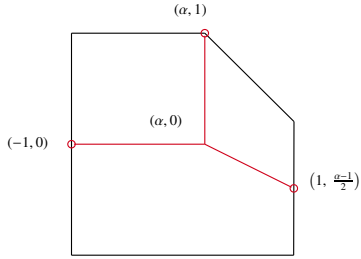
\begin{figure}[!h]
    \centering
\resizebox{0.35\textwidth}{!}{   

\tikzset{every picture/.style={line width=0.75pt}} 

\begin{tikzpicture}[x=0.75pt,y=0.75pt,yscale=-1,xscale=1]

\draw    (200,100) -- (200,300) ;
\draw    (400.29,179.59) -- (400,300) ;
\draw    (200,100) -- (320,100) ;
\draw    (200,300) -- (400,300) ;
\draw    (320,100) -- (400.29,179.59) ;
\draw [color={rgb, 255:red, 208; green, 2; blue, 27 }  ,draw opacity=1 ]   (320,200) -- (202.35,200) ;
\draw [shift={(200,200)}, rotate = 180] [color={rgb, 255:red, 208; green, 2; blue, 27 }  ,draw opacity=1 ][line width=0.75]      (0, 0) circle [x radius= 3.35, y radius= 3.35]   ;
\draw [color={rgb, 255:red, 208; green, 2; blue, 27 }  ,draw opacity=1 ]   (320,102.35) -- (320,200) ;
\draw [shift={(320,100)}, rotate = 90] [color={rgb, 255:red, 208; green, 2; blue, 27 }  ,draw opacity=1 ][line width=0.75]      (0, 0) circle [x radius= 3.35, y radius= 3.35]   ;
\draw [color={rgb, 255:red, 208; green, 2; blue, 27 }  ,draw opacity=1 ]   (398.04,238.75) -- (320,200) ;
\draw [shift={(400.15,239.8)}, rotate = 206.41] [color={rgb, 255:red, 208; green, 2; blue, 27 }  ,draw opacity=1 ][line width=0.75]      (0, 0) circle [x radius= 3.35, y radius= 3.35]   ;

\draw (269,172.4) node [anchor=north west][inner sep=0.75pt]    {$( \alpha ,0)$};
\draw (291,72.4) node [anchor=north west][inner sep=0.75pt]    {$( \alpha ,1)$};
\draw (141,192.4) node [anchor=north west][inner sep=0.75pt]    {$( -1,0)$};
\draw (412,223.4) node [anchor=north west][inner sep=0.75pt]    {$\left( 1,\ \frac{\alpha -1}{2}\right)$};

\end{tikzpicture}
}
    \caption{Case when $k_2=0, k_2-k_1\ne 0$}
    \label{fig: cp2(2) b=0}
\end{figure}
We have 
\begin{align*}
    W  
    = V(f)\cap \trop^{-1}(\Delta) 
    & = \Sp \frac{\Lambda \pair{Ty_1, Ty_2, \frac{T^{1+\alpha}}{y_1y_2},\frac{T}{y_1}, \frac{T}{y_2}  }}{\lrp{ k_1T \left(y_2-\frac{T^{\alpha}}{y_1y_2} - \frac{1}{y_2}\right) } } 
    \\
     &  
    \cong 
    \Sp \frac{\Lambda \pair{z_1, z_2, z, x_1, x_2}}{\lrp{ z_2-z-x_2, z_2x_2-T^2, } },
\end{align*}
 and 
 \[ \Crit_G^{\Delta}(\po) = W \setminus \trop^{-1} \left\{ \lrp{1,\frac{\alpha -1}{2}}, (-1,0), (\alpha,1) \right\} .\] 
\item \textbf{Suppose $k_2-k_1=0$ and $k_1,k_2\ne 0$.} 
 Then 
\[ f = T\left(-k_2y_1  + k_1y_2  + \frac{k_2}{y_1}- \frac{k_1}{y_2}\right) \]  
and 
\[ \trop f: \R^2 \to \R, \qquad (u_1,u_2)\mapsto \min \{ u_1, u_2,-u_1,-u_2\}.  \]
Thus, 
\begin{align*}
\trop(V(f)) 
= &  V(\trop f)  \\
= & \{  u_1= u_2\leq \min\{-u_1,-u_2\} \} \cup \{  u_1= -u_1\leq \min\{u_2,-u_2\} \}  \\
& \cup  \{  u_1= -u_2\leq \min\{u_2,-u_1\} \} \cup \{u_2=-u_1\leq \min\{ u_1,-u_2\}  \}  \\
& \cup  \{  u_2=-u_2\leq \min\{ u_1,-u_1\} \} \cup \{-u_1=-u_2\leq \min\{u_1,u_2\}  \}  \\
=& \{  u_1= u_2\leq 0 \} \cup \{  u_1 =u_2=0 \}  \\
& \cup  \{  u_2= -u_1, u_1\leq 0 \} \cup \{u_2 = -u_1, u_1\geq 0  \}  \\
& \cup  \{ u_2 =u_1=0\} \cup \{ u_1 = u_2\geq 0  \}.
\end{align*}
The case $k_2-k_1=0, k_1,k_2\ne 0$ is shown in Figure \ref{fig: cp2(2) b-a=0}. 
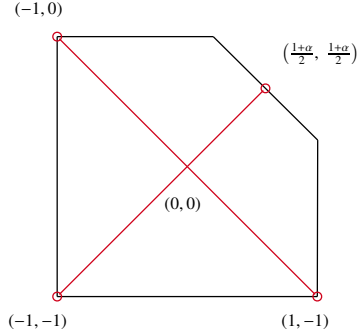
\begin{figure}[!h]
    \centering
\resizebox{0.35\textwidth}{!}{

\tikzset{every picture/.style={line width=0.75pt}} 

\begin{tikzpicture}[x=0.75pt,y=0.75pt,yscale=-1,xscale=1]

\draw    (200,100) -- (200,300) ;
\draw    (400.29,179.59) -- (400,300) ;
\draw    (200,100) -- (320,100) ;
\draw    (200,300) -- (400,300) ;
\draw    (320,100) -- (400.29,179.59) ;
\draw [color={rgb, 255:red, 208; green, 2; blue, 27 }  ,draw opacity=1 ]   (300,200) -- (201.66,298.34) ;
\draw [shift={(200,300)}, rotate = 135] [color={rgb, 255:red, 208; green, 2; blue, 27 }  ,draw opacity=1 ][line width=0.75]      (0, 0) circle [x radius= 3.35, y radius= 3.35]   ;
\draw [color={rgb, 255:red, 208; green, 2; blue, 27 }  ,draw opacity=1 ]   (201.66,101.66) -- (300,200) ;
\draw [shift={(200,100)}, rotate = 45] [color={rgb, 255:red, 208; green, 2; blue, 27 }  ,draw opacity=1 ][line width=0.75]      (0, 0) circle [x radius= 3.35, y radius= 3.35]   ;
\draw [color={rgb, 255:red, 208; green, 2; blue, 27 }  ,draw opacity=1 ]   (398.34,298.34) -- (300,200) ;
\draw [shift={(400,300)}, rotate = 225] [color={rgb, 255:red, 208; green, 2; blue, 27 }  ,draw opacity=1 ][line width=0.75]      (0, 0) circle [x radius= 3.35, y radius= 3.35]   ;
\draw [color={rgb, 255:red, 208; green, 2; blue, 27 }  ,draw opacity=1 ]   (300,200) -- (358.49,141.46) ;
\draw [shift={(360.15,139.8)}, rotate = 314.97] [color={rgb, 255:red, 208; green, 2; blue, 27 }  ,draw opacity=1 ][line width=0.75]      (0, 0) circle [x radius= 3.35, y radius= 3.35]   ;

\draw (281,222.4) node [anchor=north west][inner sep=0.75pt]    {$( 0,0)$};
\draw (161,312.4) node [anchor=north west][inner sep=0.75pt]    {$( -1,-1)$};
\draw (166,72.4) node [anchor=north west][inner sep=0.75pt]    {$( -1,0)$};
\draw (371,312.4) node [anchor=north west][inner sep=0.75pt]    {$( 1,-1)$};
\draw (371,103.4) node [anchor=north west][inner sep=0.75pt]    {$\left(\frac{1+\alpha }{2} ,\ \frac{1+\alpha }{2}\right)$};

\end{tikzpicture}

}
    \caption{Case when $k_2-k_1=0, k_1,k_2\ne 0$}
    \label{fig: cp2(2) b-a=0}
\end{figure}

We have 
\begin{align*}
    W 
    & 
    = V(f)\cap \trop^{-1}(\Delta) \cong \Sp \frac{\Lambda \pair{Ty_1, Ty_2, \frac{T^{1+\alpha}}{y_1y_2}, \frac{T}{y_1},\frac{T}{y_2}  }}{\lrp{ T\left(-k_2y_1  + k_1y_2  + \frac{k_2}{y_1}- \frac{k_1}{y_2}\right) } } 
    \\
    &  
    \cong 
    \Sp \frac{\Lambda \pair{z_1, z_2,z, x_1, x_2}}{\lrp{
    \begin{aligned}
    & -k_2z_1+k_1z_2 + k_2x_1-k_1x_2,\\ 
    & z_1x_1-T^2, z_2x_2-T^2, z_1z_2z-T^{3+\alpha}    
    \end{aligned},
    } },
\end{align*}
 and 
 \[ \Crit_G^{\Delta}(\po) = W \setminus \trop^{-1} \lrp{\left\{ \lrp{\frac{\alpha +1}{2},\frac{\alpha +1}{2}}, (-1,0), (-1,-1), (1,-1) \right\}} .\]
\end{enumerate}
\end{eg}

\subsubsection{\texorpdfstring{$S^1$}{S1}-action on \texorpdfstring{$S^2\left(\frac{c}{2}\right) \times S^2\left(\frac{d}{2}\right)$}{S2(c/2) times S2(d/2)}}
\begin{eg}[$S^1$-action on $S^2\left(\frac{c}{2}\right)\times S^2\left(\frac{d}{2}\right)$, $c<d$] 
\label{S2 times S2 example}
Denote by $S^2(r)$ the $2$-sphere with radius $r$. 
Consider $\left(S^2\left(\frac{c}{2}\right)\times S^2\left(\frac{d}{2}\right),\omega, T^2,\mu\right)$  whose moment polytope is given by
\[ \Delta=\set{ (u_1, u_2) \in \R^2 }{ 
0\leq u_1 \leq c, 0\leq u_2 \leq d
}
. \]
Its potential function is  
\[ \po  = y_1 + y_2 + \frac{T^{c}}{y_1} + \frac{T^{d}}{y_2} . \]
\begin{align*}
\Longrightarrow 
0 = f  : = & \frac{\partial \po}{\partial c_2}  = -k_2y_1 \frac{\partial \po}{\partial y_1} + k_1 y_2 \frac{\partial \po}{\partial y_2} \\
= &  -k_2\left( y_1 -\frac{T^c}{y_1}\right) + k_1\left( y_2 -\frac{T^d}{y_2}\right). 
\end{align*}

\begin{enumerate}[1.]
\item \textbf{Suppose $k_1,k_2\ne 0$.} 
We have 
\[ \trop(f): \R^2 \to \R, \qquad (u_1,u_2)\mapsto \min\{ u_1,c-u_1,u_2,d-u_2\}. \]

\begin{enumerate}[a)]
    \item $u_1 = c-u_1 \leq \min\{u_2, d-u_2\}$ 
    \[ 
    \Longrightarrow 
    \begin{cases}
       &  u_1 = \frac{c}{2} \\
       &  \frac{c}{2}\leq u_2 \\ 
       & \frac{c}{2}\leq d-u_2
    \end{cases}
    \Rightarrow 
        \begin{cases}
       &  u_1 = \frac{c}{2} \\
      &   \frac{c}{2}\leq u_2 \leq d-\frac{c}{2}
    \end{cases}
    \]
    \item $u_1 = u_2\leq \min\{c-u_1 , d-u_2\}$
     \[ 
    \Longrightarrow 
    \begin{cases}
       &  u_1 = u_2 \\
       &  u_1\leq c-u_1 \\ 
       & u_1\leq d-u_1
    \end{cases}
    \Rightarrow 
        \begin{cases}
       &  u_1=u_2 \\
      &   u_1 \leq \frac{c}{2}
    \end{cases}
    \]
    \item $u_1 = d-u_2 \leq \min\{c-u_1, u_2\}$ 
      \[ 
    \Longrightarrow 
    \begin{cases}
       &  u_2 = d-u_1 \\
       &  u_1\leq c-u_1 \\ 
       & u_1\leq d-u_1
    \end{cases}
    \Rightarrow 
        \begin{cases}
       &   u_2 = d-u_1 \\
      &   u_1 \leq \frac{c}{2}
    \end{cases}
    \]
    \item $ c-u_1 =u_2 \leq \min\{u_1 , d-u_2\}$
     \[ 
    \Longrightarrow 
    \begin{cases}
       &  u_2 = c-u_1 \\
       &  c-u_1\leq u_1 \\ 
       & c-u_1\leq d-(c-u_1)
    \end{cases}
    \Rightarrow 
        \begin{cases}
       &  u_2 = c-u_1 \\
       &  u_1 \geq \frac{c}{2}
    \end{cases}
    \]
    \item $ c-u_1 =d-u_2 \leq \min\{u_1 , u_2\}$
     \[ 
    \Longrightarrow 
    \begin{cases}
       &  u_2 = d-c+u_1 \\
       &  c-u_1\leq u_1 \\ 
       & c-u_1\leq  d-c+u_1 
    \end{cases}
    \Rightarrow 
        \begin{cases}
       &  u_2 = d-c+u_1 \\
       &  u_1 \geq  \frac{c}{2}
    \end{cases}
    \]
    \item $ u_2 =d-u_2 \leq \min\{u_1 ,c-u_1\}$
     \[ 
    \Longrightarrow 
    \begin{cases}
       &  u_2 = \frac{d}{2} \\
       &  \frac{d}{2}\leq u_1 \\ 
       & \frac{d}{2}\leq c-u_1 
    \end{cases}
    \Rightarrow 
        \begin{cases}
       &  u_2 = \frac{d}{2} \\
      &   \frac{d}{2}\leq u_1 \leq c-\frac{d}{2}
    \end{cases},
    \]
    which cannot happen because $c<d$. 
\end{enumerate}

The set $\trop(V(f))\cap \interior \Delta$ is shown in Figure \ref{fig: S2 times S2, a,b non-zero}

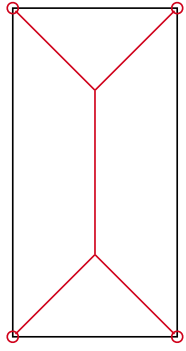
\begin{figure}[!h]
    \centering
\tikzset{every picture/.style={line width=0.75pt}} 
\resizebox{0.18\textwidth}{!}{
\begin{tikzpicture}[x=0.75pt,y=0.75pt,yscale=-1,xscale=1]

\draw   (250,50) -- (350,50) -- (350,250) -- (250,250) -- cycle ;
\draw [color={rgb, 255:red, 208; green, 2; blue, 27 }  ,draw opacity=1 ]   (251.66,51.66) -- (300,100) ;
\draw [shift={(250,50)}, rotate = 45] [color={rgb, 255:red, 208; green, 2; blue, 27 }  ,draw opacity=1 ][line width=0.75]      (0, 0) circle [x radius= 3.35, y radius= 3.35]   ;
\draw [color={rgb, 255:red, 208; green, 2; blue, 27 }  ,draw opacity=1 ]   (348.34,51.66) -- (300,100) ;
\draw [shift={(350,50)}, rotate = 135] [color={rgb, 255:red, 208; green, 2; blue, 27 }  ,draw opacity=1 ][line width=0.75]      (0, 0) circle [x radius= 3.35, y radius= 3.35]   ;
\draw [color={rgb, 255:red, 208; green, 2; blue, 27 }  ,draw opacity=1 ]   (300,100) -- (300,200) ;
\draw [color={rgb, 255:red, 208; green, 2; blue, 27 }  ,draw opacity=1 ]   (251.66,248.34) -- (300,200) ;
\draw [shift={(250,250)}, rotate = 315] [color={rgb, 255:red, 208; green, 2; blue, 27 }  ,draw opacity=1 ][line width=0.75]      (0, 0) circle [x radius= 3.35, y radius= 3.35]   ;
\draw [color={rgb, 255:red, 208; green, 2; blue, 27 }  ,draw opacity=1 ]   (348.34,248.34) -- (300,200) ;
\draw [shift={(350,250)}, rotate = 225] [color={rgb, 255:red, 208; green, 2; blue, 27 }  ,draw opacity=1 ][line width=0.75]      (0, 0) circle [x radius= 3.35, y radius= 3.35]   ;
\end{tikzpicture}
}
    \caption{Case when $k_1,k_2\ne 0$}
    \label{fig: S2 times S2, a,b non-zero}
\end{figure}
We have
\begin{align*}
W 
=  
V(f)\cap \trop^{-1}(\Delta) 
& 
= 
\Sp\frac{\Lambda\pair{y_1,y_2, \frac{T^c}{y_1}, \frac{T^d}{y_2}}}{ \lrp{ -k_2\left( y_1 -\frac{T^c}{y_1}\right) + k_1\left( y_2 -\frac{T^d}{y_2}\right)}}  
\\
&
\cong  
\Sp\frac{\Lambda\pair{y_1,y_2, x_1,x_2}}
{ \lrp{ 
\begin{aligned}
& -k_2\left( y_1 -x_1\right) + k_1\left( y_2 -x_2\right), \\  
& x_1y_1-T^c, x_2y_2-T^d
\end{aligned}
}
},
\end{align*}
and 
\[ \Crit_G^{\Delta}(\Delta) =  W \setminus \trop^{-1} \lrp{\left\{ (c,0), (0,0), (0,d) , (c,d)\right\}}  \]

\item \textbf{Suppose $k_1=0$ and $k_2 \ne 0$.}
Then \[ f =  -k_2\left( y_1 -\frac{T^c}{y_1}\right). \] 
Then 
\[ V(f) = \left\{  \left(T^{\frac{c}{2}}, y_2\right) \mid y_2\in \Lambda^*\right\} \cup \left\{  \left(-T^{\frac{c}{2}}, y_2\right) \mid y_2\in \Lambda^*\right\}, \] 
\[\trop  V(f) = \set{ (u_1,u_2)\in \R^2 }{ u_1 = \frac{c}{2} },  \]
and 
\[ \trop\lrp{ \Crit_G^{\Delta}(\po)} =  \interior \Delta \cap \trop  V(f) = \left \{\frac{c}{2} \right\}\times (0,b). \]

It is shown in Figure 
\ref{fig: S2 times S2, a=0}. 

\begin{figure}[!h]
    \centering
\tikzset{every picture/.style={line width=0.75pt}} 
\resizebox{0.18\textwidth}{!}{
\begin{tikzpicture}[x=0.75pt,y=0.75pt,yscale=-1,xscale=1]

\draw   (250,50) -- (350,50) -- (350,250) -- (250,250) -- cycle ;
\draw [color={rgb, 255:red, 208; green, 2; blue, 27 }  ,draw opacity=1 ]   (300,52.35) -- (300,247.65) ;
\draw [shift={(300,250)}, rotate = 90] [color={rgb, 255:red, 208; green, 2; blue, 27 }  ,draw opacity=1 ][line width=0.75]      (0, 0) circle [x radius= 3.35, y radius= 3.35]   ;
\draw [shift={(300,50)}, rotate = 90] [color={rgb, 255:red, 208; green, 2; blue, 27 }  ,draw opacity=1 ][line width=0.75]      (0, 0) circle [x radius= 3.35, y radius= 3.35]   ;

\end{tikzpicture}
}
    \caption{Case when $k_1=0$}
    \label{fig: S2 times S2, a=0}
\end{figure}

Indeed, 
\begin{align*}
  \Crit_G^{\Delta}(\po) 
  & =   \trop  V(f)\cap \interior \Delta\\
  & = \set{ \lrp{y_1,y_2}\in \Lambda^2 }{ 
  \begin{aligned}
  & y_1-\frac{T^c}{y_1} =0 \\
  &  0<\val(y_1)<c, 0< \val(y_2)< d   
  \end{aligned}
  } \\
    & =  
    \left\{\pm \frac{c}{2}\right\}
    \times 
    \set{ y_2 \in \B_{\Lambda}^1}
    {   
    0< \val(y_2)< d 
    } 
\end{align*}
is a union of two open annuli.

\item \textbf{Suppose $k_2=0$ and $k_1\ne 0$.} 
Then \[ f = k_1\left( y_2 -\frac{T^d}{y_2}\right).  \] 
Then 
\[ V(f) = \left\{  \left(y_1, T^{\frac{d}{2}}\right) \mid y_1\in \Lambda^*\right\} \cup \left\{ \left(y_1, -T^{\frac{d}{2}}\right) \mid y_2\in \Lambda^*\right\}, \]
\[
\trop  V(f) = \set{ (u_1,u_2)\in \R^2 } { u_2 = \frac{d}{2}} 
\]
and 
\[  \interior \Delta \cap \trop  V(f) = (0,a)\times \left\{\frac{d}{2} \right\}. \] 
It is shown in Figure 
\ref{fig: S2 times S2, b=0}.
\begin{figure}[!h]
    \centering
\tikzset{every picture/.style={line width=0.75pt}} 

\resizebox{0.18\textwidth}{!}{
\begin{tikzpicture}[x=0.75pt,y=0.75pt,yscale=-1,xscale=1]

\draw   (250,50) -- (350,50) -- (350,250) -- (250,250) -- cycle ;
\draw [color={rgb, 255:red, 208; green, 2; blue, 27 }  ,draw opacity=1 ]   (252.35,150) -- (347.65,150) ;
\draw [shift={(350,150)}, rotate = 0] [color={rgb, 255:red, 208; green, 2; blue, 27 }  ,draw opacity=1 ][line width=0.75]      (0, 0) circle [x radius= 3.35, y radius= 3.35]   ;
\draw [shift={(250,150)}, rotate = 0] [color={rgb, 255:red, 208; green, 2; blue, 27 }  ,draw opacity=1 ][line width=0.75]      (0, 0) circle [x radius= 3.35, y radius= 3.35]  ;
\end{tikzpicture}
}
    \caption{Case when $k_2=0$}
    \label{fig: S2 times S2, b=0}
\end{figure}

Indeed, 
\begin{align*}
  \Crit_G^{\Delta}(\po) 
  & =   \trop  V(f)\cap \interior \Delta\\
  & = \set{ \lrp{y_1,y_2}\in \Lambda^2 }{ 
  \begin{aligned}
  & y_2-\frac{T^d}{y_2} =0 \\
  &  0<\val(y_1)<c, 0< \val(y_2)< d   
  \end{aligned}
  } \\
    & =  
     \set{ y_1 \in \B_{\Lambda}^1}
    {   
    0< \val(y_1)< d 
    } 
      \times 
    \left\{\pm \frac{d}{2}\right\}
\end{align*}
is a union of two open annuli.
\end{enumerate}
\end{eg}

\section{\texorpdfstring{$T^r$}{Tr}-Equivariant Hamiltonian isotopy invariance}
\label{Section hamiltonian}
 We assume the notations and setup in Section \ref{Section setup}. 

\begin{defn}[Hamiltonian isotopy]
Let $H: [0,1]\times X\to \R$ be a time-dependent Hamiltonian function on $(X,\omega)$. Denote by $H_t$ the map $H(t,\act)$ and $X_{H_t}$ the Hamiltonian vector field of $H_t$ satisfying $\iota_{X_{H_t}}\omega = dH_t$. 
A \textbf{Hamiltonian isotopy} on $X$ generated by $H$ is a smooth map 
\[\psi: [0,1]\times X\to X, \qquad (t,x)\mapsto \psi_t(x) \] 
satisfying 
\[ \frac{d}{dt}\psi_t = X_{H_t}\circ \psi_t. \]
\end{defn}
\begin{defn}[$G$-equivariant Hamiltonian isotopy]
    A Hamiltonian isotopy $\psi: [0,1]\times X\to X$ of a symplectic manifold $(X,\omega)$ with a symplectic action by a compact Lie group $G$ is a \textbf{$G$-equivariant Hamiltonian isotopy} if 
    \[ \psi_t (g\cdot x ) = g\cdot \psi_t(x)\qquad \forall x\in X,\quad \forall g\in G ,\quad \forall t\in[0,1]. \] 
\end{defn}

We will define the Floer cohomology $HF_G(L,b,H, \nove)$ for a Hamiltonian function $H$ satisfying the conditions of the following theorem in Section \ref{Hamiltonian cochain complex}. 
\begin{thm}[$G$-equivariant Hamiltonian isotopy invariance]
\label{Hamiltonian isotopy invariance}
Let $(X,\omega, T^n,\mu)$ be a compact symplectic toric manifold with moment polytope $\Delta = \mu(X) \subset \R^n$. 
Let $G \hookrightarrow T^n$ be a compact $r$-dimensional connected subtorus of $T^n$ with the induced action on $X$. 
Let $u\in \interior \Delta$ and $L = \mu^{-1}(u)$. 
Let $H: [0,1]\times X\to \R$ be a $G$-invariant time-dependent smooth Hamiltonian function. 
Let
\[ \psi_{H}: [0,1]\times X\to X, \qquad (t,x)\mapsto \psi_{H}^t(x)\]
be the $G$-equivariant Hamiltonian isotopy generated by $H$ such that  $\psi_{H}^0=\operatorname{id}$ and that $\psi_{H}^1(L)\cap L$ is a finite union
\[ \psi_{H}^1(L)\cap L = \bigsqcup_{a \in \pi_0\left(\psi_{H}^1(L)\cap L\right)} R_{a}, \quad \]
where each $ R_{a} = G\cdot q_{a}$  for some $q_{a}$ in the component represented by $a \in \pi_0(\psi_{H}^1(L)\cap L)$. 
We fix our choice of $q_{a}$ for each $a\in \pi_0(\psi_{H}^1(L)\cap L)$. 
Let $u\in \interior \Delta$. Let $L = \mu^{-1}(u)$ and $b\in H^1(L,\Lambda_0)$. 
 Then, over the universal Novikov field, we have 
 \begin{equation}\label{invariance isomorphism}
    HF_G ((L,b)\, , \, (L,b), \nove )\cong HF_G (L,b, H, \nove),  
 \end{equation}
 where the right-hand-side is defined in \autoref{HF for Hamiltonian perturbation}. 
 \end{thm}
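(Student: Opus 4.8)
## Proof proposal for Theorem~\ref{Hamiltonian isotopy invariance}

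The plan is to follow the standard Fukaya--Oh--Ohta--Ono strategy for proving Hamiltonian isotopy invariance of Lagrangian Floer cohomology (as in \cite{fooobook1} Chapter~4 and \cite{I}, \cite{II}, \cite{III}), but carried out $G$-equivariantly using the $T^r$-equivariant Kuranishi machinery developed in the later sections of this paper. First I would set up the moduli spaces of $J$-holomorphic strips with boundary on $L$ and $\psi_H^1(L)$, or equivalently (after the usual change of coordinates that absorbs $\psi_H$ into the almost complex structure and makes the two Lagrangians literally equal) moduli spaces of Hamiltonian-perturbed holomorphic discs with boundary on $L$. Since $H$ is $G$-invariant and $\psi_H$ is $G$-equivariant, $G$ acts on all these moduli spaces and the evaluation maps to $L$ (and to the orbits $R_a = G\cdot q_a$) are $G$-equivariant; one equips them with $T^r$-equivariant Kuranishi structures with corners exactly as in Proposition~\ref{GKur structure on moduli space}, with the normalized boundary described by the usual fiber-product/broken-strip formula. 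The $G$-equivariant CF-perturbations and $G$-equivariant integration along the fiber from Section~\ref{GKur section} then produce the structure maps of a filtered $\Ainf$-bimodule, and hence, after deforming by $b$ on both sides and using the weak bounding cochain property, a differential $\delta_{H,b}$ on the Hamiltonian cochain complex computing $HF_G(L,b,H,\nove)$ as defined in \autoref{HF for Hamiltonian perturbation}.

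The core of the argument is then the construction of chain maps in both directions and chain homotopies between their composites and the identities. I would introduce a one-parameter family of Hamiltonians interpolating between $H$ and $0$ (a homotopy of homotopies), count the associated parametrized moduli spaces of perturbed discs --- again with $T^r$-equivariant Kuranishi structures --- and read off from the codimension-one boundary strata (via the $G$-equivariant Stokes' theorem, Theorem~\ref{Stokes for Gkur}, and the $G$-equivariant composition formula, Proposition~\ref{composition formula}) the relation $\delta\circ\delta=0$, the chain-map property, and the homotopy identities. The key algebraic input is that, working over the \emph{field} $\nove$ rather than the ring $\novringe$, a filtered chain homotopy equivalence of filtered complexes induces an isomorphism on cohomology; this is where passing to $\nove$ in the statement \autoref{invariance isomorphism} is essential, and it lets me upgrade the chain-level homotopy equivalence to the stated isomorphism $HF_G((L,b),(L,b),\nove)\cong HF_G(L,b,H,\nove)$.

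The main obstacle I anticipate is not the algebra but the geometry of the equivariant Kuranishi structures on the parametrized moduli spaces: one must check that all the constructions (obstruction bundles, CF-perturbations, integration along the fiber) can be made simultaneously $T^r$-equivariant and compatible with the fiber-product descriptions of \emph{all} the relevant boundary strata --- the strip-breaking strata, the disc-bubbling strata on either Lagrangian, and the parameter-endpoint strata --- while respecting the energy filtration. This is the equivariant analogue of the consistency/compatibility problem solved in \cite{III} Section~4.3 for the non-equivariant toric case, and the work is to verify that the inductive construction over $(k,\beta)$ used in the proof of Proposition~\ref{GKur structure on moduli space} extends to the Hamiltonian-perturbed and parametrized settings without breaking $G$-equivariance; once this is in place, the remaining steps are formal consequences of Stokes' theorem and standard homological algebra over $\nove$. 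A secondary technical point is the treatment of the orbit-type decomposition $\psi_H^1(L)\cap L=\bigsqcup_a R_a$ with $R_a=G\cdot q_a$: one must check that the freeness of the $G$-action on $L$ (used already in Corollary~\ref{weak bounding cochains are where partial derivatives in the normal directions vanish}) guarantees that each $R_a$ is a free $G$-orbit, so that the equivariant integration along the fiber over these orbits behaves as expected and the generators of $HF_G(L,b,H,\nove)$ are indexed correctly.
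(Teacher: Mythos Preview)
Your proposal is correct and follows essentially the same approach as the paper: construct the Hamiltonian cochain complex $(CF_G(L,b,H),\delta_H^G)$, build continuation maps $\f$ and $\g$ via interpolating Hamiltonians and parametrized moduli spaces carrying $G$-equivariant Kuranishi structures, and use the $G$-equivariant Stokes' theorem and composition formula (Theorem~\ref{Stokes for Gkur}, Proposition~\ref{composition formula}) to show that $\f\circ\g$ and $\g\circ\f$ are each chain homotopic to the identity. One small remark: your emphasis on needing the field $\nove$ for the last algebraic step is a red herring, since any chain homotopy equivalence induces an isomorphism on cohomology regardless of the coefficient ring; the paper simply states the result over $\nove$ because that is where the complexes are defined.
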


 \begin{proof}[Proof Outline] 

Before proving Theorem
\ref{Hamiltonian isotopy invariance} in detail, we outline the proof below. 
 \begin{enumerate}[i)]
     \item 
    We first construct a 
 cochain complex 
 $( CF_G(L,b,H), \delta_H^G)$ 
 for a Hamiltonian function $H$ satisfying the conditions of Theorem \ref{Hamiltonian isotopy invariance}. 
\item 
Let $C_G (L, \nove) = \Omega_G(L)\complete{\otimes} \nove$ and $\delta^G = (\m_1^G)^b$. 
We show the cochain maps
\[ 
\left(CF_G(L,b,H), \delta_{H}^G \right) \underset{\g}{\overset{\f}{\rightleftharpoons}}
\left(C_G (L, \nove), \delta^G = (\m_1^G)^b \right)
\] 
define a cochain homotopy equivalence; i.e. $\f\circ\g\sim \operatorname{id}$ and  $\g \circ \f \sim \operatorname{id}$. 
 \end{enumerate}
\end{proof} 

\subsection{The cochain complex \texorpdfstring{$\left(CF_G (L,b, H), \delta_{H}^G \right)$}{(CF G (L,b,H),d{H}{G} )} }
\label{Hamiltonian cochain complex}
Denote $\psi_{H}^1(L)$ by $L_H$. 
By our assumption,
\[ L_H \cap L = \bigsqcup_{a\in \pi_0(\psi_{H}^1(L) \cap L)} R_{a}^H , \] 
 where each $R_{a}^H = G\cdot q_{a}$ is a $G$-orbit for some $q_{a}$ in the component $a \in \pi_0(\psi_{H}^1(L)\cap L)$.  
Let $\psi_H^{-1}$ be the inverse of $\psi_H^1$. We have
\[ L \cap \psi_{H}^{-1}(L) = \psi_{H}^{-1}( \psi_{H}^1(L)\cap L ) = \psi_{H}^{-1}\left( \bigcup_{a\in \pi_0(\psi_{H}^1(L) \cap L)} R_{a}^H\right) =  \bigcup_{a\in \pi_0(\psi_{H}^1(L) \cap L)} \psi_{H}^{-1}(R_{a}^H).   \]

Define 
\[ CF_G (L,b, H) : = \bigoplus_{a\in \pi_0(L_{H} \cap L)} \Omega_G(R_{a}^{H}) \complete{\otimes}_{\R} \nove. \]
$\forall a,a'\in \pi_0(\psi_{H}^1(L) \cap L)$, define
\[ \pi_2(R_{a}^H ,R_{a'}^H) 
= 
\left\{ u:\R \times [0,1]\to X \, \middle| \, 
\begin{aligned}
& u \text{ is smooth}, \\
& u(s,0)\in L,  \quad u(s, 1)\in  L 
\quad \forall s\in \R,\\
&  \exists \, q\in \psi_H^{-1}(R_a^H), \quad  q'\in  \psi_H^{-1}(R_{a'}^H)  \quad \text{ such that  } \\
&  \lim_{s\to -\infty}u(s,t) = \psi_H^t(q) \quad \forall t\in [0,1],\\
& \lim_{s\to \infty}u(s,t) =\psi_H^t(q')  \quad \forall t\in [0,1]
\end{aligned}
\right
\}
\Big/ \sim, \] 
where $u_1\sim u_2$ if and only if $u_1$ and $u_2$ represent the same class in $\pi_2(X)$. 
 
Let $k_1,k_0\in \N$, $a,a'\in \pi_0(\psi_{H}^1(L) \cap L)$, and $B\in \pi_2(R_{a}^H ,R_{a'}^H) $. 
Let $ \mathcal{M}_{k_{1} ,k_{0}}( R_{a}^H ,R_{a'}^H, B) $ be the compactification of 
\[   \mathcal{M}^{\operatorname{\operatorname{reg}}}_{k_{1} ,k_{0}}( R_{a}^H ,R_{a'}^H , B) :=
\left\{ 
(u, \pmb{\tau_1},\pmb{\tau_0}) 
\, \middle| \, 
\begin{aligned}
& u:\R \times [0,1]\to X \text{ is smooth}, \quad [u]=B, \\
& \frac{\partial u}{\partial s} + J \left(\frac{\partial u}{\partial t} - X_{H_t}(u)\right)  = 0, \quad 
E(u) 
<\infty,\\ 
& u(s,0)\in L \quad \text{ and } \quad \quad u(s, 1)\in  L
\quad \forall s\in \R,
\\
&  \exists \, q\in \psi_H^{-1}(R_a^H), \quad  q'\in  \psi_H^{-1}(R_{a'}^H)  \quad \text{ such that  } \\
& \lim_{s\to -\infty}u(s,t) = \psi_{H}^t (q)  \quad \forall t\in [0,1] \quad \text{and} \\
&  \lim_{s\to \infty}u(s,t) = \psi_{H}^t(q') \quad \forall t\in [0,1], \\
& 
\pmb{\tau_1} =  \left( (\zeta_1, 1), \ldots, (\zeta_{k_1}, 1)\right) \in (\R\times \{1\})^{k_1},\\ 
& \text{ where }  
-\infty<\zeta_{k_1}<\cdots < \zeta_1< +\infty,
\\
&\pmb{\tau_0}= \left(  (\tau_1, 0), \ldots, (\tau_{k_0}, 0)\right) \in (\R\times \{0\})^{k_0},\\ 
& 
\text{ where } -\infty<\tau_1<\cdots < \tau_{k_0} < +\infty
\end{aligned}
\right
\}/\sim. 
\]

\begin{center}

\tikzset{every picture/.style={line width=0.75pt}} 

\begin{tikzpicture}[x=0.75pt,y=0.75pt,yscale=-1,xscale=1]

\draw    (98,55.67) -- (308,55.67) ;
\draw  [color={rgb, 255:red, 0; green, 0; blue, 0 }  ,draw opacity=1 ][fill={rgb, 255:red, 0; green, 0; blue, 0 }  ,fill opacity=1 ] (124.67,55.67) .. controls (124.67,56.59) and (125.41,57.33) .. (126.33,57.33) .. controls (127.25,57.33) and (128,56.59) .. (128,55.67) .. controls (128,54.75) and (127.25,54) .. (126.33,54) .. controls (125.41,54) and (124.67,54.75) .. (124.67,55.67) -- cycle ;
\draw  [color={rgb, 255:red, 0; green, 0; blue, 0 }  ,draw opacity=1 ][fill={rgb, 255:red, 0; green, 0; blue, 0 }  ,fill opacity=1 ] (168,55.67) .. controls (168,56.59) and (168.75,57.33) .. (169.67,57.33) .. controls (170.59,57.33) and (171.33,56.59) .. (171.33,55.67) .. controls (171.33,54.75) and (170.59,54) .. (169.67,54) .. controls (168.75,54) and (168,54.75) .. (168,55.67) -- cycle ;
\draw  [color={rgb, 255:red, 0; green, 0; blue, 0 }  ,draw opacity=1 ][fill={rgb, 255:red, 0; green, 0; blue, 0 }  ,fill opacity=1 ] (203,55.67) .. controls (203,56.59) and (203.75,57.33) .. (204.67,57.33) .. controls (205.59,57.33) and (206.33,56.59) .. (206.33,55.67) .. controls (206.33,54.75) and (205.59,54) .. (204.67,54) .. controls (203.75,54) and (203,54.75) .. (203,55.67) -- cycle ;
\draw    (308,96.3) -- (98,95.38) ;
\draw  [color={rgb, 255:red, 0; green, 0; blue, 0 }  ,draw opacity=1 ][fill={rgb, 255:red, 0; green, 0; blue, 0 }  ,fill opacity=1 ] (281.33,95.85) .. controls (281.34,94.93) and (280.59,94.18) .. (279.67,94.17) .. controls (278.75,94.17) and (278,94.91) .. (278,95.83) .. controls (278,96.75) and (278.74,97.5) .. (279.66,97.51) .. controls (280.58,97.51) and (281.33,96.77) .. (281.33,95.85) -- cycle ;
\draw  [color={rgb, 255:red, 0; green, 0; blue, 0 }  ,draw opacity=1 ][fill={rgb, 255:red, 0; green, 0; blue, 0 }  ,fill opacity=1 ] (238,95.85) .. controls (238,94.93) and (237.26,94.18) .. (236.34,94.17) .. controls (235.42,94.17) and (234.67,94.91) .. (234.67,95.83) .. controls (234.66,96.75) and (235.41,97.5) .. (236.33,97.51) .. controls (237.25,97.51) and (238,96.77) .. (238,95.85) -- cycle ;
\draw  [color={rgb, 255:red, 0; green, 0; blue, 0 }  ,draw opacity=1 ][fill={rgb, 255:red, 0; green, 0; blue, 0 }  ,fill opacity=1 ] (203,95.85) .. controls (203,94.93) and (202.26,94.18) .. (201.34,94.17) .. controls (200.42,94.17) and (199.67,94.91) .. (199.67,95.83) .. controls (199.66,96.75) and (200.41,97.5) .. (201.33,97.51) .. controls (202.25,97.51) and (203,96.77) .. (203,95.85) -- cycle ;
\draw  [draw opacity=0][fill={rgb, 255:red, 126; green, 211; blue, 33 }  ,fill opacity=0.53 ] (98,55.67) -- (308,55.67) -- (308,95.67) -- (98,95.67) -- cycle ;
\draw  [color={rgb, 255:red, 0; green, 0; blue, 0 }  ,draw opacity=1 ][fill={rgb, 255:red, 0; green, 0; blue, 0 }  ,fill opacity=1 ] (287.99,55.67) .. controls (288,54.75) and (287.25,54) .. (286.33,53.99) .. controls (285.41,53.99) and (284.66,54.73) .. (284.66,55.65) .. controls (284.66,56.57) and (285.4,57.32) .. (286.32,57.33) .. controls (287.24,57.33) and (287.99,56.59) .. (287.99,55.67) -- cycle ;
\draw  [color={rgb, 255:red, 0; green, 0; blue, 0 }  ,draw opacity=1 ][fill={rgb, 255:red, 0; green, 0; blue, 0 }  ,fill opacity=1 ] (117.99,95.67) .. controls (118,94.75) and (117.25,94) .. (116.33,93.99) .. controls (115.41,93.99) and (114.66,94.73) .. (114.66,95.65) .. controls (114.66,96.57) and (115.4,97.32) .. (116.32,97.33) .. controls (117.24,97.33) and (117.99,96.59) .. (117.99,95.67) -- cycle ;

\draw (116,32.4) node [anchor=north west][inner sep=0.75pt]  [font=\small]  {$\zeta_{k_{1}}$};
\draw (229,40.4) node [anchor=north west][inner sep=0.75pt]  [font=\small]  {$\cdots $};
\draw (143,100.4) node [anchor=north west][inner sep=0.75pt]  [font=\small]  {$\cdots $};
\draw (276,32.4) node [anchor=north west][inner sep=0.75pt]  [font=\small]  {$\zeta_1$};
\draw (111,98.4) node [anchor=north west][inner sep=0.75pt]  [font=\small]  {$\tau _{1}$};
\draw (271,98.4) node [anchor=north west][inner sep=0.75pt]  [font=\small]  {$\tau _{k_{0}}$};
\draw (39,66.4) node [anchor=north west][inner sep=0.75pt]    {$\psi_H^t( q)$};
\draw (319,66.4) node [anchor=north west][inner sep=0.75pt]    {$\psi_H^t( q')$};
\draw (191,122.4) node [anchor=north west][inner sep=0.75pt]    {$L$};
\draw (187,20.4) node [anchor=north west][inner sep=0.75pt]    {$L$};
\end{tikzpicture}
\end{center}
\[
\xymatrix{
 & 
 \mathcal{M}_{k_{1} ,k_{0}}(R_{a}^H ,R_{a'}^H, B) 
  \ar[dl]_{\evaluation_{-\infty,B}}
  \ar[dr]^{\evaluation_{+\infty,B}}
 &  
 \\
 R_{a}^H
& 
&
R_{a'}^H
}
\]

Define the evaluation maps as follows. 
\[ \forall 1\leq j\leq k_1, \quad \evaluation_{j,B}^{(1)}:  \mathcal{M}^{\operatorname{reg}}_{k_{1} ,k_{0}}( R_{a}^H ,R_{a'}^H , B,J) \to L, \quad (u, \pmb{\tau_1},\pmb{\tau_0}) \mapsto u(\zeta_j,1).  \]   
 \[ \forall 1\leq j\leq k_0\quad \evaluation_{j,B}^{(0)}:  \mathcal{M}^{\operatorname{reg}}_{k_{1} ,k_{0}}( R_{a}^H ,R_{a'}^H, B,J) \to L, \quad  (u, \pmb{\tau_1},\pmb{\tau_0})\mapsto u(\tau_j,0).  \]  
\[\evaluation_{-\infty,B}:   \Mq_{k_{1} ,k_{0}}( R_{a}^H ,R_{a'}^H, B) \to R_a, \quad  (u, \pmb{\tau_1},\pmb{\tau_0})\mapsto \psi_H^1\left(\lim_{s\to -\infty} u(s,0) \right).  \]
\[\evaluation_{+\infty,B}:  \Mq_{k_{1} ,k_{0}}( R_{a}^H, R_{a'}^H, B) \to R_{a'}, \quad  (u, \pmb{\tau_1},\pmb{\tau_0})\mapsto 
\psi_H^1\left(\lim_{s\to +\infty} u(s,0) \right).  \]

The evaluation maps are $G$-equivariant with respect to the $G$-action given by 
$g\cdot  (u, \pmb{\tau_1},\pmb{\tau_0}) =(l_g\circ u, \tau_1,\tau_0)$ on $\Mq_{k_{1} ,k_{0}}( R_{a}^H ,R_{a'}^H, B)$.

Let $k_1,k_0\in \N$, $a,a'\in \pi_0( \psi_{H}^1(L)\cap L)$, and $B\in \pi_2(R_{a}^H ,R_{a'}^H) $. 
Define  
\[ 
\n_{B}:  
\Omega_G(R_{a}^H)\complete{\otimes}_{\R}\nove [1]  
\to 
\Omega_G(R_{a'}^H)\complete{\otimes}_{\R}\nove [1]  
\]
by 
\[ \n_{B}\left(\eta \right) \nonumber
 =  (\evaluation_{+\infty,B}^G)_{!} (\evaluation_{-\infty,B}^G)^*\eta .  \] 
Define $\delta_H^G: CF_G (L,b, H) [1] \to CF_G (L,b, H) [1]$ such that, 
for each $a\in \pi_0(L_H\cap L)$,  
\[ 
\delta_H^G: \Omega_G(R_a^H)\complete{\otimes}\nove [1] \to CF_G (L,b, H)
\] is given by 
\begin{equation} \label{cochain map for Ham invariance}
\delta_H^G(\eta) = \sum_{a'\in \pi_0(L_H\cap L)}\sum_{B\in \pi_2(R_a^H,R_{a'}^H)}   \exp(\partial B\cap b)\n_{B}( \eta ) \exp(\partial B\cap b) T^{\frac{\omega(B)}{2\pi}}e^{\frac{I_{\mu}(B)}{2}}.     
\end{equation}

\begin{lem}\label{Kuranishi boundary for cochain map delta}
 $ \mathcal{M}_{k_{1} ,k_{0}}( R_{a}^H, R_{a'}^H, B) $ has an oriented $G$-equivariant Kuranishi structure such that $\evaluation_{-\infty,B}, \evaluation_{+ \infty,B} $ are strongly continuous and weakly submersive. 
 Moreover, its normalized boundary is a union of the following types of fiber products below. 
 \begin{enumerate}[i)]
     \item \label{broken strips}
     $\mathcal{M}_{k_{1}' ,k_{0}'}( R_{a}^H, R_{c}^H, B') _{\evaluation_{+\infty,B'}}\times _{\evaluation_{-\infty,B''}}\mathcal{M}_{k_{1}'' ,k_{0}''}( R_{c}^H,  R_{a'}^H, B'') $, where 
     \begin{itemize}
         \item $c\in \pi_0( L_H\cap L)$, 
         \item $k_1', k_1'',k_0',k_0''\in \N$ such that $k_{1}'+k_{1}'' =k_1$, $k_{0}'+k_{0}''=k_0$, 
         \item $B'\in \pi_2(R_{a}^H, R_{c}^H)$, $B''\in \pi_2(R_{c}^H, R_{a'}^H)$ such that $B'\#  B'' = B$. 
     \end{itemize} 
     \item \label{disc bubble on upper boundary}
     $\mathcal{M}_{k_{1}' ,k_{0}}( R_{a}^H, R_{a'}^H, B')      _{\evaluation_{i,B'}^{(1)}} \times_{\evaluation_{0} } \mathcal{M}_{k_{1}''}(L,J, B'')  $, 
     where 
     \begin{itemize} 
        \item $1\leq i\leq k_1'$, 
         \item $k_1', k_{1,1}, \ldots,k_{1,l} \in \N$ such that $k_{1}'+k_{1}'' =k_1 + 1$, 
         \item $B'\in \pi_2( R_{a}^H, R_{a'}^H)$, $B''\in \pi_2(X, L)$ such that 
         $B'\# B'' = B$. 
     \end{itemize} 
   \item \label{disc bubble on lower boundary}
   $\mathcal{M}_{k_{1}  ,k_{0}'}( R_{a}^H,  R_{a'}^H, B',J)
       _{ \evaluation_{i,B'}^{(0)}  } \times_{\evaluation_0}
       \mathcal{M}_{k_0''}( L,J, B'') $, 
       where
    \begin{itemize} 
        \item $1\leq i \leq k_0'$,
         \item $k_0', k_{0}''\in \N$ such that $k_{0}'+k_{0}''=k_0 + 1$, 
         \item $B'\in \pi_2( R_{a}^H, R_{a'}^H)$, $B''\in \pi_2(X, L)$ such that 
         $B'\# B'' = B$. 
     \end{itemize} 
 \end{enumerate}
 Moreover, $\evaluation_{-\infty,B}, \evaluation_{+ \infty,B} $ are strongly continuous and weakly submersive. 
 \end{lem}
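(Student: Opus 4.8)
The plan is to repeat the construction of Proposition~\ref{GKur structure on moduli space}, with pseudoholomorphic discs replaced by Hamiltonian--perturbed strips. First I would fix, for each $x=(u,\pmb{\tau_1},\pmb{\tau_0})\in\mathcal{M}_{k_1,k_0}^{\mathrm{reg}}(R_a^H,R_{a'}^H,B)$, a $G$-invariant relatively compact open subset $K$ of the interior of the strip and a finite-dimensional $G$-invariant subspace $E\subset C_c^\infty(K,u^*TX)$ so that $E$ together with the image of the linearization $D_u\bar\partial_H$ of the operator $u\mapsto\frac{\partial u}{\partial s}+J\bigl(\frac{\partial u}{\partial t}-X_{H_t}(u)\bigr)$ spans the cokernel, and so that for every boundary point $z$ of the strip, including the asymptotic ends $s\to\pm\infty$, the map $D_u^{-1}(E)\to T_{u(z)}L$, $v\mapsto v(z)$, is surjective. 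The operator $D_u\bar\partial_H$ is Fredholm on the appropriate exponentially weighted Sobolev spaces because $u$ converges to the paths $\psi_H^t(q),\psi_H^t(q')$ on the $G$-orbits $\psi_H^{-1}(R_a^H),\psi_H^{-1}(R_{a'}^H)$; since $H$ and $J$ are $G$-invariant, post-composition with $l_g$ commutes with $\bar\partial_H$, so the translates $g_*E$ and $\gamma_*E$ (for $\gamma\in\Aut x$) assemble into a $G(x)$-invariant obstruction space exactly as in loc.\ cit. After stabilizing the domain with auxiliary $G$-invariant interior marked points, transporting the obstruction spaces to nearby strips by the parallel-transport construction of Proposition~\ref{GKur structure on moduli space}, and running the downward induction on energy and on the number of disc/strip components, one obtains a $G$-equivariant Kuranishi structure with corners; orientations are induced from the $T^n$-invariant relatively spin structure on $L$ via the usual gluing of determinant lines, and $G$-equivariance of $\evaluation_{-\infty,B},\evaluation_{+\infty,B}$ was already recorded after the definition of the $G$-action on $\mathcal{M}_{k_1,k_0}(R_a^H,R_{a'}^H,B)$.

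Next I would identify the normalized boundary by Gromov--Floer compactness. A sequence in $\mathcal{M}_{k_1,k_0}^{\mathrm{reg}}$ can degenerate in exactly three ways: (a)~the strip breaks along an intermediate intersection orbit $R_c^H$ into two strips, giving the fiber product in~\ref{broken strips}; (b)~a $J$-holomorphic disc bubbles off at an upper-boundary point---the Hamiltonian term disappears in the bubble component, which has zero width---giving the fiber product in~\ref{disc bubble on upper boundary} with a factor $\mathcal{M}_{k_1''}(L,J,B'')$; (c)~symmetrically, a disc bubbles off at a lower-boundary point, giving~\ref{disc bubble on lower boundary}. The upper- and lower-boundary marked points are distributed among the two components in an order-preserving way, which produces the sums $k_1'+k_1''=k_1$ and $k_0'+k_0''=k_0$ (with a ``$+1$'' in the disc-bubbling cases for the attaching node). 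That the Kuranishi structures on these pieces are the fiber-product structures follows from the inductive construction together with the composition formula (Proposition~\ref{composition formula}), exactly as in Proposition~\ref{GKur structure on moduli space}.

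The weak submersivity and strong continuity of $\evaluation_{-\infty,B}$ and $\evaluation_{+\infty,B}$ onto $R_a^H$ and $R_{a'}^H$ are then exactly what the extra surjectivity condition on $E$ buys: since the $G$-action on $L$ is free, each $R_a^H=G\cdot q_a$ is a smooth submanifold on which $G$ acts transitively, $\evaluation_{\pm\infty,B}$ is $G$-equivariant, and hence surjectivity of the differential at one point of each fibre---guaranteed by the choice of $E$---suffices. I expect the main obstacle to be bookkeeping rather than conceptual: producing one system of obstruction spaces simultaneously compatible with all three boundary fiber-product descriptions in~\ref{broken strips}--\ref{disc bubble on lower boundary} and with the disc-moduli Kuranishi structures of Proposition~\ref{GKur structure on moduli space} (entering through the bubble factors), while staying $G$-equivariant through the two nested inductions on energy and on the number of components. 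As in the non-equivariant theory of~\cite{III}, the way to manage this is to fix the Kuranishi structures on the disc moduli $\mathcal{M}_{k''}(L,J,B'')$ first and then feed them into the strip construction.
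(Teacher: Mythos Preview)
Your proposal is correct and follows essentially the same approach as the paper: the paper's proof is a two-sentence reference, stating that the boundary decomposition follows from \cite{fukaya2017unobstructed} Proposition~15.21 and that the $G$-equivariant Kuranishi structure is constructed as in Proposition~\ref{GKur structure on moduli space} (cf.\ \cite{III} Section~4.3), which is precisely the construction you have unpacked in detail. One small imprecision: the compatibility of the boundary Kuranishi structures with the fiber-product description comes from the inductive construction itself (as in Proposition~\ref{GKur structure on moduli space}), not from the composition formula (Proposition~\ref{composition formula}), which concerns correspondence maps and is invoked only afterwards to deduce $\delta_H^G\circ\delta_H^G=0$.
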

\begin{proof} 
The boundary decomposition follows from \cite{fukaya2017unobstructed} Proposition 15.21. 
The construction of a $G$-invariant Kuranishi structure on $\Mq_{k_1,k_0}(R_a^H, R_{a'}^H,B)$ is similar to the proof of Proposition \ref{GKur structure on moduli space} (See \cite{III} Section 4.3.). 
\end{proof}

\begin{prop} $\delta_H^G$ is well-defined and 
$\delta_H^G\circ \delta_H^G =0$.   
\end{prop}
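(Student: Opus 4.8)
The plan is to run, at the level of the strip moduli spaces, the same mechanism that proves Corollary~\ref{m1b squares to 0}: apply the $G$-equivariant Stokes formula over $\mathcal{M}_{k_1,k_0}(R_a^H,R_{a'}^H,B)$ and identify the boundary contributions using Lemma~\ref{Kuranishi boundary for cochain map delta}.

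\textbf{Well-definedness.} First I would fix, by induction on the energy $\omega(B)$ (and, within a fixed energy, on $k_1+k_0$ and on the number of strip and disc components), a system of $G$-equivariant CF-perturbations on the spaces $\mathcal{M}_{k_1,k_0}(R_a^H,R_{a'}^H,B)$ of Lemma~\ref{Kuranishi boundary for cochain map delta} that is compatible with all three types of boundary fiber product listed there and with the perturbations already chosen on the disc moduli $\Mq_{k+1}(L,J,\beta)$ in Proposition~\ref{GKur structure on moduli space}; the inductive step is carried out exactly as in that proof, using the composition formula (Proposition~\ref{composition formula}). Since $\evaluation_{-\infty,B}^G$ and $\evaluation_{+\infty,B}^G$ are strongly continuous and weakly submersive, $G$-equivariant pullback and $G$-equivariant integration along the fiber then define, for each $B$, the operator $\n_B$ on $G$-equivariant forms of the appropriate degree (its $b$-deformed version, obtained by wedging $\evaluation_j^{(1)*}b$, $\evaluation_j^{(0)*}b$ at the boundary marked points, is what enters \autoref{cochain map for Ham invariance}); a dimension count on $\mathcal{M}_{k_1,k_0}(R_a^H,R_{a'}^H,B)$ shows $\delta_H^G$ raises degree by one. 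Finally, Gromov compactness forces the energies $\omega(B)$ of the $B$ with $\mathcal{M}_{k_1,k_0}(R_a^H,R_{a'}^H,B)\neq\emptyset$ to be bounded below and discrete below any bound, so the sum in \autoref{cochain map for Ham invariance} converges in the $T$-adic topology and $\delta_H^G$ is a well-defined operator on $CF_G(L,b,H)[1]$.

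\textbf{$\delta_H^G\circ\delta_H^G=0$.} Apply the $G$-equivariant Stokes' theorem (Theorem~\ref{Stokes for Gkur}, or Proposition~\ref{Stokes corollary}) to each $\mathcal{M}_{k_1,k_0}(R_a^H,R_{a'}^H,B)$, with integrand $(\evaluation_{-\infty,B}^G)^*\eta$ wedged with $b$ at every boundary marked point: the contribution of the normalized boundary equals the contribution of the Cartan differential $d_G$ applied before and after integration. Summing over $a'$, over decompositions $B'\#B''=B$, and over all distributions of the boundary marked points, the three boundary types of Lemma~\ref{Kuranishi boundary for cochain map delta} yield: (i) the broken strips of type~\ref{broken strips}, together with the interior $d_G$-terms, assemble into $\delta_H^G\circ\delta_H^G$; (ii) a disc bubble on the upper boundary (type~\ref{disc bubble on upper boundary}) contributes $\n_B^b$ with $\sum_{k,\beta}(\m_{k,\beta}^G)^b(b^{\otimes k})=(\m_0^G)^b(1)=\po(b)\,\unit$ inserted at a boundary marked point, which, because $\unit$ is a strict unit in the Cartan model, collapses to $\pm\,\po(b)\cdot\n_B^b$; (iii) symmetrically, a disc bubble on the lower boundary (type~\ref{disc bubble on lower boundary}) collapses to $\mp\,\po(b)\cdot\n_B^b$. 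With the signs of the filtered $A_\infty$ conventions, contributions (ii) and (iii) cancel — exactly as the two $(\m_2^G)^b$-terms cancel in the proof of Corollary~\ref{m1b squares to 0} — leaving $\delta_H^G\circ\delta_H^G=0$.

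\textbf{Main obstacle.} The delicate point is the inductive construction in the first step: producing a \emph{single} coherent system of $G$-equivariant CF-perturbations that is simultaneously compatible with all three families of boundary fiber products and with the disc-moduli perturbations of Proposition~\ref{GKur structure on moduli space}, and then verifying that the induced orientations make the signs in the Stokes formula such that (ii) and (iii) cancel rather than reinforce. This sign check, combined with the strict-unit property of $\unit$, is precisely what makes $\delta_H^G$ square to zero for \emph{every} $b\in H^1(L,\Lambda_0)$ and not merely for weak bounding cochains, paralleling Corollary~\ref{m1b squares to 0}.
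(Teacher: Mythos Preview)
Your approach is essentially the same as the paper's: apply Stokes' Theorem~\ref{Stokes for Gkur}, the composition formula (Proposition~\ref{composition formula}), and the boundary description in Lemma~\ref{Kuranishi boundary for cochain map delta}; then check that the broken-strip strata account for $\delta_H^G\circ\delta_H^G$ while the disc-bubble strata on the upper and lower boundaries cancel. That is exactly what the paper does.

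There is one packaging difference worth flagging. You model the $b$-deformation by inserting $b$ at boundary marked points and summing over $(k_1,k_0)$; the paper's definition \eqref{cochain map for Ham invariance} instead uses \emph{only} the moduli $\mathcal{M}_{0,0}(R_a^H,R_{a'}^H,B)$ and multiplies by the weight $\exp(\partial B\cap b)$. These are equivalent by the divisor-type identity implicit in \eqref{mkb}, so your argument goes through, but the bookkeeping of the cancellation looks slightly different in each model. In the paper's model the type-(ii)/(iii) boundary component carries one marked point on the strip with \emph{no} input attached; when the strip class $B'$ is nonzero that marked point can be forgotten and the push-forward along the $1$-dimensional forgetful fiber vanishes, so only the constant-strip strata $B'=0$ survive, and those two cancel between upper and lower boundary. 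Your statement that types (ii)/(iii) contribute $\pm\,\po(b)\cdot\n_B^b$ via $\m_0^b(1)=\po(b)\,\unit$ is the summed, $b$-inserted version of the same cancellation. Either route is fine; just be aware that the paper's $\delta_H^G$ is literally defined on $\mathcal{M}_{0,0}$, so if you write up the proof you should either work in that model or first justify the equivalence with the marked-point model.
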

\begin{proof}
The proposition follows from Theorem \ref{Stokes for Gkur}, Proposition \ref{composition formula}, and Lemma \ref{Kuranishi boundary for cochain map delta}. 
In particular, in Lemma 4.1, the contribution of \ref{broken strips}, \ref{disc bubble on upper boundary} with $(k_1',k_0,B')\ne (1,0, 0)$, and \ref{disc bubble on lower boundary} with $(k_1,k_0',B')\ne (0,1, 0)$ are trivial. 
And the contributions of \ref{disc bubble on upper boundary} with $(k_1',k_0,B')= (1,0, 0)$ and \ref{disc bubble on lower boundary} with $(k_1',k_0,B')= (0,1, 0)$ cancel with each other. 
\end{proof}

Thus, we define 
\begin{equation}
\label{HF for Hamiltonian perturbation}
    HF_G(L,b,H,J,\Lambda): = \frac{\ker \delta_H^G}{\image \delta_H^G}. 
\end{equation}

\subsection{The Floer continuation map \texorpdfstring{$\f$}{f}}
Consider a smooth non-decreasing function $ \chi: \R\to [0,1]$ such that 
\begin{equation}
\label{bump function}
\chi (s)
=
\begin{cases}
0 \quad & \text{ if } s\leq -1\\
1 \quad & \text{ if }  s\geq 1
\end{cases} . 
\end{equation}

Define $F: \R\times [0,1]\times X\to R$ by
\[ 
F(s,t,x) = (1-\chi(s))H(t,x) \quad \forall (s,t,x)\in \R\times [0,1]\times X. 
\]
Then 
\[ F(s,t,x) =\begin{cases}
H(t,x)  & \text{ if }  s\leq -1\\
0 & \text{ if }  s\geq 1\\
\end{cases} . 
\]
Since $H$ is $G$-invariant, $F$ is also $G$-invariant. 

Note that $F_{s}^t = F(s,t, \cdot ): X\to \R$ defines a Hamiltonian vector field $X_{F_{s}^t}$ via
\[ dF_{s}^t (Y)= -\omega (X_{F_{s}^t}, Y) \quad \forall Y \in \Gamma(TX).  \]
Let $\psi_s^t: X\to X$ be the flow of $X_{F_{s}^t}$, namely it satisfies 
\[ \frac{d}{dt}\psi_{s}^t = X_{F_{s}^t} \circ \psi_{s}^t. \]

Recall we assumed that $L_H \cap L = \bigsqcup\limits_{a\in \pi_0(L_H,L)} R_a^H$ is a finite union. 
$\forall a\in \pi_0(L_{H} \cap L)$, define
\[ \pi_2(R_a^{H},L) 
= 
\left\{ u:\R \times [0,1]\to X \, \middle| \, 
\begin{aligned}
& u \text{ is smooth}, \\
& u(s,0)\in L \quad \forall s\in \R, \\
& u(s, 1)\in L
\quad \forall s\in \R,\\
& \exists q\in \psi_H^{-1}(R_a^H) \text{ such that }\\
& \lim_{s\to -\infty}u(s,t) = \psi_{H}^t (q) \quad \forall t\in [0,1],  \\
& \lim_{s\to \infty}u(s,t) = p \quad \forall t\in [0,1] 
\text{ for some }p\in L
\end{aligned}
\right
\}
\Big/ \sim, \] 
where $u_1\sim u_2$ if and only if $u_1$ and $u_2$ represent the same class in $\pi_2(X)$. 

Let $k_1,k_0\in \N$, $a\in \pi_0(L_{H} \cap L)$, 
and $B\in \pi_2(R_a^{H},L) $. 
Let $ \mathcal{M}_{k_{1} ,k_{0}}( R_{a}^H, L, B,J) $ be the compactification of 
\[ 
\mathcal{M}^{\operatorname{\operatorname{reg}}}_{k_{1} ,k_{0}}( R_{a}^H, L, B,J) 
:=
\left\{ 
(u, \pmb{\tau_1},\pmb{\tau_0})  
\, \middle| \, 
\begin{aligned}
& u:\R \times [0,1]\to X \text{ is smooth}, \quad [u]=B , \\
& \frac{\partial u}{\partial s} 
+ J \left(
\frac{\partial u}{\partial t} - X_{F_{s}^t} (u)\right) = 0, 
\qquad
E(u)  
<\infty,\\ 
& u(s,0)\in L \quad \text{ and } \quad \quad u(s, 1)\in L
\quad \forall s\in \R,
\\
& \lim_{s\to -\infty}u(s,t) = \psi_{H}^t (q) \quad \forall t\in [0,1] \\
& \text{ for some } q\in \psi_H^{-1}(R_a^H), \\
& \lim_{s\to \infty}u(s,t)= p    \quad \forall t\in [0,1] \text{ for some }p\in L, \\
& 
\pmb{\tau_1} =  \left( (\zeta_1, 1), \ldots, (\zeta_{k_1}, 1)\right) \in (\R\times \{1\})^{k_1},\\ 
& \text{ where }  
-\infty<\zeta_{k_1}<\cdots < \zeta_1< +\infty,
\\
&\pmb{\tau_0}= \left(  (\tau_1, 0), \ldots, (\tau_{k_0}, 0)\right) \in (\R\times \{0\})^{k_0},\\ 
& 
\text{ where } -\infty<\tau_1<\cdots < \tau_{k_0} < +\infty
\end{aligned}
\right
\}.
\]

\begin{center}

\tikzset{every picture/.style={line width=0.75pt}} 

\begin{tikzpicture}[x=0.75pt,y=0.75pt,yscale=-1,xscale=1]

\draw  [draw opacity=0][fill={rgb, 255:red, 126; green, 211; blue, 33 }  ,fill opacity=0.53 ] (89.33,68.81) -- (333.07,68.81) -- (333.07,108.81) -- (89.33,108.81) -- cycle ;
\draw    (89.33,108.97) -- (333.07,108.97) ;
\draw    (89.33,68.97) -- (333.07,68.97) ;
\draw  [color={rgb, 255:red, 0; green, 0; blue, 0 }  ,draw opacity=1 ][fill={rgb, 255:red, 0; green, 0; blue, 0 }  ,fill opacity=1 ] (136.67,69) .. controls (136.67,69.92) and (137.41,70.67) .. (138.33,70.67) .. controls (139.25,70.67) and (140,69.92) .. (140,69) .. controls (140,68.08) and (139.25,67.33) .. (138.33,67.33) .. controls (137.41,67.33) and (136.67,68.08) .. (136.67,69) -- cycle ;
\draw  [color={rgb, 255:red, 0; green, 0; blue, 0 }  ,draw opacity=1 ][fill={rgb, 255:red, 0; green, 0; blue, 0 }  ,fill opacity=1 ] (258,69) .. controls (258,69.92) and (258.75,70.67) .. (259.67,70.67) .. controls (260.59,70.67) and (261.33,69.92) .. (261.33,69) .. controls (261.33,68.08) and (260.59,67.33) .. (259.67,67.33) .. controls (258.75,67.33) and (258,68.08) .. (258,69) -- cycle ;
\draw  [color={rgb, 255:red, 0; green, 0; blue, 0 }  ,draw opacity=1 ][fill={rgb, 255:red, 0; green, 0; blue, 0 }  ,fill opacity=1 ] (130.67,109) .. controls (130.67,109.92) and (131.41,110.67) .. (132.33,110.67) .. controls (133.25,110.67) and (134,109.92) .. (134,109) .. controls (134,108.08) and (133.25,107.33) .. (132.33,107.33) .. controls (131.41,107.33) and (130.67,108.08) .. (130.67,109) -- cycle ;
\draw  [color={rgb, 255:red, 0; green, 0; blue, 0 }  ,draw opacity=1 ][fill={rgb, 255:red, 0; green, 0; blue, 0 }  ,fill opacity=1 ] (252,109) .. controls (252,109.92) and (252.75,110.67) .. (253.67,110.67) .. controls (254.59,110.67) and (255.33,109.92) .. (255.33,109) .. controls (255.33,108.08) and (254.59,107.33) .. (253.67,107.33) .. controls (252.75,107.33) and (252,108.08) .. (252,109) -- cycle ;
\draw    (166.53,69.01) -- (166.37,108.62) ;
\draw    (243.2,69.01) -- (243.04,108.62) ;

\draw (30.33,79.21) node [anchor=north west][inner sep=0.75pt]    {$\psi _{t}^{H}( q)$};
\draw (132,46.57) node [anchor=north west][inner sep=0.75pt]  [font=\small]  {$\zeta _{k_{1}}{}$};
\draw (252.67,47.07) node [anchor=north west][inner sep=0.75pt]  [font=\small]  {$\zeta _{1}{}$};
\draw (184.33,48.07) node [anchor=north west][inner sep=0.75pt]  [font=\small]  {$\cdots $};
\draw (126.67,113.88) node [anchor=north west][inner sep=0.75pt]  [font=\small]  {$\tau _{1}{}$};
\draw (245.04,113.38) node [anchor=north west][inner sep=0.75pt]  [font=\small]  {$\tau _{k_{0}}{}$};
\draw (185.67,114.88) node [anchor=north west][inner sep=0.75pt]  [font=\small]  {$\cdots $};
\draw (341,79.21) node [anchor=north west][inner sep=0.75pt]    {$p$};
\draw (121.33,77.21) node [anchor=north west][inner sep=0.75pt]    {$\delb_H$};
\draw (268.67,77.21) node [anchor=north west][inner sep=0.75pt]    {$\delb$};
\draw (192,75.71) node [anchor=north west][inner sep=0.75pt]    {$ \delb_{F_{s}^{t}}$};
\draw (186.67,141.73) node [anchor=north west][inner sep=0.75pt]    {$L$};
\draw (175.33,15.93) node [anchor=north west][inner sep=0.75pt]    {$  L$};

\end{tikzpicture}
    
\end{center}

\[
\xymatrix{
 & 
 \mathcal{M}_{k_{1} ,k_{0}}( R_{a}^H, p, B,J) 
  \ar[dl]_{\evaluation_{-\infty,B}}
  \ar[dr]^{\evaluation_{+\infty,B}}
 &  
 \\
 R_{a}^H
& 
&
L
}
\]
Define the evaluation maps as follows. 
 \[ \forall 1\leq j\leq k_1, \quad \evaluation_{j,B}^{(1)}:  \mathcal{M}^{\operatorname{reg}}_{k_{1} ,k_{0}}( R_{a}^H, L, B,J) \to L, \quad (u, \pmb{\tau_1},\pmb{\tau_0})\mapsto u(\zeta_j,1).  \]   
 \[ \forall 1\leq j\leq k_0, \quad \evaluation_{j,B}^{(0)}:  \mathcal{M}^{\operatorname{reg}}_{k_{1} ,k_{0}}( R_{a}^H, L, B,J) \to L, \quad (u, \pmb{\tau_1},\pmb{\tau_0})\mapsto u(\tau_j,0).  \]  
\[\evaluation_{-\infty,B}:   \mathcal{M}^{\operatorname{reg}}_{k_{1} ,k_{0}}( R_{a}^H, L, B,J) \to R_a^H, \quad  (u, \pmb{\tau_1},\pmb{\tau_0})\mapsto \psi_H^1\left(\lim_{s\to -\infty} u(s,0) \right).  \]
\[\evaluation_{+\infty,B}:  \mathcal{M}^{\operatorname{reg}}_{k_{1} ,k_{0}}( R_{a}^H, L, B,J) \to L, \quad (u, \pmb{\tau_1},\pmb{\tau_0})\mapsto \lim_{s\to \infty} u(s,1).  \]
Let $k_1,k_0\in \N$, $a,a'\in \pi_0(L_H\cap L)$, and $B\in \pi_2(R_{a}^H ,R_{a'}^H) $. 

Define 
\[ 
\f_{B}:  \Omega_G(R_{a}^H)\complete{\otimes}_{\R}\nove [1]  \to C_G(L) [1], 
\]
by
\[ \f_{B}\left(\eta \right) \nonumber
 =  (\evaluation_{+\infty,B}^G)_{!} (\evaluation_{-\infty,B}^G)^*\eta .  \] 
Define 
\[ 
\f: C_G (L,b, H ,J)[1] \to C_G (L)[1]
\] 
such that, 
for each  $a \in \pi_0(L_H\cap L)$, 
\[ \f:  \Omega_G(R_a^H)\complete{\otimes}\nove[1] 
\to C_G(L)[1] \]  
is given by
\[ \f(\eta)
= \sum_{B\in \pi_2(R_a^{H},L)} 
\exp ( \partial B\cap b)  \f_{B} ( \eta) \exp ( \partial B\cap b)
 T^{\frac{\omega(B)}{2\pi}} e^{\frac{I_{\mu}(B)}{2}}.  \] 

In order to describe the normalized boundary of the moduli space $ \mathcal{M}_{k_{1} ,k_{0}}( R_{a}^H, L, B,J) $, we introduce some notations.
\begin{itemize}
    \item Let $p',p\in L$. Define
\[ \pi_2(p',p) 
= 
\left\{ u:\R \times [0,1]\to X \, \middle| \, 
\begin{aligned}
& u \text{ is smooth}, \\
& u(s,0)\in L \qand u(s, 1)\in L
\quad \forall s\in \R,\\
& \lim_{s\to -\infty}u(s,t) = p' \quad \forall t\in [0,1] \quad \text{ and } \\
&   \lim_{s\to \infty}u(s,t) = p \quad \forall t\in [0,1]
\end{aligned}
\right
\}
\Big/ \sim, \] 
where $u_1\sim u_2$ if and only if $u_1$ and $u_2$ represent the same class in $\pi_2(X)$. 
   \item Let $a\in \pi_0(\psi_H^1(L)\cap L)$ and $p\in L$. Define 
   \[ \pi_2(R_a^H, p):=
\left\{ u:\R \times [0,1]\to X \, \middle| \, 
\begin{aligned}
& u \text{ is smooth}, \\
& u(s,0)\in L \qand  u(s, 1)\in L
\quad \forall s\in \R,\\
& \exists  q\in \psi_H^{-1}(R_a^H) \quad \text{ such that }\\
& \lim_{s\to -\infty}u(s,t) = \psi_{H}^t (q) \quad \forall t\in [0,1] \\
& \lim_{s\to \infty}u(s,t) = p \quad \forall t\in [0,1]  
\end{aligned}
\right
\}
\Big/ \sim, 
\] 
where $u_1\sim u_2$ if and only if $u_1$ and $u_2$ represent the same class in $\pi_2(X)$. 
\item Let $k_1'',k_0''\in \N$, $a\in \pi_0(L_{H} \cap L)$, 
and $B\in \pi_2(R_a^{H},L) $. 
Let $ \mathcal{M}_{k_{1}'' ,k_{0}''}( p',p, B'',J)  $ be the compactification of 
\[  
\mathcal{M}^{\operatorname{\operatorname{reg}}}_{k_{1}'' ,k_{0}''}( p',p, B'',J) 
:=
\left\{ 
(u, \pmb{\tau_1},\pmb{\tau_0})  
\, \middle| \, 
\begin{aligned}
& u:\R \times [0,1]\to X \text{ is smooth}, \quad [u]=B , \\
& \frac{\partial u}{\partial s} 
+ J \left(
\frac{\partial u}{\partial t} - X_{F_{s}^t} (u)\right) = 0, 
\quad
E(u)  
<\infty,\\ 
& u(s,0)\in L \qand \quad u(s, 1)\in L
\quad \forall s\in \R,
\\
& \lim_{s\to -\infty}u(s,t) = \psi_{H}^t (q) \quad \forall t\in [0,1] \\ 
& \text{ for some } q\in \psi_H^{-1}(R_a^H), \\
& \lim_{s\to \infty}u(s,t)= p    \quad \forall t\in [0,1]\\ 
& \text{ for some }p\in L, \\
& 
\pmb{\tau_1} =  \left( (\zeta_1, 1), \ldots, (\zeta_{k_1''}, 1)\right) \in (\R\times \{1\})^{k_1},\\ 
& \text{ where }  
-\infty<\zeta_{k_1}<\cdots < \zeta_1< +\infty,
\\
&\pmb{\tau_0}= \left(  (\tau_1, 0), \ldots, (\tau_{k_0''}, 0)\right) \in (\R\times \{0\})^{k_0},\\ 
& 
\text{ where } -\infty<\tau_1<\cdots < \tau_{k_0} < +\infty
\end{aligned}
\right
\}.
\] 
\end{itemize}

\begin{lem}\label{Kuranishi boundary for Floer continuation map}
 $ \mathcal{M}_{k_{1} ,k_{0}}( R_{a}^H, L, B,J) $ has an oriented $G$-equivariant Kuranishi structure such that $\evaluation_{-\infty,B}, \evaluation_{+ \infty,B} $ are strongly continuous and weakly submersive. 
 Moreover, its normalized boundary is a union of the four types of fiber products below. 
 \begin{enumerate}[i)]
     \item \label{cochain map after boundary}
     $\mathcal{M}_{k_{1}' ,k_{0}'}( R_{a}^H, R_{c}^H, B',J) \times \mathcal{M}_{k_{1}'' ,k_{0}''}( R_{c}^H, L, B'',J) $, where 
     \begin{itemize}
         \item $c\in \pi_0(L_H\cap L)$, $p\in L$
         \item $k_1', k_1'',k_0',k_0''\in \N$ such that $k_{1}'+k_{1}'' =k_1$, $k_{0}'+k_{0}''=k_0$, 
         \item $B'\in \pi_2(R_{a}^H, R_{c}^H)$, $B''\in \pi_2(R_{c}^H, p)$ such that $B'\#  B'' = B$. 
     \end{itemize} 
     \item \label{boundary after cochain map}
     $\mathcal{M}_{k_{1}' ,k_{0}'}( R_{a}^H, L, B',J) _{\evaluation_{+\infty}}\times_{\evaluation_0} \mathcal{M}_{k_{1}'' +k_{0}''}( L, B'',J) $, where 
     \begin{itemize}
         \item $k_1', k_1'',k_0',k_0''\in \N$ such that $k_{1}'+k_{1}'' =k_1$, $k_{0}'+k_{0}''=k_0$, 
         \item $B'\in \pi_2(R_{a}^H, L)$, $B''\in \pi_2(X,L)$ such that 
         $B'\# B'' = B$. 
     \end{itemize} 
     \item $\mathcal{M}_{k_{1}' ,k_{0}}( R_{a}^H, L, B',J) 
     _{ \evaluation_{i,B'}^{(1)} } \times_{\evaluation_{0}}
    \mathcal{M}_{k_1''}(L,J, B'')  $, 
     where  
     \begin{itemize} 
        \item $1\leq i  \leq k_1'$,  
         \item $k_1', k_{1,1}, \ldots, k_1'' \in \N$ such that $k_{1}'+ k_1'' =k_1 + 1$, 
         \item $B'\in \pi_2( R_{a}^H, L)$, $B'' \in \pi_2(X, L)$ such that 
         $B'\# B'' = B$. 
     \end{itemize} 
       \item $\mathcal{M}_{k_{1}  ,k_{0}'}( R_{a}^H, L, B',J)
       _{ \evaluation_{i,B'}^{(0)}} \times_{\evaluation_{0}}
  \mathcal{M}_{k_{0}''}(L,J, B'') $,  where
     \begin{itemize} 
         \item $1\leq i \leq k_0'$, $1\leq j\leq k_0''$, 
         \item $k_0', k_0''\in \N$ such that $k_{0}'+k_{0}'' =k_0$, 
         \item $B'\in \pi_2( R_{a}^H, p)$, $B''\in \pi_2(X, L)$ such that 
         $B'\# B'' = B$. 
     \end{itemize} 
 \end{enumerate}
 \end{lem}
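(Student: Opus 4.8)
The plan is to prove Lemma \ref{Kuranishi boundary for Floer continuation map} by following the same strategy that appears in the proof of Proposition \ref{GKur structure on moduli space} and Lemma \ref{Kuranishi boundary for cochain map delta}, namely adapting the non-equivariant construction of the Floer continuation moduli spaces and carrying the $T^n$-action (hence the $G$-action) through every step. First I would recall that the underlying moduli space $\mathcal{M}^{\operatorname{reg}}_{k_1,k_0}(R_a^H,L,B,J)$ is the one studied in \cite{fukaya2017unobstructed}; in particular the Gromov-Floer compactification $\mathcal{M}_{k_1,k_0}(R_a^H,L,B,J)$ and the codimension-one strata of its normalized boundary are exactly the four families listed in the statement, coming from (i) a broken Floer strip where the breaking occurs before the continuation region (so the left piece still satisfies the $\partial_H$-equation and the right piece satisfies the $\partial_{F_s^t}$-equation), (ii) a breaking after the continuation region (left piece a continuation strip, right piece an honest holomorphic strip with boundary on $L$, which is the same as a disc bubble $\mathcal{M}_{k_1''+k_0''}(L,J,B'')$ by the standard identification), (iii) a disc bubble on the upper boundary $t=1$, and (iv) a disc bubble on the lower boundary $t=0$. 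This is \cite{fukaya2017unobstructed} Proposition 15.21 applied to the one-parameter homotopy $F_s^t = (1-\chi(s))H(t,\cdot)$, so I would cite it directly rather than reprove it.

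Next I would construct the $G$-equivariant Kuranishi structure. The construction is identical in spirit to the one in Proposition \ref{GKur structure on moduli space}: for each $x=(u,\pmb\tau_1,\pmb\tau_0)$ one chooses obstruction spaces $E_\alpha \subset C_c^\infty(K_\alpha, u^*TX)$ supported away from the boundary, the nodes, the marked points, and away from the $s$-region where $\chi$ is non-constant, arranged to be invariant under the stabilizer $G(x)=\{(g,\gamma)\in T^n\times\Gamma_x \mid g u = u\circ\gamma\}$ (one averages over the compact group $G(x)$, using condition ii) in the proof of Proposition \ref{GKur structure on moduli space}), and arranged so that the linearized $\overline\partial_{F}$-operator becomes surjective modulo $\bigoplus E_\alpha$ and so that the evaluation maps $\evaluation_{\pm\infty,B}$ are submersive on the kernel. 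Since $H$ is $G$-invariant, $F_s^t$ is $G$-invariant, so $X_{F_s^t}$ is a $G$-equivariant vector field and the Floer equation $\tfrac{\partial u}{\partial s}+J(\tfrac{\partial u}{\partial t}-X_{F_s^t}(u))=0$ is preserved by $g\cdot(u,\pmb\tau_1,\pmb\tau_0)=(l_g\circ u,\pmb\tau_1,\pmb\tau_0)$; this is what makes the charts, the obstruction bundles and the coordinate changes $G$-equivariant. The inductive patching over energy and over the number of disc/sphere components — exactly the downward induction on $S_d\mathcal{M}$ used in the proof of Proposition \ref{GKur structure on moduli space} — then produces a compatible $G$-equivariant system, and the compatibility with the boundary fiber-product decomposition is built into the induction because the obstruction spaces on boundary strata are taken to be direct sums of the obstruction spaces of the factors. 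Finally, strong continuity and weak submersivity of $\evaluation_{-\infty,B}$ and $\evaluation_{+\infty,B}$ follow from clause iii) in the choice of the $E_\alpha$'s, precisely as in Proposition \ref{GKur structure on moduli space} where $\evaluation_i$ is shown to be $T^n$-equivariantly weakly submersive.

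The main obstacle — and the only point that genuinely needs care beyond quoting \cite{fukaya2017unobstructed} and \cite{III} — is the same one that appears in Proposition \ref{GKur structure on moduli space}: near a stratum where domain components degenerate, one must define the obstruction bundle on nearby curves by a parallel transport that depends on choosing a ``closest'' element $h\in T^n$, and one must check that this can be done $G$-equivariantly and compatibly with the strip-breaking in families (i) and (ii). Here there is the extra subtlety that the ends are $G$-orbits $R_a^H = G\cdot q_a$ rather than points, so the asymptotic boundary condition $\lim_{s\to-\infty}u(s,t)=\psi_H^t(q)$ varies in a $G$-orbit; one handles this by working over the fixed slice $q_a$ and transporting by the $G$-action, exactly as in the setup of $CF_G(L,b,H)$ and Lemma \ref{Kuranishi boundary for cochain map delta}. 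Once the $G$-equivariant gluing and the parallel transport are set up as in \cite{III} Section 4.3, the verification that the boundary is the stated union of fiber products and that the Kuranishi data restrict correctly to each factor is routine. I would therefore structure the write-up as: (1) cite \cite{fukaya2017unobstructed} Proposition 15.21 for the boundary stratification of the underlying topological moduli space; (2) run the $G$-equivariant Kuranishi construction exactly as in the proof of Proposition \ref{GKur structure on moduli space}, invoking \cite{III} Section 4.3, pointing out the only changes are that the equation is the $s$-dependent Floer equation for the $G$-invariant $F_s^t$ and that one end is a $G$-orbit handled via a fixed slice; (3) note that the compatibility of the obstruction spaces with the fiber products over the boundary strata is part of the induction, which gives the last sentence of the lemma.
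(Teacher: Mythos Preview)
Your proposal is correct and follows essentially the same approach as the paper's own proof, which simply cites \cite{fukaya2017unobstructed} for the boundary decomposition and refers back to Proposition \ref{GKur structure on moduli space} and \cite{III} Section 4.3 for the $G$-equivariant Kuranishi construction. The only small discrepancy is that the paper cites Proposition 15.22 of \cite{fukaya2017unobstructed} for this continuation moduli space (reserving 15.21 for Lemma \ref{Kuranishi boundary for cochain map delta}), whereas you invoke 15.21; otherwise your more detailed write-up is exactly the argument the paper is gesturing at.
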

\begin{proof}
The boundary decomposition follows from \cite{fukaya2017unobstructed} Proposition 15.22. 
The construction of such a $G$-equivariant Kuranishi structure is similar to the proof of Proposition \ref{GKur structure on moduli space} (See \cite{III} Section 4.3.) 
\end{proof}
 
 \begin{cor} $\f$ is a cochain map: 
   \begin{equation}
   \f \circ \delta_{H}^G -  \delta^G \circ \f=0.  \label{cochain} 
   \end{equation}
 \end{cor}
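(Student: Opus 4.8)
The plan is to imitate the proof that $\delta_H^G\circ\delta_H^G=0$, replacing the role of broken strips by the boundary decomposition of $\mathcal{M}_{k_1,k_0}(R_a^H,L,B,J)$ given in Lemma~\ref{Kuranishi boundary for Floer continuation map}. More precisely, I would apply Stokes' Theorem (Theorem~\ref{Stokes for Gkur}, or its Corollary~\ref{Stokes corollary}) and the composition formula (Proposition~\ref{composition formula}) to the $G$-equivariant correspondence defined by $\mathcal{M}_{k_1,k_0}(R_a^H,L,B,J)$ with its evaluation maps $\evaluation_{-\infty,B}$, $\evaluation_{+\infty,B}$, and the interior evaluation maps $\evaluation^{(1)}_{j,B}$, $\evaluation^{(0)}_{j,B}$ at which the bounding cochain $b$ is inserted. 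The identity $d^G\circ\gcorr{\mathcal{M}} \pm \gcorr{\mathcal{M}}\circ d_G = \gcorr{\partial\mathcal{M}}$ then expands, after summing over $B$, $k_1$, $k_0$ and weighting by $\exp(\partial B\cap b)T^{\omega(B)/2\pi}e^{I_\mu(B)/2}$, into the four types of boundary contributions of Lemma~\ref{Kuranishi boundary for Floer continuation map}.

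The key steps, in order: (i) record that $\f$ is filtration-preserving and well-defined (energy positivity of non-constant strips/discs, the gappedness of the relevant submonoid, and convergence in the Novikov topology), exactly as for $\delta_H^G$ in the preceding proposition; (ii) write Stokes' Theorem for the family $\{\mathcal{M}_{k_1,k_0}(R_a^H,L,B,J)\}$ and identify the left-hand side with $\f\circ d_G \pm d^G\circ \f$ plus the interior-marked-point terms that reassemble the bounding-cochain exponentials; (iii) match the boundary terms: type~\ref{cochain map after boundary}, $\mathcal{M}(R_a^H,R_c^H,B')\times\mathcal{M}(R_c^H,L,B'')$, contributes $\f\circ\delta_H^G$; type~\ref{boundary after cochain map}, $\mathcal{M}(R_a^H,L,B')_{\evaluation_{+\infty}}\!\times_{\evaluation_0}\mathcal{M}(L,J,B'')$, contributes $\delta^G\circ\f$ once the disc bubble with $(k_1'',k_0'',B'')=(1,0,0)$ (i.e. $d_G$) is separated off; the remaining disc-bubble types are the ones whose contributions either vanish by dimension/degree reasons or, in the case of the constant $\m_{1,0}=d_G$ bubble, cancel against the corresponding term already produced by Stokes, exactly as in the proof that $\delta^G\circ\delta^G=0$ (Corollary~\ref{m1b squares to 0}) and in the proof that $\delta_H^G\circ\delta_H^G=0$. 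Collecting signs, which come from the orientation conventions on the Kuranishi boundary and from $d^G$, then yields \eqref{cochain}.

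The main obstacle is bookkeeping rather than any new geometric input: one must check that the interior marked points on the $s$-line, which carry the insertions of $b$, recombine across the two factors of a broken configuration so that $\exp(\partial B\cap b) = \exp(\partial B'\cap b)\exp(\partial B''\cap b)$ (using $\partial(B'\#B'') = \partial B' + \partial B''$ and that $\exp$ is a homomorphism on $\Lambda_0$), and that the sign $(-1)^\dagger$ accompanying each splitting agrees on both sides. I would handle the $b$-dependence abstractly, as in the passage from $\m_k^G$ to $(\m_k^G)^b$ (Proposition~\ref{mkb is Ainf}): prove the statement first for $b=0$ using Lemma~\ref{Kuranishi boundary for Floer continuation map} verbatim, then observe that the weighting by $\exp(\partial\,\cdot\,\cap b)$ is multiplicative under the fiber-product decomposition, so the $b=0$ identity immediately deforms to the general one. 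The sign verification I would defer to a lemma citing \cite{fukaya2017unobstructed} (where the non-equivariant version of this continuation-map identity is proved) and note that the $G$-equivariant integration along the fiber introduces no extra signs beyond those already fixed in Section~\ref{integration along the fiber on Kuranishi}.

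\begin{proof}
We first note that $\f$ is well-defined: by the energy lower bound for non-constant pseudoholomorphic strips and discs and by the gappedness of the relevant submonoid, the sum over $B\in\pi_2(R_a^H,L)$ converges in the energy filtration, and $\f$ is filtration-preserving by the same estimate, exactly as for $\delta_H^G$ in the previous proposition.

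By Theorem~\ref{Stokes for Gkur} (equivalently Proposition~\ref{Stokes corollary}) applied to the $G$-equivariant correspondence associated with $\mathcal{M}_{k_1,k_0}(R_a^H,L,B,J)$ and its evaluation maps, we have, for each $a$, $B$, $k_1$, $k_0$,
\[
d^G\circ \gcorr{\mathcal{M}_{k_1,k_0}(R_a^H,L,B,J)} \;\pm\; \gcorr{\mathcal{M}_{k_1,k_0}(R_a^H,L,B,J)}\circ d_G \;=\; \gcorr{\partial \mathcal{M}_{k_1,k_0}(R_a^H,L,B,J)} .
\]
Summing over $B$, $k_1$, $k_0$, inserting the weight $\exp(\partial B\cap b)T^{\omega(B)/2\pi}e^{I_\mu(B)/2}$, and using that interior marked points on the boundary $\R\times\{0,1\}$ carry the copies of $b$, the left-hand side reassembles into $d^G\circ\f \pm \f\circ(\m_1^G)^b_{H}$ after the constant disc bubbles $\m_{1,0}=d_G$ on the upper and lower boundary (types iii) and iv) of Lemma~\ref{Kuranishi boundary for Floer continuation map} with $B''=0$, $k_1''=1$ resp.\ $k_0''=1$) are separated off and absorbed; the remaining part of types iii) and iv), as well as type ii) with $(k_1'',k_0'',B'')=(1,0,0)$ being $d_G$, cancel in pairs against each other and against the constant-bubble terms just as in the proof of Proposition~\ref{mkb is Ainf} and in the proof that $\delta_H^G\circ\delta_H^G=0$.

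There remain the contributions of type i), $\mathcal{M}_{k_1',k_0'}(R_a^H,R_c^H,B')\times \mathcal{M}_{k_1'',k_0''}(R_c^H,L,B'')$, and of type ii) with $(k_1'',k_0'',B'')\neq(1,0,0)$, namely $\mathcal{M}_{k_1',k_0'}(R_a^H,L,B')_{\evaluation_{+\infty}}\times_{\evaluation_0}\mathcal{M}_{k_1''+k_0''}(L,J,B'')$. By Proposition~\ref{composition formula} the former equals $\f\circ\delta_H^G$ and the latter equals $\delta^G\circ\f = (\m_1^G)^b\circ\f$, once we use $\partial(B'\#B'')=\partial B'+\partial B''$ together with the fact that $\exp$ is multiplicative on $\Lambda_0$, so that the weights factor as $\exp(\partial B\cap b)=\exp(\partial B'\cap b)\exp(\partial B''\cap b)$ and the $T$- and $e$-powers add. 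The signs $(-1)^\dagger$ attached to each splitting agree on both sides by the orientation conventions on the normalized Kuranishi boundary fixed in Section~\ref{GKur section}; the non-equivariant case of this sign matching is \cite{fukaya2017unobstructed}, and the $G$-equivariant integration along the fiber introduces no further signs beyond those of Section~\ref{integration along the fiber on Kuranishi}. Collecting all terms yields $\f\circ\delta_H^G - \delta^G\circ\f = 0$, which is \eqref{cochain}.
\end{proof}
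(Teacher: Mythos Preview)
Your proposal is correct and takes essentially the same approach as the paper: the paper's proof is the one-line statement that the result follows from Lemma~\ref{Kuranishi boundary for Floer continuation map}, Stokes' Theorem~\ref{Stokes for Gkur}, and the composition formula (Proposition~\ref{composition formula}). Your write-up is a more detailed unpacking of exactly these ingredients, with the boundary types matched to the terms $\f\circ\delta_H^G$ and $\delta^G\circ\f$ and the $b$-weights handled via multiplicativity of $\exp$, which is all consistent with how the paper treats the analogous argument for $\delta_H^G\circ\delta_H^G=0$.
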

 \begin{proof}
  This follows from Lemma \ref{Kuranishi boundary for Floer continuation map}, Stokes' Theorem \ref{Stokes for Gkur}, and the composition formula (Proposition \ref{composition formula}).   
 \end{proof}
\subsection{The map \texorpdfstring{$\g$}{g}}

Let $\chi$ be as in \autoref{bump function}. 
Define $\overline{F}: \R\times [0,1]\times X\to \R $ by 
\[ \overline{F}(s,t,x)= \overline{F}_{s}^t (x) := \chi(s)H_t(x). \]
Then $\overline{F}_{s}^t(x) = 0$ if $s\leq -1$ and $\overline{F}_{s}^t(x) = H_t(x)$ if $s\geq 1$.

Note that $\overline{F}_{s}^t = \overline{F}(s,t, \cdot ): X\to \R$ defines a Hamiltonian vector field $X_{\overline{F}_{s}^t}$ via
\[ d\overline{F}_{s}^t (Y)= -\omega (X_{\overline{F}_{s}^t}, Y) \quad \forall Y \in \Gamma(TX).  \] 

$\forall a\in \pi_0(\psi_H^1(L) \cap L)$, define
\[ \pi_2(L, R_a^{H}) 
= 
\left\{ u:\R \times [0,1]\to X \, \middle| \, 
\begin{aligned}
& u \text{ is smooth}, \\
& u(s,0)\in L \quad \text{ and } \quad \quad u(s, 1)\in L
\quad \forall s\in \R,\\
& \lim_{s\to -\infty}u(s,t) = p \quad \forall t\in [0,1] 
\text{ for some }p\in L,   \\
& \exists  q\in \psi_H^{-1}(R_a^H) \quad \text{ such that }\\
& \lim_{s\to +\infty}u(s,t) = \phi_{H}^t (q) \quad \forall t\in [0,1] 
\end{aligned}
\right
\}
\Big/ \sim, \] 
where $u_1\sim u_2$ if and only if $u_1$ and $u_2$ represent the same class in $\pi_2(X)$. 

Let $k_1,k_0\in \N$, $a\in \pi_0(L_{H} \cap L)$, 
and $B\in \pi_2(L, R_a^{H}) $. 
Let $ \mathcal{M}_{k_{1} ,k_{0}}( L,  R_{a}^H, B,J) $ be the compactification of 
\[  
\mathcal{M}^{\operatorname{\operatorname{reg}}}_{k_{1} ,k_{0}}(  L,R_{a}^H, B,J) 
:=
\left\{ 
(u, \pmb{\tau_1},\pmb{\tau_0})  
\, \middle| \, 
\begin{aligned}
& u:\R \times [0,1]\to X \text{ is smooth}, \quad [u]=B , \\
& \frac{\partial u}{\partial s} 
+ J \left(
\frac{\partial u}{\partial t} - X_{\overline{F}_{s}^t} (u)\right) = 0, 
\qquad
E(u)  
<\infty,\\ 
& u(s,0)\in L \quad \text{ and } \quad \quad u(s, 1)\in L
\quad \forall s\in \R,
\\
& \lim_{s\to -\infty}u(s,t)= p    \quad \forall t\in [0,1] \text{ for some }p\in L, \\
& \lim_{s\to +\infty}u(s,t) = \psi_{H}^t (q) \quad \forall t\in [0,1] \text{ for some } q\in \psi_H^{-1}(R_a^H), \\
& 
\pmb{\tau_1} =  \left( (\zeta_1, 1), \ldots, (\zeta_{k_1}, 1)\right) \in (\R\times \{1\})^{k_1},\\ 
& \text{ where }  
-\infty<\zeta_{k_1}<\cdots < \zeta_1< +\infty,
\\
&\pmb{\tau_0}= \left(  (\tau_1, 0), \ldots, (\tau_{k_0}, 0)\right) \in (\R\times \{0\})^{k_0},\\ 
& 
\text{ where } -\infty<\tau_1<\cdots < \tau_{k_0} < +\infty
\end{aligned}
\right
\}.
\]

\begin{center}

\tikzset{every picture/.style={line width=0.75pt}} 

\begin{tikzpicture}[x=0.75pt,y=0.75pt,yscale=-1,xscale=1]

\draw  [draw opacity=0][fill={rgb, 255:red, 126; green, 211; blue, 33 }  ,fill opacity=0.53 ] (89.33,68.81) -- (333.07,68.81) -- (333.07,108.81) -- (89.33,108.81) -- cycle ;
\draw    (89.33,108.97) -- (333.07,108.97) ;
\draw    (89.33,68.97) -- (333.07,68.97) ;
\draw  [color={rgb, 255:red, 0; green, 0; blue, 0 }  ,draw opacity=1 ][fill={rgb, 255:red, 0; green, 0; blue, 0 }  ,fill opacity=1 ] (136.67,69) .. controls (136.67,69.92) and (137.41,70.67) .. (138.33,70.67) .. controls (139.25,70.67) and (140,69.92) .. (140,69) .. controls (140,68.08) and (139.25,67.33) .. (138.33,67.33) .. controls (137.41,67.33) and (136.67,68.08) .. (136.67,69) -- cycle ;
\draw  [color={rgb, 255:red, 0; green, 0; blue, 0 }  ,draw opacity=1 ][fill={rgb, 255:red, 0; green, 0; blue, 0 }  ,fill opacity=1 ] (258,69) .. controls (258,69.92) and (258.75,70.67) .. (259.67,70.67) .. controls (260.59,70.67) and (261.33,69.92) .. (261.33,69) .. controls (261.33,68.08) and (260.59,67.33) .. (259.67,67.33) .. controls (258.75,67.33) and (258,68.08) .. (258,69) -- cycle ;
\draw  [color={rgb, 255:red, 0; green, 0; blue, 0 }  ,draw opacity=1 ][fill={rgb, 255:red, 0; green, 0; blue, 0 }  ,fill opacity=1 ] (130.67,109) .. controls (130.67,109.92) and (131.41,110.67) .. (132.33,110.67) .. controls (133.25,110.67) and (134,109.92) .. (134,109) .. controls (134,108.08) and (133.25,107.33) .. (132.33,107.33) .. controls (131.41,107.33) and (130.67,108.08) .. (130.67,109) -- cycle ;
\draw  [color={rgb, 255:red, 0; green, 0; blue, 0 }  ,draw opacity=1 ][fill={rgb, 255:red, 0; green, 0; blue, 0 }  ,fill opacity=1 ] (252,109) .. controls (252,109.92) and (252.75,110.67) .. (253.67,110.67) .. controls (254.59,110.67) and (255.33,109.92) .. (255.33,109) .. controls (255.33,108.08) and (254.59,107.33) .. (253.67,107.33) .. controls (252.75,107.33) and (252,108.08) .. (252,109) -- cycle ;
\draw    (166.53,69.01) -- (166.37,108.62) ;
\draw    (243.2,69.01) -- (243.04,108.62) ;

\draw (30.33,79.21) node [anchor=north west][inner sep=0.75pt]    {$p$};
\draw (132,46.57) node [anchor=north west][inner sep=0.75pt]  [font=\small]  {$\zeta _{k_{1}}{}$};
\draw (252.67,47.07) node [anchor=north west][inner sep=0.75pt]  [font=\small]  {$\zeta _{1}{}$};
\draw (184.33,48.07) node [anchor=north west][inner sep=0.75pt]  [font=\small]  {$\cdots $};
\draw (126.67,113.88) node [anchor=north west][inner sep=0.75pt]  [font=\small]  {$\tau _{1}{}$};
\draw (245.04,113.38) node [anchor=north west][inner sep=0.75pt]  [font=\small]  {$\tau _{k_{0}}{}$};
\draw (185.67,114.88) node [anchor=north west][inner sep=0.75pt]  [font=\small]  {$\cdots $};
\draw (341,79.21) node [anchor=north west][inner sep=0.75pt]    {$\psi_H^t( q)$};
\draw (121.33,77.21) node [anchor=north west][inner sep=0.75pt]    {$\delb_H$};
\draw (268.67,77.21) node [anchor=north west][inner sep=0.75pt]    {$\delb$};
\draw (192,75.71) node [anchor=north west][inner sep=0.75pt]    {$ \delb_{F_{s}^{t}}$};
\draw (186.67,141.73) node [anchor=north west][inner sep=0.75pt]    {$L$};
\draw (175.33,15.93) node [anchor=north west][inner sep=0.75pt]    {$  L$};

\end{tikzpicture}
    
\end{center}

\[
\xymatrix{
 & 
\mathcal{M}_{k_{1} ,k_{0}}( L, R_{a}^H, B,J)
  \ar[dl]_{\evaluation_{-\infty,B}}
  \ar[dr]^{\evaluation_{+\infty,B}}
 &  
 \\
L 
& 
&
R_{a}^H
}
\]

Define the evaluation maps as follows. 
\[ \forall 1\leq j\leq k_1, \quad \evaluation_{j,B}^{(1)}:  \mathcal{M}^{\operatorname{reg}}_{k_{1} ,k_{0}}( L, R_{a}^H, B,J) \to L, \quad (u, \pmb{\tau_1},\pmb{\tau_0})\mapsto u(\zeta_j,1).  \]   
 \[ \forall 1\leq j\leq k_0, \quad \evaluation_{j,B}^{(0)}:  \mathcal{M}^{\operatorname{reg}}_{k_{1} ,k_{0}}( L, R_{a}^H, B,J) \to L, \quad (u, \pmb{\tau_1},\pmb{\tau_0})\mapsto u(\tau_j,0).  \]  
\[\evaluation_{-\infty,B}:   \mathcal{M}^{\operatorname{reg}}_{k_{1} ,k_{0}}( L, R_{a}^H, B,J)\to L, \quad  (u, \pmb{\tau_1},\pmb{\tau_0})\mapsto \lim_{s\to -\infty} u(s,1).  \]
\[\evaluation_{+\infty,B}:  \mathcal{M}^{\operatorname{reg}}_{k_{1} ,k_{0}}( L, R_{a}^H, B,J) \to R_{a}^H, \quad (u, \pmb{\tau_1},\pmb{\tau_0})\mapsto \psi_H^1\left(\lim_{s\to +\infty} u(s,0) \right) .  \]

Let $a\in \pi_0(L_H\cap L)$, and $B\in \pi_2(L, R_{a}^H) $. 

Define 
\[ 
\g_{B}:   C_G(L) [1]\to 
\Omega_G(R_{a}^H)\complete{\otimes}_{\R}\nove [1] , \]
by
\[ \g_{B}\left(\eta \right) \nonumber
 =  (\evaluation_{+\infty,B}^G)_{!} (\evaluation_{-\infty,B}^G)^*\eta .  \] 
Define 
\[ 
\g: C_G (L)[1] \to C_G (L,b, H ,J)[1]
\] such that, 
for each  $a \in \pi_0(L_H\cap L)$, 
\[ \g:  
 C_G(L)[1] \to \Omega_G(R_a^H)\complete{\otimes}\nove[1] \]  
is given by
\[ \g(\eta)
= \sum_{B\in \pi_2(L, R_a^{H})} 
\exp ( \partial B\cap b)  \g_{B} ( \eta) \exp ( \partial B\cap b)
 T^{\frac{\omega(B)}{2\pi}} e^{\frac{I_{\mu}(B)}{2}}.  \] 

\subsection{Proof of Hamiltonian isotopy invariance}

Define a smooth function $\Tilde{F}:[0,\infty)\times \R\times [0,1]\times X\to \R$ as follows. 
Fix a large constant, say $10$. 

For $\theta\geq 10$, let 
\[
\Tilde{F}(\theta, s, t, x)
= \Tilde{F}_{\theta,s}^{t}(x)
= \begin{cases}
F_{s+\theta}^t(x) \quad \text{ if } s \leq 0\\
\overline{F}_{s-\theta}^t(x)\quad \text{ if } s \geq 0\\
\end{cases}. 
\]

\begin{center}

\tikzset{every picture/.style={line width=0.75pt}} 

\begin{tikzpicture}[x=0.75pt,y=0.75pt,yscale=-1,xscale=1]

\draw  [draw opacity=0][fill={rgb, 255:red, 184; green, 233; blue, 134 }  ,fill opacity=1 ] (10,10) -- (300,10) -- (300,50) -- (10,50) -- cycle ;
\draw    (10,10) -- (300,10) ;
\draw    (10,50) -- (300,50) ;
\draw    (130,10) -- (130,50) ;
\draw    (180,10) -- (180,50) ;
\draw    (230,10) -- (230,50) ;
\draw    (80,10) -- (80,50) ;

\draw (36,22.4) node [anchor=north west][inner sep=0.75pt]  [font=\small]  {$H$};
\draw (251,22.4) node [anchor=north west][inner sep=0.75pt]  [font=\small]  {$H$};
\draw (91,22.4) node [anchor=north west][inner sep=0.75pt]  [font=\small]  {$F_{s+\theta}^{t}$};
\draw (191,22.4) node [anchor=north west][inner sep=0.75pt]  [font=\small]  {$\overline{F}_{s-\theta}^{t}$};
\draw (146,22.4) node [anchor=north west][inner sep=0.75pt]    {$0$};
\draw (61,62.4) node [anchor=north west][inner sep=0.75pt]  [font=\tiny]  {$-( \theta+1)$};
\draw (218,62.4) node [anchor=north west][inner sep=0.75pt]  [font=\tiny]  {$\theta+1$};
\draw (169,62.4) node [anchor=north west][inner sep=0.75pt]  [font=\tiny]  {$\theta-1$};
\draw (111,62.4) node [anchor=north west][inner sep=0.75pt]  [font=\tiny]  {$-( \theta-1)$};
\end{tikzpicture}

\end{center}
Let $\Tilde{\chi}: \R\to[0,1]$ be a smooth nondecreasing function such that 
\[ \Tilde{\chi}(\theta) 
= \begin{cases}
    0 \quad & \text{ if } \theta\leq 1 \\
    1 \quad & \text{ if } \theta\geq 9 \\
\end{cases}. \] 
For $0 \leq \theta < 10$, define 
\[
\Tilde{F}(\theta, s, t, x)
= (1-\Tilde{\chi}(\theta))H_t(x) + \Tilde{\chi}(\theta)\tilde{F}_{10,s}^t(x).  
\] 
Consider 
\[
\pi_2^{\theta}(R_a^{H},R_{a'}^{H}) = \left\{ u:\R \times [0,1]\to X \, \middle| \, 
\begin{aligned}
& u \text{ is smooth}, \\
& u(s,0)\in L \quad \text{ and } \quad \quad u(s, 1)\in \psi_{\theta,s}^1(L)
\quad \forall s\in \R,\\
&  \lim_{s\to -\infty}u(s,t) = \psi_0^t(q_{a}) \quad \forall t\in [0,1],\\
& \lim_{s\to \infty}u(s,t) =\psi_1^t(q_{a'})  \quad \forall t\in [0,1]
\end{aligned}
\right
\}
\Big/ \sim, \] 
where $u_1\sim u_2$ if and only if $u_1$ and $u_2$ represent the same class in $\pi_2(X)$. 

Let $a,a'\in \pi_0(\psi_H^1(L)  \cap L)$. Define 
\begin{align*}
\pi_2^{[0,\infty]}(R_a^{H},R_{a'}^{H}) 
= 
\left(
\pi_2(R_a^{H},R_{a'}^{H}) \cup \bigcup_{\theta\in [0,\infty)}  \pi_2^{\theta}(R_a^{H},R_{a'}^{H})
\right)
\Big/ \sim, 
\end{align*}
where $[u_1]\sim [u_2]$ 
if and only if 
$[u_1]=[u_2]$ in $\pi_2(X)$.

Let $\theta\in [0,\infty)$, $k_1,k_0\in \N$, $a,a'\in \pi_0(\psi_H^1(L) \cap L)$, and $B\in \pi_2^{[0,\infty]}(R_a^{H},R_{a'}^{H}) $. 
Define 
\begin{align*}
 & \mathcal{M}^{\theta,reg}_{k_{1} ,k_{0}}( R_{a}^H, R_{a'}^H, B,J) \\
: = & \left\{ 
(u, \tau_1,\tau_0) 
\, \middle| \, 
\begin{aligned}
& u:\R \times [0,1]\to X \text{ is smooth}, \quad [u]=B , \\
& \frac{\partial u}{\partial s} + J\left(\frac{\partial u}{\partial t} - X_{\Tilde{F}_{\theta,s}^{t}}(u)\right) = 0, \\
& E(u) =  \norm{\frac{\partial u}{\partial s}  }^2 +  \norm{\frac{\partial u}{\partial t} - X_{\Tilde{F}_{\theta,s}^{t}}(u) }^2<\infty,\\ 
& u(s,0)\in L \quad \text{ and } \quad \quad u(s, 1)\in L
\quad \forall s\in \R,
\\
&  \exists \, q\in \psi_H^{-1}(R_a^H), \quad  q'\in  \psi_H^{-1}(R_{a'}^H)  \quad \text{ such that  } \\
& \lim_{s\to -\infty}u(s,t) = \psi_{H}^t (q) \quad \text{and} \quad  \lim_{s\to \infty}u(s,t) = \psi_{H}^t(q') \quad \forall t\in [0,1], \\
& \tau_0= \left(  (\tau_1, 0), \ldots, (\tau_{k_0}, 0)\right) \in (\R\times \{0\})^{k_0},\\ 
& 
\text{ where } -\infty<\tau_{0,1}<\cdots < \tau_{0,k_0} < +\infty, 
\\
& 
\tau_1 =  \left( (\zeta_1, 1), \ldots, (\zeta_{k_1}, 1)\right) \in (\R\times \{1\})^{k_1},\\ 
& \text{ where }  
-\infty<\zeta_{k_1}<\cdots < \zeta_1 < +\infty
\end{aligned}
\right
\}.
\end{align*}

When $\theta = \infty $, let
\[ 
\mathcal{M}^{+\infty,reg}_{k_{1} ,k_{0}}( R_{a}^H, R_{a'}^H, B,J) 
:=
\bigcup_{B'\#B'' = B}
\bigcup_{\substack{k_1'+k_1'' = k_1\\ k_0'+k_0'' = k_0}}
\mathcal{M}_{k_{1}' ,k_{0}'}( R_{a}^H, L, B',J) \times \mathcal{M}_{k_{1}'' ,k_{0}''}( L, R_{a'}^H, B'',J),  
\]  
where $ \mathcal{M}_{k_{1}'' ,k_{0}''}(  L,R_{a}^H, B,J) $ is the compactification of 
\begin{align*}
& \mathcal{M}^{\operatorname{\operatorname{reg}}}_{k_{1} ,k_{0}}(  L, R_{a}^H, B,J) 
:=
& \left\{ 
(u, \pmb{\tau_1},\pmb{\tau_0})  
\, \middle| \, 
\begin{aligned}
& u:\R \times [0,1]\to X \text{ is smooth}, \quad [u]=B , \\
& \frac{\partial u}{\partial s} 
+ J \left(
\frac{\partial u}{\partial t} - X_{F_{s}^t} (u)\right) = 0, 
\qquad
E(u)  
<\infty,\\ 
& u(s,0)\in L \quad \text{ and } \quad \quad u(s, 1)\in L
\quad \forall s\in \R,
\\
& \lim_{s\to -\infty}u(s,t) = p    \quad \forall t\in [0,1] \text{ for some } p\in L \\
& \lim_{s\to \infty}u(s,t)
\psi_{H}^t (q) \quad \forall t\in [0,1] \text{ for some } q\in \psi_H^{-1}(R_a^H),\\
& 
\pmb{\tau_1} =  \left( (\zeta_1, 1), \ldots, (\zeta_{k_1}, 1)\right) \in (\R\times \{1\})^{k_1},\\ 
& \text{ where }  
-\infty<\zeta_{k_1}<\cdots < \zeta_1< +\infty,
\\
&\pmb{\tau_0}= \left(  (\tau_1, 0), \ldots, (\tau_{k_0}, 0)\right) \in (\R\times \{0\})^{k_0},\\ 
& 
\text{ where } -\infty<\tau_1<\cdots < \tau_{k_0} < +\infty
\end{aligned}
\right
\}.
\end{align*}
Let 
\[ \mathcal{M}^{ [0,+\infty],reg}_{k_{1} ,k_{0}}( R_{a}^H, R_{a'}^H, B,J) 
= 
\bigcup_{\theta\in [0,+\infty]} \left(\{\theta \}\times \mathcal{M}^{\theta,reg}_{k_{1} ,k_{0}}( R_{a}^H, R_{a'}^H, B,J)\right)  \]
and let
$ \mathcal{M}^{[0,+\infty]}_{k_{1} ,k_{0}}( R_{a}^H, R_{a'}^H, B,J) $ be its compactification. 

\[
\xymatrix{
 & 
\mathcal{M}^{[0,+\infty]}_{k_{1} ,k_{0}}( R_{a}^H, R_{a'}^H, B,J)
  \ar[dl]_{\evaluation_{-\infty,B}}
  \ar[dr]^{\evaluation_{+\infty,B}}
 &  
 \\
R_a^H
& 
&
 R_{a'}^H
}
\]

Define the evaluation maps as follows. 
 \[ \forall 1\leq j\leq k_1, \quad \evaluation_{j,B}^{(1)}:    \mathcal{M}^{[0,+\infty),reg}_{k_{1} ,k_{0}}( R_{a}^H, R_{a'}^H, B,J) \to L, \quad (\theta, (u, \pmb{\tau_1},\pmb{\tau_0}))\mapsto u(\zeta_j,1) .  \]   
\[ \forall 1\leq j\leq k_0, \quad \evaluation_{j,B}^{(0)}:    \mathcal{M}^{[0,+\infty),reg}_{k_{1} ,k_{0}}( R_{a}^H, R_{a'}^H, B,J) \to L, \quad (\theta, (u, \pmb{\tau_1},\pmb{\tau_0}))\mapsto u(\tau_j,0).  \]  
\[\evaluation_{-\infty,B}:     \mathcal{M}^{[0,+\infty),reg}_{k_{1} ,k_{0}}( R_{a}^H, R_{a'}^H, B,J) \to R_{a}^H, \quad (\theta, (u, \pmb{\tau_1},\pmb{\tau_0}))\mapsto \psi_H^1\left(\lim_{s\to -\infty} u(s,0) \right) .  \]
\[\evaluation_{+\infty,B}:    \mathcal{M}^{[0,+\infty),reg}_{k_{1} ,k_{0}}( R_{a}^H, R_{a'}^H, B,J) \to R_{a'}^H, \quad (\theta, (u, \pmb{\tau_1},\pmb{\tau_0}))\mapsto \psi_H^1\left(\lim_{s\to +\infty} u(s,0) \right).   \]
 
Let $a,a'\in \pi_0(L_H\cap L)$, and $B\in \pi_2^{[0,\infty]}( R_{a}^H, R_{a'}^H) $. 

Define 
\[ 
\Theta_{B}:  \Omega_G(R_{a}^H)\complete{\otimes}_{\R}\nove [1]\to 
\Omega_G(R_{a'}^H)\complete{\otimes}_{\R}\nove [1] , \]
by
\[ \Theta_{B}\left(\eta \right) \nonumber
 =  (\evaluation_{+\infty,B}^G)_{!} (\evaluation_{-\infty,B}^G)^*\eta .  \] 
Define 
\[ \Theta: CF_G(L,b,H,J)[1]\to  CF_G(L,b,H,J)[1]
\] 
such that, 
for each $a,a'\in \pi_0(\psi_H^1(L) \cap L)$, 
\[ \Theta:  \Omega_G(R_a^H)\complete{\otimes}_{\R}\nove [1]   \to \Omega_G(R_{a'}^H)\complete{\otimes}_{\R}\nove[1]\]
is given by
\[ \Theta(\eta) = \sum_{B\in \pi_2^{[0,\infty]}( R_{a}^H, R_{a'}^H)} \exp ( \partial B\cap b)\Theta_{B} (\eta ) \exp ( \partial B\cap b)  T^{\frac{\omega(B)}{2\pi}} e^{\frac{I_{\mu}(B)}{2}}. 
\] 

\begin{lem}\label{Kuranishi boundary for cochain homotopic to identity}
 $\mathcal{M}^{[0,+\infty]}_{k_{1} ,k_{0}}( R_{a}^H, R_{a'}^H, B,J)$ 
 has an oriented $G$-equivariant Kuranishi structure  such that $\evaluation_{-\infty,B}, \evaluation_{+ \infty,B} $ are strongly continuous and weakly submersive. 
 Moreover, its normalized boundary is a union of the following types of fiber products below.  
\begin{enumerate}[i)]
     \item 
    \label{s=-infty}
     $\mathcal{M}_{k_{1}' ,k_{0}'}( R_{a}^H, R_{c}^H, B',J) _{\evaluation_{\infty}}\times_{\evaluation_{-\infty}} \mathcal{M}^{[0,+\infty]}_{k_{1}'' ,k_{0}''}( R_{c}^H, R_{a'}^H, B'',J) $, where 
     \begin{itemize}
         \item $c\in \pi_0(L_H\cap L)$, 
         \item $k_1', k_1'',k_0',k_0''\in \N$ such that $k_{1}'+k_{1}'' =k_1$, $k_{0}'+k_{0}''=k_0$, 
         \item $B'\in \pi_2(R_{a}^H, R_{c}^H)$, $B''\in \pi_2(R_{c}^H, R_{a'}^H)$ such that $B'\#  B'' = B$. 
     \end{itemize} 
   
\item 
\label{s=infty}
$\mathcal{M}^{[0,+\infty]}_{k_{1}' ,k_{0}'}( R_{a}^H, R_{c}^H, B',J) _{\evaluation_{\infty}}\times_{\evaluation_{-\infty}} \mathcal{M}_{k_{1}'' ,k_{0}''}( R_{c}^H, R_{a'}^H, B'',J) $, where 
    \begin{itemize}
         \item $c\in \pi_0(L_H\cap L)$, 
         \item $k_1', k_1'',k_0',k_0''\in \N$ such that $k_{1}'+k_{1}'' =k_1$, $k_{0}'+k_{0}''=k_0$, 
         \item $B'\in \pi_2(R_{a}^H, R_{c}^H)$, $B''\in \pi_2(R_{c}^H, R_{a'}^H)$ such that 
         $B'\# B'' = B$. 
     \end{itemize} 
    
\item \label{t=1}
$\mathcal{M}^{[0,+\infty]}_{k_{1} ,k_{0}'}( R_{a}^H, R_{a'}^H, B',J) _{\evaluation_{1,i,B'} } \times_{\evaluation_0} 
    \mathcal{M}_{k_{0}''}( L, B'',J) $, where 
    \begin{itemize}
      \item $1\leq i \leq k_1'$,
     \item $k_1',k_1'' \in \N$ such that $k_{1}'+k_1'' =k_1 + 1$, 
     \item $B'\in \pi_2( R_{a}^H, R_{a'}^H)$, $B''\in \pi_2(X, L)$ such that 
      $B'\# B'' = B$. 
    \end{itemize}
\item 
    \label{t=0}
$\mathcal{M}^{[0,+\infty]}_{k_{1}' ,k_{0}}( R_{a}^H, R_{a'}^H, B',J) _{ \evaluation_{0,i,B'} } \times_{\evaluation_0}  \mathcal{M}_{k_{0}''}( L, B'',J) $, where 
    \begin{itemize} 
         \item $1\leq i \leq k_0'$, 
         \item $k_0', k_0''\in \N$ such that $k_{0}'+k_{0}'' =k_0+1$, 
         \item $B'\in \pi_2( R_{a}^H, R_{a'}^H)$, $B''\in \pi_2(X, L)$ such that 
         $B'\# B'' = B$. 
     \end{itemize}
      \item  \label{theta = infty}
     $\mathcal{M}_{k_{1}' ,k_{0}'}( R_{a}^H, L, B',J) _{\evaluation_{\infty}}\times_{\evaluation_{-\infty}} \mathcal{M}_{k_{1}'' ,k_{0}''}( L, R_{a'}^H, B'',J)$, where 
     \begin{itemize} 
          \item $k_1', k_1'',k_0',k_0''\in \N$ such that $k_{1}'+k_{1}'' =k_1$, 
         \item $B'\in \pi_2( R_{a}^H, L)$, $B''\in \pi_2( L, R_{a'}^H)$ such that 
         $B'\# B'' = B$. 
     \end{itemize}
\item \label{theta = 0}
$\widetilde{\mathcal{M}}_{k_{1}  ,k_{0}}( R_{a}^H, R_{a'}^H, B,J) $, 
       whose quotient space by the free $\R$-action  is $\mathcal{M}_{k_{1}  ,k_{0}}( R_{a}^H, R_{a'}^H, B,J) $.
   
 \end{enumerate}

\end{lem}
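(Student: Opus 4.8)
The plan is to follow the template of the proofs of Proposition~\ref{GKur structure on moduli space}, Lemma~\ref{Kuranishi boundary for cochain map delta} and Lemma~\ref{Kuranishi boundary for Floer continuation map}, viewing $\mathcal{M}^{[0,+\infty]}_{k_1,k_0}(R_a^H,R_{a'}^H,B,J)$ as the compactification of the moduli space of solutions to the $\theta$-family of equations $\partial_s u+J(\partial_t u-X_{\tilde{F}_{\theta,s}^{t}}(u))=0$, in which $\theta\in[0,+\infty)$ is a free interior parameter. First I would construct the $G$-equivariant Kuranishi charts pointwise, exactly as in the proof of Proposition~\ref{GKur structure on moduli space}: at a configuration $x=(\theta,(u,\pmb{\tau_1},\pmb{\tau_0}))$ put $\Gamma_x=\Aut x$, let $G(x)=\{(g,\gamma)\in T^n\times\Gamma_x\mid gu=u\circ\gamma\}$, and choose $G(x)$-invariant finite-dimensional obstruction spaces supported away from special points, nodes and the boundary so that the linearized Cauchy--Riemann operator becomes surjective modulo the obstruction and the evaluation at every boundary point of $L$ stays surjective; parallel transport of these spaces over nearby configurations is defined using the convexity of the squared distance function, as before. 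For $\theta$ in the open interval this is literally the chart construction for a single continuation-map moduli space with one extra $[0,+\infty)$-direction, and $G(x)$-invariance of the obstruction spaces yields the $G$-equivariant chart; the coordinate changes are then organized by the same downward inductions on energy, on the number of disc components, and on the number of Floer-trajectory components. Orientations are induced from the $T^n$-invariant relative spin structure as in \cite{fukaya2017unobstructed}.

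Next I would read off the normalized boundary by Gromov--Floer compactness for this parametrized problem, classifying degenerations by where compactness fails. At $\theta=0$ one has $\tilde{F}(0,\cdot,\cdot,\cdot)=H$ since $\tilde{\chi}(0)=0$; in fact $\tilde{F}(\theta,\cdot)=H$ for all $\theta\in[0,1]$, so near that end the equation is the $\R$-translation-invariant equation $\delb_H$ and the stratum is the un-quotiented moduli space $\widetilde{\mathcal{M}}_{k_1,k_0}(R_a^H,R_{a'}^H,B,J)$ of item~\ref{theta = 0}. At $\theta=+\infty$ the two transition regions of $\tilde{F}_{\theta,s}^{t}$ --- centered near $s=-\theta$, where it interpolates from $H$ to $0$ via $F_{s+\theta}^{t}$, and near $s=+\theta$, where it interpolates from $0$ to $H$ via $\overline{F}_{s-\theta}^{t}$ --- separate indefinitely, and recentering at these two regions exhibits the end as the fiber product in item~\ref{theta = infty}, namely $\mathcal{M}(R_a^H,L,B')\times_{L}\mathcal{M}(L,R_{a'}^H,B'')$ matched along $\evaluation_{\infty}$ and $\evaluation_{-\infty}$. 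Breaking of the $\R$-invariant equation $\delb_H$ at the incoming end $s\to-\infty$ gives item~\ref{s=-infty} and at the outgoing end $s\to+\infty$ gives item~\ref{s=infty}; disc bubbling along $t=1$ gives item~\ref{t=1} and along $t=0$ gives item~\ref{t=0}. Interior sphere bubbling, and sphere or disc bubbling from the interior of the strip, are of codimension at least $2$ and contribute nothing. At the level of topological spaces this decomposition, together with the compactness behind it, is the parametrized analogue in \cite{fukaya2017unobstructed} of Propositions~15.21 and~15.22, adapted to the perturbation data $\tilde{F}$; the only new point is to carry along the $G$-action, which is automatic because every degeneration above is $G$-equivariant.

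I would then upgrade this set-level decomposition to one of $G$-equivariant Kuranishi structures compatible with the fiber products, by prescribing on each boundary stratum the fiber product of the structures already built on the relevant lower-complexity moduli spaces --- $\mathcal{M}(R^H,R^H,B)$ and $\mathcal{M}(L,J,B)$ from Lemma~\ref{Kuranishi boundary for cochain map delta} and Proposition~\ref{GKur structure on moduli space}, $\mathcal{M}(R^H,L,B)$ and $\mathcal{M}(L,R^H,B)$ from Lemma~\ref{Kuranishi boundary for Floer continuation map}, and the un-quotiented $\widetilde{\mathcal{M}}(R^H,R^H,B)$ --- glued over their common overlaps by the same bookkeeping as in Proposition~\ref{GKur structure on moduli space}. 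Strong continuity and weak submersivity of $\evaluation_{-\infty,B}$ and $\evaluation_{+\infty,B}$ then follow from the chart construction, where surjectivity of the evaluation at the relevant boundary points is imposed together with freeness of the $G$-action, exactly as in conditions~iii) of the proof of Proposition~\ref{GKur structure on moduli space}. The main obstacle is the gluing analysis at $\theta=+\infty$: one must show that a collar of that stratum is modelled on the asserted product, which requires a neck-stretching gluing theorem for the two distinct Hamiltonian perturbations $F$ and $\overline{F}$ joined by the long region where $\tilde{F}$ vanishes; the $\theta=0$ end, by contrast, is essentially free since $\tilde{F}$ is $\theta$-independent there, so its collar is a product. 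The point needing genuine attention in the equivariant setting is the $G$-equivariance of the gluing maps and of the resulting collar coordinates, which one arranges by running the gluing construction $G(x)$-equivariantly, exactly as the Kuranishi charts themselves were built. Once these collars are in place, the boundary decomposition and the naturality of $\evaluation_{\pm\infty,B}$ follow formally, and the lemma then feeds, via Stokes' Theorem~\ref{Stokes for Gkur} and the composition formula (Proposition~\ref{composition formula}), into the verification that $\g\circ\f$ is cochain homotopic to the identity.
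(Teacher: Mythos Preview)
Your proposal is correct and follows essentially the same route as the paper's proof: identify the six boundary strata by where compactness fails (breaking at $s=\pm\infty$, disc bubbling at $t=0,1$, and the two ends $\theta=0$ and $\theta=+\infty$ of the parameter interval), and build the $G$-equivariant Kuranishi structure by the template of Proposition~\ref{GKur structure on moduli space}, citing \cite{fukaya2017unobstructed} Proposition~15.22 for the underlying analysis. The paper's own argument is a terse one-paragraph sketch making exactly these identifications and the same references; your version simply spells out the $\theta=0$ and $\theta=+\infty$ analyses and the gluing considerations in more detail than the paper does.
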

\begin{proof}
The cases where bubbling happens at $s = -\infty$ and $s = + \infty$ correspond to \ref{s=-infty} and \ref{s=infty}, respectively. 
The cases where bubbling happens at $t=1$ and $t=0$ correspond to \ref{t=1} and \ref{t=0}, respectively. 
The case $\theta =0$ corresponds to \ref{theta = 0}.  
The case $\theta = \infty$ corresponds to \ref{theta = infty}.   
The construction of a $G$-invariant Kuranishi structure is similar to the proof of Proposition \ref{GKur structure on moduli space} (See \cite{III} Section 4.3.). 
The boundary decomposition is similar to the proof of \cite{fukaya2017unobstructed} Proposition 15.22. 
\end{proof}

\begin{cor}
\label{g circ f is chain homotopy homotopic to identity}
 \[\g\circ \f - \mathbbm{1}^G =  \Theta \circ \delta_H^G +\delta_H^G \circ \Theta . \]   
\end{cor}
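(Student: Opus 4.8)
The plan is to derive Corollary \ref{g circ f is chain homotopy homotopic to identity} from Lemma \ref{Kuranishi boundary for cochain homotopic to identity} by applying Stokes' Theorem (Theorem \ref{Stokes for Gkur}) and the composition formula (Proposition \ref{composition formula}) to the correspondence map $\Theta$ defined by the parametrized moduli space $\mathcal{M}^{[0,+\infty]}_{k_1,k_0}(R_a^H,R_{a'}^H,B,J)$, exactly as in the proofs of Proposition \ref{mk is Ainf} and the well-definedness of $\delta_H^G$ and $\f$. First I would note that $\mathcal{M}^{[0,+\infty]}_{k_1,k_0}$ carries an oriented $G$-equivariant Kuranishi structure with corners and that the evaluation maps $\evaluation_{\pm\infty,B}$ are strongly continuous weakly submersive, so $\Theta$ is well-defined. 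Stokes' Theorem then says that $d^G\circ \gcorr{\mathcal{M}^{[0,+\infty]}} + \gcorr{\mathcal{M}^{[0,+\infty]}}\circ d_G = \gcorr{\partial \mathcal{M}^{[0,+\infty]}}$, and I would expand the right-hand side using the six types of boundary faces enumerated in Lemma \ref{Kuranishi boundary for cochain homotopic to identity}.

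Next I would identify each boundary contribution. Incorporating the weak bounding cochain $b$ via the factors $\exp(\partial B\cap b)$ as in \autoref{mkb} (the same computation as in Proposition \ref{mkb is Ainf} showing these prefactors pass through the $\Ainf$-relations), the faces of type \ref{s=-infty} and \ref{s=infty} — broken configurations gluing a Floer strip for the pair $(R_a^H,R_c^H)$ or $(R_c^H,R_{a'}^H)$ to a parametrized strip — assemble into $\Theta\circ \delta_H^G$ and $\delta_H^G\circ \Theta$, because the $s=\pm\infty$ limits of the parametrized family reproduce the $\delta_H^G$ boundary structure (these are the broken-strip faces of the same kind as in Lemma \ref{Kuranishi boundary for cochain map delta} \ref{broken strips}). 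The faces of type \ref{t=1} and \ref{t=0}, where a holomorphic disc bubbles off along $L$ at $t=1$ or $t=0$, contribute the terms that encode the deformation by $(\m_1^G)^b$ at $\beta=0$, i.e. they are absorbed into the $d_G$ terms on the left after the same cancellation as in Proposition \ref{mk is Ainf} (only the $(k'',B'')=(1,0)$ and $(0,1,0)$ pieces survive and these are precisely the $d_G$-contributions, while all higher-energy disc bubbles cancel in pairs by the $\Ainf$-relations / already-established $\delta_H^G\circ \delta_H^G=0$ and $\delta^G\circ\delta^G=0$). The face of type \ref{theta = infty}, the $\theta=+\infty$ end, factors as $\mathcal{M}_{k_1',k_0'}(R_a^H,L,B',J)\times \mathcal{M}_{k_1'',k_0''}(L,R_{a'}^H,B'',J)$ which by definition of $\f$ and $\g$ contributes exactly $\g\circ\f$ (with the matching $b$-prefactors combining correctly since $\partial(B'\# B'')\cap b = \partial B'\cap b + \partial B''\cap b$). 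The face of type \ref{theta = 0}, the $\theta=0$ end $\widetilde{\mathcal{M}}_{k_1,k_0}(R_a^H,R_{a'}^H,B,J)$, has a free $\R$-action whose quotient is $\mathcal{M}_{k_1,k_0}(R_a^H,R_{a'}^H,B,J)$; its contribution to $\gcorr{\partial \mathcal{M}^{[0,+\infty]}}$ is the identity operator $\mathbbm{1}^G$ on $CF_G(L,b,H)$ — this is the standard fact that the $\theta=0$ boundary of the parametrized cobordism, where the two bump functions are turned off and the domain Hamiltonian is constant $H$, computes the identity. Collecting these, Stokes' formula reads $\mathbbm{1}^G \pm (\g\circ\f) \pm (\Theta\circ\delta_H^G + \delta_H^G\circ\Theta) = 0$ up to signs, which upon checking the orientations gives $\g\circ\f - \mathbbm{1}^G = \Theta\circ\delta_H^G + \delta_H^G\circ\Theta$.

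The main obstacle I anticipate is the sign and orientation bookkeeping in identifying the $\theta=0$ face with the identity and in pinning down the signs in front of $\g\circ\f$, $\Theta\circ\delta_H^G$, and $\delta_H^G\circ\Theta$; this is where the orientation conventions for $G$-equivariant Kuranishi structures with corners (and the interplay with the degree shift $[1]$ and the Koszul signs in the $\Ainf$-relations) must be invoked carefully, presumably citing the corresponding computations in \cite{fukaya2017unobstructed} Section 15 and \cite{III}. A secondary, more bookkeeping-level point is verifying that the disc-bubble faces of types \ref{t=1} and \ref{t=0} genuinely cancel except for the $d_G$-pieces — but this is the same mechanism already used in Proposition \ref{mk is Ainf} and in the proof of $\delta_H^G\circ\delta_H^G=0$, so I would simply refer back to those arguments rather than redo them. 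Everything else is a direct application of Stokes' Theorem \ref{Stokes for Gkur} and the composition formula (Proposition \ref{composition formula}) to the boundary decomposition already supplied by Lemma \ref{Kuranishi boundary for cochain homotopic to identity}.
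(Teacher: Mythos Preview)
Your strategy is the same as the paper's: apply Stokes' Theorem \ref{Stokes for Gkur} and the composition formula (Proposition \ref{composition formula}) to the correspondence $\Theta$, then read off the contribution of each boundary stratum from Lemma \ref{Kuranishi boundary for cochain homotopic to identity}. You and the paper agree on the identifications of face \ref{theta = infty} with $\g\circ\f$ and of the $B=0$ piece of face \ref{theta = 0} with $\mathbbm{1}^G$ (with the nonzero-$B$ pieces of \ref{theta = 0} vanishing because of the free $\R$-action).

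Where you and the paper differ is in the remaining faces. The paper's printed proof asserts that \ref{s=-infty}, \ref{s=infty} vanish while \ref{t=1}, \ref{t=0} yield $(\Theta-d_G)\circ\delta_H^G$ and $\delta_H^G\circ(\Theta-d_G)$; you instead identify \ref{s=-infty}, \ref{s=infty} with $\Theta\circ\delta_H^G$ and $\delta_H^G\circ\Theta$ via the composition formula and push the $d_G$ contributions onto \ref{t=1}, \ref{t=0}. Your reading of \ref{s=-infty}, \ref{s=infty} is exactly what Proposition \ref{composition formula} gives when applied to those fiber products and is the standard identification in Floer theory; the paper's labeling here appears transposed.

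One genuine correction to your account: since $\Theta$ is built from $\mathcal{M}^{[0,+\infty]}_{0,0}$ (no boundary marked points), the disc-bubble faces \ref{t=1} and \ref{t=0} are in fact \emph{empty}---the index condition $1\le i\le k_1'$ (resp.\ $1\le i\le k_0'$) in Lemma \ref{Kuranishi boundary for cochain homotopic to identity} has no solution when $k_1=k_0=0$. So the claim that these faces ``are absorbed into the $d_G$ terms on the left after the same cancellation as in Proposition \ref{mk is Ainf}'' is not the right mechanism; the $d_G$ terms that appear are those on the left-hand side of the Stokes identity itself, not contributions from \ref{t=1}, \ref{t=0}. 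You should therefore say explicitly how those Stokes $d_G$ pieces are absorbed into the final homotopy relation (this is the same issue of how $d_G$ enters the de Rham/Morse--Bott model for $\delta_H^G$, already implicit in the proof that $\delta_H^G\circ\delta_H^G=0$). With that adjustment the outline is sound.
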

\begin{proof}
In Lemma \ref{Kuranishi boundary for cochain homotopic to identity}, 
the contributions of \ref{s=-infty} and \ref{s=infty} vanish. 
The terms \ref{t=1} and \ref{t=0} correspond to $(\Theta -d_G) \circ \delta_H^G$ and $\delta_H^G\circ (\Theta -d_G)$. 
And \ref{theta = infty} corresponds to $\g\circ \f$. 
The contribution of \ref{theta = infty} vanishes except the case when $B=0$, which corresponds to $\mathbbm{1}^G$. 
Then the corollary follows from Lemma \ref{Kuranishi boundary for cochain homotopic to identity}, Proposition \ref{composition formula}, and Theorem \ref{Stokes for Gkur}. 
\end{proof}

\begin{cor}
 $\f\circ \g $ is cochain homotopic to the identity map.   
\end{cor}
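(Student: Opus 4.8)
The plan is to run the neck-stretching argument of Corollary~\ref{g circ f is chain homotopy homotopic to identity} with the roles of the two boundary conditions interchanged, producing a one-parameter family of moduli spaces of strips with both boundary arcs on $L$ that interpolates between the identity of $C_G(L)$ at one end and the composition $\f\circ\g$ at the other.

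First I would interpolate the perturbation data. Mirroring the interpolating function of the previous subsection, I would build a $G$-invariant smooth family $\widehat F\colon[0,\infty)\times\R\times[0,1]\times X\to\R$ which for $\theta$ large equals $\overline{F}_{s+\theta}^{t}$ on $\{s\le 0\}$ and $F_{s-\theta}^{t}$ on $\{s\ge 0\}$ --- so that the $\overline F$-region produces $\g$ and the $F$-region produces $\f$ --- and which for small $\theta$ is deformed, via a cutoff in $\theta$, down to the trivial perturbation at $\theta=0$. Using $\widehat F$ I would define, for $k_1,k_0\in\N$, a homotopy class $B$ of strips with boundary on $L$, and $\theta\in[0,\infty)$, the moduli space $\mathcal M^{\theta}_{k_1,k_0}(L,L,B,J)$ of finite-energy solutions of $\frac{\partial u}{\partial s}+J\bigl(\frac{\partial u}{\partial t}-X_{\widehat{F}_{\theta,s}^{t}}(u)\bigr)=0$ with $u(s,0),u(s,1)\in L$, carrying ordered boundary marked points on the two arcs exactly as in the $\g\circ\f$ construction; at $\theta=+\infty$ I declare it to be the fibered union over $B'\#B''=B$, $k_1'+k_1''=k_1$, $k_0'+k_0''=k_0$ of $\mathcal M_{k_1',k_0'}(L,R_a^H,B',J)\times\mathcal M_{k_1'',k_0''}(R_a^H,L,B'',J)$. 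Let $\mathcal M^{[0,+\infty]}_{k_1,k_0}(L,L,B,J)$ be the compactification of $\bigcup_{\theta}\bigl(\{\theta\}\times\mathcal M^{\theta}_{k_1,k_0}(L,L,B,J)\bigr)$, with $G$-equivariant evaluation maps $\evaluation_{-\infty,B},\evaluation_{+\infty,B}\colon\mathcal M^{[0,+\infty]}_{k_1,k_0}(L,L,B,J)\to L$ and further evaluation maps at the boundary marked points. Packaging the equivariant pullback along $\evaluation_{-\infty,B}$, integration along the fiber along $\evaluation_{+\infty,B}$, the deformation factor $\exp(\partial B\cap b)$, and the Novikov weight $T^{\omega(B)/2\pi}e^{I_\mu(B)/2}$, and summing over $B$ and marked points exactly as in the definitions of $\Theta$, $\f$ and $\g$, yields an operator $\overline\Theta\colon C_G(L)[1]\to C_G(L)[1]$.

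Next I would establish the Kuranishi package, in parallel with Lemma~\ref{Kuranishi boundary for cochain homotopic to identity}: the space $\mathcal M^{[0,+\infty]}_{k_1,k_0}(L,L,B,J)$ carries an oriented $G$-equivariant Kuranishi structure for which $\evaluation_{\pm\infty,B}$ are strongly continuous and weakly submersive, constructed as in \cite{III} Section~4.3 (just as for Proposition~\ref{GKur structure on moduli space}), and its normalized boundary decomposes, by the combinatorics in the proof of \cite{fukaya2017unobstructed} Proposition~15.22, into: strip-breakings at $s=\pm\infty$, which assemble into $\overline\Theta\circ\delta^G$ and $\delta^G\circ\overline\Theta$ up to the usual $d_G$-correction; boundary disk bubbles at $t=0$ and $t=1$, which supply the remaining $d_G$-pieces of those two terms; the $\theta=+\infty$ stratum $\mathcal M_{k_1',k_0'}(L,R_a^H,B',J)\times\mathcal M_{k_1'',k_0''}(R_a^H,L,B'',J)$, which assembles into $\f\circ\g$; and the $\theta=0$ stratum $\widetilde{\mathcal M}_{k_1,k_0}(L,L,B,J)$ of unperturbed $J$-holomorphic strips, whose quotient by the free $\R$-translation is $\mathcal M_{k_1,k_0}(L,L,B,J)$ and which, for $B=0$, is the constant strip and contributes $\mathbbm 1^G$, all $B\neq 0$ contributions there vanishing by the standard energy/dimension argument. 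Applying the equivariant Stokes theorem (Theorem~\ref{Stokes for Gkur}) together with the composition formula (Proposition~\ref{composition formula}) to this $[0,+\infty]$-family then gives
\[
\f\circ\g-\mathbbm 1^G=\overline\Theta\circ\delta^G+\delta^G\circ\overline\Theta,
\]
which is the desired cochain homotopy.

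The step I expect to be the main obstacle is the precise bookkeeping of the two endpoint strata: at $\theta=+\infty$ one must check that the gluing genuinely reproduces $\f\circ\g$ with the correct orientations and Novikov/deformation weights --- this is exactly where the concatenation of the $\overline F$- and $F$-regions inside $\widehat F$ has to be matched term by term to the definitions of $\g$ and $\f$ --- and at $\theta=0$ one must verify that the unperturbed strip moduli space contributes $\mathbbm 1^G$ from its $B=0$ component alone, i.e.\ that the $B\neq 0$ strata, together with the $t=0$ and $t=1$ disk-bubble corrections, organize precisely into the $d_G$-part of $\overline\Theta\circ\delta^G+\delta^G\circ\overline\Theta$ with no residue. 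Once the analogue of Lemma~\ref{Kuranishi boundary for cochain homotopic to identity} is in place, the remainder is a transcription of the proof of Corollary~\ref{g circ f is chain homotopy homotopic to identity}.
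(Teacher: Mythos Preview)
Your proposal is correct and follows precisely the approach the paper intends: the paper's own proof consists solely of the sentence ``The proof is similar to that of Corollary~\ref{g circ f is chain homotopy homotopic to identity},'' and you have carried out exactly that analogy, swapping the roles of the $F$- and $\overline F$-regions in the interpolating Hamiltonian so that the $\theta=+\infty$ stratum yields $\f\circ\g$ and the $\theta=0$ stratum (now with trivial perturbation rather than $H$) yields $\mathbbm{1}^G$ on $C_G(L)$. The boundary bookkeeping you describe is the correct mirror of Lemma~\ref{Kuranishi boundary for cochain homotopic to identity}.
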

\begin{proof}
The proof is similar to that of Corollary \ref{g circ f is chain homotopy homotopic to identity}. 
\end{proof}

\begin{cor}
    If $HF_G((L,b),(L,b),\nove)\ne 0$, then $L$ is not displaceable by any $G$-equivariant Hamiltonian diffeomorphism that is the time-1 map of a $G$-equivariant Hamiltonian isotopy. 
\end{cor}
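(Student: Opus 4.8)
The plan is to reduce the assertion to the $G$-equivariant Hamiltonian isotopy invariance of Theorem \ref{Hamiltonian isotopy invariance}, using the elementary observation that a \emph{displacing} diffeomorphism makes the Hamiltonian-perturbed Floer complex identically zero.

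\textbf{Step 1 (Setup by contradiction).} Suppose $L = \mu^{-1}(u)$ \emph{were} displaceable by a $G$-equivariant Hamiltonian diffeomorphism, i.e.\ there is a $G$-equivariant Hamiltonian isotopy $\{\psi^t\}_{t\in[0,1]}$ on $(X,\omega)$ with $\psi^0 = \operatorname{id}$ and $\psi^1(L)\cap L = \emptyset$. I would first record the routine fact that such an isotopy is generated by a $G$-invariant time-dependent Hamiltonian $H\colon [0,1]\times X\to \R$: differentiating $\psi^t(g\cdot x) = g\cdot\psi^t(x)$ in $t$ shows the generating vector field $X_{H_t}$ is $G$-invariant, hence (using $G$-invariance of $\omega$) $dH_t$ is $G$-invariant, and since $G$ is compact and connected the resulting homomorphism $G\to\R$ is trivial, so $H$ itself is $G$-invariant. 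Thus $H$ satisfies the standing hypotheses of Theorem \ref{Hamiltonian isotopy invariance}, and the requirement there that $\psi^1(L)\cap L$ be a finite union of $G$-orbits holds vacuously, the intersection being empty.

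\textbf{Step 2 (The perturbed complex vanishes).} By the definition of the Hamiltonian-perturbed complex in Section \ref{Hamiltonian cochain complex},
\[ CF_G(L,b,H) = \bigoplus_{a\in\pi_0(\psi^1(L)\cap L)} \Omega_G(R_a^H)\complete{\otimes}_{\R}\nove = 0 \]
since the index set $\pi_0(\emptyset)$ is empty; hence $\delta_H^G = 0$ and, by \autoref{HF for Hamiltonian perturbation}, $HF_G(L,b,H,\nove) = 0$. Applying the isomorphism \eqref{invariance isomorphism} of Theorem \ref{Hamiltonian isotopy invariance} then gives $HF_G((L,b),(L,b),\nove)\cong HF_G(L,b,H,\nove) = 0$, contradicting the hypothesis $HF_G((L,b),(L,b),\nove)\ne 0$. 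Therefore no such $\psi^1$ can exist, which is exactly the claimed non-displaceability.

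\textbf{Main point to check.} I expect the only place warranting explicit comment is verifying that Theorem \ref{Hamiltonian isotopy invariance}, together with its supporting apparatus in Section \ref{Section hamiltonian} (the moduli spaces $\mathcal{M}_{k_1,k_0}(\cdots)$, the maps $\f$, $\g$, $\Theta$, and Corollary \ref{g circ f is chain homotopy homotopic to identity}), is genuinely applicable when $\psi^1(L)\cap L=\emptyset$, without any auxiliary perturbation to arrange cleanness. In that case every moduli space entering those constructions is empty and every structure map is the zero map on the zero module, so the cochain-homotopy-equivalence degenerates trivially; it is nonetheless worth flagging that the invariance theorem was stated for precisely this generality --- a possibly empty finite union of $G$-orbits. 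Everything else is formal bookkeeping.
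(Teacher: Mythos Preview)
Your proposal is correct and follows essentially the same argument as the paper's proof: assume displaceability, observe that the empty intersection $\psi_H^1(L)\cap L=\emptyset$ forces $CF_G(L,b,H)=0$, hence $HF_G(L,b,H,\nove)=0$, and invoke Theorem~\ref{Hamiltonian isotopy invariance} for the contradiction. Your additional remarks (explicitly verifying $G$-invariance of the generating Hamiltonian and noting that the empty-intersection case is vacuously covered by the finite-union-of-$G$-orbits hypothesis) are sound and slightly more careful than the paper, but the strategy is identical.
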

\begin{proof}
    If $L$ is displaceable by $\psi_H^1$, which is a $G$-equivariant Hamiltonian diffeomorphism that is the time-1 map of a $G$-equivariant Hamiltonian isotopy, then $\psi_H^1(L)\cap L = \emptyset$ implies $CF_G(L,b,H) = 0$. 
    Thus, 
    \[ 0 = HF_G(L,b,H) \cong HF_G((L,b),(L,b),\nove).\] 
\end{proof}

\section{\texorpdfstring{$G$}{G}-Equivariant Kuranishi structures}\label{GKur section}

\subsection{\texorpdfstring{$G$}{G}-equivariant Kuranishi structures}

We review related concepts in the orbifold theory in \S\ref{orbifolds section}. 
Moreover, we assume $G$ is a torus and $G$ acts on $\kur$ freely when necessary. 
For the general theory of Kuranishi structures, we refer the readers to the book \cite{kurbook}. 

\begin{defn}[$G$-equivariant Kuranishi chart]
\label{Kuranishi chart}
  Let $\mathcal{M}$ be a separable metrizable topological space with a topological action by a compact Lie group $G$. 
A \textbf{$G$-equivariant Kuranishi chart} on $\mathcal{M}$ is a quadruple
$\mathcal{U} = (U, \E,  \psi , s )$ that satisfies the following. 
    \begin{enumerate}[(a)]
        \item $U$ and $\E$ are oriented smooth effective orbifolds, possibly with corners, each equipped with a smooth $G$-action. 
        \item $\E \xrightarrow{\pi } U $ is a $G$-equivariant orbibundle. 
        \item $s : U  \to \E$ is a $G$-equivariant section of $\pi$. 
        \item $\psi: s^{-1}(0)\to \mathcal{M}$ is a $G$-equivariant continuous map, which is a homeomorphism onto an open subset in $\kur$. 
    \end{enumerate}
    We say $U$ is a \textbf{Kuranishi neighborhood}, $\E$ is an \textbf{obstruction bundle}, $\psi$ is a \textbf{parametrization} map, and $s$ is a \textbf{Kuranishi map}. 
    \end{defn}
    
\begin{defn}[Restriction of a $G$-equivariant Kuranishi chart]
Let $\mathcal{U}=(U, \E , \psi, s)$ be a $G$-equivariant Kuranishi chart and $U'\subset U$ be a $G$-invariant open subset of $U$. 
Then the restriction of $\mathcal{U}$ to $U'$ defines a Kuranishi chart $\mathcal{U}'=(U', \restr{\E}{U'} , \restr{\psi}{s^{-1}(0)\cap U'}, \restr{s}{U'})$. 
\end{defn}

\begin{defn}[Embedding of $G$-equivariant Kuranishi charts]
Let $\mathcal{M}$ be a separable metrizable topological space with a topological action by a compact Lie group $G$. 
An $G$-equivariant \textbf{embedding of $G$-equivariant Kuranishi charts}
\[ 
\Vec{\alpha}=(\alpha, \widehat{\alpha}): (U,\mathcal{E},\psi,s)\to (U',\mathcal{E}',\psi',s')
\]
is a $G$-equivariant embedding of the orbibundles $\left (U\xrightarrow{\alpha}U', \mathcal{E}\xrightarrow{\widehat{\alpha}}\mathcal{E}'\right)$ satisfying the following. 
   \begin{enumerate}[i)] 
        \item $\widehat{\alpha}\circ s = s' \circ \alpha$. 
        \item $\psi'\circ \restr{\alpha }{s^{-1}(0)}= \psi$. 
        \item $\forall x\in s^{-1}(0)$, 
        the derivative $D_{\alpha(x)}s'$ induces an isomorphism 
        \[ \frac{T_{\alpha 
    (x)} U'}{(D_x\alpha)(T_x U)} \cong \frac{\mathcal{E}'_{\alpha(x)}}{\widehat{\alpha}(\mathcal{E}_x)}
    \] 
    induced by  
    \[
    \xymatrix{
    \mathcal{E}_x \ar[r]^{\widehat{\alpha}} & \mathcal{E}_{\alpha(x)}'\\
    T_xU \ar[r]_{D_x\alpha} \ar[u]^{D_xs} & T_{\alpha(x)}U'
    \ar[u]_{D_{\alpha(x)}s'} } . 
    \]
    \end{enumerate}
\end{defn}

\begin{defn}[$G$-equivariant Kuranishi structure]
\label{Gkur structure}
Let $\kur$ be a separable metrizable topological space with a topological action by a compact Lie group $G$. 
A \textbf{$G$-equivariant Kuranishi structure} 
\begin{equation}
\label{Gkur structure notation}
\widehat{\mathcal{U}} =  \left(\set{ \mathcal{U}_p }{ p\in \kur},   
\set{\vec{\alpha}_{pq}}{ p\in \kur, q\in \image \psi_p } \right)  
\end{equation}
on $\kur$
consists of the following data. 
\begin{enumerate}[1)]
        \item \label{Kuranishi neighborhood of p} 
        $\forall p\in \kur$, 
        $\mathcal{U}_p = (U_p, \E_p, \psi_p, s_p)$ is a $G$-equivariant Kuranishi chart on $\mathcal{M}$ such that there exists a unique $o_p\in U_p$ satisfying $\psi_p(o_p)=p$. 
    \item $\forall p \in \kur, \forall g\in G$, $\mathcal{U}_p = \mathcal{U}_{gp}$. 
    \item 
\label{coordinate change}
$\forall p\in \kur$, $\forall q\in \image \psi_p$, 
we have a $G$-equivariant embedding of $G$-equivariant Kuranishi charts $\vec{\alpha}_{pq}=\left( U_{pq}\xrightarrow{\alpha_{pq}} U_p,  \restr{\E_{q}}{U_{pq}}\xrightarrow{\widehat{\alpha}_{pq}}\E_p\right)$ from the restriction of $\mathcal{U}_q$ to a $G$-invariant open subset $U_{pq}$ of $U_q$ with $q\in \psi_q(s_q^{-1}(0)\cap U_{pq})$.  
In particular, it satisfies the following. 
      \begin{enumerate}[a)]
        \item The following diagrams commute. 
        \[
        \xymatrix{
        \restr{\E_{q}}{U_{pq}}
        \ar[r]^{\widehat{\alpha}_{pq}}
        \ar[d]_{\pi_q}
        &  
        \E_p \ar[d]^{\pi_p}\\
        U_{pq}
        \ar[r]_{\alpha_{pq}}
        &  
        U_p
        }, 
        \;
        \xymatrix{
        \restr{\E_{q}}{U_{pq}}
        \ar[r]^{\widehat{\alpha}_{pq}} 
        &  
        \E_p \\
        U_{pq}
        \ar[u]^{s_q}
        \ar[r]_{\alpha_{pq}}
        &  
        U_p
        \ar[u]_{s_p}
        },
        \; 
        \xymatrix{ 
        s_q^{-1}(0)\cap U_{pq} \ar[r]^{\alpha_{pq}}
        \ar[dr]_{\psi_q}
        &  
       s_p^{-1}(0)\cap U_p\ar[d]^{\psi_p}
       \\
        &   \mathcal{M}
        } 
        \]
        \item If $x\in s_q^{-1}(0)\cap U_{pq}$ and $\alpha_{pq}(x)=y$, then $D_{y}s_p$ induces an isomorphism 
        \begin{equation}\label{existence of tangent bundle}         
        \end{equation}   
    \end{enumerate}
Such an embedding $\vec{\alpha}_{pq}$ is called a \textbf{$G$-equivariant Kuranishi coordinate change}. 
    \item For any $p\in \kur, q\in \image \psi_p$ and any $g, g'\in G$, we have $\Vec{\alpha}_{pq} = \Vec{\alpha}_{(gp)(g'q)}$.
    \item If $r\in \psi_q(s_q^{-1}(0)\cap U_{pq}) $ and $U_{pqr} = \alpha_{qr}^{-1}(U_{pq})\cap U_{qr}$, then the cocycle condition is satisfied in the following sense: 
        \[ \alpha_{pr}( x ) = \alpha_{pq} \circ\alpha_{qr} (x),  \qquad \forall x\in U_{pqr}, \]
        and 
        \[   \widehat{\alpha}_{pr}(v) 
        =  \widehat{\alpha}_{pq}\circ \widehat{\alpha}_{qr}(v),  \qquad \forall x\in U_{pqr}, \quad \forall v\in (\E_r)_x.   \]
\end{enumerate}  
A topological space $\mathcal{M}$ which satisfies the conditions above and has a $G$-equivariant Kuranishi structure is called a \textbf{$G$-equivariant Kuranishi space}. 
\end{defn}

\begin{defn}[$G$-equivariant good coordinate system]
   \label{GC}
    Let $\mathcal{M}$ be a separable metrizable topological space.
    A \textbf{$G$-equivariant good coordinate system} 
    \begin{equation}
    \label{good coordinate system notation}
     \widetriangle{\mathcal{U}} = \left( (\Pindex, \leq ) 
    \, , \, 
    \left\{ \mathcal{U}_{\p}\mid \p\in \Pindex \right\}
    \, , \, 
    \set{\Vec{\alpha}_{\pq}}{ \p, \q\in \Pindex, \q \leq \p } \right)   
    \end{equation}
 consists of 
    \begin{enumerate}
      [1)]
        \item \label{GC index set}
        a finite partially ordered set $(\Pindex, \leq )$; 
        \item  
        $\forall \p\in \Pindex$, a $G$-equivariant Kuranishi chart $\mathcal{U}_{\p}= (U_{\p}, \mathcal{E}_{\p }, \psi_{\p}, s_{\p})$ on $\mathcal{M}$  so that $\kur \subset \bigcup\limits_{\p \in \Pindex} \image \psi_{\p} $;  and 
        \item \label{GC coordinate change}
        $\forall \p,\q \in \Pindex$ with $\q\leq \p$,  an $G$-equivariant embedding $ (\alpha_{\pq},\widehat{\alpha}_{\pq})$ of $G$-equivariant Kuranishi charts 
        $\vec{\alpha}_{\pq}=\left( U_{\pq}\xrightarrow{\alpha_{\pq}} U_{\p},  \restr{\E_{\q}}{U_{\pq}}\xrightarrow{\widehat{\alpha}_{\pq}}\E_{\p}\right)$ from the restriction of $\mathcal{U}_{\q}$ to a $G$-invariant open subset $U_{\pq}$ of $U_{\q}$, called a \textbf{$G$-equivariant good coordinate change} from $\q$ to $\p$, satisfying
        \[ \image \psi_{\pq} = \image \psi_{\p}\cap \image \psi_{\q},  \]
\end{enumerate}
such that the following holds. 
\begin{enumerate}[i)]
    \item $\vec{\alpha}_{\mathfrak{pp}} = \left(\restr{\operatorname{id}}{U_{\p}}, \restr{\operatorname{id}}{\mathcal{E}_{\p}}\right)$.
    \item If $\image \psi_{\p} \cap \image \psi_{\q} \ne \emptyset$, then either $ \p \leq \q$ or $\q\leq \p$.  
    \item If $\mathfrak{r}\leq \q \leq \p$ and $U_{\mathfrak{pqr}} = \alpha_{\mathfrak{qr}}^{-1}(U_{\pq})\cap U_{\mathfrak{qr}}$, then  
        \[ \alpha_{\mathfrak{pr}}( x ) = \alpha_{\pq} \circ \alpha_{\mathfrak{qr}} (x), \qquad \forall x\in U_{\mathfrak{pqr}},\]
        and, 
        \[  \widehat{\alpha}_{\mathfrak{pr}}(v) 
        =  \widehat{\alpha}_{\pq} \circ \widehat{\alpha}_{\mathfrak{qr}} (v),  \qquad \forall x\in U_{\mathfrak{pqr}}, \quad \forall v\in (\E_{\mathfrak{r}})_x . \]
\end{enumerate} 
\end{defn}

\begin{defn}[Support system/support pair]
\label{support system} \label{support pair}
 Let $\left(\kur,  \widetriangle{\mathcal{U}}\right)$ be a space with a $G$-equivariant good coordinate system
\[ \widetriangle{\mathcal{U}} = \left( (\Pindex, \leq ) 
    \, , \, 
    \left\{ \mathcal{U}_{\p}\mid \p\in \Pindex \right\}
    \, , \, 
    \set{\Vec{\alpha}_{\pq}}{ \p, \q\in \Pindex, \q \leq \p } \right). 
\]
A \textbf{$G$-equivariant support system} $\widetriangle{\mathcal{K}} = \{ \mathcal{K}_{\p}\mid \p \in \Pindex\}$ on $\left(\kur,  \widetriangle{\mathcal{U}}\right)$ is a collection of sets satisfying the following. 
\begin{itemize}
    \item For each $\p\in \Pindex$,  $ \mathcal{K}_{\p} \subset \mathcal{U}_{\p}$ is a nonempty $G$-invariant compact subset, which is the closure of some open set 
    $\mathring{\mathcal{K}}_{\p} \subset \mathcal{U}_{\p}$. 
    \item $\bigcup\limits_{\p\in \Pindex} \psi_{\p} \left(  \mathring{\mathcal{K}}_{\p} \cap s_{\p}^{-1}(0)\right) = \kur$.  
\end{itemize} 
For any $G$-equivariant support system $\widetriangle{\mathcal{K}}$, 
we define 
\begin{equation}\label{heterodimensional compactum}
 |\mathcal{K}| = \lrp{\bigsqcup_{\p\in \Pindex} \mathcal{K}_{\p}}/\sim ,     
\end{equation}
where, for each $x\in \mathcal{K}_{\p}, y\in \mathcal{K}_{\q}$, we say $ x\sim y $ if and only if either $y=\alpha_{\mathfrak{qp}}(x)$ or 
$x= \alpha_{\mathfrak{pq}}(y)$. 
The $G$-action on the charts induces a $G$-action on $|\mathcal{K}|$. 

A \textbf{$G$-equivariant support pair} $\lrp{\widetriangle{\mathcal{K}}, \widetriangle{\mathcal{K}}^{++} }$ on $\left(\kur,  \widetriangle{\mathcal{U}}\right)$ consists of  $G$-equivariant support systems $\widetriangle{\mathcal{K}} = \{ \mathcal{K}_{\p}\mid \p \in \Pindex\}$, $\widetriangle{\mathcal{K}}^{++} = \{ \mathcal{K}_{\p}^{++} \mid \p \in \Pindex\}$ such that for all $\p\in \Pindex$ we have  $\mathcal{K}_{\p}\subset \mathring{\mathcal{K}}_{\p}^{++}$. We write $\widetriangle{\mathcal{K}}< \widetriangle{\mathcal{K}}^{++}$ for such a pair. 

Given a $G$-equivariant support pair $\lrp{\widetriangle{\mathcal{K}}, \widetriangle{\mathcal{K}}^{++}}$, a $G$-invariant metric on $|\mathcal{K}^{++}|$, and $\delta >0$,  
we can define another support system 
\[ \widetriangle{\mathcal{K}}(2\delta) = \{\mathcal{K}_{\p}(2\delta)\mid \p \in \Pindex \}, \]
where 
\[\mathcal{K}_{\p}(2\delta) = \{x\in \mathcal{K}_{\p}^{++} \mid d(x, \mathcal{K}_{\p})<2\delta \}. \]
\end{defn}

\begin{defn}[KG-embedding]\label{KG embedding}
    A  $G$-equivariant \textbf{strict KG-embedding} 
    \[ \set{ \Vec{\alpha}_{\mathfrak{p}p}=
    (\alpha_{\mathfrak{p}p}, \widehat{\alpha}_{\mathfrak{p}p}): \mathcal{U}_{p}\to  \mathcal{U}_{\p} }{ (p,\p ) \in \kur \times \Pindex, p\in \image \psi_{\p} }:  \widehat{\mathcal{U}} \longrightarrow \widetriangle{\mathcal{U}} \] 
    from a Kuranishi structure $\widehat{\mathcal{U}}$ on $\kur$ to a good coordinate system $\widetriangle{\mathcal{U}}$ on $\kur$ consists of one $G$-equivariant embedding of Kuranishi charts for each $(p,\p ) \in \kur \times \Pindex, p\in \image \psi_{\p}$ such that 
    the following holds. If $\p, \q\in \Pindex, \q\leq \p$ and $ p \in \image \psi_{\p}$, $q\in \image \psi_p \cap \image \psi_{\q}(U_{\pq}\cap s_{\q}^{-1}(0)) $, then on $\restr{\mathcal{U}_q}{U_{pq}\cap  \alpha _{\mathfrak{\q}q}^{-1}(U_{\mathfrak{pq}})}$ we have 
        \[ \vec{\alpha}_{\mathfrak{p}p} \circ  \vec{\alpha}_{pq} = \vec{\alpha}_{\mathfrak{pq}} \circ \vec{\alpha}_{\mathfrak{q}q}.  \]
    An \textbf{open substructure} of a $G$-equivariant Kuranishi structure $\widehat{\mathcal{U}}$ is a $G$-equivariant Kuranishi structure $\widehat{\mathcal{U}}'$ whose Kuranishi charts and coordinate changes are restrictions of those in $\widehat{\mathcal{U}}$ to $G$-invariant open subsets $U_p'$ of Kuranishi neighborhoods $U_p$. 
    
    A $G$-equivariant \textbf{KG-embedding} $\widehat{\mathcal{U}} \longrightarrow \widetriangle{\mathcal{U}}$ is a $G$-equivariant strict KG-embedding $\widehat{\mathcal{U}}_0 \longrightarrow \widetriangle{\mathcal{U}}$ from an open substructure $\widehat{\mathcal{U}}_0$ of the Kuranishi structure $\widehat{\mathcal{U}}$. 
\end{defn}

\begin{lem}
Suppose that $\kur$ is a space with a $G$-equivariant Kuranishi structure with corners, where the $G$-action on each Kuranishi chart is free. 
Then $\kur/G$ has a Kuranishi structure with corners. 

Similarly, if $\kur$ is a $G$-equivariant good coordinate system with corners, where the $G$-action on each Kuranishi chart is free, 
then $\kur/G$ has a good coordinate system with corners. 
\end{lem}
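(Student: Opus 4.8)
The plan is to reduce the statement to a single fact: if a compact Lie group $G$ acts freely and smoothly on an (effective) orbifold $U$, possibly with corners, then the quotient $U/G$ is again an (effective) orbifold with corners, and a $G$-equivariant orbibundle $\E\to U$ descends to an orbibundle $\E/G\to U/G$. Granting this, the construction of the quotient Kuranishi structure is entirely formal: I build the quotient chart-by-chart, check that the ingredients of a Kuranishi structure are preserved, and verify compatibility under coordinate changes.

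\textbf{Step 1: the local model.} First I would treat a single $G$-equivariant Kuranishi chart $\mathcal{U}_p = (U_p, \E_p, \psi_p, s_p)$ on $\kur$ in which $G$ acts freely on $U_p$. Since $G$ is compact and acts freely and smoothly on the orbifold-with-corners $U_p$, the quotient $U_p/G$ inherits the structure of a smooth effective orbifold with corners (locally, writing $U_p$ as $V_p/\Gamma_p$ with $G$ acting through a lift commuting with $\Gamma_p$, freeness means the $G$-orbits are honest submanifolds and $V_p/(G\times\Gamma_p)$ is an orbifold chart; the corner strata of $U_p$ are $G$-invariant because the $G$-action is by diffeomorphisms of orbifolds-with-corners, so they descend). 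The $G$-equivariant orbibundle $\E_p\to U_p$ and the $G$-equivariant section $s_p$ descend to an orbibundle $\E_p/G \to U_p/G$ with a section $\bar s_p$, because $s_p^{-1}(0)$ is $G$-invariant and $s_p(0)^{-1}/G = \bar s_p^{-1}(0)$. Finally $\psi_p: s_p^{-1}(0)\to \kur$ is $G$-equivariant, so it descends to $\bar\psi_p: \bar s_p^{-1}(0)\to \kur/G$, a homeomorphism onto an open subset. Orientations descend since $G$ is connected (a torus), hence acts orientation-preservingly. Thus $\bar{\mathcal{U}}_p := (U_p/G, \E_p/G, \bar\psi_p, \bar s_p)$ is a Kuranishi chart on $\kur/G$.

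\textbf{Step 2: coordinate changes and the cocycle condition.} Given a $G$-equivariant Kuranishi coordinate change $\vec\alpha_{pq} = (\alpha_{pq},\widehat\alpha_{pq})$ from $\restr{\mathcal{U}_q}{U_{pq}}$ to $\mathcal{U}_p$, the maps $\alpha_{pq}$ and $\widehat\alpha_{pq}$ are $G$-equivariant, so they descend to embeddings $\bar\alpha_{pq}: U_{pq}/G \to U_p/G$ and $\widehat{\bar\alpha}_{pq}: (\E_q/G)|_{U_{pq}/G}\to \E_p/G$. The three defining conditions of an embedding of Kuranishi charts (compatibility with the Kuranishi maps, with the parametrizations, and the tangent-bundle/obstruction-bundle isomorphism) are closed conditions that pass to the quotient: the tangent-space isomorphism $T_{\alpha(x)}U_p/(D_x\alpha)(T_xU_q)\cong (\E_p)_{\alpha(x)}/\widehat\alpha((\E_q)_x)$ descends because quotienting by the free $G$-action removes the same $\g$-directions (the orbit directions) from numerator and denominator on both sides. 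The cocycle condition $\alpha_{pr}=\alpha_{pq}\circ\alpha_{qr}$ and its bundle analogue descend verbatim. Then $\widehat{\bar{\mathcal{U}}} = (\{\bar{\mathcal{U}}_p\}, \{\vec{\bar\alpha}_{pq}\})$ is a Kuranishi structure with corners on $\kur/G$; note conditions 2) and 4) of Definition \ref{Gkur structure} (invariance under $G$) become vacuous after passing to the quotient since points of $\kur/G$ are $G$-orbits. The same argument applies mutatis mutandis in the good-coordinate-system case: the finite poset $(\Pindex,\leq)$ is unchanged, each chart $\mathcal{U}_\p$ is replaced by $\mathcal{U}_\p/G$, each good coordinate change $\vec\alpha_{\pq}$ descends as above, and the three axioms of Definition \ref{GC} (reflexivity, comparability of overlapping images, cocycle) are preserved because $\image\bar\psi_\p = (\image\psi_\p)/G$ and all the relevant maps are $G$-equivariant.

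\textbf{Main obstacle.} The genuinely nontrivial point, and the one I would spend the most care on, is Step 1: establishing that the quotient of a free smooth $G$-action on an \emph{orbifold with corners} is again an orbifold with corners, and that $G$-equivariant orbibundles descend. For manifolds this is the classical quotient-manifold theorem, but in the orbifold category one must be careful that the $G$-action and the local isotropy groups $\Gamma_p$ interact correctly — concretely, one works in a local uniformizing chart $V_p$ on which $G\times\Gamma_p$ acts, with $G$ acting freely, so that $V_p/G$ is a manifold on which $\Gamma_p$ still acts, giving the orbifold chart for $U_p/G$; one must check this is independent of the choices and glues. The corner structure requires the extra observation that $G$, being connected and acting by diffeomorphisms, preserves each corner stratum, so the normalized-boundary decomposition of $\kur$ descends to that of $\kur/G$. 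Everything downstream is bookkeeping. I would state Step 1 as a lemma (possibly citing the orbifold conventions recalled in the referenced Section on orbifolds) and then dispatch Steps 2 and the good-coordinate-system case in a few lines each.
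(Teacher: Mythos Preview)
Your proposal is correct and follows essentially the same approach as the paper: quotient each Kuranishi chart and each coordinate change by the free $G$-action, and observe that all the defining conditions descend. Your treatment is in fact considerably more detailed than the paper's own proof, which simply asserts that the quotient charts $\mathcal{U}_p/G = (U_p/G, \E_p/G, [\psi_p], [s_p])$ and induced coordinate changes $[\vec{\alpha}_{pq}]$ satisfy the definition, without spelling out the orbifold-quotient issue you flag as the main obstacle.
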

\begin{proof}
Suppose the $G$-equivariant Kuranishi structure on $\kur$ is given by 
\begin{equation*}
\widehat{\mathcal{U}} =  \left(\set{ \mathcal{U}_p }{ p\in \kur },   
\set{\vec{\alpha} _{pq}}{ p\in \kur, q\in \image \psi_p } \right), 
\end{equation*}
where the $G$-action on each chart $\mathcal{U}_p = (U_p, \E_p, \psi_p, s_p)$ is free. 

By $G$-equivariance and the freeness of the $G$-action, we obtain 
Kuranishi charts of the form $\mathcal{U}_p/G= (U_p/G, \E_p/G, [\psi_p], [s_p])$. Suppose $\vec{\alpha} _{pq}$ is given by 
a pair of $G$-equivariant orbifold embeddings  
\[ \alpha_{pq}: U_{pq}\to U_p, \quad \widehat{\alpha}_{pq}: \restr{\E_{q}}{U_{pq}}\to \E_p. \]
Then they induce maps  $[\alpha_{pq}]: U_{pq}/G \to U_p/G$ and 
$[\widehat{\alpha}_{pq}]: \restr{(\E_{q}/G)}{U_{pq}/G}\to \E_p/G$, which together define a 
Kuranishi coordinate change $[\vec{\alpha} _{pq}]$ from $\widehat{\mathcal{U}}_q/G$ to $\widehat{\mathcal{U}}_p/G$. 

The data $\widehat{\mathcal{U}}/G =  \left(\set{ \mathcal{U}_p/G }{ p\in \kur/G },   
\set{[\vec{\alpha} _{pq}]}{ p\in \kur/G, q\in \image [\psi_p] } \right)$ satisfy the definition of a Kuranishi structure on $\kur/G$.  
\end{proof}

\begin{defn}[Dimension/Orientation]

Let $\kur$ be a space with either a Kuranishi structure or a good coordinate system. 

    \begin{enumerate}[i)]
    \item  \label{dimension of a Kuranishi space/good coordinate system}
    We define the dimension of a Kuranishi chart of the form $\mathcal{U} = (U,\mathcal{E},\psi,s)$ by 
    \[\dim \mathcal{U} = \dim U - \rank \mathcal{E}.  \]
    We require the dimension of the Kuranishi charts in the Kuranishi structure (resp. good coordinate system) to be the same and define this common dimension to be the \textbf{dimension} of $\kur$. 
    \item \label{orientation of a Kuranishi space/good coordinate system}
    An orientation 
   of a Kuranishi chart $\mathcal{U} = (U,\mathcal{E},\psi,s)$ is given by an orientation on $U$ and an orientation on $\E$. 
    An \textbf{orientation} on $\kur$ is a choice of an orientation for each Kuranishi chart of the Kuranishi structure (resp. good coordinate system) such that the coordinate change maps are orientation-preserving. 
   \end{enumerate}
   
\end{defn}

\begin{defn}[$G$-equivariant strongly smooth map: orbifold $\to $ manifold]
\label{strongly smooth map on a chart}
Let $L$ be a smooth manifold with a $G$-action and $U$ be a smooth effective orbifold with a smooth $G$-action.     
A $G$-equivariant continuous map $g: U \to L$ is a \textbf{strongly smooth map} if $g \circ \varphi : V\to L$ is smooth for all orbifold charts $(V,\Gamma, \varphi)$ in the orbifold atlas of $U$. 
\end{defn}

\begin{defn}[$G$-equivariant strongly smooth map: Kuranishi $\to $ manifold]
\label{strongly smooth}
Let $L$ be a smooth manifold with a $G$-action and $\left(\mathcal{M},\widehat{\mathcal{U}}\right)$ be a $G$-equivariant Kuranishi space with Kuranishi structure
\[ 
\widehat{\mathcal{U}} =  \left(\set{ \mathcal{U}_p }{ p\in \mathcal{M}},   
\set{\vec{\alpha} _{pq}}{ p\in \mathcal{M}, q\in \image \psi_p } \right) 
\]
A \textbf{$G$-equivariant strongly smooth} map $\widehat{f}: (\kur,\widehat{U})\to L$ 
is a collection 
\[ \{f_{p}: U_{p}\to L \mid p\in \mathcal{M} \}\]
of $G$-equivariant strongly smooth maps satisfying the following. For all $p\in \mathcal{M}, q\in \image \psi_p$, the compatibility condition 
$f_{p} \circ  \alpha_{pq}
=  f_q|_{U_{pq}}$ is satisfied. 
Define the map associated with the strongly smooth map $\widehat{f}$ by 
\[ f: \mathcal{M}\to L, \quad f(p) = f_p(o_p) \quad  \forall p\in \mathcal{M}, \]
where $o_p$ is as in Definition \ref{Gkur structure} \ref{Kuranishi neighborhood of p}. 
We can define a $G$-equivariant strongly smooth map $\widetriangle{f} = \{f_{\p}: U_{\p}\to L \mid \p \in \Pindex\}$ from a space with good coordinate system to a manifold in a similar way. 
\end{defn}

\begin{defn}[$G$-equivariant differential forms on a $G$-equivariant Kuranishi space]\label{Gdiff forms}
Let $\mathcal{M}$ be a space with a $G$-equivariant Kuranishi structure as in Definition \ref{Gkur structure}.  
A \textbf{$G$-equivariant differential $k$-form} on $\left( \kur,\widehat{\mathcal{U}}\right)$ is given by a collection of differential forms 
\begin{equation}
 \widehat{\eta}= \set{ \eta_p \in \Omega_G^k(U_p) }{ p\in \kur }       
\end{equation}
such that 
\[ 
\restr{(\alpha_{pq}^G)^* \lrp{\eta_p
 }}{U_{pq}}
= \eta_q 
|_{U_{pq}}, \qquad \forall p \in \kur, \quad \forall q\in \image \psi_p, \]
where 
\[ (\alpha_{pq}^G)^*: \Omega_G^k(U_{p})  \to \Omega_G^k(U_{q}) \]
denotes the $G$-equivariant pullback via $U_q \xrightarrow{\text{restriction}} U_{pq}\xrightarrow{\alpha_{pq}}U_p$. 
We denote the set of $G$-equivariant differential $k$-forms on a $G$-equivariant Kuranishi space $\kur$ 
by $\Omega_G^k\left( \kur,\widehat{\mathcal{U}}\right)$ and denote $\Omega_G\left( \kur,\widehat{\mathcal{U}}\right) = \bigoplus\limits_{k\in \N} \Omega_G^k\left( \kur,\widehat{\mathcal{U}}\right)$.  
\end{defn}

\begin{defn}[$G$-equivariant differential forms on a good coordinate system]
  Let $\mathcal{M}$ be a space with a $G$-equivariant good coordinate system as in \autoref{good coordinate system notation}.  
 Let $\widetriangle{\mathcal{K}} = \{ \mathcal{K}_{\p}\mid \p \in \Pindex\}$ be a support system on $\left( \kur,\widetriangle{\mathcal{U}}\right)$. 
A \textbf{$G$-equivariant differential $k$-form} $\widetriangle{\eta}$ on $\left( \kur,\widetriangle{\mathcal{U}}\right)$ assigns a $G$-equivariant differential $k$-form $\eta_{\p}$ on $K_{\p}$ for each $\p\in \Pindex$
such that the following holds on a $G$-invariant open neighborhood of $\alpha_{\pq}^{-1}(K_{\p})\cap K_{\q}$: 
\[ 
(\alpha_{\pq}^G)^* \eta_{\p} 
= \eta_{\q} , \qquad \forall \p \in \kur, \quad \forall \q\in \image \psi_{\p}.\]
 
We denote the set of $G$-equivariant differential $k$-forms on a $G$-equivariant Kuranishi space $\kur$ 
by $\Omega_G^k\left( \kur,\widetriangle{\mathcal{U}}\right)$ and denote $\Omega_G\left( \kur,\widetriangle{\mathcal{U}}\right) = \bigoplus\limits_{k\in \N} \Omega_G^k\left( \kur,\widetriangle{\mathcal{U}}\right)$.    
\end{defn}

\begin{defn}[$G$-equivariant pullback map]
Let $\kur$ be a space with a $G$-equivariant Kuranishi structure $\widehat{U}$ as in \autoref{Gkur structure notation}.  
Let $\widehat{f}= \{f_{p}: U_{p}\to L \mid p\in \mathcal{M} \} : (\mathcal{M},\widehat{U})\to L$ be a $G$-equivariant strongly smooth map and 
\[ \eta= \set{ \eta_p \in \Omega_G^k(U_p) }{ p\in \kur }   \]
be a $G$-equivariant differential $k$-form on $(\kur,\widehat{U})$. 
Then the \textbf{$G$-equivariant pullback} of $\eta$ via $\widehat{f}$ is given by 
\[ \widehat{f}^*\widehat{\eta}= \set{ f_{p}^*\eta_p \in \Omega_G^k(U_p) }{ p\in \kur }. \]
We may also denote it by $f^*\widehat{\eta}$. 
Similarly, we can define the $G$-equivariant pullback $\widetriangle{f}^*\widetriangle{\eta}$ of a differential form $\widetriangle{\eta}$ on a good coordinate system via a $G$-equivariant strongly smooth map $\widetriangle{f}$. 
\end{defn}

\begin{lem}
\label{Existence of KG-embedding}
Let $\lrp{\kur,\widehat{\mathcal{U}}}$ be a space with a $G$-equivariant Kuranishi structure
\[
\widehat{\mathcal{U}} =  \left(\set{ \mathcal{U}_p }{ p\in \kur},   
\set{\vec{\alpha}_{pq}}{ p\in \kur, q\in \image \psi_p } \right).
\]   
Then there exists a $G$-equivariant good coordinate system $\widetriangle{\mathcal{U}}$ on $\kur$ and a $G$-equivariant KG-embedding $\widehat{\mathcal{U}} \longrightarrow \widetriangle{\mathcal{U}}$, given by a $G$-equivariant strict KG-embedding $ \Phi: \widehat{\mathcal{U}}_0 \longrightarrow \widetriangle{\mathcal{U}}$ from an open substructure $\widehat{\mathcal{U}}_0$ of $\widehat{\mathcal{U}}$. 

Moreover, the following holds. 
\begin{enumerate}[i)]
    \item Let $\widehat{h}$ be a $G$-equivariant differential form on $\lrp{\kur,\widehat{\mathcal{U}}}$. 
    Then there exists a $G$-equivariant differential form $\widetriangle{h}$ on $\lrp{\kur,\widetriangle{\mathcal{U}}}$ such that $\Phi_G^*(\widetriangle{h}) = \restr{\widehat{h}}{\widehat{\mathcal{U}}_0}$. 
    \item Let $\widehat{\mathcal{S}}$ be a $G$-equivariant CF-perturbation on $\lrp{\kur,\widehat{\mathcal{U}}}$. Then there exists a $G$-equivariant CF-perturbation $\widetriangle{\mathcal{S}}$ on $\lrp{ \kur,\widetriangle{\mathcal{U}}}$ 
        such that $\restr{\widehat{\mathcal{S}}}{\widehat{\mathcal{U}}_0}$, $\widetriangle{\mathcal{S}}$ are compatible with $\Phi$ and the following holds. 
    \begin{enumerate}[a)]
        \item If $\widehat{\mathcal{S}}$ is transverse to $0$, then $\widetriangle{\mathcal{S}}$ is also transverse to $0$. 
        \item If $\widehat{f}$ is strongly submersive with respect to $\widehat{\mathcal{S}}$, then $\widetriangle{f}$ is strongly submersive with respect to $\widetriangle{\mathcal{S}}$. 
        \item If $\widehat{f}$ is weakly transverse to $g: M\to N$ with respect to $\widehat{\mathcal{S}}$, then $\widetriangle{f}$ is strongly transverse to $g: M\to N$ with respect to  $\widetriangle{\mathcal{S}}$. 
    \end{enumerate}
\end{enumerate}
\end{lem}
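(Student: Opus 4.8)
The statement is an equivariant upgrade of the standard ``existence of a good coordinate system refining a Kuranishi structure'' theorem, together with the standard statements that differential forms and CF-perturbations can be pushed forward along the resulting KG-embedding. The plan is to reduce everything to the non-equivariant theory of Fukaya--Oh--Ohta--Ono developed in \cite{kurbook} by passing to quotients, exploiting the standing assumption that $G$ is a torus acting freely on each Kuranishi chart. First I would observe that, by the quotient lemma proved above, $\lrp{\kur/G,\widehat{\mathcal{U}}/G}$ is a space with an (ordinary) Kuranishi structure with corners; applying the main existence theorem of \cite{kurbook} yields an ordinary good coordinate system $\widetriangle{\mathcal{V}}$ on $\kur/G$ together with a KG-embedding $\Psi:(\widehat{\mathcal{U}}/G)_0\to \widetriangle{\mathcal{V}}$ from an open substructure. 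The crux is then to \emph{lift} $\widetriangle{\mathcal{V}}$ and $\Psi$ back up to $\kur$: each chart $\mathcal{V}_{\p}=(V_{\p},\E_{\p}',\psi_{\p}',s_{\p}')$ on $\kur/G$ is pulled back along the principal $G$-bundle $\kur\to\kur/G$ to a $G$-equivariant chart $\mathcal{U}_{\p}$ on $\kur$ with $U_{\p}$ the total space of the corresponding $G$-bundle over $V_{\p}$ (and similarly for the obstruction bundle), and the good coordinate changes $\vec\alpha_{\pq}$ on $\kur/G$ pull back to $G$-equivariant good coordinate changes because the $G$-bundle structure is functorial. The partial order, the covering property $\kur\subset\bigcup\image\psi_{\p}$, and the cocycle conditions all descend/ascend verbatim since the quotient map is open and $G$-equivariant. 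This produces $\widetriangle{\mathcal{U}}$ and the strict KG-embedding $\Phi:\widehat{\mathcal{U}}_0\to\widetriangle{\mathcal{U}}$, where $\widehat{\mathcal{U}}_0$ is the open substructure corresponding to $(\widehat{\mathcal{U}}/G)_0$.

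For part i), given a $G$-equivariant differential form $\widehat{h}=\{h_p\}$ on $(\kur,\widehat{\mathcal{U}})$, I would note that, because $G$ acts freely, the Cartan model $\Omega_G^*(U_p)$ computes $H^*(U_p/G)$ and, more to the point, a $G$-equivariant form on the total space of a free $G$-space is the same data as a form on the base after the Cartan-to-Weil comparison; concretely $\Omega_G^*(U_p)\cong\Omega^*(U_p/G)$ via the Chern--Weil/connection isomorphism once a connection on the $G$-bundle is chosen (and the isomorphism is natural enough for our purposes up to the usual homotopies, which suffices since we only need \emph{existence} of $\widetriangle{h}$ with $\Phi_G^*\widetriangle{h}=\widehat{h}|_{\widehat{\mathcal{U}}_0}$). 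Thus $\widehat{h}$ descends to an ordinary differential form on $(\kur/G,\widehat{\mathcal{U}}/G)$, the non-equivariant version of this extension lemma from \cite{kurbook} produces a form $\widetriangle{h}_{\mathrm{base}}$ on $\widetriangle{\mathcal{V}}$ with $\Psi^*\widetriangle{h}_{\mathrm{base}}=\widehat{h}/G|_{(\widehat{\mathcal{U}}/G)_0}$, and pulling this back up the $G$-bundle gives the desired $\widetriangle{h}$; compatibility $\Phi_G^*\widetriangle{h}=\widehat{h}|_{\widehat{\mathcal{U}}_0}$ follows because pullback along the $G$-bundle and the equivariant pullback $\Phi_G^*$ commute by construction of $\Phi$ as a lift of $\Psi$.

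Part ii) is the same strategy applied to CF-perturbations: a $G$-equivariant CF-perturbation $\widehat{\mathcal{S}}$ on $(\kur,\widehat{\mathcal{U}})$ (defined chartwise by $G$-invariant families of multisections, with the relevant integration-along-the-fiber structure that we set up in Section~\ref{integration along the fiber on Kuranishi}) descends to an ordinary CF-perturbation $\widehat{\mathcal{S}}/G$ on $(\kur/G,\widehat{\mathcal{U}}/G)$; the non-equivariant extension theorem of \cite{kurbook} yields a compatible CF-perturbation $\widetriangle{\mathcal{S}}_{\mathrm{base}}$ on $\widetriangle{\mathcal{V}}$, and pulling back up the principal $G$-bundle produces a $G$-equivariant $\widetriangle{\mathcal{S}}$ on $(\kur,\widetriangle{\mathcal{U}})$ compatible with $\Phi$. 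Properties a)--c) are inherited because transversality to $0$, strong submersivity of $\widetriangle{f}$, and (weak/strong) transversality to an external map $g:M\to N$ are all detected on the quotient $\kur/G$ (the $G$-action being free, so that $U_p\to U_p/G$ is a submersion and transversality statements for $f_p$ are equivalent to those for the descended maps), and the corresponding implications hold for $\widehat{\mathcal{S}}/G\rightsquigarrow\widetriangle{\mathcal{S}}_{\mathrm{base}}$ by the non-equivariant statement. The main obstacle I anticipate is purely bookkeeping rather than conceptual: verifying that the pullback-along-$G$-bundle operation is compatible with \emph{all} the structure of a good coordinate system simultaneously — in particular that the open substructure $\widehat{\mathcal{U}}_0$, the index set $\Pindex$, the support systems used implicitly in defining CF-perturbations, and the orientations (the $G$-bundle contributes a trivial orientation factor since $G=T^r$ is orientable and connected) all match up across the quotient correspondence — and checking that the equivariant Cartan-model form extension can be carried out chart-by-chart compatibly with the coordinate changes, which in \cite{kurbook} is itself a partition-of-unity argument that we need $G$-invariant partitions of unity to equivariantize (available since $G$ is compact by averaging). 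I would therefore spend most of the write-up carefully stating the pullback-of-charts construction and invoking the freeness of the action at each point where a non-equivariant result is quoted.
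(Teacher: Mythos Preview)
Your approach differs from the paper's. The paper does \emph{not} pass to the quotient: it reruns the proof of \cite{kurbook} Theorem~3.35 and Lemma~9.10 directly on $\lrp{\kur,\widehat{\mathcal{U}}}$, by downward induction on the dimension stratum $S_d\kur=\{p:\dim U_p\ge d\}$, at each step choosing finitely many points $x_1,\dots,x_n$ and $G$-invariant compact subsets $K_i\subset U_{x_i}$ to cover the next stratum. All choices (compacts, partitions of unity) are made $G$-invariantly from the start. Your quotient-then-lift route for the existence of $\widetriangle{\mathcal{U}}$ and $\Phi$ is a legitimate alternative, since the good coordinate charts produced by \cite{kurbook} on $\kur/G$ are open pieces of specific $U_p/G$'s and therefore carry canonical $G$-bundle lifts; that part is fine, modulo the bookkeeping you already flagged.

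The gap is in your treatment of parts i) and ii). The assertion that $\Omega_G^*(U_p)\cong\Omega^*(U_p/G)$ via a connection is false as stated: for $G=T^r$ abelian, $\Omega_G^*(U_p)=\Omega(U_p)^G\otimes S(\g^*)$, whereas $\Omega^*(U_p/G)$ is the subcomplex of \emph{basic} (not merely invariant) forms. These are only quasi-isomorphic via the Cartan/Chern--Weil map, and the lemma demands the chain-level equality $\Phi_G^*(\widetriangle{h})=\widehat{h}|_{\widehat{\mathcal{U}}_0}$, not agreement in cohomology. A $G$-invariant form that is not horizontal (e.g.\ a connection form) does not descend, so your descend--extend--lift scheme cannot reproduce an arbitrary $\widehat{h}$ on the nose. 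The same obstruction hits CF-perturbations, whose data include equivariant Thom forms $\tau_{\rindex}\in\Omega_G(W_{\rindex})$.

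The fix is simple and is essentially what the paper does: once you have the $G$-equivariant good coordinate system $\widetriangle{\mathcal{U}}$ (by whichever route), carry out the extension of $\widehat{h}$ and $\widehat{\mathcal{S}}$ \emph{upstairs}, by running the \cite{kurbook} partition-of-unity argument with $G$-invariant partitions of unity (obtained by averaging). No passage to the quotient is needed for i) and ii), and your last paragraph already hints at this; you should drop the ``$\widehat{h}$ descends to an ordinary form on $\kur/G$'' step entirely.
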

The proof of Lemma \ref{Existence of KG-embedding} is similar to that of \cite{kurbook} 
Theorem 3.35 and Lemma 9.10. 
\begin{proof}[Proof sketch]
For each $d\in \N$, let
$S_{d}\kur =\{ p\in \kur \mid \dim U_p \geq d\}$. The proof is based on a downward induction on $d$. 
Suppose a $G$-equivariant good coordinate system 
\[   \widetriangle{\mathcal{U}_{d+1}} = \left( (\Pindex, \leq ) 
    \, , \, 
    \left\{ \mathcal{U}_{\p}\mid \p\in \Pindex \right\}
    \, , \, 
    \set{\Vec{\alpha}_{\pq}}{ \p, \q\in \Pindex, \q \leq \p } \right) \]
which covers $S_{d+1}\kur$ is constructed. 
Pick a collection $\{ K_{\p} \mid \p \in \Pindex\}$ of $G$-invariant compact subsets $K_{\p}\subset U_{\p}$ such that we have an open neighborhood of $S_{d+1}\kur$:
\[ S_{d+1}\kur \subset   \bigcup_{\p} \psi_{\p}\lrp{s_{\p}^{-1}(0) \cap \interior K_{\p} }.\] 
Let 
\[
B = S_d\kur \setminus \bigcup_{\p\in \Pindex} \psi_{\p}\lrp{s_{\p}^{-1}(0) \cap \interior K_{\p} } \subset S_d\kur \setminus S_{d+1}\kur. 
\]
Also pick $x_1,\ldots, x_n \in S_d\kur \setminus S_{d+1}\kur $ and $\{ K_{i} \mid 1\leq i \leq n\}$ such that $K_i\subset U_{x_i}$ are $G$-invariant subsets and 
\[ \bigcup_{i=1}^n \psi_{x_i}\lrp{s_{x_i}^{-1}(0) \cap \interior K_i } \supset B. \]
Then we can construct, as in the proof of \cite{kurbook} Theorem 3.35 and that of \cite{kurbook} Lemma 9.10, a $G$-equivariant good coordinate system that covers 
\[ \bigcup_{\p\in \Pindex} \psi_{\p}\lrp{s_{\p}^{-1}(0) \cap \interior K_{\p} } \cup  \bigcup_{i=1}^n \psi_{x_i}\lrp{s_{x_i}^{-1}(0) } \]
and satisfies the properties by induction on $n$. 
\end{proof}

\subsection{\texorpdfstring{$G$}{G}-equivariant CF-perturbations}

\begin{defn}
[$G$-equivariant CF-perturbation representative on a $G$-invariant subset]
\label{CF-perturbation on a suborbifold}
Let $\mathcal{U}=(U, \E , \psi, s)$ be a $G$-equivariant Kuranishi chart and $U_{\rindex}\subset U$ be a $G$-invariant open subset of $U$. 
A \textbf{$G$-equivariant CF-perturbation representative}\footnote{"CF'' stands for "continuous family''.} of $\mathcal{U}$ on $U_{\rindex}$ is a continuous family of data 
\[  \mathcal{S}_{\rindex} = \{ \mathcal{S}_{\rindex}^{\epsilon}= (W_{\rindex}\xrightarrow{\nu_{\rindex}}U_{\rindex},\tau_{\rindex}, \s^{\epsilon}_{\rindex})\mid \epsilon \in (0,1]\}\] such that the following holds.  
\begin{enumerate}[i)]
    \item $W_{\rindex}$ is an effective orbifold with a smooth $G$-action.\footnote{Note that we require $W_{\rindex}$ to be the total space of a $G$-orbibundle, unlike in the case of ordinary Kuranishi structures, $W$ denotes an open subset of some vector space. }
    \item $\nu_{\rindex}: W_{\rindex}\to U_{\rindex}$ is a smooth oriented $G$-equivariant orbibundle. 
     \item $\tau_{\rindex} \in \Omega_G(W_{\rindex})$ is a $G$-equivariant Thom form of $ \nu_{\rindex} : W_{\rindex}\to U_{\rindex}$.\footnote{The construction of a $G$-equivariant Thom form can be found in \cite{GS} Chapter 10. }

    \item Let $\nu_{\rindex}^*(\restr{\E}{U_{\rindex}})\to W_{\rindex}$ be the pullback bundle of $\restr{\E}{U_{\rindex}} \to U_{\rindex}$ via $\nu_{\rindex}$ and let 
    $pr_2: \nu_{\rindex}^*(\restr{\E}{U_{\rindex}}) \to \restr{\E}{U_{\rindex}}$ be the projection map.  
    $\forall 0< \epsilon\leq 1$, let $\tilde{\s}_{\rindex}^{\epsilon}: W_{\rindex} \to \nu_{\rindex}^*(\restr{\E}{U_{\rindex}})$ be a section of the bundle $\nu_{\rindex}^*(\restr{\E}{U_{\rindex}}) \to W_{\rindex}$ satisfying the following. 
    \begin{enumerate}[a)]
        \item $\s_{\rindex}^{\epsilon} = pr_2\circ \tilde{\s}_{\rindex}^{\epsilon} : W_{\rindex}\to \restr{\E}{U_{\rindex}}$ is a $G$-equivariant bundle map 
        and the family 
        $\{\s_{\rindex}^{\epsilon}\}_{\epsilon \in (0,1]}$ depends smoothly on $\epsilon$. 
        \item  Moreover, $\lim\limits_{\epsilon\to 0}\s_{\rindex}^{\epsilon} = s\circ \nu_{\rindex}$
    in the compact $C^1$-topology. 
    \[
    \xymatrix{
    \nu_{\rindex}^* (\restr{\E}{U_{\rindex}})
    \ar[d]_{pr_1}
    \ar[r]^{pr_2}
    & 
     \restr{\E}{U_{\rindex}}
     \ar[d]^{\restr{\pi}{ U_{\rindex}}}
     \\
    W_{\rindex}
    \ar[r]_{ \nu_{\rindex}}
    & 
    U_{\rindex}
    }
    \]
    \end{enumerate} 
\end{enumerate}
\end{defn}

\begin{defn}[Equivalent $G$-equivariant CF-perturbation representatives on subsets]
\label{equivalent local CF-perturbations}
Let $\mathcal{U}=(U, \E , \psi, s)$ be a $G$-equivariant Kuranishi chart and $U_{\rindex}\subset U$ be a $G$-invariant open subset of $U$. 
Let 
\[  
\mathcal{S}_{\rindex}  
= 
\set{ \mathcal{S}_{\rindex} ^{\epsilon}= (W_{\rindex}\xrightarrow{\nu_{\rindex}}U_{\rindex}, \s_{\rindex}^{\epsilon}, \tau_{\rindex})
}
{\epsilon \in (0,1]}.
\]
and 
\[  
\mathcal{S}_{\rindex}^i 
= 
\set{ \mathcal{S}_i ^{\epsilon}= (W_i\xrightarrow{\nu_{i}}U_{\rindex}, \s_i^{\epsilon}, \tau_i)}
{\epsilon \in (0,1] }
\quad \forall i\in \{1,2\}
\]
be $G$-equivariant CF-perturbation representatives of $\mathcal{U}=(U, \E , \psi, s)$ on $U_{\rindex}$.  
\begin{itemize}
    \item \label{projection}
    $ \mathcal{S}_{\rindex}^i $ is said to be a \textbf{projection} of $\mathcal{S}$ if 
    there exists a $G$-equivariant bundle map 
    $
    P:  
    W_{\rindex} \to W_i$ which fiberwise is a surjective linear map
    such that the following holds. 
    \begin{enumerate}[a)]
        \item $P_{G!}(\tau_{\rindex}) = \tau_i $.  
        \item  For each $\epsilon \in (0,1]$, 
       $ \s_i^{\epsilon} \circ P= \s_{\mathfrak{r}}^{\epsilon}
       $.  
    \end{enumerate}
    \item $\mathcal{S}_{\rindex}^1$ is said to be \textbf{equivalent} to $\mathcal{S}_{\rindex}^2$ if there exist $G$-equivariant CF-perturbation representatives $\Tilde{S}_j$ of $\mathcal{U}$ on $U_{\mathfrak{r}}$, 
    $j = 0, \ldots, 2N$ such that 
    \begin{enumerate}[a)]
        \item $\forall 0\leq k \leq N-1$, $\Tilde{\mathcal{S}}_{2k}$ and $\Tilde{\mathcal{S}}_{2k+2}$ are both projections of $\Tilde{\mathcal{S}}_{2k+1}$, and 
        \item $\Tilde{\mathcal{S}}_{0} = \mathcal{S}_{\rindex}^1$ and 
        $\Tilde{\mathcal{S}}_{2N} = \mathcal{S}_{\rindex}^2$. 
    \end{enumerate}
\end{itemize}
\end{defn}

\begin{defn}[$G$-equivariant CF-perturbation representative on a Kuranishi chart]
A \textbf{$G$-equivariant CF-perturbation representative} on a $G$-equivariant Kuranishi chart $\mathcal{U}=(U, \E , \psi, s)$ on $\kur$ is a collection of data
$ \{\mathcal{S}_{\rindex}  \mid \rindex \in \mathfrak{R}  \}$, where 
\[  \mathcal{S}_{\rindex} = \{ \mathcal{S}_{\rindex}^{\epsilon}= (W_{\rindex}\xrightarrow{\nu_{\rindex}}U_{\rindex}, \s^{\epsilon}_{\rindex}, \tau_{\rindex})\mid \epsilon \in (0,1]\}, \]
such that the following holds.  
\begin{enumerate}[i)]
    \item $\forall \rindex \in \Rindex$, $\mathcal{S}_{\rindex}$ is a $G$-equivariant CF-perturbation representative of $ \mathcal{U}$ on a $G$-invariant open subset $U_{\rindex}$ of $U$. 
    \item $\bigcup\limits_{\rindex \in \Rindex} U_{\rindex} = U$. 
    \item If $x\in U_{\rindex_1} \cap  U_{\rindex_2}$ for some $\rindex_1, \rindex_2 \in \Rindex$, 
    then there exists a $G$-invariant open subset 
    $U_{\rindex_{12}} \subset U_{\rindex_1} \cap  U_{\rindex_2}$ 
    such that the following holds. 
    For each $k\in \{1,2\}$, let $i_{k}: U_{\rindex_{12}} \hookrightarrow U_{\rindex_k}$ be the $G$-equivariant inclusion map 
    and let 
     \[ i_k^* \mathcal{S}_{\rindex_k}  = \set{\lrp{ i_k^*W_{\rindex_k} \to U_{\rindex_{12}}, i_k^*\tau_{\rindex_k}, i_k^*\s_{\rindex_k}^{\epsilon}}}{\epsilon \in (0,1] }  \]
     be the restriction of $\mathcal{S}_{\rindex_k} $ to $U_{\rindex_{12}}$. 
    Then we require the CF-perturbation representatives $ i_1^* \mathcal{S}_{\rindex_1},  i_2^* \mathcal{S}_{\rindex_2}$ of $\mathcal{U}$ on $U_{\rindex_{12}}$ to be equivalent as in Definition \ref{equivalent local CF-perturbations}. 
\end{enumerate}
\end{defn} 

\begin{defn}[Equivalent $G$-equivariant CF-perturbations on a Kuranishi chart]
\label{equivalent CF-perturbations on a chart}
Let $\mathcal{S}^{(1)} = \{ S_{\rindex}  \mid \rindex \in \Rindex \}$ and $\mathcal{S}^{(2)} = \{ S_{\mathfrak{j} }  \mid \mathfrak{j}\in \mathfrak{J} \} $ be $G$-equivariant CF-perturbation representatives on a $G$-equivariant Kuranishi chart $\mathcal{U}=(U, \E , \psi, s)$. Then the $G$-equivariant CF-perturbations $\mathcal{S}^{(1)}, \mathcal{S}^{(2)}$ on $\mathcal{U}$ are said to be \textbf{equivalent} if, whenever $x\in U_{\mathfrak{r}}\cap U_{\mathfrak{j}}$ for some $\mathfrak{r} \in \Rindex , \mathfrak{j}\in \mathfrak{J}$, 
there exists a $G$-invariant suborbifold $U_{\mathfrak{rj}} \subset U$ such that $U_{\mathfrak{rj}}  \subset U_{\mathfrak{r}}\cap U_{\mathfrak{j}}$ and the following holds. 
Let $i_{\rindex}: U_{\mathfrak{rj}} \hookrightarrow U_{\rindex}$ and $i_{\mathfrak{j}}: U_{\mathfrak{rj}} \hookrightarrow U_{\mathfrak{j}}$ be the $G$-equivariant inclusion maps. 
Then we require $i_{\rindex}^* \mathcal{S}_{\mathfrak{r}}, i_{\mathfrak{j}}^* \mathcal{S}_{\mathfrak{j}}$ to be equivalent on $ U_{\mathfrak{rj}}$. 
\end{defn}

\begin{defn}[$G$-equivariant CF-perturbation on a Kuranishi chart]
\label{CF-perturbation on a chart}
Let $\mathcal{S}$ be a $G$-equivariant CF-perturbation representative on a $G$-equivariant Kuranishi chart $\mathcal{U}$. 
A \textbf{$G$-equivariant CF-perturbation} on $\mathcal{U}$ represented by $\mathcal{S}$ is the class 
\[ [\mathcal{S}] = \set{ \mathcal{S}' }
{ 
\begin{aligned}
& \mathcal{S}' \text{ is a CF-perturbation representative on } \mathcal{U}, \\    
& \mathcal{S}' \text{ is equivalent to }  \mathcal{S}
\end{aligned}
} \] of $G$-equivariant CF-perturbation representatives on $\mathcal{U}$ that are equivalent to $\mathcal{S}$. 
\end{defn}

\begin{defn}
Let  $\mathcal{U}=(U, \E , \psi, s)$ be a $G$-equivariant Kuranishi chart and let 
   \[ \mathcal{S}_{\rindex} = \{ \mathcal{S}_{\rindex}^{\epsilon} = (W_{\rindex} \xrightarrow{\nu_{\rindex}} U_{\rindex}, \tau_{\rindex} , \s_{\rindex}^{\epsilon})\mid \epsilon \in (0,1]\} \] 
   be a CF-perturbation on a $G$-invariant open subset $U_{\rindex} \subset U$. 
\begin{enumerate}[i)]
    \item \label{local CF-perturbation transverse to zero}
   $\mathcal{S}_{\rindex}$ is said to be \textbf{transverse to zero} if, 
    $\forall 0<\epsilon\leq 1$, the map $\restr{\s_{\rindex}^{\epsilon}}{W_{\rindex}^{\epsilon}}$ is transverse to the zero section on some $G$-invariant neighborhood
    $ W_{\rindex}^{\epsilon}\subset W_{\rindex}$ of the support of $\tau_{\rindex}$. 
     \item \label{strongly submersive with respect to a local CF-perturbation} 
    Let $L$ be a smooth manifold. A $G$-equivariant smooth map $f_{\rindex}: U_{\rindex}\to L$ is said to be \textbf{strongly submersive} with respect to $\mathcal{S}_{\rindex}$ if $\mathcal{S}_{\rindex}$ is transverse to zero and, 
    $\forall 0<\epsilon\leq 1$, 
    , the map 
    \[ \restr{f_{\rindex}\circ \nu_{\rindex}}{(\s_{\mathfrak{r}}^{\epsilon})^{-1}(0)}: (\s_{\mathfrak{r}}^{\epsilon})^{-1}(0) \to L \] 
    is a submersion on some $G$-invariant neighborhood
    $ W_{\rindex}^{\epsilon}\subset W_{\rindex}$ of the support of $\tau_{\rindex}$. 
    \item \label{strongly transverse to a smooth map with respect to a local CF-perturbation} 
    Let $f_{\rindex}: U_{\rindex}\to L$ be strongly submersive with respect to $\mathcal{S}_{\rindex}$ and $g: N\to L$ be a smooth manifold between manifolds. We say $f_{\rindex}$ is \textbf{strongly transverse to $g$} if $\mathcal{S}_{\rindex}$ is transverse to zero and, for any $\epsilon\in(0,1]$ and any $x\in (\s_{\mathfrak{r}}^{\epsilon})^{-1}(0)$, the map $\restr{f_{\rindex}\circ \nu_{\rindex}}{(\s_{\mathfrak{r}}^{\epsilon})^{-1}(0)}$ is transverse to $g$. 
\end{enumerate}
\end{defn}

\begin{defn} 
Let  $\mathcal{U}=(U, \E , \psi, s)$ be a $G$-equivariant Kuranishi chart. Let $[\mathcal{S}]$ be a $G$-equivariant CF-perturbation on $\mathcal{U}$, where $\mathcal{S} = \{\mathcal{S}_{\rindex}\mid \rindex \in \Rindex\}$ and 
    \[ \mathcal{S}_{\rindex} = \{ \mathcal{S}_{\rindex}^{\epsilon} = (W_{\rindex} \xrightarrow{\nu_{\rindex}} U_{\rindex}, \tau_{\rindex} , \s_{\rindex}^{\epsilon})\mid \epsilon \in (0,1]\}.  \]
\begin{enumerate}[i)]
    \item \label{CF-perturbation transverse to zero on a chart}
   $[\mathcal{S}]$ is said to be \textbf{transverse to zero} if $ \mathcal{S}_{\rindex} $ is transverse to zero for all $\rindex \in \Rindex$. 
    \item \label{strongly submersive with respect to a CF-perturbation on a chart} 
    Let $L$ be a smooth manifold. Let $i_{\mathfrak{r}}: U_{\mathfrak{r}} \to U$ be the $G$-equivariant inclusion map. A $G$-equivariant smooth map $f: U\to L$ is said to be \textbf{strongly submersive} with respect to $[\mathcal{S}]$ if $[\mathcal{S}]$ is transverse to zero and $f\circ i_{\mathfrak{r}}$ is strongly submersive with respect to $S_{\rindex}$ for all $\rindex\in \Rindex$. 
    \item We can define strong transversality between a strongly submersive map from a Kuranishi chart and a smooth map from a smooth manifold to the same target smooth manifold similarly. 
\end{enumerate}
\end{defn}

\begin{defn}[$G$-equivariant CF-perturbation on a Kuranishi space] 
\label{CF-perturbation on a Kuranishi space}
 A \textbf{$G$-equivariant CF-perturbation} on a space $\kur$ with a Kuranishi structure with corners
\[ 
\widehat{\mathcal{U}} =  \left(\set{ \mathcal{U}_p }{ p\in \mathcal{M}},   
\set{\vec{\alpha} _{pq}}{ p\in \mathcal{M}, q\in \image \psi_p } \right) 
\]
 is a collection 
 \[ \widehat{\mathcal{S}} = \set{ [\mathcal{S}_p] }{p\in \kur}\]
 such that the following holds. 
 \begin{enumerate}[i)]
     \item 
     For each $p\in \kur$, $[\mathcal{S}_p]$ is a $G$-equivariant CF-perturbation on $\mathcal{U}_p$ represented by 
     \[ \mathcal{S}_p = \set{\mathcal{S}_p^{\epsilon} = (W_p , \s_p^{\epsilon},\tau_p ) }{\epsilon \in (0,1]}   \] 
     on $\mathcal{U}_p$. 
     \item For any $p\in \kur$, $q\in \image \psi_p$, the data $\left[\restr{\mathcal{S}_q}{U_{pq}}\right]$ and $[\mathcal{S}_p]$ are compatible with the $G$-equivariant Kuranishi coordinate change
     \[ \Vec{\alpha}_{pq} = \left(\alpha_{pq}: U_{pq} \to U_p, \, \widehat{\alpha}_{pq}: \restr{\E_q}{U_{pq}}\to \E_p\right) \] in the following sense. 
     \begin{enumerate}[a)]
         \item For each $x\in U_{pq}$, $y = \alpha_{pq}(x)$, 
         there exist a
         $G$-invariant open neighborhood $U_{q,x}\subset U_{pq}$ of $x$ and a $G$-invariant open neighborhood  $U_{p,y}= \alpha_{pq}(U_{q,x})$ of $y$ such that there exist $G$-equivariant
         CF-perturbation representatives 
        \[ 
        \mathcal{S}_{q,x} =\left(W_{q,x}, \s_{q,x}^{\epsilon}, \tau_{q,x}\right),\qquad 
        \mathcal{S}_{p,y} =\left(W_{p,y} , \s_{p,y} ^{\epsilon}, \tau_{p,y}  \right)
        \] 
         of $\left[\restr{\mathcal{S}_q}{U_{q,x}}\right]$, $\left[\restr{\mathcal{S}_p}{U_{p,y}}\right] $ satisfying the following. 
          \begin{itemize}
              \item $W_{q,x}\subset W_{q}$,  $W_{p,y}\subset W_{p}$ are $G$ invariant suborbifolds such that $W_{q,x} \xrightarrow{h_{pq}} W_{p,y}$ is a $G$-equivariant diffeomorphism.
            \item $(h_{pq})_{G!}(\tau_{q,x}) =  \tau_{p,y}$.
             \item 
              $\s^{\epsilon}_{q,x} = \restr{\s^{\epsilon}_{q}}{ W_{q,x}}$, 
              $\s^{\epsilon}_{p,y}  = \restr{\s^{\epsilon}_{p}}{W_{p,y}}$. 
              \item For each $\epsilon \in (0,1]$, the following diagram commutes. 
              \[
              \xymatrix{
             \restr{\E_q}{U_{q,x}}
              \ar[r]^{\widehat{\alpha}_{pq}}
              & 
             \restr{\E_p}{U_{p,x}}
               \\
             W_{q,x} 
            \ar[u]^{\s_{q,x}^{\epsilon}}
            \ar[r]_{ h_{pq}}
            & 
            W_{p,y} 
            \ar[u]_{\s_{p,y}^{\epsilon}}
              }
        \]
          \end{itemize}
     \end{enumerate}
 \end{enumerate}
\end{defn}
We can define a $G$-equivariant CF-perturbation 
 \[ \widetriangle{\mathcal{U}} = \left( (\Pindex, \leq ) 
    \, , \, 
    \left\{ \mathcal{U}_{\p}\mid \p\in \Pindex \right\}
    \, , \, 
    \set{\Vec{\alpha}_{\pq}}{ \p, \q\in \Pindex, \q \leq \p } \right)
    \]
on a $G$-equivariant good coordinate system similarly. 

\begin{defn}[$G$-invariant partition of unity on a good coordinate system]
\label{definition of G-invariant partition of unity}
Let $\left(\kur,  \widetriangle{\mathcal{U}}\right)$ be a space with $G$-equivariant good coordinate system
\[ \widetriangle{\mathcal{U}} = \left( (\Pindex, \leq ) 
    \, , \, 
    \left\{ \mathcal{U}_{\p}\mid \p\in \Pindex \right\}
    \, , \, 
    \set{\Vec{\alpha}_{\pq}}{ \p, \q\in \Pindex, \q \leq \p } \right).
\] 
Let $ \lrp{\widetriangle{\mathcal{K}}, \widetriangle{\mathcal{K}}^{++}}$ be a $G$-equivariant support pair and let $\delta > 0$. 
Take a $G$-invariant metric on $|\mathcal{K}^{++}|$. 
A collection of functions $\{\chi_{\p}\mid \p\in \Pindex\}$
is said to be a $G$-invariant \textbf{partition of unity} on $\lrp{\kur,  \widetriangle{\mathcal{U}}}$ with respect to the data 
 $\left(\widetriangle{\mathcal{K}}, \widetriangle{\mathcal{K}}^{++}, \delta \right)$
if it satisfies the following. 
\begin{enumerate}[i)] 
    \item For each $\p$, let 
     \[ \Omega_{\p}(\mathcal{K},\delta) := \{ x \in \mathcal{K}_{\p}^{++} \mid d(x, \mathcal{K}_{\p}) <\delta \}. \] 
    \item For each $\p$, $\chi_{\p}: |\mathcal{K}^{++}| \to [0,1]$ is a $G$-invariant strongly smooth function in the following sense: 
    \[\restr{\chi_{\p} }{\mathcal{K}_{\p}^{++} \cap \Omega_{\p}(\mathcal{K},\delta)} :  \mathcal{K}_{\p}^{++}  \cap \Omega_{\p}(\mathcal{K},\delta)   \to [0,1] \]
    is $G$-invariant and smooth. 
    \item For each $\p$, we require
    $\supp  \chi_{\p} \subset \Omega_{\p}(\mathcal{K},\delta)$.  
    \item There exists an open neighborhood $\mathcal{N}$ of $\kur$ in $|\mathcal{K}^{++}|$ such that 
    \[ \sum_{\p} \chi_{\p}(x) =1 \qquad \forall x\in \mathcal{N}.  \]
\end{enumerate}
\end{defn}

\begin{lem}
\label{existence of partition of unity}
      There exists a $G$-invariant partition of unity satisfying definition \ref{definition of G-invariant partition of unity} subordinate to the $G$-equivariant good coordinate system associated with the Kuranishi structure on $\Mq_{k+1}(L,J,\beta)$ in Proposition \ref{GKur structure on moduli space}.  
\end{lem}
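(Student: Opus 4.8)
\textbf{Proof strategy for Lemma \ref{existence of partition of unity}.}

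The plan is to reduce the construction of a $G$-invariant partition of unity to the non-equivariant case treated in \cite{kurbook}, using the freeness of the $G$-action together with an averaging argument. First I would invoke Lemma \ref{Existence of KG-embedding} to pass from the $G$-equivariant Kuranishi structure $\widehat{\mathcal{U}}$ on $\Mq_{k+1}(L,J,\beta)$ (Proposition \ref{GKur structure on moduli space}) to an associated $G$-equivariant good coordinate system $\widetriangle{\mathcal{U}} = \left( (\Pindex, \leq), \{\mathcal{U}_{\p}\}_{\p\in \Pindex}, \{\Vec{\alpha}_{\pq}\} \right)$, and fix a $G$-equivariant support pair $\lrp{\widetriangle{\mathcal{K}}, \widetriangle{\mathcal{K}}^{++}}$ and a $G$-invariant metric on $|\mathcal{K}^{++}|$ (the existence of a $G$-invariant metric follows by averaging any metric over the compact group $G$ using Haar measure). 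Fix $\delta > 0$ as in Definition \ref{definition of G-invariant partition of unity}.

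Next I would build the functions $\chi_{\p}$. The key point is that the $G$-action on each Kuranishi chart $\mathcal{U}_{\p} = (U_{\p}, \E_{\p}, \psi_{\p}, s_{\p})$ is free by hypothesis, so the standard construction of a (non-equivariant) partition of unity subordinate to $\widetriangle{\mathcal{U}}$, carried out exactly as in \cite{kurbook} (see the proof of the existence of partitions of unity on good coordinate systems there), produces strongly smooth functions $\chi_{\p}^0 : |\mathcal{K}^{++}| \to [0,1]$ with $\supp \chi_{\p}^0 \subset \Omega_{\p}(\mathcal{K},\delta)$ and $\sum_{\p} \chi_{\p}^0 \equiv 1$ on a neighborhood $\mathcal{N}$ of $\kur$. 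I would then average each $\chi_{\p}^0$ over $G$:
\[
\chi_{\p}(x) = \int_{G} \chi_{\p}^0(g\cdot x) \, dg,
\]
where $dg$ is the normalized Haar measure on $G$. Since the $G$-action on $|\mathcal{K}^{++}|$ is induced from the free smooth $G$-actions on the charts and the support sets $\mathcal{K}_{\p}^{++}, \mathcal{K}_{\p}$ and the metric are all $G$-invariant, each $\Omega_{\p}(\mathcal{K},\delta)$ is $G$-invariant, so $\supp \chi_{\p} \subset \Omega_{\p}(\mathcal{K},\delta)$ is preserved; $\chi_{\p}$ is $G$-invariant and strongly smooth by differentiating under the integral (using smoothness of the action and compactness of $G$); the range stays in $[0,1]$ by convexity; and since $\mathcal{N}$ can be shrunk to a $G$-invariant open neighborhood of $\kur$, we get $\sum_{\p}\chi_{\p}(x) = \int_G \sum_{\p}\chi_{\p}^0(g\cdot x)\, dg = \int_G 1 \, dg = 1$ on that neighborhood. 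This verifies all four conditions of Definition \ref{definition of G-invariant partition of unity}.

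The main obstacle I anticipate is not the averaging itself but the bookkeeping needed to verify that the non-equivariant construction from \cite{kurbook} can be performed compatibly with the good coordinate change maps $\Vec{\alpha}_{\pq}$ on the heterodimensional compactum $|\mathcal{K}^{++}|$ — that is, checking that strong smoothness in the sense of Definition \ref{definition of G-invariant partition of unity}(ii) is genuinely preserved under restriction and under the embeddings $\alpha_{\pq}$, and that the supports nest correctly with respect to the partial order on $\Pindex$. Once the underlying non-equivariant partition of unity is in hand on $|\mathcal{K}^{++}|$ with these compatibility properties (which is exactly what \cite{kurbook} provides, applied to the good coordinate system of Lemma \ref{Existence of KG-embedding}), the equivariant enhancement is routine. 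I would therefore state the verification of compatibility as a direct citation to \cite{kurbook} and devote the written proof mainly to the averaging step and the $G$-invariance of the relevant support data.
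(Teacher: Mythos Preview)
Your proposal is correct and follows essentially the same approach as the paper: fix a $G$-equivariant support pair and $G$-invariant metric, invoke the non-equivariant existence result from \cite{kurbook} (the paper cites Proposition 7.68 specifically) to obtain a partition of unity, and then average over $G$ to make it $G$-invariant. The paper's proof is terser and omits the verification that the conditions of Definition \ref{definition of G-invariant partition of unity} survive averaging, which you spell out in more detail, but the underlying argument is identical.
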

\begin{proof}
Let $\lrp{\widetriangle{\mathcal{K}}, \widetriangle{\mathcal{K}}^{++}}$ be a $G$-equivariant support pair of the given good coordinate system. Take a $G$-invariant metric on $|\mathcal{K}^{++}|$. 
Then by \cite{kurbook} Proposition 7.68, if $\delta >0$ is sufficiently small, a partition of unity $\tilde{\chi}_{\p}$ associated with the data  $\lrp{\kur, \widetriangle{\mathcal{U}},  \widetriangle{\mathcal{K}}, \widetriangle{\mathcal{K}}^{++},\delta}$ that may not be $G$-invariant exists. 
We average $\tilde{\chi}_{\p}$ with respect to the $G$-action to obtain $\chi_{\p}$, which is now $G$-invariant. 
\end{proof}

We will define some sheaves of $G$-equivariant CF-perturbations via \'{e}tale spaces. 
\begin{defn}[\'{E}tale space]
An \textbf{\'{e}tale space} over a topological space $Y$ is a pair $\lrp{\mathcal{A}, p}$ consisting of a topological space $\mathcal{A}$ and a continuous map $p: \mathcal{A} \to Y$ such that 
$p$ is a local homeomorphism. 
\end{defn}

Given an \'{e}tale space over $Y$, one can construct a sheaf as follows. 
Let $\Omega$ be any open subset of $Y$. We assign $\Omega$ the set of sections
\[\Gamma(\Omega, \mathcal{A}) = \{s:\Omega\to \mathcal{A}\mid s \text{ is continuous, } p\circ s = \id_{\Omega} \}. \] 

\begin{defn}[Sheaf of $G$-equivariant CF-perturbations on a chart]
\label{Sheaf on a chart}
Let $\mathcal{U}_{\p}$ be a $G$-equivariant Kuranishi chart. For any open subset $\Omega /G \subset U_{\p}/G$, which is a quotient of a $G$-invariant open subset $\Omega$ of $U_{\p}$, let 
\[ \mathcal{CF}^{G, \mathcal{U}_{\p}}(\Omega)\] be the set of $G$-equivariant CF-perturbations of $\mathcal{U}_{\p}$ on  $\Omega$. 
Similar to \cite{kurbook} Proposition 7.22, $\mathcal{CF}^{G, \mathcal{U}_{\p}}$ defines a sheaf on $U_{\p}/G$. 
We define the \textbf{stalk} of $\mathcal{CF}^{G, \mathcal{U}_{\p}}$ at a point $x\in \mathcal{U}_{\p}$ by taking the direct limit
\begin{equation}
( \mathcal{CF}^{G, \mathcal{U}_{\p}} )_x = \varinjlim_{\Omega\ni x}     \mathcal{CF}^{G, \mathcal{U}_{\p}}(\Omega), 
\end{equation} 
where $\Omega$ runs through all $G$-invariant open subsets of $U_{\p}$ containing $x$. 
Indeed, the direct limit, up to isomorphism, can be constructed as 
\[ \bigsqcup_{\Omega\ni x} \mathcal{CF}^{G, \mathcal{U}_{\p}}(\Omega)/\sim,\]
where, if $[\mathcal{S}_1]\in  \mathcal{CF}^{G, \mathcal{U}_{\p}}(\Omega)$ and $[\mathcal{S}_2]\in  \mathcal{CF}^{G, \mathcal{U}_{\p}}(\Omega')$, 
then $[\mathcal{S}_1]\sim [\mathcal{S}_2]$ if and only if there exists a $G$-invariant $\Omega''\subset \mathcal{U}_{\p}$, contained in both $\Omega$ and $\Omega'$, such that $x\in \Omega''$ and $[\restr{\mathcal{S}_1}{\Omega''}] = [\restr{\mathcal{S}_2}{\Omega''}]$. 

For each $G$-invariant open subset $\Omega \subset U_{\p}$ containing $x$, there is a map
\begin{equation}
\label{direct limit map}
\mathcal{CF}^{G, \mathcal{U}_{\p}}(\Omega) \to ( \mathcal{CF}^{G, \mathcal{U}_{\p}} )_x, \qquad [\mathcal{S}]\mapsto [\mathcal{S}_x].   
\end{equation}
A member of $( \mathcal{CF}^{G, \mathcal{U}_{\p}} )_x$ is called a \textbf{germ}. 
\end{defn}

\begin{defn}
Let $\lrp{\kur, \widetriangle{\mathcal{U}}}$ be a space with a $G$-equivariant good coordinate system. 
Suppose $G$ acts on each Kuranishi chart freely. 
Let $\widetriangle{\mathcal{K}}$ be a support system on $\widetriangle{\mathcal{U}}$ and let $|\mathcal{K}|$ be as in \autoref{heterodimensional compactum}. 
Let $x \in |\mathcal{K}|$ and 
    $Q:  \bigsqcup K_{\p} \to |\mathcal{K}| $ be the map that identifies equivalent elements. 
    Suppose 
    \begin{equation}
    \label{Index of charts containing x}
    \Pindex (x) = \{\p \in \Pindex \mid Q^{-1}(x) \cap K_{\p} \ne \emptyset\} = \{\p_1 \leq \cdots \leq  \p_k\},     
    \end{equation}
where $\p_1 ,\ldots, \p_k$ are distinct. 
We denote the maximal such $\p_k$ by $\p(x)$. 
We introduce the following notations. 

 \begin{enumerate}[i)] 
     \item Define 
     \begin{equation}
    (\CFKG)_x 
    =
    \set{ [\mathcal{S}_x] \in ( \mathcal{CF}^{G, \mathcal{U}_{\p(x)}} )_x }{ \mathcal{S}_x \text{ is restrictable to } U_{\p}\quad \forall \p \in \Pindex(x)}. 
\end{equation} 
Let 
     \[ |\CFKG| = \bigcup_{[x]\in |\mathcal{K}|/G} \{[x]\}\times (\CFKG)_x \]
     and $p_{\mathcal{K}}^G: |\CFKG| \to |\mathcal{K}|/G, \quad ([x], [\mathcal{S}_x])\mapsto [x]$. 
     We can topologize $|\CFKG|$ in a way similar to \cite{kurbook} Definition 12.20 such that $p_{\mathcal{K}}^G$ becomes a local homeomorphism. 
     Then \[ \Omega/G \mapsto \CFKG(\Omega) =  \set{[\mathcal{S}]:\Omega \to|\CFKG|}{ [\mathcal{S}] \text{ is continuous}, p_{\mathcal{K}}^G \circ [\mathcal{S}] = \id_{\Omega/G}} \]
     defines a sheaf on $|\mathcal{K}|/G$. 
     \item 
     Define \begin{equation}
(\CFKGtranstozero)_x 
    =
    \set{ [\mathcal{S}_x] \in (\CFKG)_x }{ \mathcal{S}_x \text{ is transverse to zero}}. 
\end{equation}
One can similarly obtain a sheaf $\CFKGtranstozero$ on $|\mathcal{K}|/G$. 
\item  
Let $\widetriangle{f}: \left(\kur, \widetriangle{\mathcal{U}}\right) \to L$ 
be a $G$-equivariant strongly smooth map. Define
\begin{equation}
    (\mathcal{CF}_{\pitchfork f,\mathcal{K}}^{G})_x 
    =
    \set{ [\mathcal{S}_x] \in (\CFKG)_x }{ \widetriangle{f} \text{ is strongly submersive with respect to } \mathcal{S}_x}. 
\end{equation}
This defines a sheaf $\mathcal{CF}_{\pitchfork f,\mathcal{K}}^{G}$ on $|\mathcal{K}|/G$. 
\item 
Let $\widetriangle{f}: \left(\kur, \widetriangle{\mathcal{U}}\right) \to L$ 
be a $G$-equivariant strongly smooth map and $g: N\to L$ be a smooth map between manifolds. 
Define
\begin{equation}
    (\mathcal{CF}_{ f \pitchfork g, \mathcal{K}}^{G})_x 
    =
    \set{ [\mathcal{S}_x] \in (\CFKG)_x }
    { \begin{aligned}
    & \widetriangle{f} \text{ is strongly transverse to } g\\
    & \text{ with respect to } \mathcal{S}_x
    \end{aligned}} . 
\end{equation}
This defines a sheaf  $\mathcal{CF}_{ f \pitchfork g, \mathcal{K}}^{G}$ on $|\mathcal{K}|/G$.
 \end{enumerate}   
\end{defn}

\begin{defn}[Strongly transverse]
 \label{strongly transverse to 0}
Let $\mathcal{U}$ be a $G$-equivariant Kuranishi chart on $\kur$ and let $x\in \mathcal{U}$. 
Suppose $[\mathcal{S}_x]$ 
is a germ at $x$. 
Let $ (W_x\xrightarrow{\nu_x} U_x, \tau_x, \{\s_{x}^{\epsilon}\})$ be a representative of $[\mathcal{S}_x]$. 
Let $\lrp{V_x,\Gamma_x, F_x, \varphi_x, \widehat{\varphi}_x^{W}}$ and $\lrp{V_x,\Gamma_x, E_x, \varphi_x, \widehat{\varphi}_x^{\E}}$ 
be orbibundle charts of $W,\E$ at $x$, respectively. 
In particular, there exists a unique $o_x\in V_x$ such that $ \varphi_x(o_x) = x$.  
\begin{equation}
\label{local chart of CF-perturbations}
\xymatrix
{
V_x\times F_x 
\ar[r]^{\widehat{\varphi}_x^{W}}
\ar[d]_{pr_1}
&
W_x 
\ar[d]^{\nu_x}
\\
V_x 
\ar[r]^{\varphi_x} 
& 
U_x
}, 
\quad 
\xymatrix
{
V_x\times E_x   
\ar[r]^{\widehat{\varphi}_x^{\E}}
\ar[d]_{pr_1}
&
\restr{\E}{U_x} 
\ar[d]^{\pi}
\\
V_x 
\ar[r]^{\varphi_x} 
& 
U_x
}.  
\end{equation}
Let $\overline{\s}_x^{\epsilon} = pr_2\circ \lrp{\widehat{\varphi}_x^{\E}}^{-1}\circ  \s_x^{\epsilon}\circ \widehat{\varphi}_x^{W}:  V_x\times F_x \to E_x $. 
We say $[\mathcal{S}_x]$ is \textbf{strongly transverse} if, $\forall \epsilon\in (0,1]$, 
the derivative 
\[ \nabla_{(v,\xi)}^W \overline{\s}_x^{\epsilon} : T_{\xi}F_x \to T_{c}E_x \]
in the $F_x$-direction is surjective for all $(v,\xi)\in \lrp{\widehat{\varphi}_x^{W_x}}^{-1}\lrp{\nu_x^{-1}(o_x)\cap \supp(\tau_x)}$. 
     Define \begin{equation}
(\mathcal{CF}_{\pitchfork\pitchfork 0}^{G })_x
    =
    \set{ [\mathcal{S}_x] \in (\CFKG)_x }{ \mathcal{S}_x \text{ is strongly transverse}}. 
\end{equation}
\end{defn}

\begin{prop}
\label{soft sheaves}
Let $\lrp{\kur, \widetriangle{\mathcal{U}}}$ be a space with a $G$-equivariant good coordinate system. 
Suppose $G$ acts on each chart freely. 
Let $\widetriangle{\mathcal{K}}$ be a support system on $\widetriangle{\mathcal{U}}$ and let $|\mathcal{K}|$ be as in \autoref{heterodimensional compactum}.
\begin{enumerate}[i)]
\item 
The sheaves
$\CFKG, \CFKGtranstozero$ 
are soft. 
\item If $\widetriangle{f}$ is weakly submersive then the sheaf $\mathcal{CF}_{\pitchfork f,\mathcal{K}}^{G}$ is soft. 
If $\widetriangle{f}$ is weakly transverse to $g$, then $\mathcal{CF}_{ f \pitchfork g, \mathcal{K}}^{G}$ is soft.
\end{enumerate}
\end{prop}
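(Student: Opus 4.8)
The statement asserts that various sheaves of $G$-equivariant CF-perturbations — $\CFKG$, $\CFKGtranstozero$, and (under appropriate hypotheses) $\CFKGtransf$ and $\CFKGftranstog$ — are \emph{soft} on the paracompact Hausdorff space $|\mathcal{K}|/G$. The plan is to reduce to the known non-equivariant statement, namely \cite{kurbook} Theorem~7.49 (softness of the corresponding sheaves of CF-perturbations), by exploiting the fact that $G$ acts freely on each chart so that $|\mathcal{K}|/G$ is an honest (heterodimensional) quotient and the quotient charts $U_{\p}/G$ again carry good-coordinate-system data. First I would recall the elementary topological criterion: a sheaf $\mathcal{F}$ on a paracompact Hausdorff space $Y$ is soft if and only if every section over a closed set extends to a section over an open neighborhood, and this is guaranteed as soon as $\mathcal{F}$ admits a ``partition-of-unity-type'' gluing, i.e. for any two sections $s_1, s_2$ defined near a closed set and a closed cover there is a way to interpolate them within the sheaf. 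For CF-perturbations the interpolation mechanism is exactly the one in \cite{kurbook}: given local representatives on $\Omega_{\p}$, one takes fiber products of the orbibundles $W_{\p}$ and uses a partition of unity $\{\chi_{\p}\}$ on the good coordinate system to form a convex-combination-type perturbation $\sum_{\p}\chi_{\p}\s_{\p}^{\epsilon}$ on the fiber-product orbibundle.

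The key steps, in order. \emph{Step 1:} Observe that since $G$ acts freely and smoothly on each chart $\mathcal{U}_{\p}$, the quotients $\mathcal{U}_{\p}/G$ form a good coordinate system on $|\mathcal{K}|/G$ (this is the Lemma in Section~\ref{GKur section} on quotients of Kuranishi structures / good coordinate systems), and a $G$-equivariant CF-perturbation of $\mathcal{U}_{\p}$ on a $G$-invariant open $\Omega$ descends, via the freeness of the action and the requirement that $W_{\rindex}$ be the total space of a $G$-orbibundle, to an ordinary CF-perturbation of $\mathcal{U}_{\p}/G$ on $\Omega/G$; conversely any ordinary CF-perturbation downstairs pulls back. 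More precisely, I would establish a sheaf isomorphism $\CFKG \cong \mathcal{CF}_{\mathcal{K}/G}$, where the right-hand side is the sheaf of ordinary CF-perturbations on the quotient good coordinate system, and similarly $\CFKGtranstozero \cong \mathcal{CF}_{\pitchfork 0, \mathcal{K}/G}$, $\CFKGtransf \cong \mathcal{CF}_{\pitchfork \bar f, \mathcal{K}/G}$ (using that $\widetriangle{f}$ being $G$-equivariant and strongly submersive is equivalent to the induced map $\widetriangle{f}/G$ being strongly submersive on the quotient, since submersivity is fiberwise and the $G$-orbits map into $G$-orbits), and likewise for the relative-transversality sheaf. \emph{Step 2:} Invoke \cite{kurbook} Theorem~7.49 to conclude that the downstairs sheaves $\mathcal{CF}_{\mathcal{K}/G}$, $\mathcal{CF}_{\pitchfork 0,\mathcal{K}/G}$, $\mathcal{CF}_{\pitchfork \bar f,\mathcal{K}/G}$, $\mathcal{CF}_{\bar f\pitchfork g,\mathcal{K}/G}$ are soft on $|\mathcal{K}|/G$, for which one needs the existence of a partition of unity on the quotient good coordinate system — and this is precisely Lemma~\ref{existence of partition of unity} (and its straightforward extension to general good coordinate systems with corners, obtained by the same $G$-averaging trick). \emph{Step 3:} Transport softness back through the sheaf isomorphism of Step~1: a sheaf isomorphic to a soft sheaf is soft.

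The main obstacle I anticipate is \emph{Step 1}: verifying carefully that the equivalence relation on CF-perturbation representatives (Definition~\ref{equivalent local CF-perturbations}, via projections and the chains $\Tilde{\mathcal{S}}_j$) descends to and lifts from the quotient compatibly, so that the assignment $[\mathcal{S}]\mapsto [\mathcal{S}/G]$ is well-defined on equivalence classes and bijective on stalks. One must check that a projection $P: W_{\rindex}\to W_i$ of $G$-orbibundles with $P_{G!}\tau_{\rindex}=\tau_i$ descends to a projection $P/G$ with $(P/G)_!(\tau_{\rindex}/G)=\tau_i/G$ — here the compatibility of $G$-equivariant integration along the fiber (pushforward) with taking quotients by a free action is the crucial technical point, and I would cite the discussion of $G$-equivariant integration along the fiber on Kuranishi structures in Section~\ref{integration along the fiber on Kuranishi} together with the identification $\Omega_G(W/G)\cong\Omega(W/G)$ valid for free actions. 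A secondary, more routine, point is matching up the notion of ``restrictable to $U_{\p}$ for all $\p\in\Pindex(x)$'' appearing in the definition of $(\CFKG)_x$ with the analogous condition downstairs; this is immediate once one notes $\Pindex(x)$ is unchanged under the quotient. Granting these identifications, the softness is then entirely inherited from the non-equivariant theory, and no new analysis is required.
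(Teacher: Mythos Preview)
Your approach is genuinely different from the paper's. The paper does \emph{not} pass to the quotient and invoke the non-equivariant softness result; instead it re-runs the proof of \cite{kurbook} Theorem~12.24 directly in the $G$-equivariant setting. Concretely, the paper first proves a pointwise-germ lemma: at each $x\in|\mathcal{K}|$ one constructs $[\mathcal{S}_x]\in(\CFKG)_x$ by induction along the chain $\p_1\le\cdots\le\p_k=\p(x)$ in $\Pindex(x)$, taking $W_1=\restr{\E_{\p_1}}{U_1}$ with its $G$-equivariant Thom form and $\s_1^\epsilon(w)=s_{\p_1}(\nu_1(w))+\epsilon w$, then pulling back along $G$-equivariant tubular-neighborhood projections at each step. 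To extend a section from a closed set $K$, one covers $|\mathcal{K}|\setminus K$ by finitely many such local germs, forms the product orbibundle $W_y=\prod_{\rindex}W_{\rindex}$ with product Thom form, and glues via a $G$-invariant partition of unity. The transversality and submersivity conclusions then follow because, upon forgetting the $G$-action, this construction literally coincides with the non-equivariant one in \cite{kurbook}.

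Your quotient strategy has a real obstacle at Step~1 that you have underestimated: the proposed sheaf isomorphism $\CFKG\cong\mathcal{CF}_{\mathcal{K}/G}$ does not hold as stated. The equivariant Thom form $\tau_{\rindex}\in\Omega_G(W_{\rindex})$ is genuine data in the equivalence relation (Definition~\ref{equivalent local CF-perturbations} demands $P_{G!}\tau_{\rindex}=\tau_i$ exactly, not merely up to cohomology), and for free actions the Cartan model $\Omega_G(W)=(\Omega(W)\otimes S(\g^*))^G$ is only \emph{quasi}-isomorphic to $\Omega(W/G)$, not isomorphic at the chain level. The map $\tau\mapsto\overline{\tau(0)}$ does produce an ordinary Thom form on $W/G$, and pullback of a basic Thom form gives a right inverse, but two equivariant Thom forms sharing the same $S^0(\g^*)$-component need not define equivalent CF-perturbations. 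Hence the quotient--extend--lift procedure returns, on $K$, a CF-perturbation with Thom form $q^*\overline{\tau(0)}$ rather than the original $\tau$, so the lifted global section does not restrict to the given one. Repairing this would require extending the equivariant Thom form separately and then verifying compatibility with the extended section data, at which point you are essentially carrying out the direct equivariant construction the paper gives.
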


\begin{proof}
    The proof is similar to that of \cite{kurbook} Theorem 12.24. 
    Let $\widetriangle{\mathcal{K}} =\{ K_{\p}\mid \p \in \Pindex\}$ be a $G$-equivariant support system on $\kur$ as in Definition \ref{support system}.  
    Let $|\mathcal{K}| = \bigsqcup K_{\p}/\sim $ 
    be the heterodimensional compactum as defined in \autoref{heterodimensional compactum}. 
  
    Let $x \in |\mathcal{K}|$ and 
    $Q:  \bigsqcup K_{\p} \to |\mathcal{K}| $ be the map that identifies equivalent elements. 
    Then, by the definition of a good coordinate system, we have a totally ordered set
    \begin{equation}
    \Pindex (x) = \{\p \in \Pindex \mid Q^{-1}(x) \cap K_{\p} \ne \emptyset\} = \{\p_1 \leq \ldots\leq \p_k\}     
    \end{equation}
for some positive integer $k$.  
We denote the maximal element $\p_k$ by $\p(x)$ and the minimal element $\p_1$ by $\p_-(x)$ 
\begin{lem}\label{pointwise germ}
There exists a germ $[\mathcal{S}_{x}]\in (\mathcal{CF}_{\mathcal{K}}^{G})_x $ 
such that 
$[\restr{\mathcal{S}_x}{U_{\p_1}}] \in (\mathcal{CF}_{\pitchfork\pitchfork 0}^{G, \mathcal{U}_{\p_1}})_x$. 
\end{lem}

\begin{proof}[Proof of Lemma \ref{pointwise germ}]
We construct,  for each $j$, a germ $[\mathcal{S}_{j,x}]\in (\mathcal{CF}_{\pitchfork 0}^{G, \mathcal{U}_{\p_j}})_x $ 
such that, 
$\forall 1\leq j \leq k$, 
$[\mathcal{S}_{j,x}]$ is restrictable to  $\mathcal{U}_{\p_i}$ for all $i<j$. 

We induct on $1\leq j\leq k$. 
Let $j=1$.
 Take a $G$-invariant neighborhood $U_1\subset U_{\p_1}$ of $x$ on which the coordinate change $\vec{\alpha}_{\p(x),\p_1}$ is defined. 
Let $W_1 =\restr{ \E_{\p_1}}{U_1} \xrightarrow{\nu_{1}}U_1 $ be the restriction of the obstruction bundle for $\p_1$ and $\tau_1$ be a $G$-equivariant Thom form. 
Define $\s_1^{\epsilon}: W_1 \to \restr{\E_{\p_1}}{U_1}$ by 
\begin{equation}
\s_1^{\epsilon} (w) 
= 
s_{\p_1}\circ \nu_1(w)
+
\epsilon w \qquad \forall w \in W_1.    
\end{equation}

Then 
\[ 
[\mathcal{S}_1] 
= 
\left[
\lrp{W_1\xrightarrow{\nu_{1}}U_1, \tau_1, \{\s_1^{\epsilon} \}}
\right ] 
\]
is strongly transverse. 
Then $[\mathcal{S}_{1,x}] \in (\mathcal{CF}_{\pitchfork \pitchfork 0}^{G, \mathcal{U}_{\p_1}})_x$. 
 Note that the following holds. 
\begin{itemize}
    \item $(\mathcal{CF}_{\pitchfork \pitchfork 0}^{G, \mathcal{U}_{\p_1}})_x \subset (\mathcal{CF}_{\pitchfork 0}^{G, \mathcal{U}_{\p_1}})_x$. 
    \item If $\widetriangle{f}$ is weakly submersive, then 
   $(\mathcal{CF}_{\pitchfork \pitchfork 0}^{G, \mathcal{U}_{\p_1}})_x \subset  (\mathcal{CF}_{\pitchfork f}^{G, \mathcal{U}_{\p_1}})_x $. 
   \item If $\widetriangle{f}$ is weakly transverse to $g$, then $(\mathcal{CF}_{\pitchfork \pitchfork 0}^{G, \mathcal{U}_{\p_1}})_x \subset  (\mathcal{CF}_{f\pitchfork g}^{G, \mathcal{U}_{\p_1}})_x $.
\end{itemize}

Suppose a germ 
$[\mathcal{S}_{j,x}]\in (\mathcal{CF}_{\pitchfork 0}^{G, \mathcal{U}_{\p_j}})_x$ is constructed. 
Then there exists some nonempty $G$-equivariant open subset $ U_{j,0} \subset U_{\p_j}$ such that the image of  
$[\mathcal{S}_{j}] 
= \left[ 
\lrp{W_{j} \xrightarrow{\nu_{j}} U_{j,0}, \tau_{j}, \{ \s_{j}^{\epsilon}\}}
\right]$
under the map $ \mathcal{CF}^{G, \mathcal{U}_{\p_j}}(U_j) \to ( \mathcal{CF}^{G, \mathcal{U}_{\p_j}} )_x$ is $[\mathcal{S}_{j,x}]$. 

We may take a nonempty $G$-invariant subset $U_{j}\subset U_{j,0}$ and a nonempty $G$-equivariant tubular neighborhood $U_{\p_{j+1},x} \subset U_{\p_{j+1}} $ of $\alpha_{\p_{j+1},\p_j}(U_j)$ such that the coordinate change map $\alpha_{\p_{j+2},\p_{j+1}}$ is defined on $U_{\p_{j+1},x}$. 
Without loss of generality, we assume $U_{j,0}=U_j$

Then there is a projection map $\pi_j: U_{j+1} \to U_{j}$ such that $\pi_j\circ \alpha_{\p_{j+1},\p_j} = \id$ on $U_j$.

Let $W_{j+1} = \pi_j^* W_{j} \xrightarrow{\nu_j} U_{j+1}$ be the pullback bundle and $ \tau_{j+1} = \pi_j^*\tau_{j}$. 

\begin{center}
    
\tikzset{every picture/.style={line width=0.75pt}} 

\begin{tikzpicture}[x=0.75pt,y=0.75pt,yscale=-1,xscale=1]

\draw (214.75,147.4) node [anchor=north west][inner sep=0.75pt]    {$U_{j}$};
\draw (161,91.4) node [anchor=north west][inner sep=0.75pt]    {$W_{j}$};
\draw (215.75,40.4) node [anchor=north west][inner sep=0.75pt]    {$\restr{\mathcal{E}_{\mathfrak{p}_{j} }}{U_j}$};
\draw (334.75,147.4) node [anchor=north west][inner sep=0.75pt]    {$U_{j+1}$};
\draw (287,89.4) node [anchor=north west][inner sep=0.75pt]    {$\pi _{j}^{*} W_{j}$};
\draw (336.25,40.4) node [anchor=north west][inner sep=0.75pt]    {$\restr{\mathcal{E}_{\mathfrak{p}_{j+1}}}{U_{j+1}}$};
\draw (286,162.4) node [anchor=north west][inner sep=0.75pt]    {$\pi _{j}$};
\draw (261,22.4) node [anchor=north west][inner sep=0.75pt]    {$\hat{\alpha }_{\mathfrak{p}_{j+1} ,\ \mathfrak{p}_{j}}$};
\draw (181,60.4) node [anchor=north west][inner sep=0.75pt]    {$\mathfrak{s}_{j}^{\epsilon }$};
\draw (253,77.4) node [anchor=north west][inner sep=0.75pt]    {$\tilde{\pi }_{j}$};
\draw (178,125.4) node [anchor=north west][inner sep=0.75pt]    {$\nu _{j}$};
\draw (288,117.4) node [anchor=north west][inner sep=0.75pt]    {$\nu _{j+1}$};
\draw    (254.75,156.5) -- (331.75,156.5) ;
\draw [shift={(252.75,156.5)}, rotate = 0] [color={rgb, 255:red, 0; green, 0; blue, 0 }  ][line width=0.75]    (10.93,-3.29) .. controls (6.95,-1.4) and (3.31,-0.3) .. (0,0) .. controls (3.31,0.3) and (6.95,1.4) .. (10.93,3.29)   ;
\draw    (204,100.32) -- (284,99.73) ;
\draw [shift={(202,100.34)}, rotate = 359.58] [color={rgb, 255:red, 0; green, 0; blue, 0 }  ][line width=0.75]    (10.93,-3.29) .. controls (6.95,-1.4) and (3.31,-0.3) .. (0,0) .. controls (3.31,0.3) and (6.95,1.4) .. (10.93,3.29)   ;
\draw    (251.75,49.5) -- (331.25,49.5) ;
\draw [shift={(333.25,49.5)}, rotate = 180] [color={rgb, 255:red, 0; green, 0; blue, 0 }  ][line width=0.75]    (10.93,-3.29) .. controls (6.95,-1.4) and (3.31,-0.3) .. (0,0) .. controls (3.31,0.3) and (6.95,1.4) .. (10.93,3.29)   ;
\draw    (193.83,87) -- (216.99,64.4) ;
\draw [shift={(218.42,63)}, rotate = 135.69] [color={rgb, 255:red, 0; green, 0; blue, 0 }  ][line width=0.75]    (10.93,-3.29) .. controls (6.95,-1.4) and (3.31,-0.3) .. (0,0) .. controls (3.31,0.3) and (6.95,1.4) .. (10.93,3.29)   ;
\draw    (192.6,114) -- (218.29,141.54) ;
\draw [shift={(219.65,143)}, rotate = 226.98] [color={rgb, 255:red, 0; green, 0; blue, 0 }  ][line width=0.75]    (10.93,-3.29) .. controls (6.95,-1.4) and (3.31,-0.3) .. (0,0) .. controls (3.31,0.3) and (6.95,1.4) .. (10.93,3.29)   ;
\draw    (327.25,85) -- (344.55,64.53) ;
\draw [shift={(345.84,63)}, rotate = 130.2] [color={rgb, 255:red, 0; green, 0; blue, 0 }  ][line width=0.75]    (10.93,-3.29) .. controls (6.95,-1.4) and (3.31,-0.3) .. (0,0) .. controls (3.31,0.3) and (6.95,1.4) .. (10.93,3.29)   ;
\draw    (325.75,114) -- (346.05,141.39) ;
\draw [shift={(347.24,143)}, rotate = 233.45] [color={rgb, 255:red, 0; green, 0; blue, 0 }  ][line width=0.75]    (10.93,-3.29) .. controls (6.95,-1.4) and (3.31,-0.3) .. (0,0) .. controls (3.31,0.3) and (6.95,1.4) .. (10.93,3.29)   ;
\draw    (232.25,63) -- (232.25,141) ;
\draw [shift={(232.25,143)}, rotate = 270] [color={rgb, 255:red, 0; green, 0; blue, 0 }  ][line width=0.75]    (10.93,-3.29) .. controls (6.95,-1.4) and (3.31,-0.3) .. (0,0) .. controls (3.31,0.3) and (6.95,1.4) .. (10.93,3.29)   ;
\draw    (357.25,63) -- (357.25,141) ;
\draw [shift={(357.25,143)}, rotate = 270] [color={rgb, 255:red, 0; green, 0; blue, 0 }  ][line width=0.75]    (10.93,-3.29) .. controls (6.95,-1.4) and (3.31,-0.3) .. (0,0) .. controls (3.31,0.3) and (6.95,1.4) .. (10.93,3.29)   ;

\end{tikzpicture}

\end{center}
For any $w \in W_{j+1}$, define 
\begin{equation}
\label{CF-perturbation at a point}
\s_{j+1}^{\epsilon} (w)
= 
s_{\p_{j+1}}\circ \nu_{j+1} (w) 
+ \widehat{\alpha}_{\p_{j+1},\p_j } \lrp{ \s_{j}^{\epsilon}(\tilde{\pi}_j(w)) - s_{\p_{j}} \circ \nu_{j} (\tilde{\pi}_j(w))}.  
\end{equation}
Then  
\[ [\mathcal{S}_{j+1}] 
= \left[ \lrp{W_{j+1} \xrightarrow{\nu_{j+1}} U_{j+1}, \tau_{j+1}, \{ \s_{j+1}^{\epsilon}\}} \right] 
\] 
defines a germ  $[\mathcal{S}_{j+1,x}] \in   ( \mathcal{CF}^{G, \mathcal{U}_{\p_{j+1}}} )_x$. 
 
By this construction, 
$[\mathcal{S}_{k}]$ defines an element  $[\mathcal{S}_{x}] \in  (\CFKG)_x$ satisfying Lemma \ref{pointwise germ}. 
\end{proof}

By forgetting the $G$-action, the construction coincides with the CF-perturbation constructed in \cite{kurbook} Lemma 12.12. 
In particular, the following holds. 
\begin{enumerate}[i)]
    \item \label{(CFKGtranstozero)_x} 
    $[\mathcal{S}_x] \in  (\CFK)_x$. 
    \item 
    \label{(CFKGtransf)_x}
    If $\widetriangle{f}$ is weakly submersive, then $[\mathcal{S}_{x}] \in  (\mathcal{CF}_{\pitchfork f,\mathcal{K}}^{G})_x$.
    \item 
    \label{(CFKGftranstog)_x}
    If $\widetriangle{f}$ is weakly transverse to $g$, then $[\mathcal{S}_{x}] \in  (\mathcal{CF}_{ f \pitchfork g, \mathcal{K}}^{G})_x$.
\end{enumerate}

Let $\bullet \in \{ \pitchfork 0, \pitchfork f, f\pitchfork g\}$. 
Suppose $K \subset |\mathcal{K}|$ is a $G$-invariant closed subset and $[\mathcal{S}_K]\in \mathcal{CF}_{\bullet, \mathcal{K}}^G(K)$. 
Let $\mathcal{K}^{++}$ be another $G$-equivariant support system such that $\mathcal{K}< \mathcal{K}^{++}$. 
Then $[\mathcal{S}_K]$ is the restriction of some $[\mathcal{S}_{\rindex_0}] \in \mathcal{CF}_{\bullet, \mathcal{K}}^G(U_{\rindex_0} \cap |\mathcal{K}|)$ for some $G$-invariant subset $U_{\rindex_0}$ of $|\mathcal{K}^{++}|$ containing $K$.  

For each $x\in |\mathcal{K}|\setminus K$, let $[\mathcal{S}_x]$ and $ [\mathcal{S}_{j,x}]$, $1\leq j \leq \p(x)$, be as in Lemma \ref{pointwise germ}.

Since $|\mathcal{K}|$ is compact, we may take finitely many points $\{x_{\rindex}\mid \rindex \in \Rindex'\}$ 
and representatives $[(W_{\rindex}\xrightarrow{\nu_{\rindex}} U_{\rindex} , \tau_{\rindex}, \{\s_{\rindex}^{\epsilon}\})]$, $[(W_{x_{\rindex},j}\xrightarrow{\nu_{x_{\rindex},j }} U_{x_{\rindex},j}, \tau_{x_{\rindex},j}, \{\s_{x_{\rindex},j}^{\epsilon}\})] $ of $[\mathcal{S}_{x_{\mathfrak{r}}}]$ and $[\mathcal{S}_{j,x_{\mathfrak{r}}}]$ such that 
\[ \bigcup_{\rindex \in \Rindex'} U_{\rindex} \supset |\mathcal{K}|  \]  
and the following holds.  

\begin{enumerate}[i)]
    \item $\rindex_0\in \Rindex'$ and, for all $\rindex \in  \Rindex'\setminus \{\rindex_0\}$, $U_{\rindex}\cap K =\emptyset$. 
    \item 
For any $\p_-(x)\leq \p_i\leq \p(x)$, we consider local charts of the form \autoref{local chart of CF-perturbations}.
    Let $F_x$ be the fiber of $\nu_x$ and $E_{\p_j}$ be the fiber of the orbibundle of $U_{\p_j}$.  
    By construction, we have a $\Gamma_x$-equivariant projection $\pi: V_{x,j}\to V_{x,1}$ and an embedding $I_j': \pi^*\E_1\to \E_j$. 
   Then, for any $y\in V_{x,j}$ and $\xi \in F_x$, the following diagram is commutative.   
    \[
    \xymatrix{
    0 
    \ar[r]
    & 
    T_{\xi} F_x 
    \ar[r]
    \ar[d]_{D_{(y,\xi)}\s_1^{\epsilon}}
    & 
    T_{(y,\xi)}(V_{x,j}\times F_x) 
    \ar[r]
    \ar[d]_{D_{(y,\xi)}\s_j^{\epsilon}}
    & 
    T_yV_{x,j} 
    \ar[r]
    \ar[d]
    & 
    0
    \\
    0 
    \ar[r]
    & 
    I_j'\lrp{\pi(y), E_{\p_-(x)}}
    \ar[r] 
    & 
    T_{c}(E_{\p_j})
    \ar[r]
    & 
   \dfrac{ T_{c}(E_{\p_j})}{I_j'\lrp{\pi(y), E_{\p_-(x)}}} 
   \ar[r]
    & 
    0
    }
    \]
    We may choose the CF-perturbations so that  
    \begin{enumerate}
        \item $D_{(y,\xi)}\s_1^{\epsilon}$ is surjective. 
        \item There exists a sufficiently small $\sigma_x>0$ such that, if $|\s_j^{\epsilon} - s_j|_{C^1}<\sigma_x$, then the third vertical map is surjective. 
    \end{enumerate}
    
\end{enumerate}

Let $\{\chi_{\rindex}\mid  \rindex \in \Rindex'\}$ be a $G$-invariant partition of unity subordinate to this covering. 
Suppose $y\in |\mathcal{K}|$. 
Let 
\[ 
I(y) = \{\rindex \in \Rindex' \mid \chi_{\rindex }(y)\ne 0\}. 
\]
Then $\vec{\alpha}_{\p(x_{\rindex})\p(y)}^*[\mathcal{S}_{\rindex}]$ is defined. 
Let $(W_{\rindex} , \tau_{\rindex}, \{ \s_{\rindex}^{\epsilon}\})$ be a representative of $\vec{\alpha}_{\p(x_i)\p(y)}^*[\mathcal{S}_{\rindex}]$. 
Let $W_y = \prod\limits_{\rindex\in \Rindex'} W_{\rindex}$, $\tau_y = \prod\limits_{\rindex\in \Rindex'} \tau_{\rindex}$.  
Let $ (w_{\rindex})_{\rindex \in I(y)}\in  \prod\limits _{\rindex\in \Rindex'} \left(\restr{W_{\rindex}}{z}\right)$. 
Let $\sigma<\sigma_x$ and define
\[ 
\s_{y}^{\epsilon} \lrp{w}  
=  s_y(z) +  \chi_{\rindex_0}(z) \lrp{ \s_{\rindex_0}^{\epsilon} (w_{\rindex_0})-s_y(z)} 
+ 
\sigma \sum_{\rindex \in I(y)} \chi_{\rindex}(z) \lrp{ \s_{\rindex}^{\epsilon}(w_{\rindex_0}) -s_y(z)}.
\]
If we forget the $G$-action, the construction is the same as 
that in \cite{kurbook} Theorem 12.24. 
Therefore, the transversality results follow from \cite{kurbook} Theorem 12.24. 
\end{proof}

Proposition \ref{soft sheaves} implies the following. 
\begin{prop}[Existence of a  $G$-equivariant CF-perturbation on a good coordinate system]
\label{existence of CF-perturbation}
Let $\lrp{\kur, \widetriangle{\mathcal{U}}}$ be a space with $G$-equivariant good coordinate system. 
Let $\widetriangle{\mathcal{K}}$ be a support system on $\widetriangle{\mathcal{U}}$. 
\begin{enumerate}[i)]
    \item There exists a $G$-equivariant CF-perturbation $\widetriangle{\mathcal{S}}$ of $\widetriangle{\mathcal{U}}$ such that $\widetriangle{\mathcal{S}}$ is transverse to zero. 
    \item If $\widetriangle{f}: (X, \widetriangle{\mathcal{U}}) \to L$ is a weakly submersive $G$-equivariant strongly smooth map, then $\widetriangle{\mathcal{S}}$ can be chosen so that $\widetriangle{f}$ is strongly submersive. 
\end{enumerate}
\end{prop}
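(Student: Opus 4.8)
The plan is to obtain this as a formal consequence of the softness statement in Proposition \ref{soft sheaves}, exactly as in the non-equivariant theory (cf.\ \cite{kurbook} around Theorem 12.24). Fix the $G$-equivariant good coordinate system $\widetriangle{\mathcal{U}}$ and the support system $\widetriangle{\mathcal{K}} = \{\mathcal{K}_{\p}\mid \p\in \Pindex\}$. Since $\Pindex$ is finite and each $\mathcal{K}_{\p}$ is a compact $G$-invariant set, the heterodimensional compactum $|\mathcal{K}|$ of \autoref{heterodimensional compactum}, and hence $|\mathcal{K}|/G$, is a compact metrizable space. Unwinding Definition \ref{equivalent CF-perturbations on a chart} and Definition \ref{CF-perturbation on a Kuranishi space} together with the definition of the sheaf $\CFKGtranstozero$, a $G$-equivariant CF-perturbation of $\widetriangle{\mathcal{U}}$ that is transverse to zero is the same datum as a global section of $\CFKGtranstozero$ over $|\mathcal{K}|/G$ (enlarging, if needed, to a support pair $\widetriangle{\mathcal{K}}<\widetriangle{\mathcal{K}}^{++}$ and working on $|\mathcal{K}^{++}|/G$, then restricting to a neighborhood of $\kur$); similarly, a CF-perturbation for which a weakly submersive $\widetriangle{f}$ becomes strongly submersive is a global section of $\CFKGtransf$. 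So it suffices to produce global sections of these sheaves.

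Two inputs are already available. First, Lemma \ref{pointwise germ} (proved inside the proof of Proposition \ref{soft sheaves}) constructs, for every $x\in|\mathcal{K}|$, a germ $[\mathcal{S}_x]$ which lies in $(\CFKGtranstozero)_x$, and also in $(\CFKGtransf)_x$ when $\widetriangle{f}$ is weakly submersive (items \ref{(CFKGtranstozero)_x}, \ref{(CFKGtransf)_x} there); in particular each of these sheaves admits a section over a $G$-invariant open neighborhood of every point of $|\mathcal{K}|/G$. Second, Proposition \ref{soft sheaves} says these sheaves are soft (the second under the weak submersivity hypothesis). Now I would invoke the elementary fact that a soft sheaf on a paracompact Hausdorff space which has a section near every point has a global section: cover the compact $|\mathcal{K}|/G$ by finitely many of the neighborhoods above, and glue the corresponding local sections by successively extending over an increasing chain of $G$-invariant closed sets using softness — equivalently, patch the germs $[\mathcal{S}_{x_i}]$ over the finite subcover with a $G$-invariant partition of unity, exactly as in the concluding paragraph of the proof of Proposition \ref{soft sheaves}. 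This produces $\widetriangle{\mathcal{S}}\in\CFKGtranstozero(|\mathcal{K}|/G)$, giving i), and $\widetriangle{\mathcal{S}}\in\CFKGtransf(|\mathcal{K}|/G)$ when $\widetriangle{f}$ is weakly submersive, giving ii).

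Finally I would translate this global section back into the language of Definition \ref{CF-perturbation on a Kuranishi space}: a continuous section of $p_{\mathcal{K}}^{G}$ assigns to each $[x]$ a germ, compatibly with the coordinate changes $\vec{\alpha}_{\pq}$ (this compatibility is encoded in the topology on $|\CFKG|$), so restricting to each chart $\mathcal{U}_{\p}$ and choosing an actual CF-perturbation representative of each germ on a $G$-invariant open set yields a $G$-equivariant CF-perturbation $\widetriangle{\mathcal{S}}$ of $\widetriangle{\mathcal{U}}$; transversality to zero (respectively strong submersivity of $\widetriangle{f}$) is the pointwise condition defining the chosen subsheaf, hence holds throughout $\widetriangle{\mathcal{U}}$.

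The only genuinely new content beyond Proposition \ref{soft sheaves} is this bookkeeping — passing between ``global section of a soft sheaf on $|\mathcal{K}|/G$'' and ``CF-perturbation of the good coordinate system'' — together with the standard ``soft sheaf with local sections has a global section on a compact base''. The main thing to be careful about is the interplay of the support systems $\widetriangle{\mathcal{K}}$ and $\widetriangle{\mathcal{K}}^{++}$ and the choice of $G$-invariant metric on $|\mathcal{K}^{++}|$ underlying the partition of unity; equivariance itself adds nothing once the $G$-invariant partition of unity is in hand.
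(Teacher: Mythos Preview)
Your proposal is correct and matches the paper's approach: the paper simply writes ``Proposition \ref{soft sheaves} implies the following'' before stating the proposition, with no further argument. Your write-up is a faithful unpacking of that implication --- local germs from Lemma \ref{pointwise germ}, softness to glue, then translating a global section back into a CF-perturbation --- so there is nothing to correct.
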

This induces a CF-perturbation on some Kuranishi structure on the same space in the same way as \cite{kurbook} Lemma 9.9.

\subsection{\texorpdfstring{$G$}{G}-equivariant integration along the fiber}
We review the $G$-equivariant integration along the fiber in Appendix \ref{equivariant de rham} and refer the reader to \cite{GS} Chapter 10 for the detailed construction in the case of  smooth $G$-manifolds. 
The case of ordinary Kuranishi structures is explained in \cite{kurbook} Chapter 7--9. 
\begin{defn}[$G$-equivariant integration along the fiber on a chart] 
Let $[\mathcal{S}]$ be a $G$-equivariant CF-perturbation transverse to zero on a Kuranishi chart $\mathcal{U}=(U, \E , \psi, s)$, where $\mathcal{S} = \{\mathcal{S}_{\rindex}\mid \rindex \in \Rindex\}$ and 
    \[ 
    \mathcal{S}_{\rindex} = \{ \mathcal{S}_{\rindex}^{\epsilon} = (W_{\rindex} \xrightarrow{\nu_{\rindex}} U_{\rindex}, \tau_{\rindex} , \s_{\rindex}^{\epsilon})\mid \epsilon \in (0,1]\}.  \]
Let $f: U\to L$ be a
$G$-equivariant strongly smooth map on $\mathcal{U}$ such that $f: U\to L$ is strongly submersive with respect to $[\mathcal{S}]$. 
  We define the \textbf{$G$-equivariant integration along the fiber of $f$ via $[\mathcal{S}]$} as follows. 

  Suppose $h \in \Omega_{G,c}^{l}(U)$ and $0<\epsilon\leq \epsilon_0$. 
   Let $ \{\chi_{\mathfrak{r}}
   \mid \mathfrak{r} \in \Rindex\}$ be a partition of unity subordinate to the covering $\{ U_{\rindex} \mid \mathfrak{r} \in \Rindex\}$.
  Define $f_{G!} \left(h; \mathcal{S}^{\epsilon}\right)\in \Omega_{G,c}(L)$ as follows. Let 
  \[ f_{G!} \left(h; \mathcal{S}^{\epsilon}\right)(\xi)  
  =  
  \sum_{\rindex \in \Rindex} f_{!} \left(\chi_{\rindex}h(\xi); \mathcal{S}_{\mathfrak{r}}^{\epsilon}\right),  \qquad \forall \xi \in S(\g^*),  \] 
  where $f_!$ denotes the integration along the fiber in the case of ordinary Kuranishi structures. 
\end{defn}

\begin{defn}[$G$-equivariant integration along the fiber on a good coordinate system]  
\label{integration along the fiber on GC}
Let $\left(\kur,  \widetriangle{\mathcal{U}}\right)$ be a space with a good coordinate system 
\[ \widetriangle{\mathcal{U}} = \left( (\Pindex, \leq ) 
    \, , \, 
    \left\{ \mathcal{U}_{\p}\mid \p\in \Pindex \right\}
    \, , \, 
    \set{\Vec{\alpha}_{\pq}}{ \p, \q\in \Pindex, \q \leq \p } \right). 
\]
and let $L$ be a smooth manifold. 
Let $\widetriangle{f}: \left(\kur, \widetriangle{\mathcal{U}}\right) \to L$ 
be a $G$-equivariant strongly smooth map. 
Let $ \widetriangle{\mathcal{S}}$ be a CF-perturbation such that $\widetriangle{f}$ is strongly submersive with respect to $ \widetriangle{\mathcal{S}}$. Let 
$\lrp{\widetriangle{\mathcal{K}}, \widetriangle{\mathcal{K}}^{++}}$ be a $G$-equivariant support pair on $\left(\kur,  \widetriangle{\mathcal{U}}\right)$.

Let $\widetriangle{h}  = \{ h_{\p} \mid \p \in \Pindex\}$ be a compactly supported $G$-equivariant differential form on $\kur$. 
Define the \textbf{$G$-equivariant integration along the fiber} of $h$ with respect to $\widetriangle{f}, \widetriangle{\mathcal{S}}$ by 
\begin{align*}
(\widetriangle{f}_G)_{!} \left(\widetriangle{h} ,  \widetriangle{\mathcal{S}}^{\epsilon}\right) 
& = 
\sum_{\p\in \Pindex} (f_{\p})_{G!} \left(\chi_{\p}h_{\p} , \restr{\mathcal{S}_{\p}^{\epsilon}}{ K_{\p}(2\delta) 
\cap B_{\delta_2}(\kur)
 }\right).
\end{align*}
\end{defn} 
This definition is independent of the choices of the support pair and the partition of unity \cite{kurbook} Proposition 7.81.

\begin{defn}[$G$-equivariant integration along the fiber on a Kuranishi space] 
\label{integration along the fiber on Kuranishi}
Let $\left(\kur,\widehat{\mathcal{U}}\right)$ be a space with a $G$-equivariant Kuranishi structure $\widehat{\mathcal{U}}$ with corners
and let $N$ be a smooth manifold. 
Let $\widehat{f}: \left(\kur, \widehat{\mathcal{U}}\right) \to N$ 
be a $G$-equivariant strongly smooth map. 
Let $ \widehat{\mathcal{S}}$ be a CF-perturbation such that $\widehat{f}$ is strongly submersive with respect to $ \widehat{\mathcal{S}}$. 

Let $\widehat{h}  = \{h_p \mid p\in \kur \}$ be a compactly supported $G$-equivariant differential form on $\kur$. 

Then, by Lemma 9.10 of \cite{kurbook}, 
the Kuranishi data $\widehat{\mathcal{U}}, \widehat{f}, \widehat{\mathcal{S}}, \widehat{h}$ induce (non-uniquely) some compatible good coordinate system data $\widetriangle{\mathcal{U}}, \widetriangle{f}, \widetriangle{\mathcal{S}}, \widetriangle{h}$ on $\kur$ such that the conditions in Definition \ref{integration along the fiber on GC} are satisfied. 

Define the \textbf{$G$-equivariant integration along the fiber} of $h$ with respect to $\widehat{f}, \widehat{\mathcal{S}}$ by 
\begin{equation}
 \widehat{f}_{G!} \left(\widehat{h} ,  \widehat{\mathcal{S}}^{\epsilon}\right) 
= 
 \widetriangle{f} _{G!} \left(\widetriangle{h} ,  \widetriangle{\mathcal{S}}^{\epsilon}\right).     
\end{equation}

\end{defn}
This definition is independent of the choices by \cite{kurbook} Theorem 9.14. 

\subsection{\texorpdfstring{$G$}{G}-equivariant Stokes' theorem and the smooth correspondences}

\begin{defn}[Codimension-$k$ corner of a manifold]
Let $M$ be a manifold with corners and $k\in \N$. 
Define $S_k(M)$ to be the closure of the set
\[ \set{ x\in M}{
\begin{aligned}
& \text{ there exists a neighborhood } V \text{ of } x \text{ such that } \\ 
& V \text{ is diffeomorphic to } [0,\infty)^{k}\times \R^{n-k} 
 \end{aligned}
 }. 
\]
\end{defn}

\begin{lemdef}[Normalized boundary of a manifold with corners, \cite{kurbook} Lemma 8.2]
For every manifold with corners $V$, there exists a manifold with corners $\partial V$ and a smooth map $\pi: \partial V \to S_1(V)$ such that it induces a double covering map
\[ \restr{\pi}{S_1(V)\setminus S_2(V)}: S_1(V)\setminus S_2(V)\to S_1(V). \] 
\end{lemdef}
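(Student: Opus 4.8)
The plan is to give an explicit construction of $\partial V$ as a disjoint-union-along-charts object, rather than to try to find $\partial V$ inside $V$ itself. First I would recall the local model: near a point $x \in S_k(V) \setminus S_{k+1}(V)$ there is a chart $\varphi \colon W \xrightarrow{\sim} [0,\infty)^k \times \R^{n-k}$, and the $k$ local boundary hypersurfaces through $x$ are the preimages of $\{t_i = 0\}$ for $i = 1,\dots,k$. The key local fact to record is that this set of $k$ local hypersurfaces is intrinsic — it does not depend on the choice of chart — because a diffeomorphism of corners must preserve the "depth" stratification $S_\bullet$, hence permutes the local components of $S_1$. Thus one obtains, for each $x \in S_1(V)$, a well-defined finite set $\mathcal{B}(x)$ of local branches of $S_1(V)$ at $x$, with $\#\mathcal{B}(x) = k$ when $x \in S_k(V)\setminus S_{k+1}(V)$.

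Next I would define $\partial V$ as a set to be $\{(x,b) : x \in S_1(V),\ b \in \mathcal{B}(x)\}$, with the obvious projection $\pi(x,b) = x$. The topology and smooth structure come from the charts: given the chart $\varphi$ above, each index $i \in \{1,\dots,k\}$ determines a branch, and the subset $\varphi^{-1}(\{t_i = 0\})$ is a manifold-with-corners of dimension $n-1$ (namely $[0,\infty)^{k-1}\times \R^{n-k}$); I would declare these to be the charts of $\partial V$, checking that the transition maps between them are smooth by restricting the transition maps of $V$ and using the branch-preservation property just established. This makes $\partial V$ a manifold with corners of dimension $n-1$ and $\pi$ a smooth, proper, locally-at-most-$k$-to-one map. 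Over $S_1(V)\setminus S_2(V)$ every point lies on exactly one branch on each of the two "sides" — more precisely, a codimension-one boundary point has exactly two preimages only if one is careful: in the standard convention (the one used in \cite{kurbook} Lemma 8.2) the normalized boundary double-covers $S_1(V)\setminus S_2(V)$ because one takes $\partial V$ to be the closure of the genuinely codimension-one part together with its two-sheeted structure; I would follow that convention and verify $\#\pi^{-1}(x) = 2$ for $x \in S_1(V)\setminus S_2(V)$ directly from the model $[0,\infty)\times\R^{n-1}$, where the two sheets are the two local branches one gets by approaching $S_1$ from the interior in the two normal directions of the doubled hypersurface.

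The main obstacle I expect is purely bookkeeping about conventions: the phrase "double covering map" forces a specific normalization (the codimension-one locus is counted with multiplicity two, corresponding to the two outward conormal directions, which is what makes Stokes' theorem have the correct sign structure for the $S_k$-corners in \cite{kurbook}), and one must make the chart definitions consistent with that choice throughout, especially at higher-codimension corners where several branches meet. Once the branch set $\mathcal{B}(x)$ is shown to be chart-independent, the rest is routine: smoothness of transitions, properness of $\pi$, and the covering-space count over $S_1 \setminus S_2$ all follow from inspecting the local models. I would therefore spend most of the write-up on the intrinsic characterization of local branches and relegate the gluing verification to a sentence citing the standard corner-structure formalism of \cite{kurbook} Chapter 8.
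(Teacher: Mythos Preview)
The paper does not supply a proof of this Lemma/Definition; it simply records the statement and writes ``The Lemma is proved in \cite{kurbook} Lemma 8.2.'' So there is no argument in the paper to compare against, and your construction via local boundary branches $\mathcal{B}(x)$ and chart-gluing is exactly the standard one used in \cite{kurbook}.

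That said, there is a genuine confusion in your proposal, and it is worth naming because it will propagate into your Stokes-theorem signs if left uncorrected. At a point $x\in S_1(V)\setminus S_2(V)$ the local model is $[0,\infty)\times\R^{n-1}$ with $x$ sitting on $\{0\}\times\R^{n-1}$. There is exactly \emph{one} local boundary hypersurface through $x$, and only one side from which to approach it from the interior; your claim that ``the two sheets are the two local branches one gets by approaching $S_1$ from the interior in the two normal directions'' is simply false in the corner model. With your own definition $\#\mathcal{B}(x)=k$ for $x\in S_k(V)\setminus S_{k+1}(V)$, you get $\#\pi^{-1}(x)=1$ over $S_1\setminus S_2$, not $2$. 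The double cover shows up one stratum deeper: over $S_2(V)\setminus S_3(V)$ the map $\pi$ is $2$-to-$1$ (and more generally $k$-to-$1$ over $S_k\setminus S_{k+1}$).

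The displayed formula in the paper's statement is itself garbled --- the restriction $\restr{\pi}{S_1(V)\setminus S_2(V)}$ does not even type-check, since $S_1(V)\setminus S_2(V)$ lies in the target of $\pi$, not the source --- so you were trying to reverse-engineer a convention from a typo. Do not bend your construction to match it; follow \cite{kurbook} directly and state the covering multiplicities as above.
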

The Lemma is proved in \cite{kurbook} Lemma 8.2. We call $\partial V$ the normalized boundary of $V$. 

\begin{defn}[Normalized boundary of an orbifold with corners]
  Let $U$ be an orbifold with corners and $ \{(V_i,\Gamma_i, \varphi_i)\mid i\in I\}$ be an orbifold atlas on $U$. 
  Then the \textbf{normalized boundary} of $U$ is given by 
  \[\partial U = \bigcup_{i\in I}\varphi_i(\partial V_i/\Gamma_i). \]
  \end{defn}

\begin{defn}[Normalized boundary of a Kuranishi space]
Let $(\kur,\widehat{\mathcal{U}})$ be a $G$-equivariant Kuranishi space with corners as in Definition \ref{Gkur structure}. 
The normalized boundary 
$\partial \lrp{\kur, \widehat{\mathcal{U}}}: =\lrp{\partial \kur, \partial \widehat{\mathcal{U}}}$ of $\kur$ is a Kuranishi space with corners, where 
\[ \partial \kur = \bigcup_{p\in \kur} \psi_p \lrp{ s_p^{-1}(0) \cap \partial U_p } \]
and 
\[ \partial \widehat{\mathcal{U}} = 
 \left(\set{ \partial \mathcal{U}_p }{ p\in \partial \kur},   
\set{\restr{\vec{\alpha}_{pq}}{U_{pq} \cap\partial U_q }}{ p\in \partial \kur, q\in \psi_p(\partial U_p) } \right), 
\]
which consists of $G$-equivariant Kuranishi charts
\[ \partial \mathcal{U}_p = \lrp{\partial U_p, \restr{\E_p}{\partial U_p}, \restr{\psi_p}{\partial U_p}, \restr{s_p}{\partial U_p}}, \qquad \forall p\in   \partial \kur
 \]
 and $G$-equivariant Kuranishi coordinate change data
\[ \restr{\vec{\alpha}_{pq}}{U_{pq} \cap\partial U_q } = 
\lrp{\restr{\alpha _{pq}}{U_{pq} \cap\partial U_q }, \restr{\widehat{\alpha}_{pq}}{ \restr{\E_q}{U_{pq} \cap\partial U_q } } }, \qquad \forall p\in  \partial \kur, \quad q\in \psi_p ( \partial U_p). 
 \] 
\end{defn}

We can similarly define the normalized boundary of a good coordinate system. 
\begin{defn}[Normalized boundary of a good coordinate system]
Let $(\kur,\widetriangle{\mathcal{U}})$ be a $G$-equivariant space with good coordinate system with corners as in Definition \ref{GC}. 
The normalized boundary 
$\partial \lrp{\kur, \widetriangle{\mathcal{U}}}: =\lrp{\partial \kur, \partial \widetriangle{\mathcal{U}}}$ of $(\kur,\widetriangle{\mathcal{U}})$ is a good coordinate system with corners, where 
\[ \partial \kur = \bigcup_{\p\in \kur} \psi_{\p} \lrp{ s_{\p}^{-1}(0) \cap \partial U_{\p} } \]
and 
\[ \partial \widehat{\mathcal{U}} = 
 \left(\set{ \partial \mathcal{U}_{\p} }{ {\p}\in \partial \kur},   
\set{\restr{\vec{\alpha}_{\pq}}{U_{pq} \cap\partial U_{\q} }}{ {\p}\in \partial \kur, q\in \psi_{\p}(\partial U_{\p}) } \right), 
\]
which consists of $G$-equivariant Kuranishi charts
\[ \partial \mathcal{U}_{\p} = \lrp{\partial U_{\p}, \restr{\E_{\p}}{\partial U_{\p}}, \restr{\psi_{\p}}{\partial U_{\p}}, \restr{s_{\p}}{\partial U_{\p}}}, \qquad \forall \p\in   \partial \kur
 \]
 and $G$-equivariant Kuranishi coordinate change data
\[ \restr{\vec{\alpha}_{\pq}}{U_{\pq} \cap\partial U_{\q}  } = 
\lrp{\restr{\alpha_{\pq}}{U_{\pq} \cap\partial U_{\q} }, \restr{\widehat{\alpha}_{\pq}}{ \restr{\E_{\q}}{U_{\pq} \cap\partial U_{\q} } } }, \qquad \forall \p\in  \partial \kur, \quad \q\in \psi_p ( \partial U_{\p}). 
 \]    
\end{defn}

\begin{thm}[$G$-equivariant Stokes' Theorem on a good coordinate system with corners] 
\label{Stokes for G-equivariant GC}
Let $(\kur,\widetriangle{\mathcal{U}})$ be a $G$-equivariant space with good coordinate system with corners as in Definition \ref{GC}. 
  Let $N$ be a smooth $G$-manifold and $\widetriangle{f}:\left(\kur, \widetriangle{\mathfrak{\mathcal{U}}}\right)\to N$ be a $G$-equivariant strongly smooth map such that $\widetriangle{f}$ is strongly submersive with respect to some CF-perturbation $\widetriangle{\mathcal{S}}$ of $ \widetriangle{\mathfrak{\mathcal{U}}}$. 
  Then 
  \[ \forall \widetriangle{\eta} = \{ \eta_{\p}\in \Omega_G^l(U_{\p})\mid \p\in \Pindex\} \in \Omega_G^{l}\left(\kur, \widetriangle{\mathfrak{\mathcal{U}}}\right),\] the following equality holds for sufficiently small $\epsilon>0$: 
    \[ d_G\left(\widetriangle{f}_{G!} \left(\widetriangle{\eta} ; \widetriangle{\mathcal{S}}^{\epsilon}\right)\right) =\widetriangle{f}_{G!}\left(d_G\widetriangle{\eta} ; \widetriangle{\mathcal{S}}^{\epsilon}\right) + (-1)^{\dim(\kur,\widetriangle{\mathcal{U}})+l}(\widetriangle{f}_{\partial })_{G!} \left(\widetriangle{\eta}_{\partial } ; \widetriangle{\mathcal{S}}_{\partial } ^{\epsilon}\right), \]
    where $\widetriangle{f}_{\partial } , \widetriangle{\eta}_{\partial } ,  \widetriangle{\mathcal{S}}_{\partial } ^{\epsilon}$ are the restrictions of $\widetriangle{f}  , \widetriangle{\eta} ,  \widetriangle{\mathcal{S}}^{\epsilon}$ to $\partial \left(\kur,\widetriangle{U}\right)$. 
\end{thm}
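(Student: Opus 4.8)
The plan is to reduce the equivariant statement, chart by chart, to the non-equivariant Stokes' theorem for CF-perturbations on good coordinate systems with corners, which is \cite{kurbook} Theorem 9.28 (the analog cited in the paper as Theorem \ref{Stokes for Gkur} in the Kuranishi case). The key point is that the Cartan model $\Omega_G(\kur,\widetriangle{\mathcal{U}}) = (\Omega(\kur,\widetriangle{\mathcal{U}})\otimes S(\g^*))^G$ carries a differential $d_G = d - \iota_{\xi}$ evaluated on $\xi \in \g$, and that the $G$-equivariant integration along the fiber $(\widetriangle{f}_G)_!(\cdot;\widetriangle{\mathcal{S}}^\epsilon)$ was defined fiberwise over $S(\g^*)$ by $f_{G!}(h;\mathcal{S}^\epsilon)(\xi) = \sum_{\rindex} f_!(\chi_\rindex h(\xi);\mathcal{S}_\rindex^\epsilon)$ using the ordinary (non-equivariant) integration along the fiber. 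So the equality to be proved is an equality of elements of $\Omega_G(N) \subset \Omega(N)\otimes S(\g^*)$, and it suffices to check it after evaluating at each $\xi \in \g$.

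First I would fix a $G$-equivariant support pair $(\widetriangle{\mathcal{K}},\widetriangle{\mathcal{K}}^{++})$, a $G$-invariant metric on $|\mathcal{K}^{++}|$, a $\delta>0$, and a $G$-invariant partition of unity $\{\chi_\p\}$ subordinate to the data, whose existence is guaranteed by Lemma \ref{existence of partition of unity}. By Proposition \ref{existence of CF-perturbation} we may assume $\widetriangle{\mathcal{S}}$ is a $G$-equivariant CF-perturbation with respect to which $\widetriangle{f}$ is strongly submersive, so all the integrals below are defined for sufficiently small $\epsilon$. Next, for a fixed $\xi\in\g$, I would write $\eta_\p(\xi) \in \Omega(U_\p)$ for the value of the equivariant form at $\xi$; then $d_G\widetriangle{\eta}$ has value $d(\eta_\p(\xi)) - \iota_\xi(\eta_\p(\xi))$ at $\xi$. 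The essential observation is that on each chart the $G$-action on $U_\p$ is free (the standing assumption in \S\ref{GKur section}), the Thom forms $\tau_\rindex$ are $G$-equivariant, and $\chi_\p$ is $G$-invariant, so the summand $f_{\p!}(\chi_\p \eta_\p(\xi);\mathcal{S}_\p^\epsilon)$ is exactly an ordinary fiber integration to which \cite{kurbook} Theorem 9.28 applies; it yields $d\big(f_{\p!}(\chi_\p\eta_\p(\xi))\big) = f_{\p!}(d(\chi_\p\eta_\p(\xi))) \pm (f_{\partial\p})_!((\chi_\p\eta_\p(\xi))_\partial)$. Summing over $\p$, the interior boundary contributions coming from $d\chi_\p$ telescope to zero because $\sum_\p \chi_\p \equiv 1$ on a neighborhood of $\kur$ and the coordinate changes are compatible with $\widetriangle{\mathcal{S}}$ and $\widetriangle{\eta}$ (this is where one uses the commuting diagrams in Definition \ref{CF-perturbation on a Kuranishi space} and the equality $(\alpha_{\pq}^G)^*\eta_\p = \eta_\q$), so only the genuine normalized-boundary term $(\widetriangle{f}_\partial)_{G!}(\widetriangle{\eta}_\partial;\widetriangle{\mathcal{S}}_\partial^\epsilon)(\xi)$ survives.

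It then remains to handle the contraction $\iota_\xi$. Here the point is that $\iota_\xi$ commutes with ordinary integration along the fiber up to sign whenever the fibration, the Thom form, and the perturbation section are $G$-equivariant, because the vertical vector field generated by $\xi$ pushes forward to the vector field generated by $\xi$ on $N$ (as $\widetriangle{f}$ is $G$-equivariant); this is the standard compatibility of fiber integration with the Cartan operator, the version of which on smooth $G$-manifolds is in \cite{GS} Chapter 10. Combining $d$-compatibility and $\iota_\xi$-compatibility gives $d_G$-compatibility, i.e.\ the claimed identity after evaluation at $\xi$, and since $\xi\in\g$ was arbitrary and both sides are $G$-basic, the identity holds in $\Omega_G(N)$. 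The main obstacle I anticipate is the bookkeeping of the telescoping of the partition-of-unity boundary terms across charts of different dimensions in the good coordinate system — one must be careful that the restrictions $\restr{\mathcal{S}_\p}{K_\p(2\delta)\cap B_{\delta_2}(\kur)}$ used in Definition \ref{integration along the fiber on GC} are mutually compatible so that the non-equivariant Stokes theorem can be invoked simultaneously on all charts and the cross terms cancel; this is exactly the content of the non-equivariant proof in \cite{kurbook}, and the $G$-equivariance is an extra layer that, given freeness of the action and $G$-invariance of all auxiliary choices, does not introduce new analytic difficulty, only notational overhead.
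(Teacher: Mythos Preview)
Your approach is correct and matches the paper's: evaluate both sides at a fixed $\xi\in\g$ and reduce to the non-equivariant Stokes theorem. Two remarks. First, the paper applies the non-equivariant Stokes theorem for good coordinate systems (\cite{kurbook} Theorem~8.11, not Theorem~9.28, which is the Kuranishi-structure version) directly to the ordinary differential form $\widetriangle{\eta}(\xi)$ on $(\kur,\widetriangle{\mathcal{U}})$; the chart-by-chart telescoping of the $d\chi_{\p}$ terms you outline is already contained in that result and need not be redone here. Second, your separate treatment of the contraction $\iota_{\underline{\xi}}$ is more explicit than the paper's proof, which simply writes $d_G(\cdot)(\xi)=d\bigl((\cdot)(\xi)\bigr)$ on each side; this shortcut is justified precisely because $\iota_{\underline{\xi}}$ commutes with the ordinary pushforward $\widetriangle{f}_!$ (by $G$-equivariance of $\widetriangle{f}$ and of the CF-perturbation data), so the $\iota_{\underline{\xi}}$-contributions on the two sides cancel against each other and only the $d$-parts need to be compared.
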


\begin{proof}[Proof of Theorem \ref{Stokes for G-equivariant GC}] 
For any $\xi \in \g$, we have 
\begin{equation}
\label{Stokes LHS}
d_G\left(\widetriangle{f}_{G!} \left(\widetriangle{\eta} ; \widetriangle{\mathcal{S}}^{\epsilon}\right)\right) (\xi)
   = d\left(\widetriangle{f}_{!} \left(\widetriangle{\eta}(\xi) ; \widetriangle{\mathcal{S}}^{\epsilon}\right)\right)      
\end{equation}
   and 
\begin{align}\label{Stokes RHS}
& \widetriangle{f}_{G!}\left(d_G\widetriangle{\eta} ; \widetriangle{\mathcal{S}}^{\epsilon}\right) (\xi)+ (-1)^{\dim(\kur,\widetriangle{\mathcal{U}})+l}(\widetriangle{f}_{\partial })_{G!} \left(\widetriangle{\eta}_{\partial } ; \widetriangle{\mathcal{S}}_{\partial } ^{\epsilon}\right)(\xi)  \nonumber \\
 = 
 &
 \widetriangle{f}_{!}\left(d (\widetriangle{\eta} (\xi)); \widetriangle{\mathcal{S}}^{\epsilon}\right) + (-1)^{\dim(\kur,\widetriangle{\mathcal{U}})+l}(\widetriangle{f}_{\partial })_{!} \left(\widetriangle{\eta}_{\partial } (\xi) ; \widetriangle{\mathcal{S}}_{\partial } ^{\epsilon}\right) 
\end{align}
Then \autoref{Stokes LHS} equals \autoref{Stokes RHS} by \cite{kurbook} Theorem 8.11 (the usual Stokes' Theorem on a good coordinate system with corners). 
\end{proof}

We can prove the following theorem in a similar way by applying \cite{kurbook} Theorem 9.28 (the usual Stokes' Theorem on a Kuranishi space). 
\begin{thm}[$G$-equivariant Stokes' Theorem on a Kuranishi space with corners] \label{Stokes for Gkur}
Let $(\kur,\widehat{\mathcal{U}})$ be a $G$-equivariant Kuranishi space with corners as in \ref{Gkur structure} and $\widehat{\mathcal{S}}$ be a CF-perturbation of $ \widehat{\mathcal{U}}$. 
Let $L$ be a smooth $G$-manifold and $\widehat{f}: \left(\kur, \widehat{\mathfrak{\mathcal{U}}}\right)\to L$ be a $G$-equivariant map that is strongly submersive with respect to $\widehat{\mathcal{S}}$. 
  Then $\forall \widehat{\eta} \in \Omega_G^{l}\left(\kur, \widehat{\mathfrak{\mathcal{U}}}\right)$, the following equality holds for sufficiently small $\epsilon>0$: 
\[d_G\left(\widehat{f}_{G!}\left(\widehat{\eta }, \widehat{\mathcal{S}}^{\epsilon} \right)\right) 
=
\widehat{f}_{G!}\left(d_G\widehat{\eta }, \widehat{\mathcal{S}}^{\epsilon} \right) + (-1)^{\dim(\kur,\widehat{\mathcal{U}})+l}(\widehat{f}_{\partial })_{G!}\left(\widehat{\eta }_{\partial }, \widehat{\mathcal{S}}_{\partial }^{\epsilon} \right),    \]
where $\widehat{f}_{\partial } , \widehat{\eta}_{\partial } ,  \widehat{\mathcal{S}}_{\partial } ^{\epsilon}$ are the restrictions of $\widehat{f}  , \widehat{\eta} ,  \widehat{\mathcal{S}}^{\epsilon}$ to $\partial \left(\kur, \widehat{\mathfrak{\mathcal{U}}}\right)$. 
\end{thm}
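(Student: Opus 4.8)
\textbf{Proof proposal for Theorem \ref{Stokes for Gkur} ($G$-equivariant Stokes' theorem on a Kuranishi space with corners).}

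The plan is to reduce the statement for Kuranishi spaces to the already-established version for good coordinate systems, Theorem \ref{Stokes for G-equivariant GC}, exactly mirroring how the corresponding reduction is carried out in the non-equivariant theory of \cite{kurbook} (Theorem 9.28 reduced to Theorem 8.11). The essential observation is that $G$-equivariant integration along the fiber on a Kuranishi space was \emph{defined} in Definition \ref{integration along the fiber on Kuranishi} by passing to a compatible good coordinate system: given the data $\widehat{\mathcal{U}}, \widehat{f}, \widehat{\mathcal{S}}, \widehat{\eta}$, Lemma \ref{Existence of KG-embedding} (together with the analogue of \cite{kurbook} Lemma 9.10) produces a $G$-equivariant good coordinate system $\widetriangle{\mathcal{U}}$, a KG-embedding $\Phi: \widehat{\mathcal{U}}_0 \to \widetriangle{\mathcal{U}}$ from an open substructure, a compatible strongly submersive $\widetriangle{f}$, a compatible transverse CF-perturbation $\widetriangle{\mathcal{S}}$, and a compatible differential form $\widetriangle{\eta}$ with $\Phi_G^*(\widetriangle{\eta}) = \widehat{\eta}|_{\widehat{\mathcal{U}}_0}$, such that
\[
\widehat{f}_{G!}\left(\widehat{\eta}, \widehat{\mathcal{S}}^{\epsilon}\right) = \widetriangle{f}_{G!}\left(\widetriangle{\eta}, \widetriangle{\mathcal{S}}^{\epsilon}\right).
\]

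First I would observe that the normalized-boundary constructions are compatible: $\partial\bigl(\kur, \widehat{\mathcal{U}}\bigr)$ admits a $G$-equivariant good coordinate system obtained by restricting $\widetriangle{\mathcal{U}}$ to its normalized boundary $\partial\widetriangle{\mathcal{U}}$, and the restricted data $\widehat{f}_{\partial}, \widehat{\mathcal{S}}_{\partial}^{\epsilon}, \widehat{\eta}_{\partial}$ are compatible (via the restricted KG-embedding $\Phi_{\partial}$) with $\widetriangle{f}_{\partial}, \widetriangle{\mathcal{S}}_{\partial}^{\epsilon}, \widetriangle{\eta}_{\partial}$. Hence, again by Definition \ref{integration along the fiber on Kuranishi} applied to the boundary,
\[
(\widehat{f}_{\partial})_{G!}\left(\widehat{\eta}_{\partial}, \widehat{\mathcal{S}}_{\partial}^{\epsilon}\right) = (\widetriangle{f}_{\partial})_{G!}\left(\widetriangle{\eta}_{\partial}, \widetriangle{\mathcal{S}}_{\partial}^{\epsilon}\right),
\]
and one checks that $d_G$ commutes with the pullback $\Phi_G^*$ so that $d_G\widetriangle{\eta}$ is the good-coordinate-system form compatible with $d_G\widehat{\eta}$, giving $\widehat{f}_{G!}\left(d_G\widehat{\eta}, \widehat{\mathcal{S}}^{\epsilon}\right) = \widetriangle{f}_{G!}\left(d_G\widetriangle{\eta}, \widetriangle{\mathcal{S}}^{\epsilon}\right)$. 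Also $\dim(\kur,\widehat{\mathcal{U}}) = \dim(\kur,\widetriangle{\mathcal{U}})$ since the dimension is preserved under KG-embeddings. With all four terms of the desired identity identified with their good-coordinate-system counterparts, the theorem follows immediately by applying Theorem \ref{Stokes for G-equivariant GC} to $\widetriangle{\mathcal{U}}, \widetriangle{f}, \widetriangle{\mathcal{S}}, \widetriangle{\eta}$.

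One subtlety I would make explicit is the independence of the construction from the choices of compatible good coordinate system data: Definition \ref{integration along the fiber on Kuranishi} is well-posed only up to the independence result (the $G$-equivariant analogue of \cite{kurbook} Theorem 9.14), so the identification of each term with a good-coordinate-system quantity must be performed using \emph{one and the same} choice of $\widetriangle{\mathcal{U}}$ (and its boundary), which is legitimate since all four objects are restrictions/differentials of the single compatible package produced by Lemma \ref{Existence of KG-embedding}. The main obstacle, such as it is, is not conceptual but bookkeeping: verifying that the passage to a good coordinate system is compatible with taking normalized boundaries and with $d_G$, i.e. that restricting the KG-embedding to the boundary again yields a KG-embedding and that $\Phi_G^*$ intertwines the Cartan differentials. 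Both of these are formal consequences of the definitions of $\partial$, of the $G$-equivariant pullback, and of the Cartan model differential, so I would state them as lemmas and prove them by unwinding definitions rather than by any new geometric input.
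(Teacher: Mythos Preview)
Your proposal is correct, but it takes a different route from the paper. The paper does not reduce the Kuranishi case to the good coordinate system case (Theorem \ref{Stokes for G-equivariant GC}); instead it repeats the same ``evaluate at $\xi\in\g$'' trick used for Theorem \ref{Stokes for G-equivariant GC} and reduces directly to the non-equivariant Stokes' theorem on a Kuranishi space, \cite{kurbook} Theorem 9.28. In other words, the paper's diagram of reductions is
\[
\text{$G$-equiv.\ Kuranishi} \xrightarrow{\text{evaluate at }\xi} \text{ordinary Kuranishi (\cite{kurbook} 9.28)},
\]
whereas yours is
\[
\text{$G$-equiv.\ Kuranishi} \xrightarrow{\text{Def.\ \ref{integration along the fiber on Kuranishi}}} \text{$G$-equiv.\ GC (Thm.\ \ref{Stokes for G-equivariant GC})}.
\]
Your route exploits the fact that $\widehat{f}_{G!}$ is \emph{defined} through a compatible good coordinate system, so the identity is tautologically inherited from Theorem \ref{Stokes for G-equivariant GC} once you verify the boundary and $d_G$ compatibilities you flagged. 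The paper's route avoids that bookkeeping entirely but relies on \cite{kurbook} Theorem 9.28 as a black box (which itself is proved by passing to a good coordinate system). Both are valid; your version is slightly more self-contained within this paper's framework, while the paper's is shorter because it offloads the GC-reduction to \cite{kurbook}.
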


\begin{defn}[Weakly transverse to a smooth manifold map]\label{weakly transverse to a smooth manifold map}
 Let $\left(\kur,\widehat{\mathcal{U}}\right)$ be a $G$-equivariant Kuranishi space with Kuranishi structure 
\[ 
\widehat{\mathcal{U}} =  \left(\set{ \mathcal{U}_p }{ p\in \mathcal{M}},   
\set{\vec{\alpha} _{pq}}{ p\in \mathcal{M}, q\in \image \psi_p } \right) 
\]
Let 
\[\widehat{f} = \set{f_p : U_p\to L }{p\in \kur} :\left(\kur,\widehat{\mathcal{U}}\right) \to L \] be a $G$-equivariant strongly smooth map to a smooth $G$-manifold $N$. 
Let $g: N\to L$ be a $G$-equivariant smooth map between smooth manifolds. 
$\widehat{f}$ is said to be \textbf{weakly transverse to} $g$ if $f_p$ is transverse to $ g$ for each $p\in \kur$. 
\end{defn}
\begin{defn}[Weakly transverse strongly smooth maps]\label{weakly transverse strongly smooth maps}
Let $(\kur_1,\widehat{\mathcal{U}}_1)$, $(\kur_2,\widehat{\mathcal{U}}_2)$ be Kuranishi spaces and $N$ be a smooth manifold. 
Let $\widehat{f}_i: (\kur_i,\widehat{\mathcal{U}}_i) \to N$ be a strongly smooth map for $i\in \{1,2\}$. 
Let $\Delta_L: L\to L\times L$ be the diagonal map. 
Then we say $\widehat{f}_1$ and $\widehat{f}_2$ are \textbf{weakly transverse} if 
the map 
\[\widehat{f}_1 \times \widehat{f}_2:  (\kur_1,\widehat{\mathcal{U}}_1)\times (\kur_2,\widehat{\mathcal{U}}_2) \to L\times L \]
is weakly transverse to $\Delta_L$ as in Definition \ref{weakly transverse to a smooth manifold map}. 
\end{defn} 

\begin{defn}[Fiber product of a Kuranishi structure with a smooth manifold for maps in \ref{weakly transverse to a smooth manifold map}]
\label{Fiber product Kuranishi structure for maps weakly transverse to a smooth manifold map}
Let $\widehat{f}$ be weakly transverse to $g$ as defined in Definition \ref{weakly transverse to a smooth manifold map}. Let $f$ be the map associated with $\widehat{f}$ as in Definition \ref{strongly smooth}. 
We can define 
a \textbf{$G$-equivariant Kuranishi structure on the fiber product} 
\[
\kur\times_L N = \{ (p,m)\in \kur\times N \mid f(p)=g(m) \}. 
\]
Let $(p,m)\in \kur\times_L N$ and $(U_p,\mathcal{E}_{p}, \psi_p, s_p)$ be the Kuranishi neighborhood assigned to $p$ in $\widehat{\mathcal{U}}$. 
Let 
\[  U_p\times_L N = \{ (x,m)\in U_p\times N \mid f_p(x)=g(m)\}.\]
Let $\pi_p: \mathcal{E}_p\to U_p$ be the obstruction bundle of $\kur$ for $p$, $\pr_1: U_p\times_L N \to U_p$ be the projection map to the first factor, and 
\[ 
\pi_{(p,m)}: \pr_1^*\mathcal{E}_p = \set{ ((w,z), e) \in (U_p\times_L N) \times E_p }{ w = \pi_{p}(e)}
\to U_p\times_L N
\] 
be the pullback orbibundle. 
Then $s_p$ induces a section of the pullback orbibundle by
\begin{equation}\label{pullback orbibundle section}
s_{(p,m)}(w,z) = ((w,z), s_p(w)) \qquad \forall (w,z)\in U_p\times_L N.  
\end{equation}
Let
\[ 
\mathcal{U}_{(p,m)} = 
\left(
U_{(p,m)} :=    U_p\times_L N , \quad
\mathcal{E}_{(p,m)}: = pr_1^*\mathcal{E}_p, \quad s_{(p,m)}, \quad \psi_{(p,m)}:=\psi_p\times \id_N 
\right).
\]

Let $ (q, z)\in \psi_{(p,m)}(x,z)$ for some $(x,z) \in U_{(p,m)} $. 
Then we define 
\begin{itemize}
    \item $U_{(p,m), (q,z)} = U_{pq}\times_L N$; 
    \item $\alpha_{(p,m), (q,z)} = \alpha_{pq}\times_N\id_M:  U_{pq}\times_L N \to U_p\times_L N$
    ; and 
 \item $\widehat{\alpha}_{(p,m), (q,z)}:  = \widehat{\alpha}_{pq}\times_L \id_N:  \mathcal{E}_{pq}\times_L N  \to \mathcal{E}_p\times_L N$. 
\end{itemize}
Then the fiber product $\kur\times_L N$ induced by $\widehat{f}$ and $g$ is a $G$-equivariant Kuranishi space with Kuranishi structure 
\[ \widehat{\mathcal{U}} \times_L N = \left( 
\begin{aligned}
& \set{\mathcal{U}_{(p,m)} }
{(p,m)\in \kur\times_L N}, \\    
& \set{\Vec{\alpha}_{pq} = (\alpha_{(p,m), (q,z)},\widehat{\alpha}_{(p,m), (q,z)} )}
{
\begin{aligned}
 & (p,m) \in \kur\times_L N,  \\ 
 & (q,z)\in \image\psi_{(p,m)}
\end{aligned}
}  
\end{aligned}
\right), \]
where 
\[ \mathcal{U}_{(p,m)} 
= 
\left(
 U_p\times_L N,\; \mathcal{E}_{(p,m)}: = pr_1^*\mathcal{E}_p,  \; s_{(p,m)} \text{is given by Eq. \autoref{pullback orbibundle section} }, \; \psi_{(p,m)}:=\psi_p\times \id_M
\right). \]
\end{defn} 
\begin{defn}[Fiber product of Kuranishi structures]
Let $\widehat{f}_1: (\kur_1,\widehat{\mathcal{U}}_1) \to L$ and $\widehat{f}_2: (\kur_2,\widehat{\mathcal{U}}_2) \to L$ be $G$-equivariant weakly transverse strongly smooth maps as in Definition \ref{weakly transverse strongly smooth maps}. 
We define 
\[ (\kur_1,\widehat{\mathcal{U}}_1) 
_{\widehat{f}_1}\times_{\widehat{f}_2}
(\kur_2,\widehat{\mathcal{U}}_2)  
\]
to be the fiber product 
\[ 
\left(
(\kur_1,\widehat{\mathcal{U}}_1) 
\times 
(\kur_2,\widehat{\mathcal{U}}_2) \right)\times_L 
\left(L \times L\right) 
\]
induced by the weakly transverse maps
$\widehat{f}_1\times \widehat{f}_2$ and $\Delta_L$ as in Definition \ref{Fiber product Kuranishi structure for maps weakly transverse to a smooth manifold map}. 
\end{defn}

 \begin{defn}[$G$-equivariant smooth correspondences]
\label{smooth correspondence data}
Let $N_s,N_t$ be oriented compact smooth $G$-manifolds without boundary. A \textbf{$G$-equivariant smooth correspondence} from $N_s$ to $N_t$ is a collection of data 
\[ \mathfrak{X} = \lrp{\kur,\widehat{\mathcal{U}},\widehat{f}_s, \widehat{f}_t},\] where
\begin{itemize}
    \item $\left(\kur,\widehat{\mathcal{U}}\right)$ is an oriented $G$-equivariant Kuranishi space with corners, 
    \item $\widehat{f}_s:\left(\kur,\widehat{\mathcal{U}}\right)\to N_s$ is a $G$-equivariant strongly smooth map as in \ref{strongly smooth}, and 
    \item $\widehat{f}_t:\left(\kur,\widehat{\mathcal{U}}\right)\to N_t$ is a $G$-equivariant strongly smooth and weakly submersive map. 
\end{itemize}
A \textbf{perturbed $G$-equivariant smooth correspondence} from $N_s$ to $N_t$ is a pair $\left(\mathfrak{X}, \widehat{\mathcal{S}}\right)$, where
$\mathfrak{X} = \lrp{\kur,\widehat{\mathcal{U}},\widehat{f}_s, \widehat{f}_t}$ is a smooth correspondence from $N_s$ to $N_t$ and $\widehat{\mathcal{S}}$ is a $G$-equivariant CF-perturbation of $\widehat{\mathcal{U}}$ with respect to which $\widehat{f}_t$ is strongly submersive. 
\end{defn}

\begin{defn}[$G$-equivariant correspondence map]
Let $ \lrp{\mathfrak{X} =( \kur,\widehat{\mathcal{U}}, \widehat{f}_s, \widehat{f}_t), \widehat{\mathcal{S}}}$
be a perturbed smooth correspondence from $N_s$ to $N_t$. 
For $\epsilon>0$ sufficiently small, 
we define the \textbf{$G$-equivariant correspondence map} 
 by 
\[ \gcorr{\lrp{\mathfrak{X}, \widehat{\mathcal{S}} }}: \Omega_G^{\bullet}(N_s)\to \Omega_G^{\bullet+\dim N_t - \dim\left(\kur,\widehat{\mathcal{U}}\right)}(N_t)   \]
associated with $\left(\mathfrak{X},\widehat{\mathcal{S}}\right)$ by 
\[\gcorr{\lrp{\mathfrak{X}, \widehat{\mathcal{S}}}}(\eta) = (\widehat{f}_t)_{G!} \lrp{(\widehat{f}_s)^*_G\eta; \mathcal{S}^{\epsilon} } \qquad \forall \eta \in \Omega_G^{\bullet}(N_s).\]
\end{defn}
Then Stokes' Theorem \ref{Stokes for Gkur} implies the following. 
\begin{prop}[Compare with \cite{kurbook} Proposition 26.16]
\label{Stokes corollary}
\[ d_G\circ \gcorr{\lrp{\mathfrak{X}, \widehat{\mathcal{S}}}} = \gcorr{\lrp{\mathfrak{X}, \widehat{\mathcal{S}}}}\circ  d_G 
+ 
(-1)^{\dim \mathfrak{X} + \deg (\cdot )}\gcorr{\partial \lrp{\mathfrak{X}, \widehat{\mathcal{S}}}}. \]
\end{prop}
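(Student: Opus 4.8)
\textbf{Proof proposal for Proposition \ref{Stokes corollary}.}

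The plan is to reduce the statement to the $G$-equivariant Stokes' theorem on a Kuranishi space with corners, Theorem \ref{Stokes for Gkur}, which has already been established. By definition of the $G$-equivariant correspondence map, for any $\eta \in \Omega_G^{\bullet}(N_s)$ we have $\gcorr{\lrp{\mathfrak{X}, \widehat{\mathcal{S}}}}(\eta) = (\widehat{f}_t)_{G!} \lrp{(\widehat{f}_s)^*_G\eta; \widehat{\mathcal{S}}^{\epsilon}}$, where $\mathfrak{X} = (\kur,\widehat{\mathcal{U}}, \widehat{f}_s, \widehat{f}_t)$. So the entire identity is a statement about how $d_G$ interacts with $G$-equivariant integration along the fiber of $\widehat{f}_t$, applied to the pulled-back form $(\widehat{f}_s)^*_G\eta$.

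First I would record the two ingredients that feed into the computation. One: the $G$-equivariant pullback $(\widehat{f}_s)^*_G$ commutes with $d_G$, i.e. $d_G\bigl((\widehat{f}_s)^*_G\eta\bigr) = (\widehat{f}_s)^*_G(d_G\eta)$; this is immediate from the definition of the pullback chartwise (Definition of the $G$-equivariant pullback map) together with the fact that ordinary pullback of differential forms on each orbifold chart commutes with the Cartan differential, and it is compatible with restriction to the normalized boundary since $\widehat{f}_{s,\partial}$ is by definition the restriction of $\widehat{f}_s$. Two: Theorem \ref{Stokes for Gkur} applied to the $G$-equivariant strongly submersive map $\widehat{f}_t$ and the form $\widehat{\eta} := (\widehat{f}_s)^*_G\eta$ gives, for sufficiently small $\epsilon>0$,
\[
d_G\bigl((\widehat{f}_t)_{G!}(\widehat{\eta},\widehat{\mathcal{S}}^{\epsilon})\bigr) = (\widehat{f}_t)_{G!}(d_G\widehat{\eta},\widehat{\mathcal{S}}^{\epsilon}) + (-1)^{\dim(\kur,\widehat{\mathcal{U}}) + l}(\widehat{f}_{t,\partial})_{G!}(\widehat{\eta}_{\partial},\widehat{\mathcal{S}}_{\partial}^{\epsilon}),
\]
where $l = \deg \widehat{\eta} = \deg(\widehat{f}_s)^*_G\eta = \deg\eta$.

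Then I would substitute. The left side is $d_G\gcorr{\lrp{\mathfrak{X}, \widehat{\mathcal{S}}}}(\eta)$. In the first term on the right, use ingredient one to rewrite $(\widehat{f}_t)_{G!}(d_G\widehat{\eta},\widehat{\mathcal{S}}^{\epsilon}) = (\widehat{f}_t)_{G!}\bigl((\widehat{f}_s)^*_G(d_G\eta),\widehat{\mathcal{S}}^{\epsilon}\bigr) = \gcorr{\lrp{\mathfrak{X}, \widehat{\mathcal{S}}}}(d_G\eta)$. For the boundary term, observe that $\partial\bigl(\mathfrak{X},\widehat{\mathcal{S}}\bigr)$ is by definition the perturbed correspondence $\bigl(\partial\kur, \partial\widehat{\mathcal{U}}, \widehat{f}_{s,\partial}, \widehat{f}_{t,\partial}\bigr)$ with CF-perturbation $\widehat{\mathcal{S}}_\partial$, and $\widehat{\eta}_\partial = (\widehat{f}_{s,\partial})^*_G\eta$, so $(\widehat{f}_{t,\partial})_{G!}(\widehat{\eta}_\partial, \widehat{\mathcal{S}}_\partial^\epsilon) = \gcorr{\partial\lrp{\mathfrak{X},\widehat{\mathcal{S}}}}(\eta)$. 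Finally, matching the sign: write $\dim\mathfrak{X} = \dim(\kur,\widehat{\mathcal{U}})$ and $\deg(\cdot) = l$, so $(-1)^{\dim(\kur,\widehat{\mathcal{U}}) + l} = (-1)^{\dim\mathfrak{X} + \deg(\cdot)}$, giving exactly the claimed identity of operators. The main (and essentially only) obstacle is the bookkeeping: confirming that the degree shift in the definition of $\gcorr{\lrp{\mathfrak{X}, \widehat{\mathcal{S}}}}$ (namely $+\dim N_t - \dim(\kur,\widehat{\mathcal{U}})$) is consistent across the three terms — in particular that $\partial\kur$ has dimension $\dim(\kur,\widehat{\mathcal{U}}) - 1$ so that the boundary correspondence map lands in degree one higher, which is exactly what the asserted operator identity requires — and that the sign convention for $\partial$ adopted in Theorem \ref{Stokes for Gkur} is the same one used to define $\partial(\mathfrak{X},\widehat{\mathcal{S}})$. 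Once those conventions are pinned down the proof is a one-line substitution, so I would keep the write-up short and simply cite Theorem \ref{Stokes for Gkur} and the definitions.
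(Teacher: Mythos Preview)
Your proposal is correct and follows the same approach as the paper: the paper simply states that the proposition is implied by Theorem \ref{Stokes for Gkur}, and your argument spells out that implication by applying the theorem to $\widehat{\eta}=(\widehat{f}_s)^*_G\eta$ and using that equivariant pullback commutes with $d_G$.
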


\begin{defn}[Composition of smooth correspondences]\label{Composition of smooth correspondences}
Let $N_s,N_t$ be oriented compact smooth manifolds without boundary. For each bi-index $ji\in \{21, 32\}$, let 
$\mathfrak{X}_{ji} = \lrp{\kur_{ji} ,\widehat{\mathcal{U}}_{ji} ,\widehat{f}_{s; _{ji} }, \widehat{f}_{t; _{ji} }}$ be a smooth correspondence from $N_i$ to $N_j$. 
Assume $\widehat{f}_{t,21}$ and $\widehat{f}_{s, 32}$ are weakly submersive. 
We define the \textbf{composition} $ \mathfrak{X}_{31} = \lrp{\kur_{31} ,\widehat{\mathcal{U}}_{31} ,\widehat{f}_{s, {31} }, \widehat{f}_{t, {31} }}$ of $\mathfrak{X}_{21}$ with $ \mathfrak{X}_{32}$ by 
\[ \lrp{\kur_{31} ,\widehat{\mathcal{U}}_{31}} = \lrp{\kur_{32} ,\widehat{\mathcal{U}}_{32}} 
{}_{\widehat{f}_{s, 32}} \times_{\widehat{f}_{t, 21}}\lrp{\kur_{21} ,\widehat{\mathcal{U}}_{21}},  
\]
\[ \widehat{f}_{s,31}: \lrp{\kur_{31} ,\widehat{\mathcal{U}}_{31}} \lra \kur_{21}\xrightarrow{ \widehat{f}_{s, 21}} N_1, \]
\[ \widehat{f}_{s,32}: \lrp{\kur_{31} ,\widehat{\mathcal{U}}_{31}} \lra \kur_{32}\xrightarrow{ \widehat{f}_{t,32}} N_2.   \]
Then $\scorr_{31}$ is again a smooth correspondence. 
\end{defn}
\begin{defn}
    [Composition of perturbed $G$-equivariant smooth correspondences]
\label{Correspondence of perturbed smooth correspondences}
Let $N_s,N_t$ be oriented compact smooth manifolds without boundary. 
For each bi-index $ji\in \{21, 32\}$, let 
$\left(\mathfrak{X}_{ji}, \widehat{\mathcal{S}}_{ji}\right)$ be a perturbed $G$-equivariant smooth correspondence from $N_i$ to $N_j$, where  
$\mathfrak{X}_{ji} = \lrp{ \kur_{ji},\widehat{\mathcal{U}}_{ji},  \widehat{f}_{s,ji}, \widehat{f}_{t,ji}}$ 
Assume $\widehat{f}_{t,21}$ and $\widehat{f}_{s, 32}$ are weakly submersive. 
One can define the composition $ \left(\mathfrak{X}_{31} ,\widehat{\mathcal{S}}_{31}\right)$ of $ \left(\mathfrak{X}_{32},\widehat{\mathcal{S}}_{32}\right) $ with $\left(\mathfrak{X}_{21},\widehat{\mathcal{S}}_{21}\right)$  so that 
\begin{itemize}
    \item $\mathfrak{X}_{31} $ is the composition of $\mathfrak{X}_{21}$ and $\mathfrak{X}_{32}$ as in Definition \ref{Composition of smooth correspondences}, and that 
    \item $\widehat{f}_{t,31}$ is strongly submersive with respect to $\widehat{\mathcal{S}}_{31} = \widehat{\mathcal{S}}_{21}{}_{\widehat{f}_{t,21}}\times_{\widehat{f}_{s,32}} \widehat{\mathcal{S}}_{32}$, whose construction we refer to Definition 10.13 of \cite{kurbook}.   
\end{itemize}
Then $\left(\scorr_{31}, \widehat{\mathcal{S}}_{31}\right)$ is again a perturbed $G$-equivariant smooth correspondence.  
\end{defn}

\begin{prop}
[Equivariant composition formula]
\label{composition formula}
In the case of Definition \ref{Correspondence of perturbed smooth correspondences}, we have 
\[ \gcorr{\left(\mathfrak{X}_{32}, \widehat{\mathcal{S}}_{32}\right)} \circ \gcorr{\left(\mathfrak{X}_{21}, \widehat{\mathcal{S}}_{21}\right)}  
= 
\gcorr{\left(\mathfrak{X}_{31}, \widehat{\mathcal{S}}_{31}\right)}. \]
\begin{center}
\tikzset{every picture/.style={line width=0.75pt}} 

\begin{tikzpicture}[x=0.75pt,y=0.75pt,yscale=-1,xscale=1]
\draw (300,14.4) node [anchor=north west][inner sep=0.75pt]    {$\mathfrak{X}_{31}$};
\draw (351,69.4) node [anchor=north west][inner sep=0.75pt]    {$\mathfrak{X}_{32}$};
\draw (252,69.4) node [anchor=north west][inner sep=0.75pt]    {$\mathfrak{X}_{21}$};
\draw (206,120.4) node [anchor=north west][inner sep=0.75pt]    {$N_{1}$};
\draw (302,120.4) node [anchor=north west][inner sep=0.75pt]    {$N_{2}$};
\draw (400,120.4) node [anchor=north west][inner sep=0.75pt]    {$N_{3}$};
\draw    (302.16,36) -- (278.16,63.5) ;
\draw [shift={(276.84,65.01)}, rotate = 311.11] [color={rgb, 255:red, 0; green, 0; blue, 0 }  ][line width=0.75]    (10.93,-3.29) .. controls (6.95,-1.4) and (3.31,-0.3) .. (0,0) .. controls (3.31,0.3) and (6.95,1.4) .. (10.93,3.29)   ;
\draw    (253.26,91.01) -- (231.11,114.54) ;
\draw [shift={(229.74,116)}, rotate = 313.27] [color={rgb, 255:red, 0; green, 0; blue, 0 }  ][line width=0.75]    (10.93,-3.29) .. controls (6.95,-1.4) and (3.31,-0.3) .. (0,0) .. controls (3.31,0.3) and (6.95,1.4) .. (10.93,3.29)   ;
\draw    (325.55,36) -- (351.09,63.54) ;
\draw [shift={(352.45,65.01)}, rotate = 227.16] [color={rgb, 255:red, 0; green, 0; blue, 0 }  ][line width=0.75]    (10.93,-3.29) .. controls (6.95,-1.4) and (3.31,-0.3) .. (0,0) .. controls (3.31,0.3) and (6.95,1.4) .. (10.93,3.29)   ;
\draw    (376.48,91.01) -- (398.16,114.53) ;
\draw [shift={(399.52,116)}, rotate = 227.33] [color={rgb, 255:red, 0; green, 0; blue, 0 }  ][line width=0.75]    (10.93,-3.29) .. controls (6.95,-1.4) and (3.31,-0.3) .. (0,0) .. controls (3.31,0.3) and (6.95,1.4) .. (10.93,3.29)   ;
\draw    (277.74,91.01) -- (299.89,114.54) ;
\draw [shift={(301.26,116)}, rotate = 226.73] [color={rgb, 255:red, 0; green, 0; blue, 0 }  ][line width=0.75]    (10.93,-3.29) .. controls (6.95,-1.4) and (3.31,-0.3) .. (0,0) .. controls (3.31,0.3) and (6.95,1.4) .. (10.93,3.29)   ;
\draw    (351.5,91.01) -- (327.92,114.59) ;
\draw [shift={(326.5,116)}, rotate = 315] [color={rgb, 255:red, 0; green, 0; blue, 0 }  ][line width=0.75]    (10.93,-3.29) .. controls (6.95,-1.4) and (3.31,-0.3) .. (0,0) .. controls (3.31,0.3) and (6.95,1.4) .. (10.93,3.29)   ;
\end{tikzpicture}
\end{center}
\end{prop}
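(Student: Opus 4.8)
\textbf{Proof proposal for the equivariant composition formula (Proposition~\ref{composition formula}).}

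The plan is to reduce the $G$-equivariant statement to the non-equivariant composition formula for perturbed smooth correspondences, which is \cite{kurbook} Theorem~10.21, by evaluating everything on a fixed element $\xi \in S(\g^*)$ (equivalently, on a fixed $\xi \in \g$ in the Cartan model) and tracking that all the constructions in Definitions~\ref{smooth correspondence data}--\ref{Correspondence of perturbed smooth correspondences} are built so as to commute with this evaluation. This is the same strategy already used to prove the $G$-equivariant Stokes' theorem (Theorem~\ref{Stokes for Gkur} via Theorem~\ref{Stokes for G-equivariant GC}): the $G$-equivariant integration along the fiber $\widehat{f}_{G!}(\,\cdot\,;\widehat{\mathcal{S}}^\epsilon)$ was defined precisely by applying the ordinary integration along the fiber $\widehat{f}_!(\,\cdot\,;\widehat{\mathcal{S}}^\epsilon)$ pointwise in $\xi$, and the $G$-equivariant pullback $(\widehat{f}_s)_G^*$ is the ordinary pullback on forms with values in $S(\g^*)$.

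First I would unwind the definition of the composed perturbed correspondence: by Definition~\ref{Correspondence of perturbed smooth correspondences} the Kuranishi space $\kur_{31}$ is the fiber product $\kur_{32}\,{}_{\widehat{f}_{s,32}}\!\times_{\widehat{f}_{t,21}}\kur_{21}$, the CF-perturbation is $\widehat{\mathcal{S}}_{31} = \widehat{\mathcal{S}}_{21}\,{}_{\widehat{f}_{t,21}}\!\times_{\widehat{f}_{s,32}}\widehat{\mathcal{S}}_{32}$ (fiber product of CF-perturbations in the sense of \cite{kurbook} Definition~10.13), and $\widehat{f}_{s,31}, \widehat{f}_{t,31}$ are the evident projections composed with $\widehat{f}_{s,21}, \widehat{f}_{t,32}$. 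Then, for a fixed $\xi$, I would observe that $\gcorr{(\mathfrak{X}_{ji},\widehat{\mathcal{S}}_{ji})}(\eta)(\xi) = \corr{(\mathfrak{X}_{ji},\widehat{\mathcal{S}}_{ji})}(\eta(\xi))$, i.e.\ the $G$-equivariant correspondence map evaluated at $\xi$ is exactly the ordinary smooth correspondence map applied to the $S(\g^*)$-valued form evaluated at $\xi$; this follows directly from the definitions of $(\widehat{f}_t)_{G!}$ and $(\widehat{f}_s)_G^*$. Here one must check that the weak submersivity/weak transversality hypotheses needed for the non-equivariant formula hold for every $\xi$, but this is immediate since those are properties of the underlying strongly smooth maps, not of $\xi$.

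Next I would apply the non-equivariant composition formula \cite{kurbook} Theorem~10.21 (or Theorem~10.21 combined with Proposition~10.20 on how the fiber product of CF-perturbations interacts with integration along the fiber) to get
\[
\corr{(\mathfrak{X}_{32},\widehat{\mathcal{S}}_{32})}\bigl(\corr{(\mathfrak{X}_{21},\widehat{\mathcal{S}}_{21})}(\eta(\xi))\bigr) = \corr{(\mathfrak{X}_{31},\widehat{\mathcal{S}}_{31})}(\eta(\xi))
\]
for all $\xi$ and all sufficiently small $\epsilon>0$. Since the common $\epsilon$-threshold can be chosen uniformly (the correspondences and perturbations involved are fixed finitely-generated data, so the $C^1$-smallness conditions are closed conditions independent of $\xi$), re-assembling over $\xi \in S(\g^*)$ yields $\gcorr{(\mathfrak{X}_{32},\widehat{\mathcal{S}}_{32})} \circ \gcorr{(\mathfrak{X}_{21},\widehat{\mathcal{S}}_{21})} = \gcorr{(\mathfrak{X}_{31},\widehat{\mathcal{S}}_{31})}$ as $G$-equivariant operators $\Omega_G^{\bullet}(N_1) \to \Omega_G^{\bullet}(N_3)$, which is the claim.

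The main obstacle I anticipate is bookkeeping rather than conceptual: one must verify that the $G$-equivariant fiber-product CF-perturbation $\widehat{\mathcal{S}}_{31}$ of Definition~\ref{Correspondence of perturbed smooth correspondences}, when its $G$-action is forgotten, is genuinely the non-equivariant fiber-product CF-perturbation to which \cite{kurbook} Theorem~10.21 applies — i.e.\ that the $G$-equivariant Thom forms, the $G$-invariant partitions of unity (Lemma~\ref{existence of partition of unity}), and the choice of support pairs used in defining $(\widehat{f}_t)_{G!}$ all restrict compatibly so that the pointwise-in-$\xi$ picture is exactly the non-equivariant one. This is the same compatibility already invoked implicitly in the proofs of Theorem~\ref{Stokes for G-equivariant GC} and in Proposition~\ref{existence of CF-perturbation}, so it should go through, but it requires care to state cleanly; I would handle it by noting that forgetting the $G$-action sends each piece of the construction to the corresponding non-equivariant piece in \cite{kurbook} Chapter~10, exactly as was done in the last line of the proof of Proposition~\ref{soft sheaves}.
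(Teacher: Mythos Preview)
Your proposal is correct and follows essentially the same approach as the paper: evaluate at a fixed $\xi\in\g$, use that $\gcorr{(\mathfrak{X}_{ji},\widehat{\mathcal{S}}_{ji})}(\eta)(\xi)=\corr{(\mathfrak{X}_{ji},\widehat{\mathcal{S}}_{ji})}(\eta(\xi))$, apply the non-equivariant composition formula \cite{kurbook} Theorem~10.21, and conclude. The paper's proof is in fact just these four lines without the additional bookkeeping discussion you included.
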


\begin{proof}
$\forall \eta \in \Omega_G(N_1)$, $\forall \xi \in \g$, 
\begin{align*}
& \gcorr{\left(\mathfrak{X}_{32}, \widehat{\mathcal{S}}_{32}\right)} \circ \gcorr{\left(\mathfrak{X}_{21}, \widehat{\mathcal{S}}_{21}\right)}(\eta)(\xi)  \\
= 
& 
\corr{\left(\mathfrak{X}_{32}, \widehat{\mathcal{S}}_{32}\right)} \circ \corr{\left(\mathfrak{X}_{21}, \widehat{\mathcal{S}}_{21}\right)}(\eta(\xi) ) \\
= 
& 
\corr{\left(\mathfrak{X}_{31}, \widehat{\mathcal{S}}_{31}\right)}(\eta(\xi)) \qquad \text{ by \cite{kurbook} Theorem 10.21} \\
= 
& \gcorr{\left(\mathfrak{X}_{31}, \widehat{\mathcal{S}}_{31}\right)}(\eta)(\xi) . 
\end{align*}
\end{proof}

\newpage 
\appendix
\section{Equivariant de Rham theory} 
\label{equivariant de rham}
Since the construction of a $G$-equivariant $\Ainf$-algebra is built upon the equivariant de Rham theory, we will some of the concepts here. The interested reader should refer to \cite{GS} and \cite{Tu} for more detailed discussion. 

Let $G$ be an $r$-dimensional compact connected Lie group. 
Let $\g$ be the Lie algebra of $G$ and $\g^*$ be its dual. Fix a basis $\{u_1,\ldots, u_r\}$ for $\g^*$. Let $S(\g^*)$ be the symmetric algebra on $\g^*$. Then we can identify it with the polynomial algebra in $r$-variables: $S(\g^*) \cong \R[u_1,\ldots, u_r]$,  which is a graded algebra in the following sense: $\forall i\in \N$, 
\[ S^i(\g^*)\cong \{ P\in \R[u_1,\ldots, u_r]\mid P \text{ is a homogeneous polynomial of degree } i\}.\] 
By a $G$-manifold we mean a smooth manifold with a smooth $G$-action, which induces a diffeomorphism $l_g: L\to L$ for each $g\in G$. 

\begin{defn}[Fundamental vector fields]
Define a Lie algebra homomorphism $\sigma: \g \to \Gamma(TL)$, which assigns every $X\in \g$ its \textbf{fundamental vector field}\index{fundamental vector field} $\underline{X}$ on $L$, as follows. 
 \[\sigma(X)_p=\underline{X}_p = \frac{d}{dt}\eval_{t=0} (e^{-tX}\cdot p) \qquad \forall p\in L.\]
Equivalently, 
by identifying $\g$ with $T_eG$,  we define, 
$\forall p\in L$,
\[ \underline{X}_p = (dj_p)_e(-X),\] where $j_p: G\to L$ is the map $j_p(g)=g\cdot p$ and $e\in G$ is the identity element.
\end{defn}

\begin{defn}[Cartan model for a manifold] \label{Cartan model for a manifold}
Let $G$ be a Lie group. 
The \textbf{Cartan complex} 
for $G$-equivariant differential forms of a smooth $G$-manifold  $L$
is given by 
\[ \Omega_G(L) : = \left( \Omega(L)\otimes S(\g^*) \right)^G ,
\]
where $(\Omega(L),d)$ is the usual de Rham complex of $L$. 
We may regard $ \Omega_G(L)$ as the set of equivariant polynomial maps $\alpha : \g\to \Omega(L)$. 

We consider the grading
\[ \Omega_G(L) : = \bigoplus_{j\in \N}  \Omega_G^j(L), \qquad \quad \text{where } \Omega_G^j(L) = \bigoplus_{0\leq 2l \leq \dim L} \Bigp{\Omega^{j-2l}(L)\otimes S^l(\g^*)}^G. \]
Define the \textbf{equivariant de Rham differential} $d_G: \Omega_G^*(L)\to \Omega_G^{*+1}(L) $, 
\[  (d_G\alpha) (X) = d(\alpha(X)) - \iota_{\underline{X}}( \alpha(X) )\qquad \forall X\in \g, \quad \forall \alpha \in \Omega_G^*(L).\]
We call $(\Omega_G(L;\R), d_G)$ the \textbf{Cartan model} for $L$. 
It is a differential graded algebra. Thus, we can define the \textbf{equivariant de Rham cohomology} to be 
\[ H_G(L) : = \frac{\ker \Omega_G(L)}{\image \Omega_G(L)}.  \]
\end{defn} 
\begin{rmk}
When the group $G$ is abelian, the adjoint action of $G$ on $S(\g^*)$ is trivial and thus $\Omega_G(L) = \Omega(L)^G\otimes S(\g^*)$. 
Moreover, the Cartan complex $\Omega_G(L)$ is a module over the ring $S(\g^*)^G$. 
\end{rmk}

\begin{prop}\label{pullback, pushforward}
 Let $f:M\to N$ be an oriented $G$-equivariant smooth map between oriented $G$-manifolds. 
\begin{enumerate}[i)]
    \item There exists an \textbf{equivariant pullback}\index{equivariant! pullback} map 
    \[ f_G^*: \Omega_G^\bullet(N)\to \Omega_G^\bullet(M), \qquad (f_G^*\alpha)(X) = f^*(\alpha(X)) \qquad \forall \alpha\in \Omega_G(N), \quad \forall X\in \g  \] 
    and $f_G^*$ commutes with $d_G$. 
    \item If $M,N$ are both compact and oriented smooth manifolds with $\dim M - \dim N = d$, then there is an \textbf{equivariant integration along the fiber map}\index{equivariant! integration along the fiber}, also called an \textbf{equivariant pushforward} map
    \[ 
    f_{G!}: \Omega_{G,c}^\bullet(M)\to \Omega_{G}^{\bullet-d}(N), \qquad (f_{G!}\alpha)(X) = f_!(\alpha(X)) \qquad \forall \alpha\in \Omega_{G,c}(M), \quad \forall X\in \g,  
    \] 
The idea is that one can decompose any $G$-equivariant smooth map $f:M\to N$ into the composition of a projection map $\pi: M\times N \to N$ with the graph map $\gamma: M \to M\times N$.  
Fiber integration defines the ordinary pushforward $\pi_!:  \Omega_c^{\bullet}(M\times N) \to \Omega_c^{\bullet- \dim M}(N)$ by $\pi$, which induces $\pi_{G!}$ by $(\pi_{G!}\alpha)(X) := \pi_!(\alpha (X))$. 
For the inclusion map $\gamma: M\to M\times N$, we define $\gamma_{G!}: \Omega_{G,c}^{\bullet}(M) \to \Omega_{G}^{\bullet + \dim N}(M\times N)$ by 
\[ \gamma_{G!}(\alpha) = p_G^* \alpha \wedge \tau \qquad \forall \alpha \in  \Omega_{G,c}^{\bullet}(M), \]
where $p: U\to M $ is a $G$-equivariant tubular neighborhood of $M$ in $M\times N$ and $\tau$ is an equivariant Thom form. 
We refer the reader to \cite{GS} Chapter 10 for a detailed discussion. 
\end{enumerate}
\end{prop}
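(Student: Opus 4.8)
For this part the plan is to verify directly that the prescribed formula defines a map with the two claimed properties. I view $\alpha\in\Omega_G(N)$ as an equivariant polynomial map $\alpha\colon\g\to\Omega(N)$. First I would check that $f_G^*\alpha$, given by $(f_G^*\alpha)(X)=f^*(\alpha(X))$, again lies in $\Omega_G(M)=\bigl(\Omega(M)\otimes S(\g^*)\bigr)^G$: it is manifestly polynomial in $X$, and its $G$-equivariance is forced by that of $f$. Indeed $f\circ l_g^M=l_g^N\circ f$ gives $f^*\circ(l_g^N)^*=(l_g^M)^*\circ f^*$ on forms, so combining with $\alpha(\Ad_gX)=(l_g^N)^*(\alpha(X))$ one obtains $(f_G^*\alpha)(\Ad_gX)=(l_g^M)^*\bigl((f_G^*\alpha)(X)\bigr)$. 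To see that $f_G^*$ commutes with $d_G$, the key observation is that the fundamental vector fields are $f$-related: differentiating $f(e^{-tX}\cdot p)=e^{-tX}\cdot f(p)$ at $t=0$ gives $df\circ\underline{X}^M=\underline{X}^N\circ f$. Together with $f^*\circ d=d\circ f^*$ and the standard identity $f^*\circ\iota_{\underline{X}^N}=\iota_{\underline{X}^M}\circ f^*$ for $f$-related vector fields, this yields
\[
(f_G^*d_G\alpha)(X)=f^*\bigl(d(\alpha(X))-\iota_{\underline{X}^N}\alpha(X)\bigr)=d\bigl(f^*\alpha(X)\bigr)-\iota_{\underline{X}^M}\bigl(f^*\alpha(X)\bigr)=(d_Gf_G^*\alpha)(X).
\]
When $G$ is nonabelian one also records that $f_G^*$ is $S(\g^*)^G$-linear; in the body $G$ is a torus, so this is automatic.

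\textbf{Part ii).} Here the plan is to build $f_{G!}$ from the factorization $f=\pi\circ\gamma$, where $\gamma\colon M\to M\times N$, $\gamma(m)=(m,f(m))$, is the graph embedding and $\pi\colon M\times N\to N$ the projection; both are $G$-equivariant for the diagonal action on $M\times N$. For $\pi$: since $G$ is connected it preserves the orientations of the compact fibres of $\pi$, so ordinary fibre integration $\pi_!\colon\Omega_c^\bullet(M\times N)\to\Omega_c^{\bullet-\dim M}(N)$ commutes with pullback by $l_g$; hence $(\pi_{G!}\beta)(X):=\pi_!(\beta(X))$ defines $\Omega_{G,c}(M\times N)\to\Omega_G(N)$, and applying coefficientwise in $S(\g^*)$ the usual graded commutation of $\pi_!$ with $d$ and with contraction by a lifted vector field shows $\pi_{G!}$ intertwines $d_G$ up to the sign $(-1)^{\dim M}$ (the vertical component of $\underline{X}^{M\times N}$ drops out under $\pi_!$). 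For $\gamma$: averaging produces a $G$-invariant metric on $M\times N$, so the equivariant tubular neighbourhood theorem supplies a $G$-equivariant tubular neighbourhood $U$ of $\gamma(M)$ with $G$-equivariant retraction $p\colon U\to M$, and, following \cite{GS} Chapter~10, the $G$-equivariant normal bundle carries a $G$-equivariant, equivariantly closed Thom form $\tau$ supported in $U$. Then $\gamma_{G!}(\alpha):=p_G^*\alpha\wedge\tau$, extended by zero, lies in $\Omega_G(M\times N)$, raises degree by $\dim N$, and satisfies $d_G\circ\gamma_{G!}=\gamma_{G!}\circ d_G$ because $d_G\tau=0$ and $d_G\circ p_G^*=p_G^*\circ d_G$. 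Setting $f_{G!}:=\pi_{G!}\circ\gamma_{G!}$ gives a map lowering degree by $\dim M-\dim N=d$; since $\pi_!\circ\gamma_!=f_!$ coefficientwise, one gets $(f_{G!}\alpha)(X)=f_!(\alpha(X))$, as asserted.

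\textbf{On choices and the main difficulty.} At the cochain level $f_{G!}$ depends on the choices of equivariant tubular neighbourhood and Thom form; for the applications one fixes such data once and for all (compatibly with the choices used for the Kuranishi-theoretic integration along the fibre), and the induced map on equivariant de Rham cohomology is independent of them by the usual homotopy argument. The construction is otherwise a routine $G$-equivariant upgrade of the classical one; the only points requiring care are (a) that every ingredient — invariant metric, tubular neighbourhood, Thom form, retraction — can be taken $G$-equivariant, which holds because $G$ is compact, and (b) the sign bookkeeping $(-1)^{\dim M}$ in the compatibility with the Cartan differential. I expect (b), together with the equivariant closedness $d_G\tau=0$ of the Thom form, to be the only genuinely delicate point, and both are settled by \cite{GS} Chapter~10.
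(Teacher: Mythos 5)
Your proposal is correct and follows essentially the same route the paper takes: part i) by direct verification using $f$-relatedness of fundamental vector fields, and part ii) by factoring $f=\pi\circ\gamma$ and assembling $f_{G!}=\pi_{G!}\circ\gamma_{G!}$ with a $G$-equivariant tubular neighbourhood and equivariant Thom form — which is exactly the sketch the paper embeds in the statement before deferring the details to \cite{GS} Chapter~10. The only point you gloss over (and the paper does too) is that with the equivariant Thom form $\tau$, the evaluation $\gamma_{G!}(\alpha)(X)=p^*\alpha(X)\wedge\tau(X)$ differs from $\gamma_!(\alpha(X))=p^*\alpha(X)\wedge\tau_0$ by the moment-map correction terms in $\tau(X)$, so the displayed formula $(f_{G!}\alpha)(X)=f_!(\alpha(X))$ should really be understood modulo those corrections or at the level of cohomology; this imprecision is inherited from the statement itself, not introduced by your argument.
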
 

\section{Orbifolds}
\label{orbifolds section}
General orbifold theory is discussed in  \cite{RuanOrbifolds} and \cite{kurbook} Chapter 23. The interested reader can read more about equivariant orbifolds and equivariant Kuranishi charts in \cite{FukayaLieGroupoids}. 

\begin{defn}[Orbifold chart]\label{orbifold charts}
Let $U$ be a paracompact Hausdorff topological space. 
An $n$-dimensional effective \textbf{orbifold chart}\index{orbifold!charts} of $U$ is a triple $(V,\Gamma, \varphi)$ such that 
\begin{enumerate}[i)]
    \item $V$ is a smooth $n$-dimensional manifold (possibly with corners);
    \item $\Gamma$ is a finite group acting smoothly and effectively on $V$; 
    \item $\varphi: V\to U$ is a continuous map which induces a homeomorphism $\Bar{\varphi}: V/\Gamma \to \varphi(V)$ onto an open subset $\varphi(V)$ of $U$. 
\end{enumerate} 
Let $x\in U$. We say $(V,\Gamma, \varphi)$ is an \textbf{orbifold chart at $x$} if there exists a point $o_x\in V$ such that $\varphi(o_x)=x$ and $\Gamma \cdot o_x = \{o_x\}$. 
Given an orbifold chart at $x$, the \textbf{tangent space} of the orbifold $U$ at $x$ is given by $T_xU = (T_{o_x}V)/\Gamma$. 

Let $(V,\Gamma, \varphi)$ be an orbifold chart and $p\in V$. An \textbf{orbifold subchart} of $(V,\Gamma, \varphi)$ relative to $p$ is an orbifold chart $(V_p, \Gamma_p,\restr{\varphi}{V_p})$ such that $\Gamma_p$ is the isotropy group of $\Gamma$ at $p$,  $V_p$ is a $\Gamma_p$-invariant open neighborhood of $p$ in $V$, 
and $\restr{\varphi}{V_p}$ induces an injective map $V_p/\Gamma_p\to U$. 
\end{defn} 

\begin{defn}[Embedding of orbifold charts]
Let $f:U_1\to U_2$ be a continuous map between paracompact topological spaces.
     An \textbf{embedding} \index{orbifold!charts!embedding of} $(h,\lambda): (V_1,\Gamma_1,\varphi_1 ) \to (V_2,\Gamma_2,\varphi_2) $ from an orbifold chart of $U_1$ to an orbifold chart of $U_2$ \textbf{relative to $f$} consists of 
   \begin{itemize}
       \item a group isomorphism $h: \Gamma_1\to \Gamma_2$ and 
       \item an $h$-equivariant embedding of manifolds $\lambda: V_1\to V_2$ 
   \end{itemize}
   such that $f\circ \varphi_1 = \varphi_2 \circ \lambda$. 

   An embedding of effective orbifold charts is an \textbf{isomorphism} if $\lambda$ is also a  diffeomorphism. 
   If an isomorphism of two orbifold charts on the same topological space $U$ is taken relative to the identity map, we may simply say it is an isomorphism of orbifold charts. 
\end{defn}

\begin{defn}[Orbifold]
Let $U$ be a paracompact Hausdorff topological space.  An $n$-dimensional (effective) \textbf{orbifold atlas} is a collection 
\[ \{ (V_i,\Gamma_i,\varphi_i) \mid i\in I\}\]
of $n$-dimensional effective orbifold charts such that the following holds. 
\begin{enumerate}[i)]
    \item $ \bigcup\limits _{i\in I}\varphi_i(V_i) = U$. 
    \item If   $\varphi_i(p)=\varphi_j(q) = x$ for some $p\in V_i$, $q\in V_j$, 
    then there exists an isomorphism of orbifold charts 
    \[ (h_{qp},\lambda_{qp}): (V_{i,p}, (\Gamma_{i})_p, \restr{\varphi_{i}}{V_{i,p}}) \to (V_{j,q}, (\Gamma_{j})_q,\restr{\varphi_{j}}{V_{j,q}}), \] 
    called a \textbf{transition map},
    from some orbifold subchart of $(V_i,\Gamma_i,\varphi_i)$ relative to $p$ to some orbifold subchart of $(V_i,\Gamma_i,\varphi_i)$ relative to $q$.  
\end{enumerate}
A $n$-dimensional \textbf{maximal orbifold atlas} $\mathcal{A}$ on $U$ is an $n$-dimensional orbifold atlas such that: $\mathcal{B} \subset \mathcal{A}$ whenever $\mathcal{B}$ and $\mathcal{A}\cup \mathcal{B}$ are both orbifold atlases on $U$. 
An $n$-dimensional effective \textbf{orbifold} $(U, \mathcal{A})$ is a paracompact Hausdorff topological space $U$ equipped with an $n$-dimensional maximal orbifold atlas $\mathcal{A}$ on $U$.  
\end{defn}

\begin{defn}[Embedding of orbifolds]
A topological map $f: U_1\to U_2$ between effective orbifolds 
is an \textbf{embedding of orbifolds} if, $\forall x\in U_1$, 
there exists an embedding of effective orbifold charts 
$(h,\lambda)$ 
from an orbifold chart $(V_x,\Gamma_x,\varphi_x)$ of $U_1$ at $x$ to an orbifold chart $(V_y',\Gamma_y',\varphi_y')$ of $U_2$ at $y=f(x)$ relative to $f$.  
An embedding of orbifolds is a \textbf{diffeomorphism} of orbifolds if it is a homeomorphism. 
Let $U$ be an effective orbifold. 
We denote the group of diffeomorphisms from $U$ to itself by $\Diff(U)$, which is a topological group under the compact open topology. 
\end{defn}

\begin{defn}[Orbibundle chart]
Let $U,\mathcal{E}$ be orbifolds and $\pi: \mathcal{E} \to U$ be a continuous surjective map between the underlying topological spaces. 
An \textbf{orbibundle chart} is a quintuple $(V,E,\Gamma, \varphi, \widehat{\varphi})$, where 
\begin{itemize}
    \item $(V,\Gamma, \varphi)$ is an orbifold chart on $U$,
    \item  $E$ is a finite-dimensional vector space with a linear $\Gamma$-action, and 
    \item $(V\times E,\Gamma, \widehat{\varphi})$ is an orbifold chart on $\mathcal{E}$, where $\Gamma$ acts on $V\times E$ diagonally  
\end{itemize}  
such that
\begin{enumerate}[i)]
    \item $\pi \circ \widehat{\varphi} = \varphi \circ \pr_1$. 
\item $\widehat{\varphi}$ induces a homeomorphism on the quotients $\overline{\widehat{\varphi}}: (V\times E )/\Gamma\to \pi^{-1}(\overline{\varphi}(V/\Gamma))$ such that 
$\overline{\varphi}^{-1}\circ \pi \circ \overline{\widehat{\varphi}} = \overline{\pr}_1$.  
\[ 
\xymatrix{
   &  V\times E\ar[r]^{\widehat{\varphi}} \ar[d]_{\pr_1} & \mathcal{E} \ar[d]^{\pi}\\ 
   & V \ar[r]_{\varphi} & U
    }  \quad
\xymatrix{
   &  (V\times E)/\Gamma\ar[rr]^{\overline{\widehat{\varphi}}} \ar[dr]_{\overline{\pr}_1} & & \pi^{-1}(\overline{\varphi}(V/\Gamma)) \ar[dl]^{\overline{\varphi}^{-1}\circ \pi}\\ 
   &  & V/\Gamma &
    }. 
\]
\end{enumerate}

\end{defn}
\begin{defn}[Embedding of orbibundle charts]
Let $\mathcal{E}_1, U_1,\mathcal{E}_2, U_2$ be orbifolds and $\pi_1 :\mathcal{E}_1\to U_1$, $\pi_2 :\mathcal{E}_2\to U_2$ be continuous surjective maps between the underlying topological spaces. 
Let $f: U_1\to U_2,\widehat{f}: \mathcal{E}_1\to \mathcal{E}_2 $ be continuous maps such that $ f  \circ \pi_1= \pi_2\circ \widehat{f}$. 
    An \textbf{embedding of orbibundle charts} is a triple 
    \[ (h, \lambda, \widehat
{\lambda}): (V_1,E_1,\Gamma_1, \varphi_1, \widehat{\varphi}_1) \to  (V_2,E_2,\Gamma_2, \varphi_2, \widehat{\varphi}_2)  \] 
from an orbibundle chart on $\pi_1: \mathcal{E}_1\to U_1$ 
to an orbibundle chart on $\pi_2: \mathcal{E}_2\to U_2$ 
relative to $(f,\widehat{f})$ such that the following holds.   
\begin{enumerate}[i)]
    \item $(h,\lambda): (V_1,\Gamma_1, \varphi_1)\to  (V_2,\Gamma_2, \varphi_2)$ is an embedding of orbifold charts relative to $f$.
    \item $(h,\widehat{\lambda}):  (V_1\times E_1,\Gamma_1,  \widehat{\varphi}_1) \to (V_2\times E_2,\Gamma_2,  \widehat{\varphi}_2)$ is an embedding of orbifold charts relative to $\widehat{f}$. 
    \item If $\pi_{V_1}:V_1\times E_1\to V_1$ and $\pi_{V_2}:V_2\times E_2\to V_2$ are projection maps to their first factors, then 
    \begin{enumerate}[a)]
        \item $ \lambda \circ \pi_{V_1}  = \pi_{V_2}\circ \widehat{\lambda}$ and 
        \item for each $x\in V_1$, $\restr{\widehat{\lambda}}{\{x\}\times E_1} : \{x\}\times E_1  \to \{\lambda(x)\}\times E_2$ is a linear embedding.
    \end{enumerate}
\end{enumerate}

An embedding of orbibundle charts is an \textbf{isomorphism} if $(h,\lambda)$ and $(h,\widehat{\lambda})$ define isomorphisms of orbifold charts and, 
for each $x\in V$, $\restr{\widehat{\lambda}}{\{x\}\times E_1} : \{x\}\times E_1 \to \{\lambda(x)\}\times E_2$ is a linear isomorphism.

\end{defn}

\begin{defn}[Orbibundle subchart at a point]
    Let $(V,E,\Gamma, \varphi, \widehat{\varphi})$ be an orbibundle chart. If $(V_p,\Gamma_p, \restr{\varphi}{V_p})$ is an orbifold subchart of $ (V,\Gamma, \varphi)$ relative to $p\in V$, then 
    $(V_p,E,\Gamma_p, \restr{\varphi}{V_p}, \restr{\widehat{\varphi}}{V_p\times E})$ 
    is also an orbibundle chart, called an \textbf{orbibundle subchart of $(V,E,\Gamma, \varphi, \widehat{\varphi})$ at $p$}. \index{orbibundle!chart!subchart at $p$} 
\end{defn}

\begin{defn}[Orbibundle atlas]
    Let $U,\mathcal{E}$ be orbifolds and $\pi: \mathcal{E} \to U$ be a continuous surjective map between the underlying topological spaces. 
An \textbf{orbibundle atlas} is a locally finite collection of orbibundle charts 
\[\left \{ (V_i,E_i,\Gamma_i, \varphi_i, \widehat{\varphi}_i) \,\middle|\, i\in I \right \}\]
such that 
\begin{enumerate}[i)]
    \item $\left \{ (V_i,\Gamma_i, \varphi_i) \,\middle|\, i\in I \right \}$ is an orbifold atlas on $U$; 
    \item $\left \{ (V_i\times E_i,\Gamma_i, \widehat{\varphi}_i) \,\middle|\, i\in I \right \}$ is an orbifold atlas on $\mathcal{E}$;
    \item for any $i,j\in I$, 
    if $p\in V_i$, $q\in V_j$ satisfy $ \varphi_i(p) = \varphi_j(q)$, then there exist an isomorphism 
    \[ (h_{qp}, \lambda_{qp}, \widehat{\lambda}_{qp}): \left(V_{i,p},E_i,\Gamma_{i,p}, \restr{\varphi}{V_{i,p}}, \restr{\widehat{\varphi}_i}{V_{i,p}\times E_i}\right)\to \left(V_{j,q},E_j,\Gamma_{j,q}, \restr{\varphi}{V_{j,q}}, \restr{\widehat{\varphi}_j}{V_{j,q}\times E_j}\right)\] between
    an orbibundle subchart of  $(V_i,E_i,\Gamma_i, \varphi_i, \widehat{\varphi}_i) $ at $p$ and an orbibundle subchart of  $(V_j,E_j,\Gamma_j, \varphi_j, \widehat{\varphi}_j) $ at $q$.
\end{enumerate}
\end{defn}
\begin{defn}[Embedding of orbibundles]
For $\alpha \in \{1,2\}$, let 
\[\left (\mathcal{E}_\alpha \xrightarrow{\pi_{\alpha}} U_\alpha, \quad \widehat{\mathcal{V}}_{\alpha}=\left \{ \left (V_i^{\alpha},E_i^{\alpha},\Gamma_i^{\alpha}, \varphi_i^{\alpha}, \widehat{\varphi}_i^{\alpha}\right)\,\middle|\, i\in I_{\alpha}\right \}\right)\] 
be a pair such that $\widehat{\mathcal{V}}_{\alpha}$ is an orbibundle atlas on $\mathcal{E}_\alpha \xrightarrow{\pi_{\alpha}} U_\alpha$. 
An \textbf{embedding of orbibundles} $\left (f, \widehat{f}\right): \left(\mathcal{E}_1\xrightarrow{\pi_{1}} U_1, \widehat{\mathcal{V}}_1\right)\to \left(\mathcal{E}_2 \xrightarrow{\pi_2} U_2, \widehat{\mathcal{U}}_2\right)$ consists of two orbifold embeddings $U_1\xrightarrow{f}U_2$ and $\mathcal{E}_1\xrightarrow{\widehat{f}}\mathcal{E}_2$ such that the follows holds. 
\begin{enumerate}[i)]
    \item For any $i\in I_{1}, j\in I_2$ and $p\in V_i^1$, $q\in V_j^2$ with $f(\varphi_i^1(p)) = \varphi_j^2(q)$, there exists an embedding $\left(h_{qp}, f_{qp}, \widehat{f}_{qp}\right)$
   relative to 
    $\left(f, \widehat{f}\right)$, 
     from an orbibundle subchart of $\left(V_i^{1},E_i^{1},\Gamma_i^{1}, \varphi_i^{1}, \widehat{\varphi}_i^{1}\right)$ at $p$ 
     to an orbibundle subchart of $\left (V_j^{2},E_j^{2},\Gamma_j^{2}, \varphi_j^{2}, \widehat{\varphi}_j^{2}\right)$ at $q$. 
    \item $ \pi_2\circ \widehat{f} = f\circ \pi_1$. 
\end{enumerate}

Two orbibundle atlases $\widehat{\mathcal{V}}_1, \widehat{\mathcal{V}}_2$ on  $\E\xrightarrow{\pi} U$ are \textbf{equivalent} if the pair of identity maps 
$(\id, \widehat{\id}): \left(\mathcal{E}\xrightarrow{\pi} U, \widehat{\mathcal{V}}_1\right)\to \left(\mathcal{E} \xrightarrow{\pi} U, \widehat{\mathcal{V}}_2\right)$ is an embedding of orbibundles and $\id, \widehat{\id}$ are diffeomorphisms of orbifolds.
\end{defn}

\begin{defn}[Orbibundle]
    An \textbf{orbibundle} $(\E\xrightarrow{\pi} U,  [\widehat{\mathcal{V}}])$ consists of a continuous surjective map $\E\xrightarrow{\pi} U$ between the underlying topological spaces of two orbifolds and an equivalence class of orbibundle atlases on $\pi$. 
\end{defn}

\begin{defn}[$G$-action on an orbifold]
\label{G-orbifold}
Let $G$ be a compact connected Lie group and $U$ be an effective orbifold. 
A continuous group homomorphism 
\[ 
\alpha: G\to \Diff (U), 
\qquad 
\alpha(g)(x) = g\cdot x \quad \forall x\in U,
\]
is a \textbf{smooth action} of $G$ on $U$,
if $\forall g\in G, \forall x\in U$ there exist 
\begin{itemize}
     \item an open neighborhood $R$ of $g$ in $G$, 
    \item orbifold charts $(V_x,\Gamma_x,\varphi_x )$ at $x$ and $(V_y',\Gamma_y',\varphi_y' )$ at $y=g\cdot x$ of $U$, 
    \item a group isomorphism $h_{g,x}: \Gamma_x\to \Gamma_y'$  , and  
    \item a smooth map $f_{g,x}: R\times V_x\to V_y'$
\end{itemize}
    such that the following holds. 
    \begin{enumerate}[i)]
        \item $f_{g,x}$ is $h_{g,x}$-equivariant: 
        \[ f_{g,x}(\gamma \cdot p) = h_{g,x}(\gamma)  f_{g,x}(p) \quad \forall p\in V_x.  \]
        \item $\varphi_y'(g\cdot p) = g\cdot \varphi_x(p)$ for all $p\in V_x$. 
    \end{enumerate}
An effective orbifold equipped with a smooth $G$-action is called a \textbf{$G$-orbifold}. 
\end{defn}

\begin{defn}[$G$-equivariant orbibundle] 
Let $\pi: \E \to U$
be an orbibundle between $G$-orbifolds and
\[  \left \{ (V_i,E_i,\Gamma_i, \varphi_i, \widehat{\varphi}_i) \,\middle|\, i\in I \right \}  \]
be an orbibundle atlas on $\pi$. 
Then, in particular, $\forall g \in G, x\in \E$, the $G$-action on $\E$ induces some smooth map $f_{g,x}: R\times (V_x\times E_x)\to V_{g\cdot x}\times E_{g\cdot x}$. 
We say $\pi$ is a \textbf{$G$-equivariant orbibundle} if the following holds. 
\begin{enumerate}[i)]
\item $\pi$ is $G$-equivariant:  
$ \pi(g\cdot x) = g\cdot \pi(x)  \qquad \forall g\in G, \quad  \forall x\in U.$
\item For each $g\in G, p\in V_1$, the map
\[ E_x\to E_{g\cdot x}, \qquad v \mapsto \pr_2\circ f_{g,x}(g,x,v)  \]
is linear. 
\end{enumerate}

A \textbf{$G$-equivariant section} of a $G$-equivariant orbibundle $\pi: \E \to U$ is an orbifold embedding $s: U\to \E$ such that 
$\pi \circ s = \id_{U}$ and $g\cdot s(x) = s(g\cdot x)$ for all $g\in G$, $x\in U$. 

An \textbf{embedding of $G$-equivariant orbibundles} $\left (f, \widehat{f}\right): \left(\mathcal{E}_1\xrightarrow{\pi_{1}} U_1, \widehat{\mathcal{V}}_1\right)\to \left(\mathcal{E}_2 \xrightarrow{\pi_2} U_2, \widehat{\mathcal{U}}_2\right)$ 
is an embedding of orbibundles such that $f, \widehat{f}$ are both $G$-equivariant. 
\end{defn} 

\begin{defn}[Differential form on an orbifold]
        A \textbf{differential form} on an orbifold $\lrp{U, \left \{ (V_i,\Gamma_i, \varphi_i) \,\middle|\, i\in I \right \} }$ 
        is a collection 
        \[ \eta = \{ \eta_i \in \Omega(V)^{\Gamma_i}\mid i\in I \}\]
       which associates each orbifold chart $(V_i,\Gamma_i, \varphi_i) $ with a $\Gamma_i$-invariant differential form $\eta_i \in \Omega(V_i)$ such that the following holds. 
        \begin{enumerate}[i)]
            \item If $(h,\lambda): (V_i,\Gamma_i, \varphi_i)\to  (V_j,\Gamma_j, \varphi_j)$ is an isomorphism of orbifold charts, then 
            $ \lambda^*\eta_j =\eta_i  $.
            \item If $\mathfrak{B}_j =(V_j,\Gamma_j, \varphi_j)$ is an orbifold subchart of $(V_i,\Gamma_i, \varphi_i)$, then $\eta_j =\restr{\eta_i}{\mathfrak{B}_j}$. 
        \end{enumerate}
    Denote the set of differential forms on $U$ by $\Omega(U)$.  
    An \textbf{orientation} on an orbifold $U$ is a choice of a differential form $\eta \in \Omega(U)$ such that $\eta_i$ never vanishes. 
\end{defn}

\begin{defn}[Equivariant differential forms an orbifold]
Let $U$ be a $G$-orbifold and $\Omega(U)$ be the space of differential forms on $U$. 
The set of \textbf{$G$-equivariant differential forms} on $U$ is given by 
\[
\Omega_G(U):= \lrp{ \Omega(U) \otimes S(\g^*)}^G.
\]
\end{defn}

\section{Some rigid analytic geometry}
In this appendix, we review some basic definitions and properties of rigid analytic geometry. 
We refer the interested reader to \cite{bgr}, \cite{bosch},  \cite{fresnel2012rigid}, \cite{lutkebohmert}, \cite{nicaise}, \cite{boschlutkebohmertstable}, and \cite{conrad}.  
For tropical analytic geometry and polyhedral domains (of the form $\trop^{-1}(\Delta)$, where $\Delta$ is a polyhedron), we refer the 

We will work over an algebraically closed field $\Lambda$, which is a non-Archimedean field, namely, a field that is complete with respect to a non-Archimedean absolute value (see Definition \ref{NA absolute value}). 
Note that the general rigid analytic geometry concerns a non-Archimedean field $K$, which may not be algebraically closed. 

\begin{defn}[Non-Archimedean valuation]
\label{NA valuation}
A function $\val: \Lambda \to \R \cup \{\infty\}$ 
is a \textbf{non-Archimedean valuation} on $\Lambda$ if the following holds. 
\begin{enumerate}[i)]
    \item $\val(a) = \infty$ if and only if $a=0$. 
    \item $\val(ab) = \val (a) + \val(b)$ for all $a,b\in \Lambda$. 
    \item $\val(a+b)\geq \min \{\val (a) , \val(b)\}$ for all $a,b\in \Lambda$.  
\end{enumerate}
\end{defn}
\begin{defn}[Non-Archimedean absolute value]
\label{NA absolute value}
A function $|\cdot|: \Lambda \to \R_{\geq 0}$ 
is a \textbf{non-Archimedean absolute value} on $\Lambda$ if the following holds. 
\begin{enumerate}[i)]
    \item $|a| = 0$ if and only if $a=0$. 
    \item $|ab|  = |a|  + |b| $ for all $a,b\in \Lambda$. 
    \item \label{NA property} $|a+b| \leq  \max\{|a| , |b|\}$ for all $a,b\in \Lambda$.  
\end{enumerate}    
\end{defn}
From \ref{NA property} one can see that $|n\cdot a| \leq |a|$ for any $n\in \N$, $|a|>0$, which shows that the Archimedean property does not hold for a non-Archimedean absolute value. 
We can associate a non-Archimedean absolute value to a field with non-Archimedean valuation by defining 
\[ |a| = e^{-\val(a)},\] where $e$ is Euler's number.  

We now introduce some major players of rigid analytic geometry. 
\begin{defn}[Closed unit polydisc $B_{\Lambda}^n$]
The \textbf{closed unit polydisc} $B_{\Lambda}^n$ is defined by
   \[B_{\Lambda}^n = \set{(x_1, \ldots, x_n) \in \Lambda^n}{|x_i|\leq 1 \quad \forall 1\leq i \leq n}. \]
\end{defn} 
The set of all power series that converge on $B_{\Lambda}^n$ is called the Tate algebra. 
\begin{defn}[Tate algebra]
    Let $n\geq 1$. The \textbf{Tate algebra} in $n$ variables is defined by
    \[ T_n = \set{\sum_{\pmb{c}\in \N^n} a_{\pmb{c}}x^{\pmb{c}}\in \Lambda\pseries{ x_1, \ldots, x_n} }{a_{\pmb{c}}\in \Lambda, \lim_{|\pmb{c}|\to \infty}|a_{\pmb{c}}| =0 }, \]
    where 
    if $\pmb{c}=(c_1,\ldots, c_n)$ is a multi-index, then $x^{\pmb{c}} = x_1^{c_1}\cdots x_n^{c_n}$ and 
    $|\pmb{c}| = c_1+\cdots + c_n$.  
    Equivalently, 
    \[ T_n = \set{\sum_{\pmb{c}\in \N^n} a_{\pmb{c}}x^{\pmb{c}} \in \Lambda\pseries{ x_1, \ldots, x_n} }{ \lim_{|\pmb{c}|\to \infty} \val (a_{\pmb{c}})=\infty }.  \]
    We denote it by $\Lambda \pair{x_1,\ldots, x_n}$. 
    In particular, $T_0 = \Lambda$. 
\end{defn} 

\begin{prop}[Properties of the Tate algebra]
\label{Tate algebra properties}
    Let $n\geq 1$. 
    The Tate algebra $T_n$ is normal and is a Noetherian integral domain. 
\end{prop}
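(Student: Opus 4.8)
\textbf{Plan for proving Proposition \ref{Tate algebra properties}.}
The statement to be proved is that the Tate algebra $T_n = \Lambda\langle x_1,\ldots, x_n\rangle$ over the algebraically closed non-Archimedean field $\Lambda$ is normal and is a Noetherian integral domain. My plan is to follow the classical route through the Weierstrass Division and Preparation Theorems, which is the standard approach in \cite{bgr} 5.2.5--5.2.7. The key tool is that after a suitable change of coordinates (a ``Weierstrass automorphism'') any nonzero element of $T_n$ becomes ``$x_n$-distinguished,'' which lets one reduce statements about $T_n$ to statements about $T_{n-1}$ by induction on $n$, using that $T_n$ is finite as a module over $T_{n-1}[x_n]/(\text{monic polynomial})$.

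First I would recall the Gauss norm $\|\sum a_{\pmb c} x^{\pmb c}\| = \max_{\pmb c} |a_{\pmb c}|$, note it is multiplicative (this already shows $T_n$ is an integral domain, since a product of nonzero elements has nonzero Gauss norm hence is nonzero), and that $T_n$ is complete with respect to it. Next I would state the Weierstrass Division Theorem: given $g\in T_n$ that is $x_n$-distinguished of degree $d$ (i.e. $g = \sum_i g_i(x_1,\ldots,x_{n-1}) x_n^i$ with $\|g_d\| = \|g\|$ and $\|g_i\| < \|g\|$ for $i>d$, after renormalizing $g_d$), every $f\in T_n$ can be written uniquely as $f = qg + r$ with $q\in T_n$ and $r\in T_{n-1}[x_n]$ of degree $<d$. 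The proof is a contraction-mapping / successive-approximation argument using completeness. From division one derives Weierstrass Preparation: $g = u\cdot\omega$ with $u$ a unit in $T_n$ and $\omega\in T_{n-1}[x_n]$ a monic polynomial of degree $d$ (a ``Weierstrass polynomial''). Crucially one checks that for any nonzero $g\in T_n$ there is a $\Lambda$-algebra automorphism $\sigma$ of $T_n$ making $\sigma(g)$ $x_n$-distinguished; here $\Lambda$ infinite (indeed algebraically closed) makes the coordinate shift available.

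With these in hand, Noetherianity follows by the usual Hilbert-basis-style induction: $T_0 = \Lambda$ is a field hence Noetherian; assuming $T_{n-1}$ Noetherian, an ideal $I\subset T_n$ containing a nonzero $g$ can, after applying $\sigma$, be analyzed via division by a Weierstrass polynomial $\omega$, so $I/(\omega)$ injects into the Noetherian ring $T_{n-1}[x_n]/(\omega)$, which is a finite $T_{n-1}$-module; hence $I$ is finitely generated. For normality (integral closedness in the fraction field), I would again induct: $T_n$ is a finite extension of $A := T_{n-1}[x_n]/(\omega)$, and one shows $T_n$ is integrally closed by combining (i) $T_{n-1}$ normal by induction, (ii) $T_n$ is a finite torsion-free extension of a polynomial-type ring over $T_{n-1}$, and (iii) the Gauss norm characterization of $T_n$ inside $T_{n-1}\langle x_n\rangle$; more precisely one uses that $T_n$ is a unique factorization domain when $n\le \dots$ — actually cleaner: invoke that $T_n$ is factorial (proved in \cite{bgr} 5.2.6, again by Weierstrass-preparation induction), and a UFD is normal. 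So the cleanest skeleton is: Gauss norm $\Rightarrow$ domain; Weierstrass Division/Preparation $\Rightarrow$ (by induction on $n$) $T_n$ Noetherian and factorial; factorial $\Rightarrow$ normal.

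The main obstacle is establishing the Weierstrass Division Theorem itself with all the norm estimates, and verifying that the Weierstrass change of coordinates genuinely produces an $x_n$-distinguished element — these are the technical heart, requiring careful bookkeeping of Gauss norms and a convergence argument. Everything downstream (the inductive passages to Noetherian and to UFD/normal) is then formal commutative algebra. Since the excerpt only asks me to record the result and the paper treats it as background, in practice I would simply cite \cite{bgr} Sections 5.2.5--5.2.7 for the Weierstrass theorems, Noetherianity, and factoriality, and then add the one-line deduction that a Noetherian UFD is integrally closed, hence normal; I would reproduce the Gauss-norm multiplicativity argument for the integral-domain claim only if a self-contained proof is desired.
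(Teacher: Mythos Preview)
Your plan is mathematically correct and follows the standard route via Weierstrass Division and Preparation, exactly as in \cite{bgr} 5.2.5--5.2.7. Note, however, that the paper does not supply a proof of this proposition at all: it is stated in the appendix as a background fact, with the surrounding text referring the reader to \cite{bgr}, \cite{bosch}, and related references for the general theory. So there is nothing to compare against; your proposal simply fills in what the paper leaves as a citation, and your suggestion at the end --- to cite \cite{bgr} for Weierstrass, Noetherianity, and factoriality, and add the one-line remark that a UFD is normal --- is precisely the level of detail appropriate here.
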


\begin{defn}[$\Lambda$-affinoid algebra and $\Lambda$-affinoid space]
    A $\Lambda$-algebra that is isomorphic to $T_n/I$ for some ideal $I$ in $T_n$ is called a \textbf{$\Lambda$-affinoid algebra} and the maximal spectrum of $A$, denoted by
    \[ \Sp A = \{ \mathfrak{a}\subset A\mid \mathfrak{a} \text{ is a maximal ideal of }A \},\]
    is called a \textbf{$\Lambda$-affinoid space}. 
    A $\Lambda$-algebra morphism $f: A\to B$ between two $\Lambda$-affinoid algebras is called a \textbf{morphism of $\Lambda$-affinoid algebras}. 
    If $f^{\#}: \Sp B\to \Sp A$ is induced by a morphism $f: A\to B$ of $\Lambda$-affinoid algebras such that 
    \[ f^{\#} (\m) = f^{-1}(\m) \quad \forall \m \in \Sp B,\]
    then $f^{\#}$ is called a \textbf{morphism of $\Lambda$-affinoid spaces}. 
\end{defn}

\begin{defn}[$\Lambda$-affinoid subdomain] 
Let $f : A\to B$ be a $\Lambda$-affinoid algebra homomorphism.  
    Then $f^{\#}(\Sp B)$ is a \textbf{$\Lambda$-affinoid subdomain} of $\Sp A$ if the following universal property holds. Whenever $g: A\to C$ is a $\Lambda$-affinoid algebra homomorphism such that $g^{\#}(\Sp C) \subset f^{\#}(\Sp B)$, there exists a unique morphism $h: A\to C$ of $\Lambda$-affinoid algebras such that $g^{\#} = f^{\#} \circ h^{\#}$. 
    
    We call such a morphism $f^{\#}$ of $\Lambda$-affinoid spaces an \textbf{open immersion} if it satisfies the universal property as above.  
\end{defn}
By \cite{bgr} Proposition  7.7.2/1, if $f^{\#}:\Sp B\to \Sp A$ satisfies the universal property, then it is injective. 
So the affinoid subdomain $f^{\#}(\Sp B) \subset \Sp A$ can be identified with the affinoid space $\Sp B$. 
\begin{defn}[Grothendieck topology]
  A \textbf{Grothendieck topology (G-topology)}
 consists of 
 \begin{itemize}
     \item a category $\Cat$, called the admissible open subsets, and, 
     \item for each $U\in \operatorname{ob} \Cat$, a set 
     $ \operatorname{Cov} U$, 
     called the set of  admissible coverings of $U$, which consists of 
    families of the form $(U_i\xrightarrow{\Phi_i}U)_{i\in I}$, where each $\Phi_i$ is a morphism in $\Cat$,
 \end{itemize}
 such that the following holds. 
 \begin{enumerate}[i)]
     \item If $\Phi: U'\to U$ is an isomorphism in $\operatorname{mor} \Cat$, then the family $(U' \xrightarrow{\Phi} U) \in \operatorname{Cov} U$. 
     \item If $(U_i \xrightarrow{\Phi_i} U)_{i\in I} \in \operatorname{Cov} U$ and $(U_{ij} \xrightarrow{\Phi_{ij}} U_i)_{j\in J} \in \operatorname{Cov} U_i$, 
     then $( U_{ij} \xrightarrow{\Phi_{ij}} U_i\xrightarrow{\Phi_i}U)_{i\in I, j\in J} \in \operatorname{Cov} U$. 
     \item \label{intersection of admissible opens}
     If $(U_i \xrightarrow{\Phi_i} U)_{i\in I} \in \operatorname{Cov} U$  and $V\to U$ belongs to $\operatorname{mor} \Cat$, 
     then the fiber products $U_i\times _U V$ exist in $\Cat$ and 
     $(U_i\times _U V\to V)_{i\in I}$ belongs to $\operatorname{Cov} V$. 
 \end{enumerate}
\end{defn}
A category with a Grothendieck topology is called a \textbf{site}. 
It's a generalization of a topological space. A topological space can be viewed as a site whose admissible open subsets are open subsets of $X$ and the set of admissible coverings of an open subset $U$ of $X$ consists of the open covers of $U$.  
 
\begin{defn}[Weak Grothendieck topology on a $\Lambda$-affinoid space]
   The \textbf{weak G-topology} on a $\Lambda$-affinoid space $X$ is defined as follows.  
   The \textit{admissible open} subsets are the affinoid subdomains of $X$ and the \textit{admissible coverings} of an affinoid subdomain $U\subset X$ are the coverings of $U$ by finitely many affinoid subdomains of $X$. 
\end{defn}

\begin{defn}[Structure sheaf]
Let $X$ be an affinoid space. 
We define a structure presheaf $\hol_X$ on the site $X$ as follows.  
Let $U\subset X$ be an affinoid subdomain of $X$ which is the image of a morphism $f^{\#}: \Sp B\to \Sp A$ of affinoid spaces. 
 $\hol_X(U) = B$. 
 By Tate's acyclicity theorem \cite{bosch} 4.3/Theorem 1, $\hol_X$ is a sheaf. 
\end{defn}

\begin{defn}[Strong Grothendieck topology on a $\Lambda$-affinoid space]
     The \textbf{strong G-topology} on a $\Lambda$-affinoid space $X$ is defined as follows. 
\begin{itemize}
    \item  A subset $U\subset X$ is \textit{admissible open} if
    there is a covering 
    $U = \bigcup\limits_{i\in I} U_i$ of $U$ by (not necessarily finitely many) affinoid subdomains $U_i$ of $X$ such that,
   for any $\Lambda$-affinoid space morphism $\varphi: Y\to X$ with $\varphi(Y)\subset U$, 
   the covering $\{ \varphi^{-1}(U_i)\}_{i\in I}$ 
   has a refinement which is a covering by finitely many affinoid subdomains of $Y$. 
   \item A covering $U = \bigcup\limits_{j\in J}U_j$ of an admissible open subset $U$ of $X$ is \textit{admissible} if, for each affinoid space morphism $\varphi: Y\to X$ with $\varphi(Y)\subset U$, 
  the covering $\{ \varphi^{-1}(U_i)\}_{i\in I}$ has a refinement which is a covering by finitely many affinoid subdomains of $Y$. 
\end{itemize}
\end{defn}

Any sheaf defined with respect to the weak G-topology extends uniquely to a sheaf with respect to the strong G-topology. 
In particular, $\hol_X$ extends to a sheaf with respect to the strong G-topology.  

\begin{defn}[$\Lambda$-rigid analytic space]
    A $\Lambda$-rigid analytic space is a pair $(X,\hol_X)$ such that the following holds. 
\begin{enumerate}[i)]
\item $X$ can be endowed with a G-topology which satisfies the completeness conditions in the following sense. 
\begin{enumerate}[i)]
        \item [(G0)] $\emptyset, X$ are admissible open. 
        \item [(G1)]
        If $U = \bigcup_{i\in I} U_i$ is an admissible covering and $V\subset U$ is a subset such that $V\cap U_i$ is admissible open for all $i$, then $V$ is admissible open in $X$. 
        \item [(G2)] 
        If $U, U_i$ are admissible open for all $i\in I$ and $U = \bigcup\limits_{i\in I} U_i$ is an admissible covering which admits an admissible refinement, 
        then $(U_i)_{i\in I}$ is an admissible covering of $U$. 
        \end{enumerate}
\item $\hol_X$ is a sheaf of $\Lambda$-algebras such that there exists an admissible covering $X = \bigcup\limits_{i\in I} X_i$ where each $(X_i, \hol_X|_{X_i})$ is a $\Lambda$-affinoid space.
    \end{enumerate}
\end{defn} 
\begin{prop}[\cite{bosch} 5.1/Proposition 7] 
\label{strict norm inequalities define admissible opens}
Suppose $X$ is an affinoid space with the strong Grothendieck topology and $f\in \hol_X(X)$. 
Let 
\[ U = \{x\in X \mid |f(x)|<1\}, \quad U' = \{ x\in X \mid |f(x)|> 1\}, \quad U'' =  \{ x\in X \mid |f(x)|> 0\}. \]
Then any finite union of the sets of these types is admissible open in the strong G-topology. 
Any finite covering by finite unions of such sets is admissible.
\end{prop}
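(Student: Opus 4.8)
\begin{sketch}
The plan is to produce, for each of the three basic sets, an explicit admissible covering by affinoid subdomains, and then to reduce the ``finite union'' and ``finite covering'' assertions to these by a bookkeeping argument with refinements. The only genuinely analytic input will be the Maximum Modulus Principle for $\Lambda$-affinoid algebras (\cite{bgr} 6.2.1/4): on any $\Lambda$-affinoid space $Y=\Sp B$ and for any $g\in B$, the quantity $\sup_{y\in Y}|g(y)|$ is finite and attained at a point of $Y$, and $g$ is a unit in $B$ precisely when $g(y)\ne 0$ for all $y\in Y$, in which case $\inf_{y\in Y}|g(y)|>0$ is attained as well. I will also use that $|\Lambda^\ast|=\R_{>0}$: since $|T^\lambda|=e^{-\lambda}$ for every $\lambda\in\R$, the value group is all of $\R_{>0}$, so I may freely choose scalars with prescribed absolute values.

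First I would treat $U=\{x\in X:|f(x)|<1\}$. Pick $c_n\in\Lambda^\ast$ with $|c_n|<1$ and $|c_n|\uparrow 1$ (e.g.\ $c_n=T^{1/n}$) and set $X_n:=\{x\in X:|f(x)|\le|c_n|\}=X\langle c_n^{-1}f\rangle$, a Weierstrass subdomain of $X$, hence a $\Lambda$-affinoid subdomain, with $X_n\subset U$ and $U=\bigcup_{n}X_n$. To see that this covering is admissible in the strong G-topology, let $\varphi\colon Y\to X$ be a morphism of $\Lambda$-affinoid spaces with $\varphi(Y)\subset U$; then $g:=\varphi^{\#}(f)$ satisfies $|g(y)|<1$ for all $y\in Y$, so $\sup_Y|g|<1$ by the Maximum Modulus Principle, and hence $\varphi^{-1}(X_n)=Y$ once $|c_n|\ge\sup_Y|g|$. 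Thus $\{\varphi^{-1}(X_m)\}$ admits the one-element affinoid refinement $\{Y\}$, which establishes both that $U$ is admissible open and that $U=\bigcup_nX_n$ is admissible. The sets $U'=\{|f|>1\}$ and $U''=\{|f|>0\}$ are handled the same way, now with the Laurent subdomains $\{|f|\ge|d_n|\}=X\bigl((d_n^{-1}f)^{-1}\bigr)$ for $|d_n|\downarrow 1$, $|d_n|>1$ (resp.\ for $|e_n|\downarrow 0$, $|e_n|>0$): if $\varphi(Y)\subset U'$ or $\varphi(Y)\subset U''$ then $g=\varphi^{\#}(f)$ is a unit in $\hol_Y(Y)$ with $\inf_Y|g|$ attained and strictly greater than $1$ (resp.\ than $0$), so again $\varphi^{-1}$ of one member of the covering is all of $Y$.

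For a finite union $W$ of the sets $\{|f|<1\},\{|f|>1\},\{|f|>0\}$ I would use that, for a single $f$, there are only a few such unions: $U''\cup U'=U''$, $U\cup U''=X$ (hence also $U\cup U'\cup U''=X$), and $U\cup U'=\{|f|\ne 1\}$, so only $U,U',U'',\{|f|\ne1\}$ and $X$ need to be considered, and the only new one is $\{|f|\ne1\}$. There, for $\varphi\colon Y\to X$ with $\varphi(Y)\subset\{|f|\ne1\}$, the two admissible opens $\varphi^{-1}(U)$ and $\varphi^{-1}(U')$ are disjoint and cover $Y$, hence clopen; such a decomposition corresponds to an idempotent of $\hol_Y(Y)$, so each piece is an affinoid subdomain of $Y$ on which $|g|$ is, respectively, bounded above by something $<1$ or below by something $>1$, and collapses to a single $\varphi^{-1}(X_n)$ or $\varphi^{-1}(\{|f|\ge|d_n|\})$; this yields the required finite affinoid refinement. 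The final ``finite covering by finite unions is admissible'' assertion follows by combining, for a given covering, the affinoid coverings of the members produced above and running the same refinement bookkeeping; the argument is insensitive to replacing the single $f$ by finitely many functions $f_1,\dots,f_r$, which is the form in which the statement is applied (e.g.\ in Proposition \ref{CritGPO is a rigid analytic space}).

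The main obstacle is the one non-formal step: deducing $\sup_Y|g|<1$ from $|g(y)|<1$ pointwise, i.e.\ the fact that the spectral norm on an affinoid algebra is attained. Granting the Maximum Modulus Principle, everything else is the surjectivity of the absolute value onto $\R_{>0}$ together with elementary manipulation of Weierstrass and Laurent domains; the only mildly delicate bookkeeping is extracting a \emph{finite} affinoid refinement when the sets in the union genuinely overlap, which I would resolve by first refining $Y$ along the disjoint clopen decomposition $\varphi^{-1}(U)\sqcup\varphi^{-1}(U')$ (or the analogous refinements) and only then collapsing each piece to a single domain from Steps~1--2.
\end{sketch}
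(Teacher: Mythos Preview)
The paper does not give its own proof of this proposition; it is quoted verbatim as \cite{bosch} 5.1/Proposition 7 and used as a black box. Your sketch is essentially the standard argument one finds in that reference (and in \cite{bgr} 9.1.4): exhaust each of $U$, $U'$, $U''$ by an increasing family of Weierstrass or Laurent subdomains, and invoke the Maximum Modulus Principle to collapse the pullback of this covering along any affinoid map $\varphi\colon Y\to X$ to a single member. So the approach is correct and matches the cited source.

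One small remark: your treatment of the ``finite union'' step is slightly ad hoc, since you enumerate the possible unions for a single $f$ and then assert at the end that the argument is ``insensitive'' to using several functions $f_1,\dots,f_r$. In the references the union case is handled more uniformly: one first shows each basic set is admissible open with an admissible affinoid covering, and then appeals directly to the completeness property (G1) of the strong topology (a subset $V\subset X$ is admissible open once there is an admissible covering $V=\bigcup V_i$ with each $V_i$ admissible open). This avoids the case analysis and the idempotent argument for $\{|f|\ne 1\}$, and generalizes immediately to arbitrary finite unions with different functions. Your clopen/idempotent trick is correct, but it is not needed.
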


\begin{defn}[Dimension of a rigid analytic space]
Let $X$ be a rigid analytic space and $x\in X$. 
Define 
\[ \hol_{X,x} :=\lim_{\substack{\longrightarrow\\ U\ni x }} \hol_X(U), \]
where the direct limit is taken over all admissible open subsets $U$ of $X$ containing $x$. 

The dimension $\dim_x X$ of a rigid analytic space $X$ at a point $x\in X$ is defined to be the Krull dimension of $\hol_{X,x}$. 
The \textbf{dimension} $\dim X$ of a rigid analytic space $X$ is defined to be 
\[ \dim X: = \sup_{x\in X} \dim_x X = \sup _{x\in X} \dim \hol_{X,x}. \]
In particular, the dimension of a $\Lambda$-affinoid space $X =\Sp A$ is the Krull dimension of $A$. 
\end{defn} 

For any $\Lambda$-scheme $X$ of finite type, one can associate a $\Lambda$-rigid analytic space $X^{an}$ with $X$ such that the underlying set of $X^{an}$ is the set of closed points of $X$. 
    
\begin{defn}[Analytification]
Let $(X,\hol_X)$ be a $\Lambda$-scheme of finite type. 
$X^{an}$ is the rigid analytification of $X$ if there exists a natural morphism of locally ringed sites 
$i: (X^{an}, \hol_{X^{an}})\to (X,\hol_X)$ 
which maps the underlying set of $X^{an}$ bijectively to the set of closed points of $X$ such that the following universal property holds. 
For any $\Lambda$-rigid analytic space $(Y, \hol_Y)$ and a morphism of locally ringed sites $j: Y\to X$  there exists a unique morphism of rigid analytic spaces $j_Y: Y\to X^{an}$ such that $j = i\circ j_Y$. 

Let $r\in \Lambda$ be an element with $|r|>1$. 
One can describe the rigid analytification of a scheme of the form 
\[ X=\Spec (\Lambda [x_1,\ldots, x_n]/(f_1,\ldots, f_k))\] 
explicitly by 
\[ X^{an} = \bigcup_{i\in \N}\Sp  \frac{\Lambda \pair{ r^{-i} x_1,\ldots, r^{-i}x_n} }{(f_1,\ldots, f_k) }. \]
\end{defn} 
Let $\Lambda_0$ be the valuation ring of $\Lambda$. Let $\mathfrak{a}$ be its ideal of definition.

\begin{defn}[Genus of a rigid analytic curve]
Let $W$ be a smooth proper rigid analytic variety over $\Lambda$ of dimension $1$ which is geometrically connected. 
Then the \textbf{genus} of $W$ is defined to be $g : = \dim_{\Lambda}H^1(W,\hol_W)$. 
\end{defn}

\section{Some tropical geometry}
Let $\Lambda$ be a field with a valuation and $\Lambda^* = \Lambda\setminus \{0\}$. Let 
\[ \Lambda_0 = \{ y \in \Lambda \mid \val(y)\geq 0 \},\quad  \Lambda_+ = \{ y \in \Lambda \mid \val(y) >  0 \}, \]
and $k = \Lambda_0/\Lambda_+$ be the residue field. 

Recall there is a tropicalization map defined on the algebraic $n$-torus $(\Lambda^*)^n$: 
\[ \trop: (\Lambda^*)^n \to \R^n, \qquad (y_1,\ldots, y_n)\mapsto (\val(y_1),\ldots, \val(y_n)). \]
\begin{defn}[Tropicalization of a Laurent polynomial]
For any Laurent polynomial 
\[ f = \sum_{c\in \Z^n} a_c y^c \in \Lambda[y_1^{\pm 1}, \ldots, y_n^{\pm 1}], \]
where $y^c:= y_1^{c_1}\cdots y_n^{c_n}$, $a_c\in \Lambda$, 
define the \textbf{tropicalization} 
$\trop f: \R^n\to \R$ of $f$ by 
\[ (\trop f)(u) = \min \{ \val(a_c) +\pair{u,c}\mid c\in \Z^n\} \qquad \forall u\in \R^n. \]
\end{defn}

\begin{defn}[Tropical variety]
We define the \textbf{tropical hypersurface} $V(\trop f)$ associated with a Laurent polynomial
\[ f = \sum_{c\in \Z^n} a_c y^c \in \Lambda[y_1^{\pm 1}, \ldots, y_n^{\pm 1}] \]
such that $u\in \R^n$ is an element of $V(\trop f)$ if and only if there exist at least two $c',c''\in \Z^n$ such that 
\[ \trop(f)(u) =\val(a_{c'}) + \pair{u,c'} = \val(a_{c''}) + \pair{u,c''} . \]
Let $I\subset  \Lambda[y_1^{\pm 1}, \ldots, y_n^{\pm 1}]$ be an ideal. 
The tropical variety associated with $V(I)$ is defined to be 
\[ V(\trop (I)) = \bigcap_{f\in I}\trop (V(f)). \]
\end{defn}

\begin{defn}[Initial form/ideal]
Let \[ f = \sum_{c\in \Z^n} a_c y^c \in \Lambda[y_1^{\pm 1}, \ldots, y_n^{\pm 1}], \quad u\in \R^n. \]
The Laurent polynomial 
\[ in_u(f) = \sum_{ \substack{c\in \Z^n\\ \trop(f)(u) = \val(a_c) + \pair{u,c} } } \overline{T^{-\val(a_c)}a_c} \cdot y^c \in k[y_1^{\pm 1}, \ldots, y_n^{\pm 1}], \]
where $T\in \Lambda$ is an element satisfying \[ \val(T^\lambda) = \lambda \qquad \forall \lambda\in \val(\Lambda^*) \]
and $\overline{\phantom{A}}: \Lambda^* \to k^*$ is the reduction map,  
is called the \textbf{initial form} of the Laurent polynomial $f$ at $u$. 
Similarly, if $I$ is an ideal in $\Lambda[y_1^{\pm 1}, \ldots, y_n^{\pm 1}]$ and $u\in \R^n$, the \textbf{initial ideal} $in_u(I)$ is given by
\[in_u(I) = \left(in_u(f)\mid f\in I\right).  \]
\end{defn}

\begin{thm}[Kapranov's Theorem, \cite{maclagan} Theorem 3.1.3]
\label{Kapranov's Theorem}
Let $\Lambda$ be an algebraically closed field with a non-trivial valuation. 
Let
\[ f = \sum_{c\in \Z^n} a_c y^c \in \Lambda[y_1^{\pm 1}, \ldots, y_n^{\pm 1}] \]
be a Laurent polynomial. Then 
\[ V(\trop f) = \{u\in \R^n\mid in_u(f) \text{ is not a monomial}\}= \overline{\trop(V(f))},\]
where the last set is the closure of the image $\trop(V(f))$ of $V(f)\subset (\Lambda^*)^n$ under the coordinate-wise valuation map in $\R^n$. 
\end{thm}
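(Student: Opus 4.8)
The plan is to prove the two equalities $V(\trop f) = \{u \in \R^n \mid in_u(f) \text{ is not a monomial}\}$ and $\{u \in \R^n \mid in_u(f) \text{ is not a monomial}\} = \overline{\trop(V(f))}$ separately, as they have quite different flavors. The first is essentially a bookkeeping exercise; the second contains all the real content.

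For the first equality, I would unwind the definitions. By construction, $in_u(f) = \sum_{c} \overline{T^{-\val(a_c)}a_c}\, y^c$, where the sum ranges over exactly those $c \in \Z^n$ achieving the minimum $\trop(f)(u) = \val(a_c) + \langle u, c\rangle$. Each coefficient $\overline{T^{-\val(a_c)}a_c}$ is a nonzero element of the residue field $k$ (since $\val(T^{-\val(a_c)}a_c) = 0$, so reduction is nonzero), hence no cancellation occurs and $in_u(f)$ has precisely as many monomials as there are $c$ achieving the minimum. Therefore $in_u(f)$ is a monomial if and only if the minimum defining $\trop(f)(u)$ is achieved by a \emph{unique} $c$; equivalently, $in_u(f)$ is \emph{not} a monomial iff the minimum is achieved at least twice, which is exactly the definition of $u \in V(\trop f)$. (One should also note $in_u(f) \neq 0$ always, so ``not a monomial'' is unambiguous.)

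For the inclusion $\overline{\trop(V(f))} \subset V(\trop f)$, I would first show $\trop(V(f)) \subset V(\trop f)$ — which suffices since $V(\trop f)$ is closed (it is a finite union of polyhedra, being defined by equalities and inequalities among the finitely many affine functions $c \mapsto \val(a_c) + \langle u, c\rangle$). So let $y = (y_1,\ldots,y_n) \in V(f) \subset (\Lambda^*)^n$ and set $u = \trop(y)$. Then $0 = f(y) = \sum_c a_c y^c$, and $\val(a_c y^c) = \val(a_c) + \langle u, c\rangle$. If the minimum over $c$ of these valuations were achieved uniquely, then by the ultrametric inequality (the strict version: if $\val(x) < \val(x')$ then $\val(x+x') = \val(x)$) we would get $\val(f(y)) = \min_c(\val(a_c) + \langle u,c\rangle) < \infty$, contradicting $f(y) = 0$. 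Hence the minimum is achieved at least twice, so $u \in V(\trop f)$.

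The reverse inclusion $V(\trop f) \subset \overline{\trop(V(f))}$ is the main obstacle and uses the hypothesis that $\Lambda$ is algebraically closed with nontrivial valuation. The strategy: given $u \in V(\trop f)$, I must produce points of $V(f)$ whose tropicalizations approach $u$. The standard approach has two stages. First, reduce to the case $u = 0$ by the monomial change of coordinates $y_i \mapsto t_i y_i$ with $\val(t_i) = u_i$ (possible since $\val(\Lambda^*)$ is dense — here I would note that for rational $u_i$ an exact $t_i$ exists, and density handles the general case by an approximation argument, or one works with $u$ in the divisible group $\val(\Lambda^*) \otimes \Q$ first and takes closures at the end). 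After this reduction, $in_0(f) = \overline{f}$ (coefficientwise reduction of the $\val = 0$ part) is a nonzero, non-monomial Laurent polynomial over $k$, hence by the Nullstellensatz over $k$ (or a direct argument) it has a zero $\bar{y} \in (k^*)^n$ — one may need to first multiply $f$ by a monomial and by $T^{-\trop(f)(0)}$ to arrange it lies in $\Lambda_0[y^{\pm}]$ with nonzero reduction, and possibly pass to a curve/adjust so that $in_0(f)$ genuinely vanishes somewhere in the torus, which is where ``not a monomial'' is used. Second, I would lift $\bar y$ to an honest zero $y \in (\Lambda^*)^n$ of $f$ with $\trop(y) = 0$: this is a Hensel-type / Newton-polygon argument, using completeness of $\Lambda$ and algebraic closedness to solve $f = 0$ in one variable after specializing the others generically near $\bar y$. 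Since this is the standard proof of Kapranov's theorem, I would either cite \cite{maclagan} Theorem 3.1.3 directly for this direction or sketch the lifting argument; I expect the write-up to simply invoke the cited reference, as the theorem is quoted rather than reproved in the paper.
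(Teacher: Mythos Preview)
Your anticipation is correct: the paper does not prove this theorem at all. It is stated in the appendix on tropical geometry purely as a quoted result, attributed to \cite{maclagan} Theorem 3.1.3, and is then applied in the examples of Section \ref{examples} without further justification. There is no proof to compare against.

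For what it is worth, your sketch is the standard argument and is correct in outline. The first equality is indeed definitional bookkeeping, the easy inclusion follows from the ultrametric inequality exactly as you say, and the hard inclusion is the lifting argument using algebraic closedness and nontrivial valuation. One small caveat: in the step where you assert that a non-monomial Laurent polynomial over $k$ has a zero in $(k^*)^n$, you should be slightly careful, since $k$ need not be algebraically closed in general; however, under the hypotheses here (in particular for the Novikov field used in the paper, where $k = \C$), this is fine, and in any case the full argument in \cite{maclagan} handles this by working with the algebraic closure of $k$ and then lifting. Since the paper simply cites the reference, your final remark that one would invoke \cite{maclagan} directly is exactly what happens.
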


\begin{thm}[Fundamental theorem of tropical algebraic geometry,  \cite{maclagan} Theorem 3.2.3]
Let $\Lambda$ be an algebraically closed field with a non-trivial valuation. 
Let
$I\subset \Lambda[y_1^{\pm 1}, \ldots, y_n^{\pm 1}]$
be an ideal. Then 
\[ \bigcap_{f\in I}\trop (V(f)) = \{u\in \R^n\mid in_u(I)\ne \pair{1}\}= \overline{\trop(V(I))},\]
where the last set is the closure of the image $\trop(V(I))$ of $V(I)\subset (\Lambda^*)^n$ under the coordinate-wise valuation map in $\R^n$. 
\end{thm}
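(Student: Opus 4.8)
The plan is to deduce this from Kapranov's theorem (Theorem \ref{Kapranov's Theorem}), which is the principal-ideal case, together with two standard inputs: the existence of a finite Gröbner basis of $I$ with respect to a monomial order refining a weight $u$ over the valued field $\Lambda$ (equivalently a standard basis over the valuation ring $\Lambda_0$), and the fact that $\Lambda$ algebraically closed forces its residue field $k$ algebraically closed. Write $A(I) = \bigcap_{f\in I}\trop(V(f))$, $B(I) = \{u\in\R^n : in_u(I)\ne\pair{1}\}$, and $C(I) = \overline{\trop(V(I))}$; the goal is $A(I) = B(I) = C(I)$.

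First I would establish $A(I) = B(I)$, which is essentially formal once Kapranov's theorem is granted. By Theorem \ref{Kapranov's Theorem}, $\trop(V(f)) = V(\trop f) = \{u\in\R^n : in_u(f)$ is not a monomial$\}$ for each $f$, so $A(I)$ is the set of $u$ for which $in_u(f)$ is not a monomial for any $f\in I$. Since $in_u(I) = \pair{in_u(f) : f\in I}$ and a monomial is a unit in $k[y_1^{\pm1},\dots,y_n^{\pm1}]$: if $u\in B(I)$ then no $in_u(f)$ is a monomial, hence $u\in A(I)$, giving $B(I)\subseteq A(I)$. For the reverse, suppose $u\notin B(I)$, i.e.\ $in_u(I) = \pair{1}$; taking a finite Gröbner basis $g_1,\dots,g_m$ of $I$ refining $u$ one has $in_u(I) = \pair{in_u(g_1),\dots,in_u(g_m)}$, and from $1\in in_u(I)$ a $u$-homogeneity argument together with the multiplicativity $in_u(gh) = in_u(g)\,in_u(h)$ forces some $in_u(g_i)$, hence some $in_u(f)$ with $f\in I$, to be a monomial; thus $u\notin A(I)$, giving $A(I)\subseteq B(I)$.

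Finally I would relate these to $C(I)$. The inclusion $C(I)\subseteq A(I)$ is immediate: if $p\in V(I)$ and $f = \sum_c a_c y^c\in I$ then $f(p)=0$, so by the ultrametric inequality the minimum of $\val(a_c) + \pair{\trop(p),c}$ over the support of $f$ cannot be attained uniquely, whence $\trop(p)\in\trop(V(f))$; since each $\trop(V(f)) = V(\trop f)$ is closed, taking closures yields $\overline{\trop(V(I))}\subseteq A(I)$. The substantive content is the realizability inclusion $B(I) = A(I)\subseteq C(I)$: given $u$ with $in_u(I)$ proper one must exhibit points of $V(I)$ whose tropicalization approximates $u$. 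The strategy is to reduce to rational $u$ by density (both $A(I)$ and $C(I)$ being polyhedral/closed), choose a point $\bar q\in V(in_u(I))\subset(k^*)^n$ — nonempty because $in_u(I)\ne\pair{1}$ and $k$ is algebraically closed — and lift $\bar q$ to a point of $V(I)$ over $\Lambda$ by a Hensel/valuation-extension argument (using that $\Lambda$ is large enough and $k$ algebraically closed), then rescale by the torus action $y\mapsto \lambda y$ with $\val(\lambda) = u$ so that the valuation matches $u$ exactly; an alternative is to intersect with generic affine-linear forms and induct on $\dim V(I)$ using Kapranov's hypersurface case, while bookkeeping dimensions so the sliced varieties stay nonempty. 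I expect this realizability step to be the main obstacle: it is where one genuinely needs the lifting machinery over $\Lambda_0$ (or the inductive reduction plus intersection-dimension control), whereas the equalities $A(I)=B(I)$ and $C(I)\subseteq A(I)$ follow formally from Kapranov's theorem, finiteness of Gröbner bases, and the ultrametric inequality.
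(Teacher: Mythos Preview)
The paper does not give its own proof of this statement: it appears in the appendix on tropical geometry as a background result, stated with the citation to Maclagan--Sturmfels Theorem 3.2.3 and no argument. So there is no paper proof to compare against; your sketch is essentially the standard route taken in that reference.

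One small correction to your outline of $A(I)\subseteq B(I)$: from $1\in in_u(I)=\pair{in_u(g_1),\dots,in_u(g_m)}$ it does \emph{not} follow that some $in_u(g_i)$ is a monomial (e.g.\ $y_1-1$ and $y_1-2$ generate the unit ideal in $k[y_1^{\pm1}]$ with neither a monomial). What you actually need is the lifting lemma from valued-field Gr\"obner theory: every element of $in_u(I)$ is $in_u(f)$ for some $f\in I$. Granting that, $1\in in_u(I)$ produces $f\in I$ with $in_u(f)$ a unit monomial, hence $u\notin\trop(V(f))$. The rest of your plan --- the ultrametric inclusion $C(I)\subseteq A(I)$ and the realizability step $B(I)\subseteq C(I)$ via lifting from $V(in_u(I))\subset(k^*)^n$ over the algebraically closed residue field --- is the correct shape of the argument.
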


\newpage

\addcontentsline{toc}{section}{\refname}
\printbibliography

\end{document}